\newtheorem{thm}{Theorem}[section]
\newtheorem{prop}[thm]{Proposition}
\newtheorem{lem}[thm]{Lemma}
\newtheorem{cor}[thm]{Corollary}
\theoremstyle{definition}
\newtheorem{defn}[thm]{Definition}
\theoremstyle{remark}
\newtheorem{remk}[thm]{Remark}
\newtheorem{remks}[thm]{Remarks}
\newtheorem{exm}[thm]{Example}
\newtheorem{exms}[thm]{Examples}
\newtheorem{notat}[thm]{Notation}
\numberwithin{equation}{section}
\newcommand{\thmref}{Theorem~\ref}
\newcommand{\propref}{Proposition~\ref}
\newcommand{\corref}{Corollary~\ref}
\newcommand{\lemref}{Lemma~\ref}
\newcommand{\sA}{{\mathcal A}}
\newcommand{\sC}{{\mathcal C}}
\newcommand{\sE}{{\mathcal E}}
\newcommand{\sF}{{\mathcal F}}
\newcommand{\sH}{{\mathcal H}}
\newcommand{\sI}{{\mathcal I}}
\newcommand{\sK}{{\mathcal K}}
\newcommand{\sM}{{\mathcal M}}
\newcommand{\sN}{{\mathcal N}}
\newcommand{\sO}{{\mathcal O}}
\newcommand{\sR}{{\mathcal R}}
\newcommand{\sZ}{{\mathcal Z}}
\newcommand{\A}{{\mathbb A}}
\newcommand{\G}{{\mathbb G}}
\renewcommand{\H}{{\mathbb H}}
\renewcommand{\P}{{\mathbb P}}
\newcommand{\Q}{{\mathbb Q}}
\newcommand{\W}{{\mathbb W}}
\newcommand{\Z}{{\mathbb Z}}
\newcommand{\fp}{{\mathfrak p}}
\newcommand{\Lci}{{\rm lci}}
\newcommand{\Ker}{{\rm Ker}}
\newcommand{\Alb}{{\rm Alb}}
\newcommand{\CH}{{\rm CH}}
\newcommand{\surj}{\twoheadrightarrow}
\newcommand{\inj}{\hookrightarrow}
\newcommand{\red}{{\rm red}}
\newcommand{\codim}{{\rm codim}}
\newcommand{\Pic}{{\rm Pic}}
\newcommand{\Hom}{{\rm Hom}}
\newcommand{\Spec}{{\rm Spec \,}}
\newcommand{\sing}{{\rm sing}}
\newcommand{\Char}{{\rm char}}
\newcommand{\ab}{\rm ab}
\newcommand{\Gal}{{\rm Gal}}
\newcommand{\divf}{{\rm div}}
\newcommand{\sHom}{{\mathcal{H}{om}}}
\newcommand{\sExt}{{\mathcal{E}{xt}}}
\newcommand{\Sch}{{\operatorname{\mathbf{Sch}}}}
\newcommand{\Sm}{{\mathbf{Sm}}}
\newcommand{\cyc}{{\operatorname{\rm cyc}}}
\newcommand{\et}{{\text{\'et}}}
\newcommand{\ds}{{/\kern-3pt/}}
\renewcommand{\log}{{\operatorname{log}}}
\newcommand{\un}{\underline}
\newcommand{\ov}{\overline}
\renewcommand{\dim}{\text{\rm dim}}
\newcommand{\tuborg}{\left\{\begin{array}{ll}}
\newcommand{\sluttuborg}{\end{array}\right.}
\newcommand{\zar}{{\rm zar}}
\newcommand{\nis}{{\rm nis}}
\newcommand{\edim}{{\rm edim}}
\newcommand{\reg}{{\rm reg}}
\newcommand{\Lef}{{\rm Lef}}
\newcommand{\Spf}{{\rm Spf}}
\newcommand{\tor}{{\rm tor}}
\newcommand{\ns}{{\rm NS}}
\newcommand{\dlog}{{\rm dlog}}
\newcommand{\wt}{\widetilde}
\newcommand{\wh}{\widehat}
\newcommand{\coker}{{\rm Coker}}
\newcommand{\free}{{\rm free}}
\newcommand{\etl}{{\acute{e}t}}
\newcounter{elno}
\newcounter{elno-abc}   
\newenvironment{listabc}{
                         \begin{list}{\alph{elno-abc})
                                     }{\usecounter{elno-abc}}
                      }{
                         \end{list}}
\newcounter{elno-abc-prime}
\begin{document}
\title{Zero-cycles on normal projective varieties}
\author{Mainak Ghosh, Amalendu Krishna}
\address{School of Mathematics, Tata Institute of Fundamental Research,  
1 Homi Bhabha Road, Colaba, Mumbai, 400005, India.}
\email{mainak@math.tifr.res.in}
\address{Department of Mathematics, Indian Institute of Science,  
Bangalore, 560012, India.}
\email{amalenduk@iisc.ac.in}

\keywords{Singular varieties, Algebraic cycles,
  Class field theory, Suslin homology, Lefschetz theorems}        

\subjclass[2010]{Primary 14C25; Secondary 14F42, 19E15}

\maketitle

\begin{quote}\emph{Abstract.} 
We prove an extension of the Kato-Saito unramified class field theory for
smooth projective schemes over a finite field to a class of normal projective
schemes.
As an application, we obtain Bloch's formula for 
the Chow groups of 0-cycles on such schemes.
We identify the Chow group of 0-cycles on a normal projective
scheme over an algebraically closed field to the Suslin homology of its
regular locus. 
Our final result is a Roitman torsion theorem for smooth quasi-projective
schemes over algebraically closed fields. This completes the missing
$p$-part in the torsion theorem of Spie{\ss} and Szamuely.
\end{quote}
\setcounter{tocdepth}{1}
\tableofcontents

\section{Introduction}\label{sec:Intro}
\subsection{Motivation}\label{sec:Motiv*}
It is well known that the Chow group of 0-cycles on a smooth projective scheme over an
appropriate field describes many other invariants of the scheme such as
Suslin homology,
cohomology of $K$-theory sheaves and 
abelianized {\'e}tale fundamental groups. However, this is not the case when the underlying
scheme is not projective. The latter case is a very challenging problem in the theory of algebraic
cycles.
The principal motivation of this paper is to explore if the Levine-Weibel Chow group \cite{Levine-Weibel} of
normal projective schemes could be used to solve this problem for those smooth quasi-projective
schemes which are the regular loci of normal projective schemes.
The results that we obtain in this paper suggest that this strategy is indeed 
a promising one. Below we describe our main results in some detail.

\subsection{Levine-Weibel Chow group and Class field theory}
\label{sec:CFT}
The aim of the class field theory in the geometric case is to describe 
the abelian {\'e}tale coverings (which are extrinsic to the scheme)
of a scheme over a finite field
in terms of some arithmetic or geometric invariants (such as the Chow groups of 
0-cycles) which are intrinsic to the scheme.
Let $k$ be a finite field and $X$ an integral smooth projective scheme over $k$.
Let $\CH^F_0(X)$ denote the classical (see \cite{Fulton}) Chow group of 0-cycles 
and let $\CH^F_0(X)^0$ denote the kernel of the degree map 
${\rm deg}_X \colon \CH^F_0(X) \to \Z$. Let $\pi^{\rm ab}_1(X)$ denote the
abelianized {\'e}tale fundamental group of $X$ (e.g., see \cite[\S~5.8]{Szamuely})
and let $\pi^{\rm ab}_1(X)^0$ denote the
kernel of the canonical map $\pi^{\rm ab}_1(X) \to \Gal({\ov{k}}/k)$
induced by the structure map of $X$.
The following is the main theorem of the geometric class field 
theory for smooth projective schemes. The case of curves was
earlier proven by Lang \cite{Lang-CFT}, which was based on Artin's reciprocity theorem
for local and global fields \cite{Artin-Tate}.
 
\begin{thm}$($\cite[Theorem~1]{Kato-Saito-1}$)$\label{thm:CFT-KS}
Let $X$ be an integral smooth projective scheme over a finite field.
Then the map $\phi^0_X \colon \CH^F_0(X)^0 \to \pi^{\rm ab}_1(X)^0$, induced by 
sending a closed point to its associated Frobenius element, is an isomorphism 
of finite groups.
\end{thm}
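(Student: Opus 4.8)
The plan is to realize the reciprocity map on the Gersten complex for Milnor $K$-theory, reduce both the surjectivity and the injectivity of $\phi^0_X$ to the case of curves by a Lefschetz argument, and take Lang's theorem for curves as the base case. Set $n = \dim X$. First I would invoke the Bloch--Quillen/Kato isomorphism $\CH^F_0(X) \cong H^n_{\nis}(X, \sK^M_n)$, under which $\CH^F_0(X)$ is the cokernel of the divisor map $\bigoplus_{y \in X^{(n-1)}} k(y)^\times \to \bigoplus_{x \in X^{(n)}} \Z = Z_0(X)$. In these terms the rule $x \mapsto \mathrm{Frob}_x$ defines $\phi_X$ on cycles, and its well-definedness and functoriality are governed by the local reciprocity maps (tame symbols together with local class field theory) at the codimension-one points of curves in $X$; this makes compatibility with proper pushforward along a closed curve $i_C \colon C \hookrightarrow X$ transparent.

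Second, I would treat well-definedness and surjectivity together. A $0$-cycle rationally equivalent to zero is a sum of divisors of rational functions on integral curves; pushing forward along the normalization of each such curve reduces the vanishing of $\phi_X$ to Weil reciprocity combined with local class field theory, i.e. to the curve case. For surjectivity, choose by Bertini a smooth curve $C \subset X$ cut out by general members of a sufficiently ample linear system. The Lefschetz theorem for the {\'e}tale fundamental group gives that $\pi^{\rm ab}_1(C)^0 \surj \pi^{\rm ab}_1(X)^0$, while Lang's theorem gives that $\phi^0_C \colon \CH^F_0(C)^0 \to \pi^{\rm ab}_1(C)^0$ is an isomorphism. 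Feeding a lift through $\phi^0_C$ and pushing forward along $i_C$ then produces a preimage, so $\phi^0_X$ is surjective.

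Third -- and this is where I expect the genuine difficulty -- comes injectivity. Here I would pass to the dual picture: characters of $\pi^{\rm ab}_1(X)^0$ are classes in $H^1_{\et}(X, \Q/\Z)$, and reciprocity produces a pairing $\CH^F_0(X)^0 \times H^1_{\et}(X, \Q/\Z) \to \Q/\Z$ by summing local symbols, so that injectivity of $\phi^0_X$ amounts to showing that a cycle pairing trivially with every character vanishes. The obstacle is that restricting to a single Lefschetz curve $C$ only detects the characters in the image of $H^1_{\et}(X, \Q/\Z) \to H^1_{\et}(C, \Q/\Z)$ (dually to the surjectivity used above), so one cannot directly conclude triviality of a cycle $z_C$ on $C$ from triviality of its pushforward on $X$. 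Breaking this circularity is the technical heart of the theorem and requires the full higher-dimensional local duality of Kato--Saito, combined with an induction on $\dim X$ through hyperplane sections and the finiteness of the groups involved.

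Finally, the finiteness of both groups. The group $\pi^{\rm ab}_1(X)^0$ is finite by the Katz--Lang finiteness theorem: after base change to $\ov{k}$ the geometric abelianized fundamental group is built from the Tate module of the Albanese variety (plus a $p$-part), on which $1 - \mathrm{Frob}$ has finite cokernel since the eigenvalues of Frobenius are Weil numbers different from $1$. Finiteness of $\CH^F_0(X)^0$ is then inherited through the isomorphism just established.
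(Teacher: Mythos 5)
First, a point of calibration: the paper does not prove this statement at all --- it is quoted as \cite[Theorem~1]{Kato-Saito-1} and used later as a black box (for instance in Part~2 of the proof of Theorem~\ref{thm:Main-1}, where the regular case is dispatched by citing it). So there is no internal proof to measure you against, and your proposal has to stand on its own. Much of it does: the identification $\CH^F_0(X)\cong H^d_\nis(X,\sK^M_{d,X})$, the well-definedness of $\phi_X$ by pushing rational equivalences forward from normalizations of curves and invoking Artin reciprocity for the function fields $k(C)$, the surjectivity of $\phi^0_X$ via a Bertini curve $C\subset X$ with $\pi^{\ab}_1(C)^0\surj \pi^{\ab}_1(X)^0$ together with Lang's isomorphism $\Pic^0(C)\xrightarrow{\cong}\pi^{\ab}_1(C)^0$, and the finiteness of $\pi^{\ab}_1(X)^0$ by Katz--Lang are all correct and standard (over a finite field the Bertini input must be of Poonen--Gabber type, with hypersurfaces of large degree rather than hyperplanes, but that is cosmetic).

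The genuine gap is injectivity, and you have in effect conceded it: the sentence ``requires the full higher-dimensional local duality of Kato--Saito, combined with an induction on $\dim X$ through hyperplane sections'' names the proof rather than giving one. The induction you gesture at does work in dimension $\ge 3$: if $\alpha\in\Ker(\phi^0_X)$, a Bertini section $Y$ containing ${\rm Supp}(\alpha)$ with $\pi_1(Y)\xrightarrow{\cong}\pi_1(X)$ lets you write $\alpha=\iota_*(\alpha')$ with $\phi^0_Y(\alpha')=0$ and descend (this is exactly the mechanism the paper uses for its own Theorem~\ref{thm:Main-1}). But it bottoms out at surfaces, where Lefschetz gives only surjectivity $\pi_1(C)\surj\pi_1(X)$ for a curve section, and the circularity you yourself describe in the ``dual picture'' paragraph becomes unavoidable: vanishing of the pairing of $\alpha$ against the image of $H^1_{\etl}(X,{\Q}/{\Z})$ in $H^1_{\etl}(C,{\Q}/{\Z})$ does not force $\alpha'=0$ on $C$. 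The surface case is the actual content of the Kato--Saito Annals paper (two-dimensional local class field theory, $K_2$-ideles, duality), and your proposal contains no argument for it. Relatedly, the finiteness of $\CH^F_0(X)^0$ is ``inherited through the isomorphism just established'' only once injectivity is known, so it cannot be fed back into the injectivity step. As written, the proposal is an accurate map of the proof's architecture with its hardest room left empty.
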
 

If $U$ is a smooth quasi-projective scheme over a finite field
$k$ which is not projective, then one does not know in general how to 
describe $\pi^{\rm ab}_1(U)$ in terms of 0-cycles.
It was shown by Schmidt and Spie{\ss} \cite{Schmidt-Spiess} and
Schmidt \cite{Schmidt} 
that the tame quotient of $\pi^{\rm ab}_1(U)$ is described by the
Suslin homology of $U$. But we do not yet know if the abelian covers of $U$ 
with wild ramifications could be described in terms of the 
Chow group of 0-cycles on a compactification of $U$.
Our main result in this direction provides a partial answer to this
problem.

Let $\CH^{LW}_0(X)$ denote the Levine-Weibel Chow group of 0-cycles of
a scheme $X$ \cite{Levine-Weibel} (see \S~\ref{sec:LWC-0} for a reminder
of its definition).

\begin{thm}\label{thm:Main-1}
Let $X$ be an integral projective scheme over a finite field which is
regular in codimension one. Then the Frobenius substitution associated to
the regular closed points gives rise to a reciprocity homomorphism
\[
\phi_X \colon \CH^{LW}_0(X) \to \pi^{\rm ab}_1(X_\reg)
\]
which restricts to a surjective homomorphism
$\phi^0_X \colon \CH^{LW}_0(X)^0 \surj \pi^{\rm ab}_1(X_\reg)^0$.
The map $\phi^0_X$ is an 
isomorphism of finite groups in any of the following cases.
\begin{enumerate}
\item
$X$ has only isolated singularities.
\item
$X$ is regular in codimension three and its local rings satisfy Serre's
$S_4$ condition.
\end{enumerate}
\end{thm}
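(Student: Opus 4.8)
The plan is to construct $\phi_X$ on cycles, check that it annihilates the Levine--Weibel relations, and then treat surjectivity and bijectivity separately. Since $\CH^{LW}_0(X)$ is generated by the classes of regular closed points, I would set $\phi_X([x])$ to be the Frobenius class of $x \in X_\reg$ in $\pi^{\rm ab}_1(X_\reg)$, the only real point being that this respects rational equivalence. A defining relation comes from a good curve $C \subset X$ (no component inside $X_\sing$, meeting $X_\sing$ in finitely many points) together with a function $f \in k(C)^\times$ that is a unit at every point of $C \cap X_\sing$, so that $\divf(f)$ is supported in $X_\reg$. Writing $\nu \colon \wt C \to X$ for the normalisation and $C^o = \nu^{-1}(X_\reg)$, the cycle $\divf(f)$ is the image of a principal divisor on the open curve $C^o$. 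Functoriality of $\pi^{\rm ab}_1$ along $C^o \to X_\reg$, together with the reciprocity law of global class field theory for the function field $k(C)$ (a principal idele dies, and there is no contribution from $\wt C \setminus C^o$ since $f$ is a unit there), forces $\phi_X(\divf(f)) = 0$. The same computation shows that $\phi_X$ is degree-compatible, hence restricts to $\phi^0_X$ on the degree-zero subgroups.

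Surjectivity of $\phi^0_X$ onto $\pi^{\rm ab}_1(X_\reg)^0$ is the formal part: the Frobenius classes of the closed points of $X_\reg$ topologically generate $\pi^{\rm ab}_1(X_\reg)$ by Chebotarev density, the image of $\phi^0_X$ lies in the geometric part $\pi^{\rm ab}_1(X_\reg)^0$, and the quotient by $\Gal(\ov k/k)$ is accounted for by the degree; this yields surjectivity. For the bijectivity and finiteness in cases (1) and (2) I would argue by d\'evissage on $d = \dim X$, reducing to the smooth projective case covered by \thmref{thm:CFT-KS}. For a sufficiently general (Bertini) hyperplane section $Y = X \cap H$ one considers the commuting square whose top row is $\phi^0_Y$, bottom row $\phi^0_X$, left vertical the proper pushforward $\CH^{LW}_0(Y)^0 \to \CH^{LW}_0(X)^0$, and right vertical the map $\pi^{\rm ab}_1(Y_\reg)^0 \to \pi^{\rm ab}_1(X_\reg)^0$ induced by $Y_\reg \hookrightarrow X_\reg$. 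If the left vertical is surjective (a moving lemma) and the right vertical is an isomorphism (a Lefschetz theorem for the abelianised fundamental group), then injectivity of $\phi^0_Y$ propagates to injectivity of $\phi^0_X$ by a one-line diagram chase.

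The two hypotheses serve to keep the descent inside a class of schemes whose base case is genuinely smooth. If $X$ has isolated singularities, then a general $H$ avoids the finite set $X_\sing$, so $Y \subset X_\reg$ is already smooth projective and the induction terminates in a single step. If $X$ is regular in codimension three and satisfies $S_4$, then $X_\sing$ has codimension $\geq 4$; a general $Y$ is again regular in codimension three, and the descent should reach a smooth projective threefold after finitely many cuts, where \thmref{thm:CFT-KS} applies and, a posteriori, delivers the finiteness of $\pi^{\rm ab}_1(X_\reg)^0$. Verifying that enough depth genuinely survives the successive hyperplane cuts—so that the moving lemma can be rerun at each stage—is itself part of the delicate bookkeeping, and is the reason the theorem is stated with $S_4$ rather than a weaker depth bound.

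The principal obstacle is the construction of the two vertical maps in the inductive square. The Lefschetz isomorphism must hold for the fundamental group of the \emph{non-proper} scheme $X_\reg$, where \'etale covers may be wildly ramified along $X_\sing$, so the classical SGA~2 statements must be upgraded to a form controlling wild ramification by $X$. The moving lemma is harder still: one must move an arbitrary $0$-cycle, and the good curves witnessing its rational equivalences, onto a single general hyperplane section while keeping all supports inside $X_\reg$ and in good position with respect to $X_\sing$. It is exactly here that the depth hypothesis $S_4$ (beyond the codimension bound coming from $R_3$) is needed, to force the vanishing of the local cohomology groups obstructing such a compatible restriction; the failure of this vanishing for general normal $X$ is what prevents the conclusion from holding unconditionally. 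I expect this moving lemma, phrased compatibly with the reciprocity structure, to be the hardest step.
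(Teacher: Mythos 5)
Your global architecture --- define $\phi_X$ on regular closed points via Frobenius, then run a d\'evissage through hypersurface sections using a square whose verticals are a pushforward on Chow groups and a Lefschetz isomorphism for $\pi^{\ab}_1$ of the regular locus --- is the same as the paper's. But your first step, checking that the Levine--Weibel relations die, has a genuine gap. For a Cartier curve $C$ meeting $X_\sing$ and $f \in \sO^{\times}_{C, C\cap X_\sing}$, the cycle $\divf(f)$ is a principal divisor on the open curve $C^o = \nu^{-1}(X_\reg) \subset \wt{C}$, and an \'etale cover of $X_\reg$ restricts to a cover of $C^o$ that may be \emph{wildly ramified} at the points of $E = \wt{C}\setminus C^o$. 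Reciprocity for such covers is governed by ${\varprojlim}_m \Pic(\wt{C}, mE)$, and the local contribution at a boundary point vanishes only when $f$ lies in a higher unit group $1+\fm^n$ with $n$ at least the conductor; being a unit at $E$ is not enough. (This is precisely the phenomenon behind the cuspidal-curve example in \S~\ref{sec:No-R1}.) So your direct appeal to "no contribution from $\wt{C}\setminus C^o$" fails, even though the conclusion is true. The paper circumvents this entirely with a moving lemma over finite fields (\lemref{lem:Moving-nor}): $\CH^{LW}_0(X, X_\sing) \to \CH^{LW}_0(X)$ is an isomorphism, so every relation may be replaced by one supported on a curve \emph{disjoint} from $X_\sing$, where unramified class field theory for the proper curve applies. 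That lemma (proved via a pro-$\ell$ base-change trick plus Bertini over finite fields) is the missing ingredient in your construction, and it is also what the existence of your pushforward $\iota_*$ on Levine--Weibel groups secretly requires.

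Two further points. First, Chebotarev only gives that the image of $\phi^0_X$ is \emph{dense} in $\pi^{\ab}_1(X_\reg)^0$; since that group is not known to be finite a priori (its finiteness is part of the assertion), density does not yield surjectivity. The paper proves surjectivity by the same induction, using surjectivity of $\pi^{\ab}_1(Y^o) \to \pi^{\ab}_1(X^o)$ (the $\Lef(X^o,Y^o)$ condition of \propref{prop:LEF*} together with SGA~2) and a surface base case (\thmref{thm:CFT-surface}) that rests on resolution of singularities for surfaces and the Kerz--Saito class field theory with modulus --- not on \thmref{thm:CFT-KS} alone. Second, you locate the role of $S_4$ in the moving lemma; in fact the moving lemma holds for every $R_1$-scheme, and $S_4$ enters in the Lefschetz theorem \thmref{thm:Lef-EFG} for $\pi^{\ab}_1(X^o)$: the $p$-primary part is handled via the Artin--Schreier sequence, and one needs $H^{i}_{X_\sing}(X, \sO_{X}(-Y)) = 0$ for $i \le 3$, which is exactly where depth $\ge 4$ along $X_\sing$ is used. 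Identifying and proving that $p$-primary Lefschetz statement for the non-proper scheme $X^o$ is the real content that your sketch leaves open.
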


\vskip .3cm

It readily follows that in cases (1) and (2), the map $\phi_{X}$ is injective 
with uniquely divisible cokernel ${\wh{\Z}}/{\Z}$
(see ~\eqref{eqn:Rec-map-2}). Note also that the finiteness of
the source and target of $\phi^0_X$ is part of our assertion and was
not known before.

\vskip .3cm

Without the assumption (1) or (2) in 
\thmref{thm:Main-1}, we prove the
following.

\begin{thm}\label{thm:Main-1-0}
Let $X$ be an integral projective scheme over a finite field which is
regular in codimension one. Then reciprocity homomorphism
of \thmref{thm:Main-1} induces an isomorphism of finite groups
\[
\phi_X \colon {\CH^{LW}_0(X)}/m \to {\pi^{\rm ab}_1(X_\reg)}/m
\]
for every integer $m \in k^{\times}$.
\end{thm}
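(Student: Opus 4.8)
The plan is to deduce Theorem~\ref{thm:Main-1-0} from the already-established surjectivity in Theorem~\ref{thm:Main-1} by reducing everything modulo $m$ and controlling the $m$-torsion on both sides. Since $m \in k^{\times}$, the integer $m$ is invertible in the base field, so the relevant coverings are tame at the level of $\mu_m$-torsors and the whole argument sidesteps the wild-ramification obstruction that forces us to impose conditions (1) or (2) in Theorem~\ref{thm:Main-1}. This is precisely why we can make an unconditional statement here: the difficulty in the full reciprocity isomorphism lives in the $p$-part, where $p = \Char(k)$, and that part vanishes after tensoring with $\Z/m$.

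First I would record that Theorem~\ref{thm:Main-1} already gives a surjection $\phi^0_X \colon \CH^{LW}_0(X)^0 \surj \pi^{\rm ab}_1(X_\reg)^0$ unconditionally, and hence, after adding the degree/Frobenius part and reducing mod $m$, a surjection $\CH^{LW}_0(X)/m \surj \pi^{\rm ab}_1(X_\reg)/m$. So the entire content is injectivity after reduction mod $m$. For this I would pass to the dual formulation: by Pontryagin duality for finite (or profinite) abelian groups, it suffices to show that the induced map on $m$-torsion of the character groups
\[
\Hom(\pi^{\rm ab}_1(X_\reg)/m, \Q/\Z) \to \Hom(\CH^{LW}_0(X)/m, \Q/\Z)
\]
is surjective, i.e. that every character of $\CH^{LW}_0(X)$ of order dividing $m$ that is compatible with the reciprocity map arises from a finite abelian {\'e}tale covering of $X_\reg$ of exponent dividing $m$. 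Concretely, a character of order dividing $m$ corresponds via class field theory to a $\Z/m$-torsor, and the task is to realize it geometrically.

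The technical heart is therefore a Bloch--Kato--Gabber style identification: one wants to show that $H^1_{\et}(X_\reg, \Z/m)$ (which classifies the coverings, the target side) surjects onto the group of $\Z/m$-valued characters of $\CH^{LW}_0(X)$ cut out by the reciprocity pairing. I would do this by relating $\CH^{LW}_0(X)/m$ to a motivic or {\'e}tale cohomology group with $\Z/m$-coefficients via a cycle-class map, using the Bloch-type formula that the paper's earlier sections establish, and then invoking that for $m$ prime to $p$ the relevant {\'e}tale cohomology of $X_\reg$ and of $X$ agree in the correct degrees through a comparison over the singular locus of codimension at least two. The main obstacle I anticipate is exactly this comparison: controlling how the $\Z/m$-{\'e}tale cohomology changes between $X$ and its regular locus $X_\reg$ across the singularities, and verifying that the reciprocity pairing is perfect mod $m$ without needing the $S_4$ or codimension-three hypotheses. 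I expect this to be handled by a duality argument (Poincar{\'e}--Lefschetz or Artin--Verdier duality for the finite-field situation) combined with the finiteness already proved, so that surjectivity plus equality of finite cardinalities forces the mod-$m$ map to be an isomorphism.
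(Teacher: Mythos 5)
Your endgame is the right one---the paper also concludes by showing that $\phi_X$ mod $m$ is a surjection between finite groups of equal cardinality---and your observation that the whole difficulty of \thmref{thm:Main-1} is concentrated in the $p$-part is the correct heuristic. But the step that actually produces the equality of cardinalities is missing, and the one concrete tool you invoke for it is unavailable. You propose to relate ${\CH^{LW}_0(X)}/m$ to \'etale cohomology ``via the Bloch-type formula that the paper's earlier sections establish''; that formula (\thmref{thm:Main-2}) is proved only under the isolated-singularity or $(R_3+S_4)$ hypotheses, i.e.\ precisely the hypotheses this theorem is meant to dispense with, so appealing to it here is circular. Likewise, a direct duality argument on $X$ or on $X^o$ does not compute ${\CH^{LW}_0(X)}/m$: the Levine--Weibel group is defined by Cartier curves through the singular locus and is not a priori an \'etale-cohomological invariant, and $X$ itself is singular so Poincar\'e duality is not available on it. Your dual reformulation (realize every order-$m$ character of $\CH^{LW}_0(X)$ by a covering of $X^o$) is essentially a restatement of the injectivity to be proved, and the proposal supplies no mechanism for it.

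The missing idea is the comparison with Suslin homology. The paper first proves (\thmref{thm:p-torsion}) that the kernel of the canonical surjection $\CH^{LW}_0(X) \surj H^S_0(X^o)$ is $p$-primary torsion --- this is a genuine theorem, established by induction on dimension using Bertini arguments and, on surfaces, the identification of $\CH^{LW}_0(X)$ with Chow groups with modulus on a resolution together with \propref{prop:Suslin-modulus}. Hence ${\CH^{LW}_0(X)}/m \cong {H^S_0(X^o)}/m \cong H^S_0(X^o,{\Z}/m)$ for $m$ prime to $p$. One then invokes the duality of Kelly--Saito, $H^S_0(X^o,{\Z}/m) \cong H^1_{\et}(X^o,{\Z}/m)^{*}$, valid for the \emph{smooth} scheme $X^o$ over a finite field, and the finiteness of the latter group due to Geisser--Schmidt; since $({\pi^{\rm ab}_1(X^o)}/m)^{*} \cong H^1_{\et}(X^o,{\Z}/m)$ as well, both sides of $\phi_X$ mod $m$ are finite of the same order and the surjection (itself proved by an induction using the Lefschetz surjectivity of \propref{prop:LEF*}, not merely quoted from \thmref{thm:Main-1}) is an isomorphism. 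Without the detour through $H^S_0(X^o)$ there is no handle on the size of ${\CH^{LW}_0(X)}/m$, so as written the proposal does not close.
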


\subsection{Bloch's formula for Levine-Weibel Chow group}
\label{sec:BF}
In the theory of algebraic cycles, Bloch's formula
describes the Chow group of algebraic cycles of codimension $d$ 
on a smooth scheme (of any dimension) over a field as the $d$-th Zariski or
Nisnevich cohomology of an appropriate Milnor or Quillen $K$-theory sheaf.
A statement of this kind plays a central role in the study of algebraic cycles
on smooth schemes. Bloch's formula for smooth schemes 
in $d =1$ case is classical, the $d =2$ case is due to Bloch \cite{Bloch74} and
the general case is due to Quillen \cite{Quillen}. This
formula for the Chow group of 0-cycles on smooth schemes in terms of 
the Milnor $K$-theory is due to Kato \cite{Kato86}

Bloch's formula for the Levine-Weibel Chow group is well known
for singular curves (see \cite[Proposition~1.4]{Levine-Weibel}). 
However, it is a very challenging problem in higher dimensions.
This formula for singular surfaces over algebraically closed fields
is due to Levine \cite{Levine85}. For projective surfaces over infinite fields,
this formula was recently proven by Binda, Krishna and Saito \cite[Theorem~8.1]{BKS}.
Bloch's formula for the Levine-Weibel Chow group of singular projective schemes
over non-algebraically closed fields is yet unknown in any other case.

Suppose that $X$ is a quasi-projective scheme of pure dimension $d$ 
over a perfect{\footnote{Perfectness is not required by
    \cite[Lemma~3.7]{Gupta-Krishna-CFT}.}} field $k$ and
$x \in X_\reg$ is a regular closed point.
One then knows by \cite[Theorem~2]{Kato86} that there is a canonical isomorphism
$\Z \xrightarrow{\cong} K^M_0(k(x)) \xrightarrow{\cong} 
H^d_x(X, \sK^M_{d, X})$, where the latter is the Nisnevich cohomology 
with support, $K^M_n(R)$ is the Milnor $K$-theory on a ring $R$ and $\sK^M_{i,X}$ is the Nisnevich sheaf of Milnor $K$-theory on $X$
(as defined, for instance, in \cite[\S~0]{Kato86}){\footnote{We could
    use the improved Milnor $K$-theory of Kerz instead, but it will make no
    difference in the top cohomology.}}.
Hence, using the `forget support' map for $x$ and extending
it linearly to the free abelian group on all regular closed points of
$X$, we get the cycle class homomorphism
\begin{equation}\label{eqn:Cycle-map}
\cyc_{X} \colon \sZ_0(X_\reg) \to H^d_\nis(X, \sK^M_{d, X}).
\end{equation}

As an application of \thmref{thm:Main-1} and the class field theory of Kato 
and Saito \cite{Kato-Saito-2}, we prove the following.

\begin{thm}\label{thm:Main-2}
Let $X$ be an integral projective scheme of dimension $d$ 
over a finite field which satisfies one of the following.
\begin{enumerate}
\item
$X$ has only isolated singularities.
\item
$X$ is regular in codimension three and its local rings satisfy Serre's
$S_4$ condition.
\end{enumerate}
Then the cycle class map induces an isomorphism
\[
\cyc_{X} \colon \CH^{LW}_0(X) \xrightarrow{\cong} H^d_\nis(X, \sK^M_{d, X}).
\]
\end{thm}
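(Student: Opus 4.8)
The plan is to realize $\cyc_X$ as the first arrow in a factorization of the reciprocity map $\phi_X$ of \thmref{thm:Main-1}, and then to read off the isomorphism from the degree filtration by a five-lemma argument. The main tool throughout is the coniveau (Cousin) spectral sequence
\[
E_1^{p,q} = \bigoplus_{x \in X^{(p)}} H^{p+q}_x(X, \sK^M_{d,X}) \Longrightarrow H^{p+q}_\nis(X, \sK^M_{d,X}),
\]
whose edge map exhibits $H^d_\nis(X, \sK^M_{d,X})$ as a quotient of $\bigoplus_{x \in X^{(d)}} H^d_x(X, \sK^M_{d,X})$, the regular closed points contributing $\sZ_0(X_\reg)$ by Kato's purity isomorphism. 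First I would use this to show that $\cyc_X$ descends to $\CH^{LW}_0(X)$: if $C \subset X$ is a Cartier curve relative to $X_\sing$ with no component in $X_\sing$, and $f$ is a rational function on $C$ that is a unit along $C \cap X_\sing$, then $\divf(f)$ is precisely the image under the differential $d_1$ of the symbol $\{f\} \in K^M_1(k(C)) \cong H^{d-1}_{\eta_C}(X, \sK^M_{d,X})$ (purity at the regular codimension-$(d-1)$ point $\eta_C$), the singular points of $C$ contributing nothing because the tame symbol of a unit vanishes. Hence $\cyc_X(\divf(f)) = 0$, so $\cyc_X$ factors through $\CH^{LW}_0(X)$.

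The next, and I expect hardest, step is the surjectivity of $\cyc_X$, i.e.\ the statement that $H^d_\nis(X, \sK^M_{d,X})$ is generated by classes of regular closed points. By the spectral sequence this reduces to showing that the singular closed points and the higher differentials contribute nothing in top degree; equivalently, that the local cohomology sheaves $\sH^q_{X_\sing}(\sK^M_{d,X})$ supported on $X_\sing$ vanish in the relevant range, so that the subquotients $E_\infty^{p,d-p}$ with $p < d$ all die and the Cousin complex is exact near $X_\sing$. This is exactly where hypotheses (1) and (2) enter beyond their use in \thmref{thm:Main-1}: for isolated singularities the singular locus is a finite set of closed points whose local contribution can be analyzed directly, while the condition that $X$ is regular in codimension three together with Serre's $S_4$ forces the vanishing of the obstructing local cohomology in the appropriate codimension. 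Establishing this vanishing is the principal technical obstacle.

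With $\cyc_X \colon \CH^{LW}_0(X) \surj H^d_\nis(X, \sK^M_{d,X})$ in hand, I would invoke the higher unramified class field theory of Kato and Saito \cite{Kato-Saito-2} to produce a reciprocity homomorphism $\rho_X \colon H^d_\nis(X, \sK^M_{d,X}) \to \pi^{\rm ab}_1(X_\reg)$ carrying the class of a regular closed point to its Frobenius substitution. Since both $\phi_X$ and $\rho_X \circ \cyc_X$ are defined on the generators $\sZ_0(X_\reg)$ by the same Frobenius recipe, the triangle $\rho_X \circ \cyc_X = \phi_X$ commutes.

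Finally I would conclude by comparing degrees. Pushforward to $\Spec k$ equips both $\CH^{LW}_0(X)$ and $H^d_\nis(X, \sK^M_{d,X})$ with degree maps to $\Z$ sending a closed point $x$ to $[\kappa(x) : k]$; these are compatible under $\cyc_X$ and, by its surjectivity, have the same image, with kernels the respective degree-zero subgroups. On degree-zero parts, $\phi^0_X$ is injective by \thmref{thm:Main-1}, so the factorization $\rho_X \circ \cyc_X = \phi_X$ forces $\cyc_X$ to be injective there; surjectivity together with degree compatibility then makes $\cyc_X$ an isomorphism of degree-zero subgroups. A five-lemma comparison of the two short exact sequences $0 \to (-)^0 \to (-) \xrightarrow{\deg} \im(\deg) \to 0$ yields that $\cyc_X$ itself is an isomorphism.
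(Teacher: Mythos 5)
Your overall architecture for the endgame is the same as the paper's: produce $\rho_X \colon H^d_\nis(X,\sK^M_{d,X}) \to \pi^{\ab}_1(X^o)$ with $\rho_X \circ \cyc_X = \phi_X$, get surjectivity of $\cyc_X$ from Kato--Saito, and deduce injectivity from the injectivity of $\phi_X$ supplied by \thmref{thm:Main-1}. However, there are two genuine gaps upstream of that.

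First, your argument that $\cyc_X$ kills $\sR^{LW}_0(X)$ does not work as stated. For a Cartier curve $C$ meeting $X_\sing$, the components of the coniveau differential landing at points $x \in C \cap X_\sing$ are boundary maps into $H^d_x(X,\sK^M_{d,X})$ for \emph{singular} $x$; Kato's purity does not apply there, this group is not $K^M_0(k(x))$, and the boundary is not a tame symbol, so ``the tame symbol of a unit vanishes'' is not available. You could repair this by first invoking the moving lemma (\lemref{lem:Moving-nor}), which lets you test only against curves disjoint from $X_\sing$, where everything is regular and your Gersten computation is legitimate. The paper instead avoids the Gersten complex altogether: it shows $\rho_X$ is \emph{injective} (\lemref{lem:Finiteness}, a nontrivial input using \cite[Theorem~9.1(3)]{Kato-Saito-2} plus finiteness of the degree-zero part and a separate argument when $X$ is not geometrically connected), and then deduces that $\cyc_X$ kills $\sR^{LW}_0(X)$ from the fact that $\phi_X = \rho_X\circ\cyc_X$ does (\corref{cor:Rec-map-0-main}). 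Your proposal contains no substitute for this injectivity statement, which is where the real content of the well-definedness lies.

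Second, you identify surjectivity of $\cyc_X$ as ``the principal technical obstacle'' and propose to prove it by a vanishing of local cohomology supported on $X_\sing$, claiming this is where hypotheses (1) and (2) enter --- but you do not carry this out, and it is both unnecessary and misattributed. Surjectivity of $\sZ_0(X^o) \to H^d_\nis(X,\sK^M_{d,X})$ holds for any integral projective scheme with dense regular locus by \cite[Theorem~2.5]{Kato-Saito-2} (the paper uses it already for arbitrary $R_1$-schemes in \S~3), with no vanishing of $E_1^{p,d-p}$ at singular points required; indeed such vanishing is doubtful even for isolated singularities, where the unknown groups $H^d_x(X,\sK^M_{d,X})$ at singular closed points sit inside $E_1^{d,0}$. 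Hypotheses (1) and (2) enter only through \thmref{thm:Main-1}, i.e., through the injectivity of $\phi_X$, which you do use correctly at the end.
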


\vskip .3cm

In case of isolated singularities, one can also include the Zariski 
topology and Quillen $K$-theory sheaf $\sK_{d,X}$ in 
\thmref{thm:Main-2} (see \S~\ref{sec:Isolated}).
We remark that the existence of $\cyc_X$ on $\CH^{LW}_0(X)$ is part of
our assertion, and was not previously known.

\subsection{Levine-Weibel Chow group and 0-cycles with modulus}
\label{sec:BSC}
Let $X$ be an integral projective scheme of dimension $d \ge 2$
over a field which is regular in codimension 
one. Let us assume that
a resolution of singularities $f\colon \wt{X} \to X$ exists in the sense of 
Hironaka. Let $E \inj \wt{X}$
denote the reduced exceptional divisor. Let $\CH_0(\wt{X}|mE)$ denote the
Chow group of 0-cycles with modulus (see \S~\ref{sec:Res-surface}). 
It is not hard to see that the identity map of  
$\sZ_0(X_{\rm reg})$ induces a surjection
$\CH^{LW}_0(X) \surj \CH_0(\wt{X}|mE)$ for all integers $m \ge 1$.
The following application of \thmref{thm:Main-1} is an extension of the
Bloch-Srinivas conjecture (which was proven for normal surfaces in
\cite{Krishna-Srinivas}) to higher dimensions over finite fields.

\begin{thm}\label{thm:Main-3}
Let $X$ be an integral projective scheme of dimension $d \ge 2$ 
over a finite field which satisfies one of the following.
\begin{enumerate}
\item
$X$ has only isolated singularities.
\item
$X$ is regular in codimension three and its local rings satisfy Serre's
$S_4$ condition.
\end{enumerate}
Let $f \colon \wt{X} \to X$ be a resolution of
singularities with the reduced exceptional divisor $E$.
Then the pull-back map $f^* \colon \sZ_0(X_{\rm reg}) \to 
\sZ_0(\wt{X} \setminus E)$ induces an isomorphism
\[
f^* \colon \CH^{LW}_0(X) \xrightarrow{\cong} \CH_0(\wt{X}|mE)
\]
for all $m \gg 0$.
\end{thm}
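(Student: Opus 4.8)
The plan is to compare $\CH^{LW}_0(X)$ and $\CH_0(\wt{X}|mE)$ through their reciprocity maps to abelianized {\'e}tale fundamental groups, thereby reducing the assertion to a stabilization statement on the arithmetic side. Write $\alpha_m = f^*$ for the surjection $\CH^{LW}_0(X) \surj \CH_0(\wt{X}|mE)$ recalled before the statement; it is induced by the identity of $\sZ_0(X_\reg) = \sZ_0(\wt{X} \setminus E)$ and is compatible with the degree maps and with the transition surjections $\CH_0(\wt{X}|(m+1)E) \surj \CH_0(\wt{X}|mE)$. Since $f$ restricts to an isomorphism $\wt{X} \setminus E \xrightarrow{\cong} X_\reg$ preserving residue fields, the degree maps on source and target have the same image $D \subseteq \Z$. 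Hence, by the five lemma applied to the two degree sequences (with the identity on $D$), it suffices to prove that the induced map $\alpha^0_m \colon \CH^{LW}_0(X)^0 \to \CH_0(\wt{X}|mE)^0$ is an isomorphism for $m \gg 0$; as it is already surjective, only injectivity is at issue.

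The central device is the commutative square
\[
\begin{CD}
\CH^{LW}_0(X)^0 @>{\phi^0_X}>> \pi^{\rm ab}_1(X_\reg)^0 \\
@VV{\alpha^0_m}V @VV{\beta_m}V \\
\CH_0(\wt{X}|mE)^0 @>{\rho_m}>> \pi^{\rm ab}_1(\wt{X}, mE)^0
\end{CD}
\]
in which $\phi^0_X$ is the reciprocity isomorphism of \thmref{thm:Main-1}, $\beta_m$ is the canonical surjection coming from the open immersion $X_\reg = \wt{X} \setminus E$, and $\rho_m$ is the reciprocity map furnished by the Kato--Saito class field theory with modulus for the smooth projective pair $(\wt{X}, mE)$ \cite{Kato-Saito-2}. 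The square commutes on the level of cycles because each of the three maps sends a regular closed point to its Frobenius substitution; observe that only the existence and this compatibility of $\rho_m$, and not its bijectivity, will be used.

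It remains to show that $\beta_m$ is injective for $m \gg 0$. Every finite abelian covering of $X_\reg$ is the restriction of a normal covering of $\wt{X}$ ramified along $E$, and over the proper scheme $\wt{X}$ such a covering has finite conductor, hence ramification bounded by $mE$ for $m \gg 0$; consequently $\bigcap_m \ker(\beta_m) = 0$. By \thmref{thm:Main-1} the group $\pi^{\rm ab}_1(X_\reg)^0$ is finite, so the descending chain $\{\ker(\beta_m)\}_m$ stabilizes and, its intersection being $0$, we obtain $\ker(\beta_m) = 0$ for all $m \ge m_0$. Fix such an $m$ and suppose $\alpha^0_m(a) = 0$ for some $a \in \CH^{LW}_0(X)^0$. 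Then commutativity of the square gives $\beta_m(\phi^0_X(a)) = \rho_m(\alpha^0_m(a)) = 0$, so $\phi^0_X(a) = 0$ by injectivity of $\beta_m$, and therefore $a = 0$ because $\phi^0_X$ is injective. Thus $\alpha^0_m$ is an isomorphism for every $m \ge m_0$, which completes the proof.

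The main obstacle is precisely the finiteness of $\pi^{\rm ab}_1(X_\reg)^0$ together with the identification $\pi^{\rm ab}_1(X_\reg)^0 \xrightarrow{\cong} \varprojlim_m \pi^{\rm ab}_1(\wt{X}, mE)^0$: these are what force the wild ramification along $E$ to be captured by a single finite modulus, and it is here that the hypotheses (1) and (2) enter, through \thmref{thm:Main-1}. Without the finiteness the argument would yield only $\CH^{LW}_0(X)^0 \cong \varprojlim_m \CH_0(\wt{X}|mE)^0$ rather than an isomorphism at a fixed finite level. A secondary subtlety, already flagged above, is the construction of the modulus reciprocity map $\rho_m$ on the Chow group with modulus and its Frobenius-compatibility with $\phi_X$; this is what we import from the class field theory with modulus of \cite{Kato-Saito-2}.
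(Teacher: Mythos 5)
Your proof is correct, but it takes a genuinely different route from the paper's. The paper also reduces to the degree-zero subgroups and compares with $\pi^{\ab}_1(X_\reg)^0$, but it works with the pro-group $C(X^o)=\varprojlim_m \CH_0(\wt{X}|mE)$ and invokes the full strength of the reciprocity \emph{isomorphism} with modulus, $C(X^o)^0\xrightarrow{\cong}\pi^{\ab}_1(X^o)^0$ (\cite[Theorem~III]{Kerz-Saito-2}, resp.\ \cite[Theorem~8.5]{BKS}); combined with \thmref{thm:Main-1} this shows $f^*\colon \CH^{LW}_0(X)^0\to C(X^o)^0$ is an isomorphism, hence $C(X^o)^0$ is finite, and the surjective inverse system $\{\CH_0(\wt{X}|mE)^0\}_m$ with finite limit must be eventually constant. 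You instead stabilize on the Galois side: you use only the \emph{existence} of the modulus reciprocity map $\rho_m$ (not its bijectivity), together with the injectivity of $\beta_m\colon \pi^{\ab}_1(X_\reg)^0\to\pi^{\ab}_1(\wt{X},mE)^0$ for $m\gg 0$, which you deduce from the finiteness of $\pi^{\ab}_1(X_\reg)^0$ (again \thmref{thm:Main-1}) and the vanishing of $\bigcap_m\ker(\beta_m)$. This buys an argument that avoids the hard isomorphism theorem with modulus; the cost is that the vanishing of $\bigcap_m\ker(\beta_m)$ --- i.e.\ that every finite abelian cover of $X_\reg$ has ramification bounded by $mE$ for a single $m$, \emph{uniformly over all test curves} in the sense of the Kerz--Saito filtration --- is itself a nontrivial input (it is the exhaustiveness of that filtration, essentially \cite[Lemma~8.4]{BKS}), and your one-line justification via ``finite conductor'' glosses over precisely this uniformity; you should cite it rather than treat it as formal. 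Two further small points: the reciprocity map on $\CH_0(\wt{X}|mE)$ and the group $\pi^{\ab}_1(\wt{X},mE)$ come from \cite[Proposition~3.2]{Kerz-Saito-2} and \cite[\S~8.3]{BKS}, not from \cite{Kato-Saito-2}; and while the paper first reduces to $X$ normal via \corref{cor:Pull-back-nor}, your argument never uses normality, only the $R_1$ property and \thmref{thm:Main-1}, so no such reduction is needed.
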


If $X$ is defined over an algebraically closed field, \thmref{thm:Main-3}
was proven by Gupta and Krishna \cite[Theorem~1.8]{Gupta-Krishna}
(see also \cite{Krishna-Crelle} and \cite{Krishna-JAG}
for earlier results).

\subsection{Levine-Weibel Chow group and Suslin homology}
\label{sec:LWS**}
The Suslin homology was introduced by Suslin and Voevodsky \cite{Suslin-Voev}
with the objective of constructing an analogue of singular homology of
topological spaces for algebraic schemes.
It is now a part of the motivic cohomology with compact support of 
schemes in the sense of $\A^1$-homotopy theory. 
Because of this, Suslin homology is now a well studied theory and there are many
known results which can be used for its computation. On the contrary, the
Levine-Weibel Chow group is only conjecturally a part of a motivic
cohomology theory of singular schemes and is much less accessible.

Our next main result however provides an identification between 
the two groups and provides a strong evidence that over algebraically closed
fields of positive characteristics, the conjectural motivic cohomology of 
a normal projective scheme should coincide with  the already known motivic
cohomology with compact support of its regular locus.

\begin{thm}\label{thm:Main-4}
  Let $X$ be an integral projective scheme of dimension $d \ge 1$ over a
  field $k$ which is regular in codimension one. Then
there is a canonical surjective homomorphism
\[
  \theta_X \colon \CH^{LW}_0(X) \surj H^S_0(X_\reg).
\]

Furthermore, we have the following if $k$ is algebraically closed.

\begin{enumerate}
\item
  $\theta_X$ is an isomorphism if ${\rm char}(k) > 0$.
\item
  ${\theta_X}/n \colon {\CH^{LW}_0(X)}/n \to {H^S_0(X_\reg)}/n$
  is an isomorphism for all integers $n \neq 0$ if ${\rm char}(k) = 0$.
\end{enumerate}
\end{thm}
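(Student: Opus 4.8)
\emph{Construction and surjectivity.} The plan is to define $\theta_X$ as the homomorphism induced by the identity map on the free group $\sZ_0(X_\reg)$ of zero-cycles, so that the entire content of the first assertion is the comparison of the two systems of relations. First I would verify that each generating relation of $\CH^{LW}_0(X)$ maps to a boundary in the Suslin complex of $X_\reg$, which is precisely what makes $\theta_X$ well defined; its surjectivity is then automatic, since $\sZ_0(X_\reg)$ already surjects onto $H^S_0(X_\reg)$. Concretely, given a good (Cartier) curve $C \subset X$ and a rational function $f$ that is a unit along $S := C \cap X_\sing$, I would pass to the normalization $\nu \colon \ov C \to C$, where $\ov f := \nu^* f$ defines a finite morphism $\ov C \to \P^1$ sending $\nu^{-1}(S)$ into $\G_m$. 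The graph of $(\ov f, \nu)$ is a curve $\Gamma \subset \P^1 \times X$, finite over $\P^1$, whose fibres over $0$ and $\infty$ lie in $X_\reg$ and whose difference is exactly $\divf_C(f)$; this exhibits $\divf_C(f)$ as a Suslin boundary. The one delicate point is that $\Gamma$ meets $\P^1 \times X_\sing$ over finitely many interior points of $\G_m$, which I would remove by a moving lemma for zero-cycles on the singular scheme, arranging the correspondence to lie in $\P^1 \times X_\reg$.

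\emph{Reduction to the kernel, via Albanese and Roitman.} Since $\theta_X$ commutes with the degree map and the degree is surjective over an algebraically closed field, the remaining assertions reduce to understanding $K := \ker(\theta_X)$, which lies in the degree-zero part $\CH^{LW}_0(X)^0$. Here I would introduce the generalized (semi-abelian) Albanese variety $A$ of $X_\reg$ together with the Albanese homomorphisms $\mathrm{alb}_{LW} \colon \CH^{LW}_0(X)^0 \to A(k)$ and $\mathrm{alb}_{S} \colon H^S_0(X_\reg)^0 \to A(k)$; as both are defined cycle-wise on $\sZ_0(X_\reg)$, they satisfy $\mathrm{alb}_{S} \circ \theta_X = \mathrm{alb}_{LW}$. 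The crucial input is the Roitman torsion theorem: each Albanese map restricts to an isomorphism on torsion subgroups. For the prime-to-$p$ part this is classical on the Suslin side (Suslin--Voevodsky, Spie{\ss}--Szamuely) and known on the Levine--Weibel side, while the $p$-primary part on the Suslin side is supplied by the Roitman theorem of this paper. It follows that $\theta_X$ induces an isomorphism on all torsion, so that $K$ is torsion-free.

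\emph{Positive characteristic.} To prove (1), with $k = \ov{\F}_p$, I would combine torsion-freeness of $K$ with the fact that $\CH^{LW}_0(X)^0$ is itself a torsion group. The latter follows by spreading $X$ out over a finite subfield and writing $\CH^{LW}_0(X)^0$ as a filtered colimit $\varinjlim_q \CH^{LW}_0(X_q)^0$ of degree-zero Chow groups over finite fields, which are torsion (and finite in the situations controlled by the reciprocity isomorphism of \thmref{thm:Main-1}). A torsion-free subgroup of a torsion group is zero, hence $K = 0$ and $\theta_X$ is an isomorphism.

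\emph{Characteristic zero and the main obstacle.} For $k$ algebraically closed of characteristic $0$ the group $\CH^{LW}_0(X)^0$ is no longer torsion but is divisible, as for the classical Chow group over an algebraically closed field. Using divisibility of the source together with the surjectivity of $\theta_X$ on $n$-torsion coming from the Roitman isomorphisms, a short diagram chase shows that $K$ is divisible; tensoring $0 \to K \to \CH^{LW}_0(X) \to H^S_0(X_\reg) \to 0$ with $\Z/n$ then gives $K/n = 0$, whence $\theta_X/n$ is an isomorphism for every $n \neq 0$, which is (2). I expect the main obstacle to be the $p$-primary Roitman statement underlying part (1): controlling the $p$-torsion of $H^S_0(X_\reg)$ and its pairing with the generalized Albanese is exactly the ingredient missing from Spie{\ss}--Szamuely, and the positive-characteristic isomorphism rests entirely on it; a secondary, more technical obstacle is the moving argument needed for the well-definedness of $\theta_X$ in the first step.
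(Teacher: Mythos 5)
Your overall strategy is sound, and for the two isomorphism statements it is genuinely different from the paper's. The construction of $\theta_X$ matches the paper's: the real content is the moving lemma (\lemref{lem:Moving-nor}), which shows that $\sR^{LW}_0(X)=\sR^{LW}_0(X,X_\sing)$, i.e.\ every Levine--Weibel relation is a sum of relations on curves disjoint from $X_\sing$, for which the Suslin relation is immediate from Schmidt's description; your graph construction is a fine way to say this, with the caveat that one moves the relation, not the fixed correspondence $\Gamma$. For (1) and (2) the paper does not argue through the Albanese except at the last step: it first proves (\thmref{thm:p-torsion}) that $\ker(\theta_X)$ is $p$-primary torsion by induction on dimension, using the identification ${\CH_0(\wt{X}|E)}_\Lambda\cong {H^S_0(X^o)}_\Lambda$ with the modulus Chow group of a resolution (\propref{prop:Suslin-modulus}) in the surface case and Bertini sections through a given curve to descend the dimension, and only then kills the $p$-primary kernel with \propref{prop:Non-normal-tor}; in characteristic zero it runs the chain ${\CH^{LW}_0(X)}/n\cong{\CH_0(\wt{X}|mE)}/n\cong{\CH_0(\wt{X}|E)}/n\cong{H^S_0(X^o)}/n$. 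Your route (kernel torsion-free by Roitman; source torsion over $\ov{\F}_p$, divisible in characteristic $0$) is shorter, but the paper's intermediate \thmref{thm:p-torsion} holds over any perfect field of characteristic $p$ and is reused in the proof of \thmref{thm:Main-1-0}, so it is not dispensable for the paper's other purposes.

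The one genuine misstep is your appeal, in characteristic $p$, to ``the Roitman theorem of this paper'' for the $p$-primary torsion of $H^S_0(X_\reg)$. \thmref{thm:Main-5} requires $U$ to admit a \emph{smooth projective} compactification; for $U=X_\reg$ with $X$ singular in characteristic $p$ no such compactification is known (this is precisely the resolution-of-singularities obstruction), so it cannot be invoked here, and naming it as the ingredient on which part (1) ``rests entirely'' inverts the logic. Fortunately your argument does not need it: to see that $K=\ker(\theta_X)$ is torsion-free you only need injectivity on torsion of the Levine--Weibel Albanese map $\alpha_X\colon \CH^{LW}_0(X)^0\to \Alb_W(X)(k)$ (\propref{prop:Non-normal-tor}), since $\alpha_X=\vartheta_{X_\reg}\circ\theta_X$ after the identification $\Alb_S(X_\reg)\cong\Alb_W(X)$ (the torus part vanishes because $X_\sing$ has codimension at least two). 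The Suslin-side Roitman theorem is genuinely needed only for the surjectivity of $\theta_X$ on $n$-torsion in your characteristic-zero divisibility chase, where Spie{\ss}--Szamuely applies without restriction. Two smaller points: the torsionness of $\CH^{LW}_0(X)^0$ over $\ov{\F}_p$ should not be routed through the finiteness in \thmref{thm:Main-1}, which holds only under the extra hypotheses (1) or (2); it follows directly from \lemref{lem:Moving-nor} and \lemref{lem:Bertini-spl}, since every degree-zero class is supported on a smooth projective curve inside $X_\reg$ and ${\rm Pic}^0$ of such a curve is torsion over $\ov{\F}_p$. The same curve argument yields the divisibility of $\CH^{LW}_0(X)^0$ in characteristic zero, which you assert but should justify.
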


\vskip .3cm 
We shall show (see \S~\ref{sec:LWCH-fin})
that the condition that $k$ is algebraically closed in (1)
is essential.
When ${\Char}(k) = 0$, we expect that $\Ker(\theta_X)$ is
a (large) divisible group.

\subsection{Roitman torsion theorem for Suslin homology}
\label{sec:RTT*}
Let $k$ be an algebraically closed field.
Let $U$ be a smooth quasi-projective scheme over $k$ which admits an
open embedding $U \inj {X}$, where ${X}$ is smooth and projective
over $k$. Then Spie{\ss} and Szamuely \cite{SS} showed that
the Albanese map into the generalized Albanese variety ({\`a} la  Serre) of $U$
induces a homomorphism $\vartheta_U \colon H^S_0(U) \to \Alb_S(U)(k)$
which is an isomorphism on the prime-to-$p$ torsion subgroups,
where $p$ is the exponential characteristic of $k$. 
This was a crucial 
breakthrough in eliminating the projectivity hypothesis from the
famous Roitman torsion theorem for the Chow group of 0-cycles \cite{Roitman}.

Geisser \cite{Geisser} subsequently showed that the prime-to-$p$ condition in
the torsion theorem of Spie{\ss} and Szamuely could be eliminated if one
assumed resolution of singularities. 
The final result of this paper eliminates the prime-to-$p$ condition from
the torsion theorem of Spie{\ss} and Szamuely without assuming resolution of
singularities.

\begin{thm}\label{thm:Main-5}
Let $U$ be a smooth quasi-projective scheme 
over an algebraically closed field $k$.
Suppose that there exists an open immersion $U \subset X$ such that
$X$ is smooth and projective over $k$. Then the Albanese homomorphism 
\[
\vartheta_U \colon H^S_0(U)_{\tor} \to \Alb_S(U)(k)_\tor
\]
is an isomorphism. 
\end{thm}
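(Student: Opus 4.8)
The plan is to peel off the coefficients that are already understood and reduce everything to a single $p$-adic divisibility statement. If $\Char(k)=0$ the exponential characteristic is $1$, and \cite{SS} already yields the full assertion; so assume $p=\Char(k)>0$. Since $\vartheta_U$ is an isomorphism on the prime-to-$p$ torsion by \cite{SS}, only the $p$-primary part $\vartheta_U\{p\}$ remains to be treated.

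The next step is to trade $U$ for its smooth projective compactification $X$ by exploiting the structure of the Serre-Albanese variety. Write $\Alb_S(U)$ as an extension $0\to T\to \Alb_S(U)\to \Alb(X)\to 0$ with $T$ a torus; in characteristic $p$ the group $T(k)=(k^{\times})^{r}$ is uniquely $p$-divisible, so the projection gives an isomorphism $\Alb_S(U)(k)\{p\}\xrightarrow{\ \sim\ }\Alb(X)(k)\{p\}$. For the smooth projective $X$ one has $H^S_0(X)=\CH^F_0(X)$, and Roitman's theorem in the form proved by Milne supplies an isomorphism $\CH^F_0(X)\{p\}\xrightarrow{\ \sim\ }\Alb(X)(k)\{p\}$. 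The open immersion $\iota\colon U\inj X$ induces $\iota_*\colon H^S_0(U)\to \CH^F_0(X)$, compatibly with the Albanese maps and with the projection onto $\Alb(X)$. Tracking these isomorphisms shows that $\vartheta_U\{p\}$ is an isomorphism if and only if
\[
\iota_*\colon H^S_0(U)\{p\}\longrightarrow \CH^F_0(X)\{p\}
\]
is one. By the moving lemma on the smooth projective $X$, every $0$-cycle is rationally equivalent to one supported on the dense open $U$, so $\iota_*$ is surjective; writing $K=\ker(\iota_*)$ and applying the snake lemma to multiplication by $p^{n}$, the displayed map is an isomorphism on $p$-primary torsion as soon as $K$ is uniquely $p$-divisible.

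The crux is thus the unique $p$-divisibility of $K$, and this is exactly the place where Geisser \cite{Geisser} appealed to resolution of singularities. I would instead reach the $p$-adic structure of $K$ through the Levine-Weibel Chow group. The comparison of \thmref{thm:Main-4} identifies $H^S_0$ of a regular locus with $\CH^{LW}_0$ of a normal projective model, and after a standard spreading-out and specialization to $k=\ol{\F}_p$ the $p$-primary structure of these Chow groups becomes accessible through the class field theory of \thmref{thm:Main-1} and Kato-Saito \cite{Kato-Saito-2}, in which wild (that is, $p$-adic) ramification is built in. The subgroup $K$ is governed by the boundary $D=X\setminus U$: its classes lift the toric directions $T$, and the class field theoretic computation is meant to show that this boundary term carries no $p$-torsion and is $p$-divisible. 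A Bertini-Lefschetz argument, reducing the dimension and the boundary to a situation where \thmref{thm:Main-4} applies, is used to bring an arbitrary $U$ within range of these tools; the remaining case $\dim U=1$, where no non-smooth normal projective model exists, is settled directly from the Suslin-Voevodsky computation of $H^S_*$ of a smooth curve together with the theory of the generalized Jacobian, for which the torsion statement including its $p$-part is classical.

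The main obstacle is precisely this unique $p$-divisibility of $K$, i.e.\ the verification that the non-proper boundary contributes neither $p$-torsion nor an obstruction to $p$-divisibility, so that the kernel of the Albanese map sees only the uniquely $p$-divisible toric part. All of the surrounding reductions are formal; the substantive input is \thmref{thm:Main-4}, which replaces resolution of singularities by the Levine-Weibel description and thereby makes the $p$-adic analysis of the boundary feasible. Establishing that this boundary term is uniquely $p$-divisible is the heart of the proof.
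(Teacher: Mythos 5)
Your opening reductions are sound and in fact agree with what the paper uses implicitly: since the toric part $T(k)$ of $\Alb_S(U)(k)$ is uniquely $p$-divisible, the projection $\Alb_S(U)(k)\{p\}\to \Alb_W(X)(k)\{p\}$ is an isomorphism (this is exactly the mechanism of \lemref{lem:p-tor-inv}), and combining with Roitman--Milne for the smooth projective $X$ one may reformulate the $p$-primary statement as the assertion that $\iota_*\colon H^S_0(U)\{p\}\to \CH^F_0(X)\{p\}$ is an isomorphism, i.e.\ that $K=\Ker(\iota_*)$ is uniquely $p$-divisible. The problem is that this reformulation is essentially a restatement of the theorem, and the argument you offer for the one substantive claim does not go through. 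The decisive obstruction is that \thmref{thm:Main-4} applies to $H^S_0(X^o)$ where $X^o$ is the \emph{regular locus} of a projective $R_1$-scheme, so the complement has codimension at least two; here $X\setminus U$ is in general a divisor in a smooth projective $X$, and such a $U$ is not the regular locus of any normal projective model. So \thmref{thm:Main-4} gives no purchase on $H^S_0(U)$, and the proposed ``spreading out and specialization to $\ol{\F}_p$'' of $p$-primary torsion in these groups, followed by an appeal to class field theory over finite fields, is not justified (specialization of torsion in Chow-type groups, and the passage from finite fields to $\ol{\F}_p$, are precisely the kind of steps that need proof). You correctly flag that the unique $p$-divisibility of the ``boundary term'' is the heart of the matter, but no proof of it is supplied.

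The paper takes a different route that avoids this issue entirely: it argues by induction on $\dim U$, with base case $d\le 2$ given by Geisser's theorem. Given $\alpha$ with $p^a\alpha=0$ in $H^S_0(U)$, one first performs monoidal transformations at closed points so that the finitely many curves witnessing the relation become smooth, disjoint, and transverse to the exceptional locus, and one corrects the resulting discrepancy by rational curves in the exceptional fibres (Bloch's trick); the resulting curve configuration $D$ has embedding dimension $\le 2$ at every point, so by the Bertini statement \lemref{lem:Bertini-general} it lies on a $Z$-admissible hypersurface section $Y$. The Lefschetz theorem for the generalized Albanese variety (\thmref{thm:Lef-Alb-main}, resting on \propref{prop:Lef-et} and \propref{prop:Lef-NS}) identifies $\Alb_S(Y\cap U)(k)$ with $\Alb_S(U)(k)$, the inductive hypothesis applies to $Y\cap U$, and \lemref{lem:p-tor-inv} transports the conclusion back down through the blow-ups. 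If you want to salvage your strategy, the missing ingredient is precisely a replacement for this Lefschetz-plus-blow-up mechanism; merely naming the unique $p$-divisibility of $K$ as the goal does not supply it.
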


\vskip .3cm

\subsection{Overview of proofs}\label{sec:Outline}
The proofs of our main results broadly have two main steps, namely, the
construction of the underlying maps and, the proof that these maps are
isomorphisms. The first part is achieved by means of a moving lemma for
the Levine-Weibel Chow group. The heart of the problem is the
more challenging second part.

Following an induction technique, we first prove our results for 
surfaces. To take care of higher
dimensions, we establish new Lefschetz hypersurface{\footnote{We call 
    hypersurface section theorems because we use hypersurfaces
    of large degrees instead of only hyperplanes used in some of the
    classical Lefschetz theorems.}} section theorems for
several invariants of smooth quasi-projective (but not necessarily projective)
and singular projective schemes. We may emphasize that 
these Lefschetz theorems are of
independent interest and we expect them to have several applications elsewhere.
These Lefschetz theorems allow us to reduce the proofs of the main
results to the case of surfaces. 

We prove the moving lemma (see \lemref{lem:Moving-nor})
and its consequences in \S~\ref{sec:LWC}.
Using this, we construct the reciprocity map (see \corref{cor:Rec-map-0-main})
and prove the reciprocity isomorphism for surfaces (see \thmref{thm:CFT-surface})
in \S~\ref{sec:Surface}.
We prove the Lefschetz theorems for the {\'e}tale cohomology
(see \propref{prop:Lef-et}) and the abelianized {\'e}tale fundamental group
(see \thmref{thm:Lef-EFG}) in \S~\ref{sec:LFG}.
They allow us to prove Theorem~\ref{thm:Main-1} in \S~\ref{sec:CFT**}.
We then combine this result with the class field theory of \cite{Kato-Saito-2} to prove
Theorems~\ref{thm:Main-2} and ~\ref{thm:Main-3} in \S~\ref{sec:CFT**}.  
 
The heart of the proofs of Theorems~\ref{thm:Main-4} and ~\ref{thm:Main-5}
are the two results: 
(1) a Lefschetz hypersurface section theorem for the generalized Albanese
variety of smooth quasi-projective schemes and,
(2) an identification of Suslin homology with the Chow group of 0-cycles
of a certain modulus pair. This first result is shown in
\S~\ref{sec:Alb*} and the second in \S~\ref{sec:LWS}. The latter section
also contains the proof of Theorem~\ref{thm:Main-4}. 
We combine the Lefschetz theorem with a result of
Geisser \cite{Geisser} to prove Theorem~\ref{thm:Main-5} in \S~\ref{sec:RTT}
following a delicate blow-up trick.

\subsection{Notations}\label{sec:Notn}
We shall, in general, work with an arbitrary base field $k$ even if our main 
results are over either finite or algebraically closed fields.
We let $\Sch_k$ denote the category of quasi-projective $k$-schemes and 
$\Sm_k$ the category of smooth quasi-projective $k$-schemes.
A product $X \times_k Y$ in $\Sch_k$ will be simply written as $X \times Y$.
For a reduced scheme $X$, we let $X_n$ denote the normalization of $X$.

For any excellent scheme $X$, we let $X^o$ denote the regular locus of $X$.
One knows that $X^o$ is an open subscheme of $X$ which is dense if
$X$ is generically reduced. We shall let $X_\sing$ denote the singular locus of
$X$. If $X$ is reduced, we shall consider $X_\sing$ as a closed subscheme of
$X$ with the reduced induced structure. 
For any Noetherian scheme $X$, we shall denote its {\'e}tale fundamental
group with a base point $x \in X$ by $\pi_1(X,x)$. We shall let
$\pi^{\ab}_1(X)$ denote the abelianization of $\pi_1(X,x)$. One knows that
$\pi^{\ab}_1(X)$ does not depend on the choice of the base point $x \in X$.
We shall consider $\pi^{\ab}_1(X)$ a topological abelian group with its
profinite topology.

For $X \in \Sch_k$ equidimensional,
we let $X^{(i)}$ denote the set of codimension $i$ points 
and $X_{(i)}$ the set of dimension $i$ points on $X$
for $i \ge 0$. We shall let $\sZ_i(X)$ denote the
free abelian group on $X_{(i)}$ and $\CH^F_i(X)$ the Chow group
of cycles of dimension $i$ as defined in \cite[Chapter~1]{Fulton}.
For an abelian group $A$, we shall denote 
the torsion and the $p$-primary torsion
subgroups of $A$ by $A_\tor$ and $A\{p\}$, respectively, for any prime $p$. 
For a commutative ring $\Lambda$, we shall
write $A_{\Lambda}$ for $A \otimes_{\Z} \Lambda$.

\section{Zero-cycles on $R_1$-schemes}
\label{sec:LWC}
In this section, we shall recall the definition of the Levine-Weibel Chow group
of an $R_1$-scheme and prove some preliminary results about this group.
Recall from \cite[p.~183]{Matsumura} that a Noetherian scheme $X$ is called
an $R_a$-scheme if it is regular in codimension $a$, where $a \ge 0$.
One says that $X$ is an $S_b$-scheme if for all points $x \in X$, one has
${\rm depth}(\sO_{X,x}) \ge {\rm min}\{b, \dim(\sO_{X,x})\}$. 
We shall say that a Noetherian scheme $X$ is an $(R_a+S_{b})$-scheme
for $a \ge 0$ and $b \ge 0$ if it is an $R_a$ as well as an $S_b$-scheme.
A Noetherian commutative ring will be called an $R_a$-ring (resp.
$S_b$-ring) if its Zariski spectrum is so.

\subsection{The Levine-Weibel Chow group}\label{sec:LWC-0}
Let $k$ be any field and $X$ a
reduced quasi-projective scheme of dimension $d \ge 2$ over $k$. Recall from
\cite[Definition~3.5]{Binda-Krishna} that
a Cartier curve on $X$ is a purely one-dimensional reduced closed 
subscheme $C \inj X$ none of whose irreducible components is contained in 
$X_{\rm sing}$ and whose defining sheaf of ideals is a local complete
intersection in $\sO_X$ at every point of $C \cap X_{\rm sing}$. 
Let $k(C)$ denote the ring of total quotients for a Cartier curve $C$ on $X$
and let $\{C_1, \ldots , C_r\}$ be the set of irreducible components 
of $C$. 
For $f \in \sO^{\times}_{C, C \cap X_{\rm sing}} \subset k(C)^{\times}$, 
let $f_i \in k(C_i)^{\times}$ be the image of
$f$ under the projection $\sO^{\times}_{C, C \cap X_{\rm sing}}
\inj k(C)^{\times} \surj k(C_i)^{\times}$.
We let $\divf(f) = \stackrel{r}{\underset{i =1}\sum} \divf(f_i) \in
\sZ_0(X^o)$ be the cycle associated to $f$.
We let $\sR^{LW}_0(X)$  denote the subgroup of $\sZ_0(X^o)$
generated by $\divf(f)$, where $C \subset X$ is a Cartier curve
and $f \in \sO^{\times}_{C, C \cap X_{\rm sing}}$.
The Levine-Weibel Chow group of $X$ is the quotient
${\sZ_0(X^o)}/{\sR^{LW}_0(X)}$ and is denoted by
$\CH^{LW}_0(X)$.

It is immediate from the definition that $\CH^{LW}_0(X)$ coincides with 
$\CH^F_0(X)$ if $X$ is regular. We also remark that the above definition of
$\CH^{LW}_0(X)$ is slightly different from that of \cite{Levine-Weibel}.
However, this difference disappears if $k$ is infinite as a consequence
of \cite[Lemma~1.4]{Levine-2}.
If $X$ is integral and projective over $k$, there is a degree map
$\deg \colon \CH^{LW}_0(X) \to \Z$ and we let $\CH^{LW}_0(X)^0$ be the
kernel of this map.

Let $\sR^{\Lci}_0(X)$ be the subgroup of $\sZ_0(X^o)$ generated by
$\nu_*(\divf(f))$ for $f \in \sO^{\times}_{C, \nu^{-1}(X_\sing)}$,
where $\nu \colon C \to X$ is a finite morphism
from a reduced curve of pure dimension one over $k$   
such that the image of none of the irreducible components of $C$ is contained in 
$X_{\rm sing}$ and $\nu$ is a local complete intersection (lci) morphism at every   
point $x \in C$ such that $\nu(x) \in X_\sing$.
Such a curve $C$ is called a good curve relative to $X_\sing$.
The lci Chow group of 0-cycles for $X$ is the quotient
$\CH^{\Lci}_0(X) = {\sZ_0(X^o)}/{\sR^{\Lci}_0(X)}$.
This modification of the Levine-Weibel Chow group was introduced in
\cite{Binda-Krishna}.

Clearly, the identity map of
$\sZ_0(X^o)$ induces a surjection $\CH^{LW}_0(X) \surj \CH^{\Lci}_0(X)$.
If $k$ is infinite, we can say the following.

\begin{lem}\label{lem:0-cycle-com-nor}
Assume that $k$ is infinite and $X$ is an $R_1$-scheme. Then the 
canonical map
$\CH^{LW}_0(X) \surj \CH^{\Lci}_0(X)$ is an isomorphism.
\end{lem}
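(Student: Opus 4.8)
The surjectivity of $\CH^{LW}_0(X) \surj \CH^{\Lci}_0(X)$, together with the inclusion $\sR^{LW}_0(X) \subseteq \sR^{\Lci}_0(X)$, is immediate from the definitions, since a Cartier curve on $X$ is precisely a good curve whose structure map is a closed immersion. Thus the entire content of the lemma is the reverse inclusion $\sR^{\Lci}_0(X) \subseteq \sR^{LW}_0(X)$, and for this it suffices to show that each generator $\nu_*(\divf(f))$ of $\sR^{\Lci}_0(X)$ already lies in $\sR^{LW}_0(X)$, where $\nu \colon C \to X$ is a good curve (a reduced curve of pure dimension one, no component of which maps into $X_\sing$, with $\nu$ finite and lci at each point over $X_\sing$) and $f \in \sO^{\times}_{C, \nu^{-1}(X_\sing)}$.

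The first step I would take is to pass to a closed subscheme by taking the image. Put $C' = \nu(C)_\red \inj X$, a reduced curve no component of which lies in $X_\sing$, and let $g = \Nm_{C/C'}(f)$ be the norm of $f$ along the finite surjection $C \to C'$. Then $g \in \sO^{\times}_{C', C' \cap X_\sing}$, and by the compatibility of proper pushforward with norms of principal divisors (\cite[\S~1.4]{Fulton}) one has $\divf_{C'}(g) = \nu_*(\divf(f))$ in $\sZ_0(X^o)$. Hence the pair $(C',g)$ would exhibit $\nu_*(\divf(f))$ as an element of $\sR^{LW}_0(X)$ as soon as $C'$ is a Cartier curve, i.e.\ as soon as the ideal sheaf of $C'$ is a local complete intersection at every point of $C' \cap X_\sing$. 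The single obstruction is that, although $\nu$ is lci over $X_\sing$, the image $C'$ may fail to be lci there: this happens, for instance, when distinct points of $C$ lying over $X_\sing$ collide in $C'$, or when $\nu$ ramifies over $X_\sing$.

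To remove this obstruction I would not insist on using $C'$ itself, but replace it, within the same class in $\sZ_0(X^o)/\sR^{LW}_0(X)$, by a better curve produced through a general-position argument that is available exactly because $k$ is infinite. Concretely, fix a projective embedding $X \inj \P^M_k$. The cycle $\nu_*(\divf(f))$ is supported on finitely many regular closed points, and since $X$ is an $R_1$-scheme, $X_\sing$ has codimension $\ge 2$. One then cuts $X$ by general hypersurfaces of a large common degree passing through the prescribed points so as to produce a reduced curve on $X$ that meets $X_\sing$ only at points where the defining equations form a regular sequence in the local ring, hence where the curve is a local complete intersection, and that carries a unit whose divisor equals $\nu_*(\divf(f))$ modulo $\sR^{LW}_0(X)$. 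Over an infinite field the requisite general members are reduced and lie in sufficiently general position by Bertini-type theorems; this is precisely the mechanism packaged by the moving lemma for $R_1$-schemes (\lemref{lem:Moving-nor}).

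The main obstacle is this last descent: converting the finite lci datum $(\nu,f)$ into an honest Cartier curve on $X$ that cuts out the same $0$-cycle while remaining a local complete intersection exactly along $X_\sing$. The infinitude of $k$ is what guarantees the existence of the needed general hypersurface sections; over a finite field such general members may simply fail to exist, which is why the hypothesis cannot be dropped and why the comparison is genuinely more delicate than the formal inclusion of the two subgroups of relations.
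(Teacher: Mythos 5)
Your reduction to showing $\sR^{\Lci}_0(X) \subseteq \sR^{LW}_0(X)$ is correct, and your diagnosis of the obstruction --- that the image $\nu(C)_\red$ of a good curve need not be lci along $X_\sing$ --- is exactly right. But the step meant to overcome this obstruction is where your argument has a genuine gap. You propose to replace $\nu(C)_\red$ ``within the same class in $\sZ_0(X^o)/\sR^{LW}_0(X)$'' by a general complete-intersection curve through the support of the cycle; however, at that point the relation $\nu_*(\divf(f))$ is only known to be an lci relation, not a Levine--Weibel relation, so you have no means of producing a unit on the new Cartier curve whose divisor equals $\nu_*(\divf(f))$ modulo $\sR^{LW}_0(X)$ --- that is precisely the statement being proved. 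Moreover, the appeal to \lemref{lem:Moving-nor} is circular in the paper's architecture: the infinite-field case of that moving lemma is itself deduced from the present lemma. (There is also a smaller issue in your first step: for a finite surjection of singular curves $C \to C'$, the norm of a unit of $\sO_{C,\nu^{-1}(X_\sing)}$ does not visibly land in $\sO^{\times}_{C', C'\cap X_\sing}$, since the extension of semilocal rings need not be flat.)

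The missing idea, which is the heart of the paper's proof, is to make $C$ into an honest Cartier curve not on $X$ but on $X' = \P^n_X$: since $\nu$ is finite, it factors as a closed immersion $i \colon C \inj \P^n_X$ followed by the smooth projection $g$, and because $\nu$ is lci along $X_\sing$ and $g$ is smooth, $i$ is a regular immersion along $X'_\sing = g^{-1}(X_\sing)$. Hence $\divf(f) \in \sR^{LW}_0(X')$, and now the external moving lemma for the Levine--Weibel Chow group of the $R_1$-scheme $X'$ over an infinite field (\cite[Lemma~2.1]{BS}, \cite[Lemma~1.4]{Levine-2}) rewrites $\divf(f)$ as $\sum_i \divf(f_i)$ on curves $C'_i$ contained entirely in $X'^o$. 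Pushing forward along $g$ then produces curves in $X^o$, which are automatically Cartier, and the norm argument applies with no delicacy because these curves miss $X_\sing$ altogether. Your instinct to work by general position directly on $X$ would require redoing Levine's moving arguments starting from a non-embedded lci datum, which is not what the cited Bertini/moving technology provides.
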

\begin{proof}
Let $\nu \colon C \to X$ be a good curve relative to $X_\sing$, and let $Z=\nu^{-1}(X_\sing)$.
It suffices to show that $\nu_*({\rm div}(f)) \in \sR^{LW}_0(X)$ 
for any $f \in \sO^{\times}_{C, Z}$. Since $\nu$ is finite,
we can find a factorization
\begin{equation}\label{eqn:0-cycle-com-nor-0}
  \begin{array}{c}
\xymatrix@C1pc{
& \P^n_X \ar[d]^g \\
C \ar[ur]^i \ar[r]_{\nu} & X,}
  \end{array}
\end{equation}
where $i$ is a closed immersion and $g$ is the canonical projection.
Setting $X' = \P^n_X$, we see that $X'$ is an $R_1$-scheme and
$X'_{\rm sing} = g^{-1}(X_{\rm sing})$. In particular, $i^{-1}(X'_{\rm sing}) 
\subseteq Z$. Since $\nu$ is an lci morphism along $X_{\rm sing}$ and
$g$ is smooth, it follows that the closed immersion $i$ is regular along  
$X'_{\rm sing}$.  In other words, $C$ is embedded as a Cartier curve on $X'$.
One deduces that ${\rm div}(f) \in \sR^{LW}_0(X')$.  

Since $X'$ is $R_1$ and $k$ is infinite, 
it follows from \cite[Lemma~2.1]{BS} (see also \cite[Lemma~1.4]{Levine-2}) that
there are closed reduced curves $C'_i \subset X'$ and rational functions
$f_i \in k(C'_i)^{\times}$ such that $C'_i \subset X'^o$ and 
${\rm div}(f) = \sum_i {\rm div}(f_i)
\in \sZ_0(X'^o)$. If $g(C'_i)$ is a point, then clearly 
$g_*({\rm div}(f_i)) = 0$ as the map $C'_i \to X$ then factors through a 
regular 
closed point and $g_*({\rm div}(f_i))$ is already zero in the Chow group of the
closed point.
Otherwise, we let $C_i = g(C'_i)$ and assume that the map 
$C'_i \to C_i$ is finite. Then each $C_i \subset X$ is clearly a
Cartier curve as it does not meet $X_\sing$. Let $N_i: k(C'_i)^{\times} \to
k(C_i)^{\times}$ denote the norm map. We then have
\[
\nu_*({\rm div}(f)) = g_*({\rm div}(f)) = \sum_i g_*({\rm div}(f_i))
= \sum_i {\rm div}(N_i(f_i))
\]
and it is clear that ${\rm div}(N_i(f_i)) \in \sR^{LW}_0(X)$ 
for every $i$.
\end{proof}

\subsection{The moving lemma}\label{sec:ML}
One of the key ingredients for proving \thmref{thm:Main-1}
is a moving lemma for the Levine-Weibel Chow group over finite fields that we
shall prove in this subsection.

Let $k$ be any field and $X \in \Sch_k$ an integral $R_1$-scheme. Let
$A \subset X$ be a closed subset of $X$ of codimension at least two such that
$X_{\rm sing} \subseteq A$.
We let $\sR^{LW}_0(X, A)$ be the subgroup of
$\sZ_0(X \setminus A)$ generated by $\divf(f)$, where   
$f$ is a nonzero rational function
on an integral curve $C \subset X$ such that $C \cap A = \emptyset$.
We let $\CH^{LW}_0(X, A) := {\sZ_0(X \setminus A)}/{\sR^{LW}_0(X, A)}$.
We define $\CH^{\Lci}_0(X,A)$ in an analogous way.
The inclusion $\sZ_0(X \setminus A) \inj \sZ_0(X^o)$ preserves the
subgroups of rational equivalences. Hence, we get
canonical maps
\[
\CH^{LW}_0(X,A) \to \CH^{LW}_0(X), \ \ \CH^{\Lci}_0(X,A) \to \CH^{\Lci}_0(X).
\]

\begin{lem}\label{lem:LW-LCI-A}
  The canonical surjection $\CH^{LW}_0(X,A) \surj \CH^{\Lci}_0(X,A)$
  is an isomorphism.
\end{lem}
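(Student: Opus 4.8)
The plan is to follow the proof of \lemref{lem:0-cycle-com-nor}, taking advantage of the fact that, since $X_\sing \subseteq A$, every curve now in play avoids the singular locus entirely. This makes the argument considerably more direct: the Cartier/lci condition becomes vacuous, and---unlike in \lemref{lem:0-cycle-com-nor}---no moving of cycles off the singular locus is needed, so in particular no hypothesis on the cardinality of $k$ is required.

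The canonical surjection is the easy inclusion $\sR^{LW}_0(X,A) \subseteq \sR^{\Lci}_0(X,A)$: if $C \subset X$ is an integral curve with $C \cap A = \emptyset$ and $f \in k(C)^\times$, then the inclusion $\iota \colon C \hookrightarrow X$ is a finite morphism whose image misses $A$, so $\iota$ is trivially an lci morphism over $A$ and $\divf(f) = \iota_*\divf(f)$ is an admissible relation for $\CH^{\Lci}_0(X,A)$. It therefore remains to prove the reverse inclusion $\sR^{\Lci}_0(X,A) \subseteq \sR^{LW}_0(X,A)$.

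To this end, I would take a generator $\nu_*\divf(f)$ of $\sR^{\Lci}_0(X,A)$, with $\nu \colon C \to X$ a finite morphism from a reduced curve of pure dimension one satisfying $\nu(C) \cap A = \emptyset$, and $f \in k(C)^\times$. Using finiteness, factor $\nu$ as in~\eqref{eqn:0-cycle-com-nor-0} through a closed immersion $i \colon C \hookrightarrow X' := \P^n_X$ followed by the projection $g \colon X' \to X$. Exactly as in \lemref{lem:0-cycle-com-nor}, $X'$ is an integral $R_1$-scheme with $X'_\sing = g^{-1}(X_\sing)$; putting $A' := g^{-1}(A)$, flatness of $g$ gives $\codim_{X'}(A') \ge 2$, and $X'_\sing \subseteq A'$, while $i(C) \cap A' = \emptyset$ since $g(i(C)) = \nu(C)$ avoids $A$. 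Decomposing $C$ into its irreducible components $C_j$ and letting $f_j$ denote the rational function induced on the integral curve $i(C_j) \subset X' \setminus A'$, I obtain $i_*\divf(f) = \sum_j \divf(f_j) \in \sR^{LW}_0(X',A')$ straight from the definition, with no moving lemma required.

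It then suffices to show $g_*\big(\sR^{LW}_0(X',A')\big) \subseteq \sR^{LW}_0(X,A)$, whence $\nu_*\divf(f) = g_* i_* \divf(f) \in \sR^{LW}_0(X,A)$. This I would check on a generator $\divf(h)$ attached to an integral curve $D \subset X'$ with $D \cap A' = \emptyset$ and $h \in k(D)^\times$: by the compatibility of proper push-forward with divisors of rational functions \cite[Proposition~1.4]{Fulton}, one has $g_*\divf(h) = 0$ when $g(D)$ is a point, and $g_*\divf(h) = \divf(N(h))$ when $g|_D$ is finite onto its image, where $N$ is the norm and $g(D) \subset X$ is an integral curve disjoint from $A$; in both cases the outcome lies in $\sR^{LW}_0(X,A)$. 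The only step demanding genuine care is verifying that the pair $(X', A')$ again satisfies the standing hypotheses ($X'$ integral and $R_1$, $X'_\sing \subseteq A'$, $\codim_{X'} A' \ge 2$) under the smooth base change $g$; once this is in place the rest is formal, precisely because all curves have been kept away from the singular locus.
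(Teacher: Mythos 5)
Your proof is correct and follows essentially the same route as the paper, which disposes of the lemma by reducing to the proper push-forward of principal divisors via the norm map (Fulton's Theorem~1.4), exactly as in the last part of the proof of Lemma~\ref{lem:0-cycle-com-nor} but with the moving step rendered unnecessary because all curves already avoid $A \supseteq X_\sing$. Your write-up merely makes explicit the factorization through $\P^n_X$ and the verification that $(X',A')$ inherits the standing hypotheses, both of which the paper leaves implicit.
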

\begin{proof}
Let $\nu \colon C \to X$ be a finite morphism from an integral
curve such that $\nu(C) \cap A = \emptyset$.
It suffices to show that $\nu_*({\rm div}(f)) \in \sR^{LW}_0(X,A)$ 
for any $f \in k(C)^\times$. But the proof of this is identical to that of
\cite[Theorem~1.4]{Fulton} (see the last part of the proof
of \lemref{lem:0-cycle-com-nor}).
\end{proof}

We shall need the following application of the Bertini theorems of
Altman-Kleiman \cite{KL} and Wutz \cite{Wutz} (which is
a small modification of the Bertini theorem of Poonen \cite{Poonen-Annals}).

\begin{lem}\label{lem:Bertini-spl}
  Assume that $k$ is perfect, $\dim(X) \ge 2$ and $W \subset X^o_{(0)}$
  a finite set. Let $B \subset A$ be any closed subset containing $X_\sing$
  such that $B \cap W = \emptyset$.
  We can then find a smooth integral curve $C \subset X$ containing $W$
  such that $C \cap B = \emptyset$ and $C \not\subset A$.
\end{lem}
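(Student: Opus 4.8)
The plan is to realize the desired curve as an iterated general hypersurface section of the projective closure of $X$, forced to pass through the prescribed finite set $W$. Choose a locally closed embedding $X \hookrightarrow \P^n_k$ and let $Y = \ol{X}$ be its closure, an integral projective $k$-scheme of dimension $d = \dim(X) \ge 2$ in which $X$ is open and dense. Since $k$ is perfect, $X^o = X_{\sm}$, so $V := X \setminus B$ is a smooth quasi-projective $k$-scheme (recall $X_\sing \subseteq B$), open in $Y$ because $V = X \cap (Y \setminus \ol{B})$, and containing $W$ because $W \cap B = \emptyset$. Write $\ol{A}, \ol{B}$ for the closures of $A,B$ in $Y$; since $A$ (hence $B$) has codimension at least two in $X$ and $X$ is dense in $Y$, both have dimension at most $d-2$, and $W \cap \ol{B} = \emptyset$.

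First I would fix a large degree $e$ and, for $j = 1, \dots, d-1$, choose hypersurfaces $H_j \subset \P^n$ of degree $e$ containing $W$, successively, so that for $Y_j = Y \cap H_1 \cap \cdots \cap H_j$ and $V_j = V \cap H_1 \cap \cdots \cap H_j$ the scheme $Y_j$ is integral of dimension $d-j$ while $V_j$ is smooth of dimension $d-j$ with $W \subset V_j$. Over an infinite field this is exactly the content of the Bertini theorems of Altman--Kleiman \cite{KL}, applied to the linear system of degree-$e$ hypersurfaces through $W$, which for $e \gg 0$ is base-point free away from $W$ and separates points and tangents elsewhere; over a finite field it is supplied by Wutz \cite{Wutz}. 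The one delicate point at each step is smoothness at the base points $W$: at every $w \in W$ one imposes that an equation $F_j$ of $H_j$ vanishes at $w$ but that the differential of $F_j|_{V_{j-1}}$ at $w$ is nonzero, so that $H_j$ meets $V_{j-1}$ transversally at $w$. This is an admissible condition on finitely many Taylor coefficients in both the Altman--Kleiman and the Poonen--Wutz frameworks, and it keeps $V_j$ smooth at $w$ while still passing through $w$. Setting $\ol{C} = Y_{d-1}$, I obtain an integral projective curve through $W$ that is smooth at every point of $V \cap \ol{C}$.

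It remains to secure the two disjointness conditions, which are dimension counts once $e \gg 0$. Because $W \cap \ol{B} = \emptyset$, the system has no base points near $\ol{B}$, so each successive general section drops $\dim \ol{B}$; as $\dim \ol{B} \le d-2$, after $d-1$ cuts $\ol{C} \cap \ol{B} = \emptyset$, whence $C := \ol{C} \cap X$ satisfies $C \cap B = \emptyset$ and thus $C \subset V$. The same count applied to $\ol{A}$ (dimension $\le d-2$) shows that away from the base points $W$ the intersection $\ol{C} \cap \ol{A}$ is empty, so $\ol{C} \cap \ol{A} \subseteq W$ is finite; as $\ol{C}$ is a curve this gives $\ol{C} \not\subset A$ and hence $C \not\subset A$. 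Finally $C = \ol{C} \cap X$ is nonempty (it contains $W$, or meets the dense open $X$ if $W = \emptyset$), open in the integral curve $\ol{C}$ and hence integral, smooth since $C \subset V$ with $\ol{C}$ smooth along $V$, and it contains $W$ by construction; this $C$ has all the required properties.

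The \emph{main obstacle} is the simultaneous control, at a single finite degree, of smoothness along the open smooth locus $V$, integrality of the projective section, and passage through $W$ as smooth points, above all over a finite field, where the classical Altman--Kleiman theorems are unavailable and one must instead invoke the positive-density Bertini theorem of Poonen in the strengthened form of Wutz \cite{Wutz}. By contrast, the conditions $C \cap B = \emptyset$ and $C \not\subset A$ reduce to the routine dimension estimates above.
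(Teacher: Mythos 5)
Your overall strategy -- iterated Bertini hypersurface sections of large degree forced through $W$, with dimension counts to arrange $C\cap B=\emptyset$ and $C\not\subset A$ -- is the same as the paper's, and your handling of the two disjointness conditions is sound (the paper secures $C\not\subset A$ differently, by adding an auxiliary base point in $X^o\setminus A$, and secures the dimension drop along $B$ by forcing the hypersurfaces to miss a finite set $S$ meeting every component of the closure of $B$; your dimension counts amount to the same thing once one makes ``general'' precise over a finite field via exactly such an avoidance condition).

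There is, however, a genuine gap in your argument: the integrality of the iterated sections $Y_j$, equivalently the connectedness of the smooth curve $\ol{C}\cap V$, is asserted but not supplied by the theorems you cite, at least over finite fields. Wutz's theorem (like Poonen's) produces hypersurfaces through $W$ avoiding a finite set and \emph{smooth} along a prescribed smooth locus; it says nothing about irreducibility of $Y\cap H$ for your (in general non-normal) closure $Y=\ol{X}$. One cannot substitute ``connected plus smooth on a dense open'' for integrality here: for a non-normal $Y$, the section $Y\cap H$ may be connected while its components meet only inside $Y\setminus V$, so that $V\cap H$ -- and after $d-1$ cuts, $C=\ol{C}\cap V$ -- is a disjoint union of smooth curves, which is not integral. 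This is precisely why the paper does \emph{not} cut the closure of $X$ directly: it first passes to the normalization $\nu\colon X_n\to X$ and embeds $X_n$ in a \emph{normal} integral projective compactification $Y$. There, every ample hypersurface section of a normal projective scheme of dimension $\ge 2$ is connected (by the connectedness theorem of \cite{SGA-2}), and combining this with smoothness of $Y_\reg\cap H$ one extracts an integral curve $C'$; since $C'$ avoids $\nu^{-1}(B)\supseteq\nu^{-1}(X_\sing)$ and $\nu$ is an isomorphism over $X^o$, the image $C=\nu(C'\cap X_n)$ is a closed smooth integral curve in $X$ with the required properties. To repair your proof you should insert this normalization step (or otherwise supply an irreducibility statement for hypersurface sections through $W$ over finite fields, which the cited Altman--Kleiman and Wutz results do not give).
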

\begin{proof}
  We let $\nu \colon X_n \to X$ be the normalization morphism and
  let $\wt{B} = \nu^{-1}(B)$. 
  We choose a dense open immersion $X_n \inj Y$ such that $Y$ is an integral
  projective normal $k$-scheme. We let $B'$ be the Zariski closure of $\wt{B}$ in $Y$.
  Then $\dim(B') \le \dim(X) - 2$. We let $S \subset B'$ be a finite closed subset
  whose intersection with every irreducible component of $B'$ is nonempty.
  We fix a closed embedding $Y \inj \P^n_k$. We choose a closed point
  $x \in \pi^{-1}(X^o \setminus A)$ and set $Z = W \cup \{x\}$.
  
By the Bertini theorems of Altman-Kleiman \cite[Theorem~1]{KL}
(when $k$ is infinite) and Wutz \cite[Theorem~3.1]{Wutz}
(with $C = S$ and $T = H^0_\zar(S, \sO^\times_S)$ when $k$ is finite),
we can find a hypersurface $H \subset \P^n_k$ containing $Z$ and
disjoint from $S$ such that $Y_\reg \cap H$ is smooth. The condition
$S \cap H = \emptyset$ implies that $\dim(B' \cap H) \le \dim(B') - 1 \le \dim(X) -3$.

Since $Y$ is normal, it follows from \cite[Expos{\'e}~XII, Corollaire~3.5]{SGA-2}
that $Y \cap H$ is connected. In particular, $Y_\reg \cap H$ is connected and smooth.
Hence, it is integral. By iterating this process
$\dim(X) -1$ times, we get an integral curve $C' \subset Y$ containing $Z$
such that $\dim(B' \cap C') \le (\dim(X) - 2) - (\dim(X) -1) < 0$.
In particular, $B' \cap C' = \emptyset$ and $C' \not\subset \pi^{-1}(A)$.
We let $C = \pi(C' \cap X_n)$.
Then $C$ satisfies the desired properties.
\end{proof}

The moving lemma we want to prove is the following.

\begin{lem}\label{lem:Moving-nor}
 Let $k$ be any field. Then the canonical map
    \begin{equation}\label{eqn:Moving-nor-00}
      \CH^{LW}_0(X,A) \to \CH^{LW}_0(X)
    \end{equation}
    is an isomorphism. The same
holds also for the lci Chow group.
\end{lem}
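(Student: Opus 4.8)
The plan is to prove the statement by separately establishing that the inclusion $\sZ_0(X \setminus A) \inj \sZ_0(X^o)$ induces a surjection and an injection on the quotient groups. Since $X_\sing \subseteq A$ we have $X \setminus A \subseteq X^o$, and the assertion is equivalent to the two statements $\sZ_0(X^o) = \sZ_0(X \setminus A) + \sR^{LW}_0(X)$ (surjectivity) and $\sR^{LW}_0(X) \cap \sZ_0(X \setminus A) = \sR^{LW}_0(X,A)$ (injectivity), the inclusion $\supseteq$ in the latter being clear. I would carry out the Levine--Weibel case in detail; the lci case is entirely parallel, with embedded Cartier curves replaced by finite lci maps from curves, and in any event $\CH^{LW}_0(X,A) \cong \CH^{\Lci}_0(X,A)$ by \lemref{lem:LW-LCI-A}. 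Since \lemref{lem:Bertini-spl} is available over perfect fields, I would first reduce to the case that $k$ is perfect by descent along the purely inseparable extension $k_{\mathrm{perf}}/k$, under which closed points and Cartier-curve relations correspond bijectively; this is in any case immaterial for the applications, where $k$ is finite or algebraically closed.

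For surjectivity it suffices, by linearity, to move a single regular closed point $x \in X^o \cap A$ to a cycle supported on $X \setminus A$. Applying \lemref{lem:Bertini-spl} with $W = \{x\}$ and $B = X_\sing$ produces a smooth integral curve $C \subset X$ with $x \in C$, $C \cap X_\sing = \emptyset$ and $C \not\subset A$; in particular $C$ is a Cartier curve disjoint from $X_\sing$, so every $g \in k(C)^{\times}$ contributes a relation $\divf_C(g) \in \sR^{LW}_0(X)$. Because $C \not\subset A$, the set $C \cap A$ is finite, so the semilocal ring $\sO_{C, C\cap A}$ is a regular one-dimensional semilocal domain, i.e. a principal ideal domain. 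Choosing $g$ to generate the maximal ideal corresponding to $x$, the function $g$ is a unit at every other point of $C \cap A$ and has a simple zero at $x$; hence $\divf_C(g) = [x] + \gamma$, where $\gamma$ is supported on $C \setminus (C \cap A) = C \setminus A \subseteq X \setminus A$. Thus $[x] = -\gamma$ in $\CH^{LW}_0(X)$ with $-\gamma \in \sZ_0(X \setminus A)$, as desired. This step is short precisely because the remaining zeros and poles of $g$ are automatically off $A$, so no Jacobian obstruction arises.

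Injectivity is the heart of the matter. Here we are given $\alpha \in \sZ_0(X \setminus A)$ with a presentation $\alpha = \sum_j \divf_{C_j}(f_j)$ by Cartier-curve relations on $X$, and we must rewrite $\alpha$ using only curves disjoint from $A$. The difficulty is that the individual divisors $\divf_{C_j}(f_j)$ need not be supported off $A$; only their sum is, so one cannot argue a single curve at a time. My plan is to move the entire relation off $A$ by a deformation argument: for each $j$ I would pass to the graph $\Gamma_j \subset X \times \P^1$ of the morphism $(\nu_j, f_j)\colon C_j \to X \times \P^1$, so that $\divf_{C_j}(f_j)$ is the projection of $\Gamma_j \cdot (X \times ([0]-[\infty]))$, and then use a Bertini/general-position argument in $X \times \P^1$ to move $\Gamma_j$ off the codimension $\ge 2$ subset $A \times \P^1$ while controlling its intersections with $X \times \{0,\infty\}$. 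Since the deformed curves avoid $A \supseteq X_\sing$ entirely, the Cartier (resp. lci) condition along $X_\sing$ becomes vacuous, and they automatically define relations lying in $\sR^{LW}_0(X,A)$.

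The main obstacle is exactly this deformation step: one must move the one-dimensional relations off $A$ without altering the resulting $0$-cycle $\alpha$, which forces one to control the behaviour of the functions $f_j$ (equivalently, the marked fibres of $\Gamma_j$ over $0$ and $\infty$) throughout the deformation, and this control is itself a rational-equivalence statement that must be realized off $A$. I expect to organize this by enlarging $A$ one irreducible component at a time and inducting on $\dim A$, at each stage invoking \lemref{lem:Bertini-spl} to produce a moving curve that meets the fixed $0$-cycle correctly while avoiding the new component, and using the principal-ideal-domain mechanism of the surjectivity step to absorb the resulting discrepancy as a relation supported off $A$. Once this is achieved, combining it with the surjectivity computation yields $\sR^{LW}_0(X) \cap \sZ_0(X \setminus A) = \sR^{LW}_0(X,A)$ and hence the claimed isomorphism; the lci statement then follows by the same scheme together with \lemref{lem:LW-LCI-A}.
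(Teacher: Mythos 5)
Your surjectivity argument is essentially the paper's: a Bertini curve through the given point avoiding $X_\sing$ but not contained in $A$, followed by the observation that the semilocal ring $\sO_{C,C\cap A}$ has trivial Picard group, so the point can be moved off $A$ along $C$. That half is fine.

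The injectivity half is where your proposal has a genuine gap. What you describe --- deforming each graph $\Gamma_j \subset X\times\P^1$ off $A\times\P^1$ by a general-position argument while controlling the fibres over $0$ and $\infty$ --- is precisely the content of the hard moving lemmas of Levine \cite[Lemma~1.4]{Levine-2} and Biswas--Srinivas \cite[Lemma~2.1]{BS}, and those arguments genuinely require $k$ to be \emph{infinite}: one needs enough rational points in the relevant linear systems to put a positive-dimensional family of curves in general position, and over a finite field this fails. Your own text concedes that this step is ``the main obstacle'' and only offers a plan (induction on the components of $A$, absorbing discrepancies via the PID mechanism); as written it is a restatement of the difficulty rather than a resolution of it. The paper does something quite different here: for $k$ infinite it simply \emph{quotes} the known moving lemma, and for $k$ finite it sidesteps the general-position problem entirely by a pro-$\ell$ trick --- passing to the two infinite pro-$\ell_i$ extensions $k_i$ for distinct primes $\ell_1\neq\ell_2$ (where the infinite-field case applies), descending to finite subextensions via a continuity statement (\cite[Proposition~6.1]{Binda-Krishna}), and using the push-forward to conclude that a class in the kernel is killed by coprime powers $\ell_1^n$ and $\ell_2^n$, hence vanishes. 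This descent argument is carried out on the lci Chow group (where push-forward along finite field extensions is available) and transported back to $\CH^{LW}_0$ via \lemref{lem:LW-LCI-A}; without some such device your deformation plan will not close over a finite field. Separately, your preliminary reduction to perfect $k$ by ``descent along $k_{\mathrm{perf}}/k$'' is not innocuous: residue fields change under purely inseparable base change, so cycles and relations do not correspond bijectively without introducing $p$-power multiplicities, and the paper avoids this by treating the infinite and finite cases directly rather than the imperfect and perfect ones.
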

\begin{proof}
We can assume that $\dim(X) \ge 2$ because the lemma is trivial otherwise. 
Before we begin the proof, we note that one always has the commutative diagram
\begin{equation}\label{eqn:Moving-nor-1}
\xymatrix@C.8pc{
\CH^{LW}_0(X,A) \ar[r] \ar[d] & \CH^{LW}_0(X) \ar[d] \\
\CH^{\Lci}_0(X,A) \ar[r] & \CH^{\Lci}_0(X).}
\end{equation}

Suppose now that $k$ is infinite. Since $X$ is $R_1$, 
the top horizontal arrow is known to be an isomorphism 
(see \cite[Lemma~2.1]{BS} and \cite[Lemma~1.4]{Levine-2}).
The right (resp. left) vertical arrow in ~\eqref{eqn:Moving-nor-1} is an isomorphism by
\lemref{lem:0-cycle-com-nor} (resp. \lemref{lem:LW-LCI-A}). It follows that the
bottom horizontal arrow in ~\eqref{eqn:Moving-nor-1} is an isomorphism.

We now assume that $k$ is finite and prove the
injectivity of ~\eqref{eqn:Moving-nor-00}.
Let $\alpha \in \CH^{\Lci}_0(X,A)$ 
be a cycle which dies in $\CH^{\Lci}_0(X)$.
Let $\ell_1 \neq \ell_2$ be two
distinct prime numbers and let $k_i$ be the pro-$\ell_i$ algebraic
extension of $k$ for $i =1,2$.
For $i = 1,2$, \cite[Proposition~6.1]{Binda-Krishna} says that
there is a commutative diagram
\begin{equation}\label{eqn:Moving-nor-2}
\xymatrix@C.8pc{
\CH^{\Lci}_0(X,A) \ar[r] \ar[d] & \CH^{\Lci}_0(X) \ar[d] \\
\CH^{\Lci}_0(X_{k_i}, A_{k_i}) \ar[r] & \CH^{\Lci}_0(X_{k_i}).}
\end{equation}

The bottom horizontal arrow is an isomorphism because $k_i$ is
infinite. It follows that $\alpha$ dies in $\CH^{\Lci}_0(X_{k_i}, A_{k_i})$.
It follows from \cite[Proposition~6.1]{Binda-Krishna} (by a 
straightforward modification, explained in
\cite[Proposition~8.5]{Gupta-Krishna-CFT}) that there is a finite
algebraic extension $k \inj k'_i$ inside $k_i$ such that
$\alpha$ dies in $\CH^{\Lci}_0(X_{k'_i}, A_{k'_i})$.
We apply \cite[Proposition~6.1]{Binda-Krishna} once again to conclude that
$\ell^n_i \alpha = 0$ for $i =1, 2$ and some $n \gg 1$.
Since $\ell^n_1$ and $\ell^n_2$ are relatively prime, it follows that
$\alpha = 0$. We have thus shown that the map
$\CH^{\Lci}_0(X,A) \to \CH^{\Lci}_0(X)$ is injective.

To show the same for the Levine-Weibel Chow group, we use
~\eqref{eqn:Moving-nor-1} again.
The left vertical arrow in this diagram is an isomorphism by
\lemref{lem:LW-LCI-A}. We just showed that the bottom
horizontal arrow is injective. It follows that the top horizontal
arrow is also injective.

It remains to show that ~\eqref{eqn:Moving-nor-00} is surjective when
$k$ is finite.  For this, we fix a closed point $x \in X^o$.  We need to show
that there is a 0-cycle $\alpha \in \sZ_0(X^o)$ supported on $X^o \setminus A$
such that the cycle class $[x]$ coincides with $\alpha$ in $\CH^{LW}_0(X)$.

To that end, we let $C \subset X$ be
a curve as in \lemref{lem:Bertini-spl} with $W = \{x\}$ and $B = X_\sing$.
We let $\CH^{F}_0(C, C \cap A)$ be the cokernel of the map
$\sO^{\times}_{C, C\cap A} \xrightarrow{\divf} \sZ_0(C \setminus A)$.
Using the isomorphism $\CH^{F}_0(C) \cong H^1_\zar(C, \sO^{\times}_C)$
and taking the colimit over the exact sequences for cohomology of
$\sO^{\times}_C$ with support in finite closed subsets 
$S \subset C \setminus A$, one easily checks that the canonical map
$\CH^{F}_0(C, C \cap A) \to \CH^{F}_0(C)$ is an isomorphism.
In particular, the 0-cycle $[x]$ on $C$ coincides with a 0-cycle $\alpha' \in
\CH^{F}_0(C)$ supported on $C \setminus A$.

Since $C \cap X_\sing = \emptyset$, we have a push-forward map
$\iota_* \colon \CH^F_0(C) \to \CH^{LW}_0(X)$, where $\iota \colon C \inj X$
is the inclusion. Letting $\alpha = \iota_*(\alpha')$, we achieve our claim.
This proves the surjectivity of ~\eqref{eqn:Moving-nor-00} for the
Levine-Weibel Chow group.
Since the map $\CH^{LW}_0(X) \to \CH^{\Lci}_0(X)$ is surjective by
definition, it follows that the map
$\CH^{\Lci}_0(X,A) \to \CH^{\Lci}_0(X)$ is also surjective.
This concludes the proof.
\end{proof}

We now draw some consequences of the moving lemma.
The first is an extension of \lemref{lem:0-cycle-com-nor} for integral
$R_1$-schemes over finite fields.

\begin{thm}\label{thm:Moving-nor-C}
Let $k$ be any field and $X \in \Sch_k$ an integral $R_1$-scheme. 
Then the canonical map
$\CH^{LW}_0(X) \to \CH^{\Lci}_0(X)$ is an isomorphism.
\end{thm}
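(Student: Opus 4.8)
The plan is to deduce the statement from the moving lemma and its two companions by a single diagram chase, so that no genuinely new difficulty arises. The statement is trivial when $X$ is regular — in particular when $\dim(X) \le 1$, since then the $R_1$ hypothesis forces $X_\sing = \emptyset$ and both groups coincide with $\CH^F_0(X)$ — so the content is really about the comparison in the presence of singularities.

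First I would introduce the auxiliary closed subset $A = X_\sing$. Because $X$ is an integral $R_1$-scheme, $A$ has codimension at least two in $X$ and contains $X_\sing$, so it is an admissible choice for the relative theories $\CH^{LW}_0(X,A)$ and $\CH^{\Lci}_0(X,A)$ of \S~\ref{sec:ML}. With this $A$ in hand I would record the commutative square
\begin{equation*}
\xymatrix@C.8pc{
\CH^{LW}_0(X,A) \ar[r] \ar[d] & \CH^{LW}_0(X) \ar[d] \\
\CH^{\Lci}_0(X,A) \ar[r] & \CH^{\Lci}_0(X),}
\end{equation*}
in which the right vertical arrow is precisely the canonical map whose bijectivity we must prove.

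Finally I would invoke the three results already established to identify the remaining three arrows as isomorphisms: the top and bottom horizontal arrows are isomorphisms by \lemref{lem:Moving-nor}, applied to the Levine-Weibel and to the lci theory respectively, while the left vertical arrow is an isomorphism by \lemref{lem:LW-LCI-A}. Since the square commutes and three of its four arrows are isomorphisms, the fourth is as well; concretely, the right vertical arrow factors as the bottom horizontal arrow composed with the left vertical arrow and the inverse of the top horizontal arrow. I expect no remaining obstacle at this stage: the substantive work — notably the delicate finite-field argument passing through pro-$\ell$ extensions — has already been absorbed into \lemref{lem:Moving-nor}, so the present theorem is a formal corollary, the only point needing a moment's care being the verification that $A = X_\sing$ satisfies the codimension hypothesis, which is immediate from the $R_1$ condition.
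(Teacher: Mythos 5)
Your proof is correct and is essentially the paper's own argument: the paper proves this theorem by the same diagram chase on the square relating $\CH^{LW}_0(X,A)$, $\CH^{\Lci}_0(X,A)$, $\CH^{LW}_0(X)$ and $\CH^{\Lci}_0(X)$, citing \lemref{lem:LW-LCI-A} for the left vertical arrow and \lemref{lem:Moving-nor} for both horizontal arrows. The only (immaterial) difference is that the paper first disposes of the infinite-field case via \lemref{lem:0-cycle-com-nor} before running the diagram chase for finite $k$, whereas you run it uniformly for all $k$, which is equally valid since \lemref{lem:Moving-nor} is stated for an arbitrary field.
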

\begin{proof}
By \lemref{lem:0-cycle-com-nor}, we assume that $k$ is finite.
  We look at the commutative diagram ~\eqref{eqn:Moving-nor-1}.
  Its left vertical arrow is an isomorphism (without any condition on $k$)
  by \lemref{lem:LW-LCI-A}. The two horizontal arrows are isomorphisms by
 \lemref{lem:Moving-nor}.  The theorem now follows.
\end{proof}

The next two applications show that the Levine-Weibel Chow groups of
$R_1$-schemes admit pull-back and push-forward maps. Note that
neither of these maps was previously known to exist.

\begin{cor}\label{cor:Pull-back-nor}
Let $k$ be any field and $X \in \Sch_k$ an integral $R_1$-scheme.
Let $f \colon X' \to X$ be a morphism in $\Sch_k$ such that
$X'_\sing \subseteq f^{-1}(X_\sing)$ and the resulting map
$f^{-1}(X_\reg) \to X_\reg$ is finite and surjective. Then 
$f^* \colon \sZ_0(X^o) \to \sZ_0(X'^o)$ induces a pull-back homomorphism
\[
f^* \colon \CH^{LW}_0(X) \to \CH^{LW}_0(X').
\]
This is an isomorphism if $f$ is the normalization morphism.
\end{cor}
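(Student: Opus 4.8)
The plan is to construct $f^*$ first on the level of $0$-cycles by flat pull-back, and then to verify that it preserves rational equivalence by means of the moving lemma (\lemref{lem:Moving-nor}). First I would note that the hypothesis $X'_\sing \subseteq f^{-1}(X_\sing)$ is equivalent to $f^{-1}(X^o) \subseteq X'^o$, so the finite surjective morphism $g := f|_{f^{-1}(X^o)} \colon f^{-1}(X^o) \to X^o$ is a morphism between regular schemes of the same pure dimension. It is therefore flat by miracle flatness, and flat pull-back supplies the homomorphism $f^* = g^* \colon \sZ_0(X^o) \to \sZ_0(X'^o)$ on the free groups of $0$-cycles. The entire content of the first assertion is then that $f^*$ sends rational equivalences to rational equivalences.

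The key device is \lemref{lem:Moving-nor}, which lets me replace $\CH^{LW}_0(X)$ by $\CH^{LW}_0(X, X_\sing)$ (note that $X_\sing$ is closed of codimension $\geq 2$ since $X$ is an $R_1$-scheme). A generator of $\sR^{LW}_0(X, X_\sing)$ has the form $\divf(h)$ with $h \in k(C)^\times$, where $C \subset X$ is an integral curve satisfying $C \cap X_\sing = \emptyset$; in particular $C \subset X^o$. For such a $C$ the base change $f^{-1}(C) = C \times_{X^o} f^{-1}(X^o)$ is finite and flat over $C$ and is contained in $f^{-1}(X^o) \subseteq X'^o$. Since flat pull-back commutes with the formation of divisors of rational functions (cf.~\cite{Fulton}), I obtain $f^*(\divf(h)) = \divf(g^* h)$, a $0$-cycle supported on $X'^o$. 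As the reduced irreducible components $C'_1, \dots, C'_m$ of $f^{-1}(C)$ are disjoint from $X'_\sing$, each is a Cartier curve on $X'$ (the local complete intersection condition along $X'_\sing$ being vacuous), and $\divf(g^* h)$ is a finite sum of divisors of rational functions on the $C'_j$. Hence $\divf(g^* h) \in \sR^{LW}_0(X')$, which shows $f^*\big(\sR^{LW}_0(X, X_\sing)\big) \subseteq \sR^{LW}_0(X')$. Combined with the isomorphism $\CH^{LW}_0(X, X_\sing) \xrightarrow{\cong} \CH^{LW}_0(X)$ of \lemref{lem:Moving-nor}, this produces the map $f^* \colon \CH^{LW}_0(X) \to \CH^{LW}_0(X')$: every class is represented by a cycle avoiding $X_\sing$, any two such representatives differ by an element of $\sR^{LW}_0(X, X_\sing)$, and the latter maps into $\sR^{LW}_0(X')$.

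For the last assertion, take $f = \nu \colon X_n \to X$ to be the normalization. Since $X^o$ is regular, hence normal, $\nu$ restricts to an isomorphism $\nu^{-1}(X^o) \xrightarrow{\cong} X^o$; consequently the hypotheses of the corollary hold, $(X_n)_\sing \subseteq \nu^{-1}(X_\sing)$, and $X_n$ is an integral normal (so $R_1$) scheme. Set $A = X_\sing$ and $A_n = \nu^{-1}(X_\sing)$, each closed of codimension $\geq 2$ and containing the relevant singular locus. The isomorphism $\nu \colon X_n \setminus A_n \xrightarrow{\cong} X \setminus A$ matches integral curves avoiding $A_n$ with integral curves avoiding $A$, hence induces an isomorphism $\sZ_0(X \setminus A) \xrightarrow{\cong} \sZ_0(X_n \setminus A_n)$ carrying $\sR^{LW}_0(X, A)$ onto $\sR^{LW}_0(X_n, A_n)$. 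Therefore $\nu^* \colon \CH^{LW}_0(X, A) \xrightarrow{\cong} \CH^{LW}_0(X_n, A_n)$ is an isomorphism, and applying \lemref{lem:Moving-nor} to $X$ and to $X_n$ gives that $\nu^* \colon \CH^{LW}_0(X) \to \CH^{LW}_0(X_n)$ is an isomorphism.

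The principal obstacle lies in the second paragraph: the naive generators of $\sR^{LW}_0(X)$ come from Cartier curves that may pass through $X_\sing$, and the inverse image under $f$ of such a curve can be badly behaved along $X'_\sing$, so there is no transparent way to see directly that its divisor pulls back into $\sR^{LW}_0(X')$. The moving lemma removes this difficulty by allowing one to compute entirely with curves in the regular locus, over which $f$ is flat and the pull-back of a principal divisor is again a sum of Cartier-curve relations. The one supporting technical point to treat with care is the flatness of $g$ (and of its base change to each curve $C$), which is what legitimises the identity $f^*(\divf(h)) = \divf(g^* h)$.
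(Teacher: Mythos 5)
Your proof is correct and follows essentially the same route as the paper's: the published argument is precisely ``flat pull-back \`a la Fulton, once one notes that $f^{-1}(X_\reg) \to X_\reg$ is finite and flat'', combined with \lemref{lem:Moving-nor} for the normalization statement, and you have simply written out the details (in particular the reduction via the moving lemma to Cartier curves avoiding $X_\sing$, which is indeed the intended use of that lemma here). The only point worth noting is that your appeal to miracle flatness tacitly uses that every irreducible component of $f^{-1}(X^o)$ dominates $X^o$; this holds in all the applications (notably for the normalization) and is equally implicit in the paper's own assertion of flatness.
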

\begin{proof}
The proof of the existence of $f^*$ is a routine construction following
\cite[Chapter~1]{Fulton} once we note that the map 
$f^{-1}(X_\reg) \to X_\reg$ is finite and flat.
The assertion that $f^*$ is an isomorphism if $f$ is the normalization 
follows directly from \lemref{lem:Moving-nor}.
\end{proof}

\begin{cor}\label{cor:PF-LW}
Let $f \colon X' \to X$ be a proper morphism of integral quasi-projective
schemes over a field $k$. Assume that $f^{-1}(X_{\rm sing})$ 
has codimension at least two in $X'$ and it contains
$X'_{\rm sing}$. Then the push-forward between the cycle groups
$f_* \colon \sZ_0(X' \setminus f^{-1}(X_{\sing})) \to \sZ_0(X^o)$ descends to
a homomorphism
\[
f_* \colon \CH^{LW}_0(X') \to \CH^{LW}_0(X).
\]
\end{cor}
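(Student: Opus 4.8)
The plan is to use the moving lemma to replace $\CH^{LW}_0(X')$ by the group computed from curves avoiding $f^{-1}(X_\sing)$, and then to invoke Fulton's formula for the proper push-forward of a principal divisor. First I would set $A' = f^{-1}(X_\sing)$. By hypothesis $A'$ is closed of codimension at least two in $X'$ and contains $X'_\sing$, so \lemref{lem:Moving-nor} applies to the pair $(X', A')$ and shows that the canonical map $\CH^{LW}_0(X', A') \to \CH^{LW}_0(X')$ is an isomorphism. Since $X' \setminus A' = f^{-1}(X^o)$ and $f$ is proper, the induced map $f^{-1}(X^o) \to X^o$ is proper and in particular sends closed points to closed points; the usual proper push-forward of cycles (\cite[Chapter~1]{Fulton}) therefore yields the homomorphism $f_* \colon \sZ_0(X' \setminus A') \to \sZ_0(X^o)$ of the statement. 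In view of the isomorphism above, it suffices to prove that $f_*$ carries $\sR^{LW}_0(X', A')$ into $\sR^{LW}_0(X)$, since then $f_*$ descends to $\CH^{LW}_0(X', A') \cong \CH^{LW}_0(X') \to \CH^{LW}_0(X)$.

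To do this I would take a generator $\divf(f')$ of $\sR^{LW}_0(X', A')$, given by an integral curve $C' \subset X'$ with $C' \cap A' = \emptyset$ and a nonzero rational function $f' \in k(C')^\times$. The restriction $f|_{C'} \colon C' \to X$ is proper, being the restriction of a proper morphism to a closed subscheme, so its image $C := f(C')$ with the reduced structure is an integral closed subscheme of $X$ that is either a point or a curve, and the induced surjection $p \colon C' \to C$ is again proper. The identity $f(C' \cap A') = C \cap X_\sing$ together with $C' \cap A' = \emptyset$ forces $C \cap X_\sing = \emptyset$; hence $C \subset X^o$, and $C$ is automatically a Cartier curve on $X$ because it meets $X_\sing$ trivially.

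It then remains to compute $f_*(\divf(f'))$ by means of the push-forward of principal divisors along the proper morphism $p \colon C' \to C$ (\cite[Theorem~1.4]{Fulton}). If $\dim C = 0$, the formula gives $f_*(\divf(f')) = 0$, which lies in $\sR^{LW}_0(X)$. If $\dim C = 1$, then $p$ is a surjective proper morphism of integral curves and hence finite, so the same formula gives $f_*(\divf(f')) = \divf(N_p(f'))$, where $N_p \colon k(C')^\times \to k(C)^\times$ is the norm of the finite field extension $k(C')/k(C)$. As $C \subset X^o$ is a Cartier curve and $N_p(f') \in k(C)^\times$, the cycle $\divf(N_p(f'))$ belongs to $\sR^{LW}_0(X)$. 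In either case every generator of $\sR^{LW}_0(X', A')$ maps into $\sR^{LW}_0(X)$, which is what I want. The same argument, using the lci version of the moving lemma, would give the analogous push-forward on $\CH^{\Lci}_0$.

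I expect the only genuine content to lie in the compatibility with rational equivalence, and the main obstacle it removes is the following: a general Cartier curve on $X'$ may pass through $X'_\sing$, and its image under $f$ may meet $X_\sing$ and thus fail to be a Cartier curve, so the cycle-level push-forward of such a relation need not manifestly be a Levine--Weibel relation on $X$. The moving lemma \lemref{lem:Moving-nor} is precisely what sidesteps this difficulty, by allowing $\CH^{LW}_0(X')$ to be computed using only curves disjoint from $A' = f^{-1}(X_\sing)$, whose images are guaranteed to avoid $X_\sing$.
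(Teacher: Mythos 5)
Your proof is correct and follows essentially the same route as the paper: the paper also sets $A' = f^{-1}(X_\sing)$, invokes \lemref{lem:Moving-nor} to reduce to constructing $f_* \colon \CH^{LW}_0(X',A') \to \CH^{LW}_0(X, X_\sing)$, and then cites the proper push-forward construction of \cite[\S~1.4]{Fulton}. You have merely written out the Fulton argument (image of a curve avoiding $A'$ is a point or a Cartier curve in $X^o$, plus the norm formula) that the paper leaves implicit.
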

\begin{proof}
The hypothesis of the corollary implies that $X'$ is $R_1$. We let 
$A' = f^{-1}(X_{\rm sing})$. Using 
\lemref{lem:Moving-nor} and the canonical map
$\CH^{LW}_0(X, X_{\rm sing}) \to \CH^{LW}_0(X)$, it suffices to construct the
homomorphism $f_* \colon \CH^{LW}_0(X', A') \to 
\CH^{LW}_0(X, X_{\rm sing})$.  But this can be achieved by repeating the
construction of the proper push-forward map for the classical Chow groups 
in \cite[\S~1.4]{Fulton}.
\end{proof}

\subsection{The cycle class map}\label{sec:CCM}
Let $k$ be any field and $X \in \Sch_k$ of pure dimension $d$. 
Let $\sK^M_{i, X}$ be the 
Zariski (or Nisnevich) sheaf of Milnor $K$-theory on $X$
(see \cite[\S~0]{Kato86}). Let $\sK_{i,X}$ denote the Zariski (or Nisnevich) sheaf 
of Quillen $K$-theory on $X$.
The product structures on the Milnor and Quillen $K$-theory yield
a natural map of sheaves $\sK^M_{i, X} \to \sK_{i,X}$.
In ~\eqref{eqn:Cycle-map}, we defined the cycle class homomorphism
(this could be trivial, e.g., if $X^o = \emptyset$)
\begin{equation}\label{eqn:Cycle-map-0} 
\cyc_{X} \colon \sZ_0(X^o) \to H^d_\zar(X, \sK^M_{d, X}).
\end{equation}

Using the Thomason-Trobaugh spectral sequence for Quillen $K$-theory
(see \cite[Theorem~10.3]{TT}), there
are canonical maps $H^d_\zar(X, \sK_{d, X}) \to H^d_\nis(X, \sK_{d, X}) \to 
K_0(X)$. These maps fit into a commutative diagram
(see \cite[Lemma~3.2]{Gupta-Krishna})

\begin{equation}\label{eqn:CCM-0}
\xymatrix@C.8pc{
& H^d_\zar(X, \sK^M_{d, X}) \ar[r] \ar[d] &
H^d_\nis(X, \sK^M_{d, X}) \ar[d] & \\
\sZ_0(X^o) \ar[r] \ar[ur]^-{\cyc_X} \ar@/_1cm/[rrr]^-{\wt{\cyc}_X} 
&  H^d_\zar(X, \sK_{d, X}) \ar[r] &
H^d_\nis(X, \sK_{d, X}) \ar[r] & K_0(X)}
\end{equation}
such that the composite map $\sZ_0(X^o) \to K_0(X)$ is the cycle class
(see \cite[Proposition~2.1]{Levine-Weibel})
map which takes a closed point $x \in X^o$ to the class $[k(x)] \in K_0(X)$.
Note also that any closed point $x \in X^o$ has a class $[k(x)] \in
K_0(X, D)$, where the latter is the 0-th homotopy group of the
$S^1$-spectrum $K(X, D)$ defined as the homotopy fiber of the
pull-back map of spectra $K(X) \to K(D)$ for any closed subscheme $D$
supported on $X_\sing$. Moreover, there is a factorization 
$\sZ_0(X^o) \to K_0(X, D) \to K(X)$.

As an analogue of Bloch's formula, one asks if the cycle class homomorphism
$\sZ_0(X^o) \to H^d_\nis(X, \sK^M_{d, X})$ factors through $\CH^{LW}_0(X)$
and, if the resulting map is an isomorphism when $X$ is reduced.
We shall prove some new results on this question in this paper.
We consider a special case in what follows.

\subsection{The case of isolated singularities}\label{sec:Isolated}
Let us now assume that $X$ is reduced, equidimensional, and
has only isolated singularities. Let $S$ denote the
finite set of singular points.
We consider the sequence of Zariski sheaves of Milnor $K$-groups
(see \cite[\S~0]{Kato86}):
\begin{equation}\label{eqn:Milnor-K}
\sK^M_{m, X} \xrightarrow{\epsilon}
\left(\begin{array}{c}{\underset{x \in X^{(0)}}\coprod}\ (i_x)_* K^M_{m}(k(x)) 
\\ \oplus \\
{\underset{P \in S}\coprod}\ (i_P)_* K^M_{m}(\sO_{X,P}) 
\end{array}\right) \xrightarrow{e_0}
\left(\begin{array}{c}
{\underset{x \in X^{(1)}}\coprod} \ (i_x)_* K^M_{m-1}(k(x)) 
\\ \oplus \\
{\underset{P \in S}\coprod}
{\underset{P \in \ov{\{x\}}, x \in X^{(0)}}\coprod}\ (i_P)_* K^M_{m}(k(x)) 
\end{array}\right)
\xrightarrow{e_1} \cdots
\end{equation}
\[
\cdots \xrightarrow{e_{d-1}}
\left(\begin{array}{c}
{\underset{x \in X^{(d)}}\coprod} \ (i_x)_* K^M_{m-d}(k(x)) 
\\ \oplus \\
{\underset{P \in S}\coprod}
{\underset{P \in \ov{\{x\}}, x \in X^{(d-1)}}\coprod}\ (i_P)_* K^M_{m-d+1}(k(x)) 
\end{array}\right)
\xrightarrow{e_d} 
\left(\begin{array}{c}
0 \\ \oplus \\
{\underset{P \in S}\coprod} \ (i_P)_* K^M_{m-d}(k(P)) 
\end{array}\right)
\to 0.
\]
Here, the map $\epsilon$ is induced by the inclusion into both terms and
the other maps are given by the matrices
\[
e_0 = \left(\begin{array}{cc}
\partial_1 & 0 \\
-\Delta & \epsilon
\end{array}\right), 
e_1 = \left(\begin{array}{cc}
\partial_1 & 0 \\
\Delta & \partial_2
\end{array}\right),
\cdots ,
e_d = \left(\begin{array}{cc}
0 & 0 \\
\pm \Delta & \partial_2
\end{array}\right)
\]
with $\partial_1$ and $\partial_2$ being the differentials of
the Gersten-Quillen complex for Milnor $K$-theory sheaves as
described in \cite{Kato86} and $\Delta$'s being the 
diagonal maps.

\begin{lem}\label{lem:Milnor-res}
The above sequence of maps forms a complex which gives a flasque resolution
of the sheaf $\epsilon(\sK^M_{m,X})$ in Zariski and Nisnevich topologies.
\end{lem}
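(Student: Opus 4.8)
The plan is to prove three things about the displayed sequence $\mathcal{C}^\bullet$ (write $\mathcal{C}^j$ for its $j$-th term, with top summand the Gersten term $\coprod_{x \in X^{(j)}} (i_x)_* K^M_{m-j}(k(x))$ and bottom summand the $S$-indexed correction): that it is a complex, that each $\mathcal{C}^j$ is flasque, and that the augmented sequence $0 \to \epsilon(\sK^M_{m,X}) \to \mathcal{C}^0 \to \mathcal{C}^1 \to \cdots$ is exact. Flasqueness is formal: every summand of every $\mathcal{C}^j$ is of the form $(i_x)_* M$ for a point $x$ and an abelian group $M$, and such a pushforward from a point is flasque in both the Zariski and the Nisnevich topologies; a direct sum of flasque sheaves is flasque. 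That $\mathcal{C}^\bullet$ is a complex is a bookkeeping computation with the matrices $e_j$: multiplying out $e_{j+1} e_j$, the diagonal blocks vanish because $\partial^2 = 0$ for the Gersten--Quillen differentials and $\partial \circ \epsilon = 0$, while the off-diagonal block is of the form $\Delta \partial - \partial \Delta$ and vanishes because the diagonal maps $\Delta$ are identities between the equal Gersten terms of the top and bottom rows, hence commute with $\partial$; the signs in $e_0, \ldots, e_d$ are chosen precisely to produce this cancellation.

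For exactness I would argue stalkwise, treating the Zariski and Nisnevich cases uniformly by working with the (possibly Henselian) local ring $\sO_{X,y}$ at a point $y$. Since each $P \in S$ is a closed point, a skyscraper summand $(i_P)_*(-)$ contributes to the stalk at $y$ only when $y = P$. Hence for $y \notin S$ all bottom summands vanish in the stalk, and the stalk of $\mathcal{C}^\bullet$ at $y$ is exactly the Gersten--Quillen complex of the regular local ring $\sO_{X,y}$. This is a resolution of $K^M_m(\sO_{X,y})$ by the Gersten resolution for Milnor $K$-theory over (equicharacteristic) regular local rings; in particular $\epsilon$ is injective there and the augmented complex is exact at $y$.

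The crux is the stalk at a singular point $P \in S$. Here $\mathcal{C}^\bullet_P$ is assembled from the unaugmented Gersten complex $T^\bullet = G^\bullet(\sO_{X,P})$ of the singular local ring in the top row and, in the bottom row, the augmented Gersten complex $B^\bullet = [\, K^M_m(\sO_{X,P}) \xrightarrow{\epsilon} G^0 \xrightarrow{\partial} G^1 \to \cdots \,]$ placed one degree to the right, the two being glued by the maps $\Delta$. The lower-triangular shape of the $e_j$ exhibits $B^\bullet$ as a subcomplex of $\mathcal{C}^\bullet_P$ with quotient $T^\bullet$, giving a short exact sequence of complexes $0 \to B^\bullet \to \mathcal{C}^\bullet_P \to T^\bullet \to 0$ whose connecting map $H^j(T^\bullet) \to H^{j+1}(B^\bullet)$ is induced by $\Delta$. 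Since $\Delta$ is the identity on the common Gersten terms, and since $H^{j+1}(B^\bullet)$ is the $j$-th cohomology of the augmented Gersten complex, which agrees with $H^j(T^\bullet)$ for all $j \ge 1$, this connecting map is an isomorphism in every positive degree; the long exact sequence then forces $H^j(\mathcal{C}^\bullet_P) = 0$ for $j \ge 1$. For $j = 0$, a direct inspection of $\ker e_0$ at $P$, using $\partial \circ \epsilon = 0$, shows $\ker e_0 = \{(\epsilon b, b) : b \in K^M_m(\sO_{X,P})\}$, so $H^0(\mathcal{C}^\bullet_P) \cong K^M_m(\sO_{X,P})$ via $b \mapsto (\epsilon b, b)$; this map is injective even when the top augmentation $\epsilon$ is not, because of the identity bottom component. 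Consequently the stalk of $\epsilon \colon \sK^M_{m,X} \to \mathcal{C}^0$ at $P$ is injective with image $\ker e_0$, and combining with the regular-point case shows that $\epsilon$ is an injective sheaf map with $\epsilon(\sK^M_{m,X}) = \ker e_0 = H^0(\mathcal{C}^\bullet)$ and $H^j(\mathcal{C}^\bullet) = 0$ for $j \ge 1$, i.e. a flasque resolution. The Nisnevich case is identical, replacing $\sO_{X,y}$ by its Henselization throughout.

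I expect the main obstacle to be the singular-point computation. One must set up the filtration $0 \to B^\bullet \to \mathcal{C}^\bullet_P \to T^\bullet \to 0$ carefully, track the signs so that it is genuinely a short exact sequence of complexes, and verify that the diagonal maps induce an isomorphism on positive-degree cohomology, so that the generally non-exact Gersten complex of the singular ring $\sO_{X,P}$ is cancelled against its augmented shift. By contrast the regular-point case is a direct appeal to the Gersten resolution for Milnor $K$-theory, and flasqueness together with the complex property are formal.
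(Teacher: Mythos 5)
Your proof is correct and follows essentially the same route as the paper's, which simply defers to the Pedrini--Weibel argument for Quillen $K$-theory, to Kerz's Gersten resolution at the regular stalks, and to "the way the differentials are defined" at the points of $S$; your short exact sequence of complexes $0 \to B^\bullet \to \sC^\bullet_P \to T^\bullet \to 0$ at $P \in S$ is exactly the content of that last step, spelled out. The only caveat is that your side claim that $\epsilon$ is injective on regular stalks is not needed for the lemma (which only asserts a resolution of the image $\epsilon(\sK^M_{m,X})$, so one only needs $\ker e_0 = \mathrm{im}\,\epsilon$ stalkwise) and, for naive Milnor $K$-theory over finite residue fields, is precisely the point the paper's footnote about Kerz's improved Milnor $K$-theory is hedging against.
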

\begin{proof}
A similar complex for the Quillen $K$-theory sheaves is constructed in 
\cite[\S~5]{Pedrini-Weibel86} and it is shown there that this complex is 
a flasque resolution of $\epsilon(\sK_{m,X})$. 
The same proof works here verbatim. On all stalks except at $S$, the 
exactness follows from \cite[Proposition~10]{Kerz10}. The
exactness at the points of $S$ is an immediate consequence of the way the
differentials are defined in ~\eqref{eqn:Milnor-K} (see
\cite{Pedrini-Weibel86} for details). 
\end{proof}

\begin{prop}\label{prop:Milnor-res-*}
Let $X \in \Sch_k$ be integral of dimension $d \ge 1$  
with only isolated singularities and let $\tau$ denote Zariski or
Nisnevich topology. Then there are canonical maps
\[
\CH^{LW}_0(X) \xrightarrow{\cyc_X} H^d_\tau(X, \sK^M_{d,X}) \to 
H^d_\tau(X, \sK_{d,X}) 
\to \CH^F_0(X),
\]
in which the middle arrow is an isomorphism. Furthermore, all arrows in the
middle square of ~\eqref{eqn:CCM-0} are isomorphisms.
\end{prop}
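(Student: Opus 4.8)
The plan is to build the four-term sequence of maps by exploiting the flasque resolution established in \lemref{lem:Milnor-res}, together with the already-constructed cycle class map and the diagram ~\eqref{eqn:CCM-0}. Let me think about what each arrow should be and where the real content lies.

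First the middle arrow: I want to show $H^d_\tau(X,\sK^M_{d,X}) \to H^d_\tau(X,\sK_{d,X})$ is an isomorphism. The natural map $\sK^M_{d,X}\to\sK_{d,X}$ induces this. The idea is that both top cohomology groups can be computed from the respective flasque resolutions (the Milnor one of \lemref{lem:Milnor-res}, the Quillen analogue from \cite[\S~5]{Pedrini-Weibel86}), and in top degree the cokernel of the last differential is governed entirely by the residue fields $k(x)$ at codimension-$d$ (i.e.\ dimension-zero) points and the contributions at $S$. At a point $x\in X^{(d)}$, by Kato's theorem \cite[Theorem~2]{Kato86} one has $K^M_0(k(x))\xrightarrow{\cong}K_0(k(x))$ since both are $\Z$, so the comparison map is an isomorphism on the relevant terms of the two resolutions. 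The same holds for the $K^M_{m-d}(k(P))$ versus $K_{m-d}(k(P))$ terms with $m=d$ (where $m-d=0$). Therefore the induced map on top cohomology — which is a cokernel of maps between these flasque resolutions — is an isomorphism.

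Then I want the map $H^d_\tau(X,\sK_{d,X})\to\CH^F_0(X)$. The Quillen flasque resolution of Pedrini-Weibel computes $H^d_\tau(X,\sK_{d,X})$ as a cokernel whose dimension-zero part is $\sZ_0(X^o)$ modulo the image of the differential coming from codimension $d-1$ points, which is exactly the Fulton rational equivalence relation; the extra $S$-contributions are controlled by the structure of the complex at $S$. Identifying this cokernel with $\CH^F_0(X)$ is then a matter of comparing the relevant differentials. I should be careful here: $\CH^F_0(X)$ is defined via cycles on $X_{(0)}$ modulo divisors of rational functions on integral curves, and the top cohomology of the Gersten-type complex for Quillen $K$-theory recovers precisely this by the classical computation (this is where the residue maps $\partial_2$ at $S$ and the diagonal maps $\Delta$ must be shown not to impose extra relations beyond rational equivalence). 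Finally the leftmost map $\CH^{LW}_0(X)\xrightarrow{\cyc_X}H^d_\tau(X,\sK^M_{d,X})$ is obtained by showing that the already-defined cycle class map $\cyc_X\colon\sZ_0(X^o)\to H^d_\tau(X,\sK^M_{d,X})$ of ~\eqref{eqn:Cycle-map-0} kills $\sR^{LW}_0(X)$; this factorization should follow because the composite $\sZ_0(X^o)\to H^d_\tau(X,\sK_{d,X})\to K_0(X)$ from ~\eqref{eqn:CCM-0} respects the divisors of functions on Cartier curves, and the resolution lets one read off that a Cartier-curve relation maps into the image of the penultimate differential.

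The last assertion — that all arrows in the middle square of ~\eqref{eqn:CCM-0} are isomorphisms — follows by combining the above: the top-to-Nisnevich and Zariski-to-Nisnevich comparisons for $\sK^M_{d,X}$ and $\sK_{d,X}$ all become isomorphisms in top degree because, as the flasque resolutions show, the top cohomology is insensitive to the difference between the Zariski and Nisnevich topologies (both sheaves being generically locally constant enough that the relevant stalk computations at points of $X^{(d)}$ and at $S$ agree), and the vertical $\sK^M\to\sK$ comparison is the isomorphism established for the middle arrow.

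The main obstacle I expect is the careful bookkeeping at the singular points $S$ in identifying the two cokernels. Away from $S$ everything reduces to the standard Gersten-resolution computation of $\CH^F_0$ and to Kato's $K^M_0\cong K_0\cong\Z$ identification, which is routine. The delicate part is verifying that the additional summands indexed by $S$ in ~\eqref{eqn:Milnor-K} (and their Quillen analogues), with the diagonal maps $\Delta$ and the differentials $\partial_2$, contribute \emph{exactly} the same relations in both the Milnor and Quillen complexes so that the comparison map is an isomorphism in top degree and so that the target is genuinely $\CH^F_0(X)$ rather than some quotient of it. Establishing that the $S$-contributions match — essentially that the local cohomology at the isolated singular points behaves identically for both $K$-theories in the top degree — is where I would concentrate the argument, leaning on the verbatim applicability of the Pedrini-Weibel construction asserted in \lemref{lem:Milnor-res}.
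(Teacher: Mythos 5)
Your overall framework (compute both top cohomologies from the flasque resolution of \lemref{lem:Milnor-res}, use that Milnor and Quillen $K$-theory of fields agree in the degrees $\le 2$ that appear there, and obtain the map to $\CH^F_0(X)$ by projecting off the $S$-indexed summands) matches the paper's. But there is a genuine gap in your construction of the leftmost arrow. You propose to show directly that $\cyc_X$ kills $\sR^{LW}_0(X)$ because ``the resolution lets one read off that a Cartier-curve relation maps into the image of the penultimate differential.'' This fails for exactly the curves that matter: if $C$ is a Cartier curve meeting $X_\sing$ with generic point $\eta$ and $f\in K_1(k(\eta))$ is invertible along $C\cap X_\sing$, then applying the differential $e_{d-1}=\left(\begin{smallmatrix}\partial_1 & 0\\ \Delta & \partial_2\end{smallmatrix}\right)$ to $(f,0)$ produces not $(\divf(f),0)$ but $(\divf(f),\Delta(f))$, with a nonzero component in the summand ${\underset{P\in S}\coprod}{\underset{P\in\ov{\{x\}}}\coprod}\,K_1(k(x))$. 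Cancelling $\Delta(f)$ requires exhibiting a class in the $K_2$-summand of degree $d-1$ mapping onto it under $\partial_2$, which is not something one can ``read off.'' (Your fallback — that the composite $\sZ_0(X^o)\to K_0(X)$ kills Cartier relations — does not help either, since $H^d_\tau(X,\sK^M_{d,X})\to K_0(X)$ is not known to be injective at this stage.) The paper avoids this entirely: it introduces the subcomplex $\sC^0_X$ indexed only by points whose closures miss $S$, for which $\Delta\equiv 0$, observes that $H_1(\sC^0_X)$ is by definition $\CH^{LW}_0(X,X_\sing)$, and then invokes the moving lemma (\lemref{lem:Moving-nor}) to identify this with $\CH^{LW}_0(X)$. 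The moving lemma is the essential ingredient your proposal omits.

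Two smaller points. First, for the middle arrow you only check the comparison on the $K^M_0$-terms; to identify the two middle homologies you also need the degree-$(d-1)$ terms, i.e.\ $K^M_1(F)=K_1(F)$ and $K^M_2(F)=K_2(F)$ for fields (Matsumoto), which is what makes the two complexes literally coincide in the relevant range. Second, the arrow to $\CH^F_0(X)$ is only asserted to be a map, induced by the surjection of complexes $\sC_X\surj\sC^{F,0}_X$; your attempt to ``identify this cokernel with $\CH^F_0(X)$'' is proving something stronger than claimed, and it is false in general.
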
 
\begin{proof}
The $d = 1$ case is well known (see \cite[Proposition~1.4]{Levine-Weibel}).
We can thus assume that $d \ge 2$. Let $S$ denote the singular locus of $X$ and
let $X^{(j)}_S$ denote the set of points $x \in X$ of codimension $j$ such that
$S \cap \ov{\{x\}} = \emptyset$. 
We first observe that the map of sheaves $\sK^M_{d, X} \surj 
\epsilon(\sK^M_{d, X})$ is generically an isomorphism
and the same holds for the Quillen $K$-theory sheaves.
It follows that (see \cite[Exercise~II.1.19, Lemma~III.2.10]{Hartshorne})
that the map $H^d_\tau(X, \sK^M_{d,X}) \to H^d_\tau(X, \epsilon(\sK^M_{d,X}))$
is an isomorphism and ditto for the Quillen $K$-theory sheaves.
It follows from \lemref{lem:Milnor-res} that both $H^d_\tau(X, \sK^M_{d,X})$ 
and $H^d_\tau(X, \sK_{d,X})$ are given by the middle homology of the complex
$\sC_X$: 
\[
\left(\begin{array}{c}
{\underset{x \in X^{(d-1)}}\coprod} \ K_{1}(k(x)) 
\\ \oplus \\
{\underset{P \in S}\coprod}
{\underset{P \in \ov{\{x\}}}\coprod}\ K_{2}(k(x)) 
\end{array}\right) 
\xrightarrow{e_{d-1}}
\left(\begin{array}{c}
{\underset{x \in X^{(d)}}\coprod} \ K_{0}(k(x)) 
\\ \oplus \\
{\underset{P \in S}\coprod}
{\underset{P \in \ov{\{x\}}}\coprod}\ K_{1}(k(x)) 
\end{array}\right)
\xrightarrow{e_d} 
\left(\begin{array}{c}
0 \\ \oplus \\
{\underset{P \in S}\coprod} \ K_{0}(k(P)) 
\end{array}\right).
\]

On the other hand, letting $\sC^0_X$ and $\sC^{F,0}_X$ denote the complexes
\[
{\underset{x \in X^{(d-1)}_S}\coprod} \ K_{1}(k(x)) 
\xrightarrow{{\rm div}}  {\underset{x \in X^{(d)}_S}\coprod} \ 
K_{0}(k(x)) \to 0 \ \ 
{\rm and}  
\]
\[
{\underset{x \in X^{(d-1)}}\coprod} \ K_{1}(k(x)) 
\xrightarrow{{\rm div}}  {\underset{x \in X^{(d)}}\coprod} \ K_{0}(k(x)) \to 0, 
\]
respectively, we see that there are canonical maps of chain complexes
$\sC^0_X \inj \sC_X \surj \sC^{F,0}_X$. This 
yields canonical maps $H_1(\sC^0_X) \to H^d_\tau(X, \sK^M_{d,X}) 
\xrightarrow{\cong} 
H^d_\tau(X, \sK_{d,X}) \to H_1(\sC^{F,0}_X)$. 
It follows however from \lemref{lem:Moving-nor} that $H_1(\sC^0_X) \cong 
\CH^{LW}_0(X)$. It is also clear that $H_1(\sC^{F,0}_X) \cong \CH^F_0(X)$.
The second assertion is clear. This concludes the proof.
\end{proof}

\section{Zero-cycles on surfaces}\label{sec:Surface}
In this section, we shall define the reciprocity maps. We shall then
restrict to the case of surfaces and prove several results which
will essentially be enough to prove the main theorems of this paper 
in this special case.

\subsection{The reciprocity maps}\label{sec:Rec}
Let $k$ be a finite field and $X \in \Sch_k$ an integral $R_1$-scheme of 
dimension $d \ge 1$. Given a closed point
$x \in X^o$, we have the push-forward map
$(\iota_x)_* \colon \pi_1(\Spec(k(x))) \to \pi_1(X^o)$,
where $\iota \colon \Spec(k(x)) \inj X^o$ is the inclusion.
We let $\phi_X([x])  = (\iota_x)_*(F_x)$, where $F_x$ is the
Frobenius automorphism of $\ov{k(x)}$, which is the topological generator
of $\Gal({\ov{k(x)}}/{k(x)}) \cong \pi_1(\Spec(k(x)))
\cong \wh{\Z}$. Extending it linearly, we get the reciprocity map
$\phi_X \colon \sZ_0(X^o) \to \pi^{\ab}_1(X^o)$.
By \cite[Theorem~2.5]{Kato-Saito-2}, the cycle class map
$\cyc_X \colon \sZ_0(X^o) \to H^d_\nis(X, \sK^M_{d,X})$ is surjective.

\begin{lem}\label{lem:Factor-rho}
  There exists a homomorphism $\rho_X \colon H^d_\nis(X, \sK^M_{d,X}) \to
  \pi^{\ab}_1(X^o)$ such that $\phi_X = \rho_X \circ \cyc_X$.
 \end{lem}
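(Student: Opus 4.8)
The plan is to show that the reciprocity map $\phi_X \colon \sZ_0(X^o) \to \pi^{\ab}_1(X^o)$ factors through the cycle class map $\cyc_X \colon \sZ_0(X^o) \to H^d_\nis(X, \sK^M_{d,X})$. Since the paper has just recorded that $\cyc_X$ is \emph{surjective} (by \cite[Theorem~2.5]{Kato-Saito-2}), the factorization $\rho_X$, if it exists, is uniquely determined. Thus the entire content of the lemma is the single statement that $\ker(\cyc_X) \subseteq \ker(\phi_X)$ as subgroups of $\sZ_0(X^o)$. I would state this reduction explicitly at the outset, since it converts an existence claim about a homomorphism into a concrete containment of kernels.

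First I would identify what $\ker(\cyc_X)$ contains. The source $\sZ_0(X^o)$ is the free abelian group on closed points of the regular locus, and the target $H^d_\nis(X, \sK^M_{d,X})$ is a Nisnevich cohomology group computed by a Gersten-type complex whose degree-$(d-1)$ term involves $K^M_1(k(x)) = k(x)^\times$ for codimension-$(d-1)$ points $x$, mapping by the tame symbol / divisor map into the codimension-$d$ term. Concretely, the relations defining the image of $\cyc_X$ come from curves $C \subset X$ with $C \subset X^o$ (equivalently, $C \cap X_\sing = \emptyset$) and rational functions $f \in k(C)^\times$, giving $\divf(f) \in \ker(\cyc_X)$. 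The key structural input here is the moving lemma: by \lemref{lem:Moving-nor} the natural map $\CH^{LW}_0(X,A) \to \CH^{LW}_0(X)$ with $A = X_\sing$ is an isomorphism, so $H^d_\nis(X,\sK^M_{d,X})$ is presented using only curves avoiding the singular locus, i.e.\ honest smooth (or at least regular) curves in $X^o$. Therefore it suffices to prove that for every integral curve $C \subset X^o$ and every $f \in k(C)^\times$, one has $\phi_X(\divf(f)) = 0$ in $\pi^{\ab}_1(X^o)$.

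This last statement is exactly the classical reciprocity law for curves over a finite field, pushed forward to $X^o$. I would argue as follows: let $\bar{C}$ be the smooth projective model of the function field $k(C)$ and let $j \colon C \hookrightarrow \bar{C}$ be the open immersion; the inclusion $\iota \colon C \hookrightarrow X^o$ extends the Frobenius-substitution map, so $\phi_X \circ \divf = \iota_* \circ \phi_C \circ \divf$ by functoriality of the reciprocity homomorphism for points and the compatibility of pushforward with the Frobenius element $F_x$. For a smooth projective curve $\bar C$ over a finite field, Lang's class field theory (equivalently Artin reciprocity, cited in the paper) gives that the reciprocity map $\phi_{\bar C} \colon \sZ_0(\bar C) \to \pi^{\ab}_1(\bar C)$ kills all principal divisors $\divf(f)$; the point is that $\divf(f)$ is a \emph{principal} divisor on the projective curve, hence lands in the identity component / is rationally trivial, and reciprocity sends rationally trivial $0$-cycles of degree $0$ to the identity. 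Composing with the map $\pi^{\ab}_1(\bar C) \to \pi^{\ab}_1(X^o)$ (which factors the pushforward, using that a covering of $X^o$ pulls back to a covering of $C$ and hence of an open subcurve of $\bar C$) yields $\phi_X(\divf(f)) = 0$. Extending $\Z$-linearly over all such curves $C$ establishes $\sR^{LW}_0(X,A) \subseteq \ker(\phi_X)$, and via the moving lemma this gives $\ker(\cyc_X) \subseteq \ker(\phi_X)$, completing the proof.

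The step I expect to be the main obstacle is the \emph{compatibility of the Frobenius substitution with pushforward along $C \hookrightarrow X^o$}, together with the precise comparison between the tame-symbol relations computing $H^d_\nis(X,\sK^M_{d,X})$ and the divisor relations on curves. The functoriality $\phi_X \circ \iota_* = \iota_* \circ \phi_C$ is intuitively clear from the definition of $\phi_X$ via Frobenius elements of closed points, but one must check it genuinely respects the group structure on fundamental groups and that the diagram of étale fundamental groups commutes; this is where I would be most careful. A clean alternative, which I would consider if the direct argument gets delicate, is to invoke the Kato--Saito higher-dimensional reciprocity machinery directly: since $\cyc_X$ is surjective and both groups receive $\sZ_0(X^o)$, one can instead factor through the idele-class-group description of $H^d_\nis(X,\sK^M_{d,X})$ supplied by \cite{Kato-Saito-2} and appeal to their reciprocity isomorphism for the target. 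Either way, the heart of the matter is reducing everything to one-dimensional reciprocity via the moving lemma, which is the genuinely new ingredient this paper contributes to making the factorization work in the singular setting.
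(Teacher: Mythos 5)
There is a genuine gap, and it sits exactly where you flagged your reduction as "the entire content of the lemma": the claim that $\ker(\cyc_X)$ is generated by the divisors $\divf(f)$ of rational functions on curves $C\subset X^o$. That description of the kernel is not available at this stage — it is essentially the injectivity half of Bloch's formula $\CH^{LW}_0(X)\xrightarrow{\cong}H^d_\nis(X,\sK^M_{d,X})$, which is one of the paper's main theorems, is proved only \emph{later}, only under the extra hypotheses (isolated singularities, or $R_3+S_4$), and moreover uses the present lemma as an ingredient. The moving lemma you invoke concerns the Levine--Weibel Chow group $\CH^{LW}_0(X,A)\to\CH^{LW}_0(X)$; it says nothing about how $H^d_\nis(X,\sK^M_{d,X})$ is presented. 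For singular $X$ the Gersten complex is not a resolution of $\sK^M_{d,X}$, so the relations in $H^d_\nis(X,\sK^M_{d,X})$ are not exhausted by tame-symbol/divisor relations coming from curves avoiding $X_\sing$ (even in the isolated-singularity case the paper's resolution in \S\ref{sec:Isolated} carries extra terms at the singular points). What your curve argument actually establishes is the containment $\sR^{LW}_0(X,X_\sing)\subseteq\ker(\phi_X)$, i.e.\ that $\phi_X$ descends to $\CH^{LW}_0(X,A)$ — which is the content of the \emph{next} lemma in the paper (\lemref{lem:Rec-map-0}), not of this one.

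The route you mention only as a fallback is in fact the paper's proof, and it is unavoidable here. One sets $\sK^M_{d,(X,Y)}=\Ker(\sK^M_{d,X}\surj\sK^M_{d,Y})$ for closed subschemes $Y\subseteq X_\sing$, observes that $H^d_\nis(X,\sK^M_{d,(X,Y)})\to H^d_\nis(X,\sK^M_{d,X})$ is an isomorphism since $\dim Y\le d-2$, and then uses the Kato--Saito identification
\[
{\underset{Y,m}\varprojlim}\ {H^d_\nis(X,\sK^M_{d,(X,Y)})}/m \xrightarrow{\ \cong\ } \pi^{\ab}_1(X^o)
\]
together with their compatibility statement that the composite $\sZ_0(X^o)\to{\underset{Y,m}\varprojlim}\ {H^d_\nis(X,\sK^M_{d,(X,Y)})}/m\to\pi^{\ab}_1(X^o)$ equals $\phi_X$. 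The factorization $\rho_X$ is then simply the pro-$(Y,m)$ completion map followed by this isomorphism; no knowledge of $\ker(\cyc_X)$ is needed. If you want to salvage your write-up, promote that "clean alternative" to the main argument and drop the kernel computation.
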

\begin{proof}
We let $\sK^M_{i, (X,Y)} = \Ker(\sK^M_{i,X} \surj \sK^M_{i,Y})$
  for any closed subscheme $Y \subset X$. We now look at the diagram
  \begin{equation}\label{eqn:Rec-map-00}
 \xymatrix@C.8pc{
   & {{\underset{Y}\varprojlim}\ H^d_\nis(X, \sK^M_{d,(X, Y)})} \ar[r]^-{\cong}
   \ar[d] \ar[dr]^-{\psi_X} &
   H^d_\nis(X, \sK^M_{d,X}) \ar@{.>}[d]^-{\rho_X} \\
   \sZ_0(X^o) \ar[ru]^-{\cyc'_X} \ar[r]^-{\eta_X}  \ar@/_1cm/[rr]^-{\phi_X} &
   {{\underset{Y, m}\varprojlim}\ {H^d_\nis(X, \sK^M_{d,(X, Y)})}/m} \ar[r]^-{\cong} &
 \pi^{\ab}_1(X^o),}
 \end{equation}
where $Y$ ranges over all closed subschemes of $X$ contained in $X_\sing$
  and $m$ ranges over all nonzero integers.
  Top horizontal arrow is the map induced on the cohomology by
  the inclusion of Nisnevich sheaves $\sK^M_{d, (X,Y)} \inj \sK^M_{d,X}$.
  All such maps are isomorphisms because $\dim(Y) \le d-2$. This explains
  why the top horizontal arrow is an isomorphism.
  The cycle class map $\cyc_X$ clearly factors through
  $\sZ_0(X^o) \to  H^d_\nis(X, \sK^M_{d,(X, Y)})$ for any $Y \subset X_\sing$
  because $H^d_{\{x\}}(X, \sK^M_{d,(X, Y)}) \xrightarrow{\cong}
  H^d_{\{x\}}(X, \sK^M_{d, X})$ for any closed point $x \in X^o$.
  The limit of all these maps is $\cyc'_X$.
  Hence, $\cyc_X$ is the composition of $\cyc'_X$ with the top horizontal
  arrow. 

  The middle vertical arrow is
  induced by the canonical surjections
  $H^d_\nis(X, \sK^M_{d,(X, Y)}) \surj {H^d_\nis(X, \sK^M_{d,(X, Y)})}/m$.
  The bottom horizontal arrow on the right is given by
  \cite[Theorem~9.1(3)]{Kato-Saito-2}, and is an isomorphism.
 The arrows $\eta_X$ and $\psi_X$ are defined so that
  the triangles on the two sides of the middle vertical arrow
  commute. It follows from
  \cite[Proposition~3.8(2)]{Kato-Saito-2} that $\psi_X \circ \cyc'_X = \phi_X$.
  We deduce from this that there is a unique homomorphism $\rho_X \colon
  H^d_\nis(X, \sK^M_{d,X})  \to \pi^{\ab}_1(X^o)$ which factors $\phi_X$
  via $\cyc_X$. 
  \end{proof}

\begin{lem}\label{lem:Rec-map-0}
For any closed subset $A \subset X$ of codimension at least two and
containing $X_\sing$, the reciprocity map $\phi_X$ descends to a homomorphism
\[
\phi_X \colon \CH^{LW}_0(X, A) \to \pi^{\ab}_1(X \setminus A).
\]
\end{lem}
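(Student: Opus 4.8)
The plan is to realize the desired arrow as the reciprocity map of the regular scheme $U := X\setminus A$ itself. Since $X_\sing\subseteq A$, the scheme $U$ is integral and regular (in particular $R_1$) with $U^o=U$, so the construction of \S\ref{sec:Rec} already produces a reciprocity homomorphism $\phi_U\colon\sZ_0(U)\to\pi^{\ab}_1(U)$, $[x]\mapsto(\iota_x)_*(F_x)$; it is compatible with $\phi_X$ through the canonical map $j_*\colon\pi^{\ab}_1(U)\to\pi^{\ab}_1(X^o)$ induced by the open immersion $U\inj X^o$, which justifies reusing the name $\phi_X$. To prove the lemma I must therefore show that $\phi_U$ annihilates $\sR^{LW}_0(X,A)$, i.e. that $\phi_U(\divf(f))=0$ for every integral curve $C\subset X$ with $C\cap A=\emptyset$ and every $f\in k(C)^\times$.

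First I would record the covariant functoriality of the reciprocity map along finite morphisms. Let $g\colon C'\to U$ be a finite morphism from a smooth integral curve $C'$ over $k$. The claim is that $\phi_U\circ g_*=g_*\circ\phi_{C'}$ as maps $\sZ_0(C')\to\pi^{\ab}_1(U)$, where the left $g_*$ is the push-forward of $0$-cycles and the right $g_*$ is the homomorphism on fundamental groups induced by $g$. This is checked on a closed point $y\in C'$ with image $x=g(y)$: writing $f_y=[k(y):k(x)]$, the morphism $g\circ\iota_y$ factors as $\Spec k(y)\to\Spec k(x)\xrightarrow{\iota_x}U$, and under the resulting inclusion $\Gal(\ov{k}/k(y))\inj\Gal(\ov{k}/k(x))$ the Frobenius $F_y$ maps to $F_x^{f_y}$ (since $|k(y)|=|k(x)|^{f_y}$). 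Hence $g_*(\phi_{C'}([y]))=(\iota_x)_*(F_x^{f_y})=f_y\,\phi_U([x])$, which matches $\phi_U(g_*[y])=\phi_U(f_y[x])$ because $g_*[y]=f_y[x]$.

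Next, fixing $C$ and $f$ as above, I would reduce to a single smooth projective curve. Since $X$ is projective and $C$ is closed in $X$ and disjoint from $A$, the curve $C$ is proper and contained in $U$. Let $\nu\colon C^N\to C$ be the normalization and put $\tilde f=\nu^* f$; then $C^N$ is a smooth \emph{projective} integral curve over $k$, and $g:=\iota\circ\nu\colon C^N\to U$ (with $\iota\colon C\inj U$ the inclusion) is finite. As $\nu$ is birational, the compatibility of proper push-forward with divisors \cite[Theorem~1.4]{Fulton} (the norm being the identity here) gives $\divf_C(f)=\nu_*(\divf_{C^N}(\tilde f))$, so that as a cycle on $U$ we have $\divf(f)=g_*(\divf_{C^N}(\tilde f))$. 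Applying the functoriality of the previous paragraph,
\[
\phi_U(\divf(f))=\phi_U\bigl(g_*(\divf_{C^N}(\tilde f))\bigr)=g_*\bigl(\phi_{C^N}(\divf_{C^N}(\tilde f))\bigr).
\]

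Finally I would invoke the reciprocity law on the smooth projective curve $C^N$. By the $d=1$ case of \thmref{thm:CFT-KS} (equivalently, Lang's reciprocity for the global function field $k(C^N)$, \cite{Lang-CFT,Artin-Tate}), the map $\phi_{C^N}$ factors through $\CH^F_0(C^N)=\Pic(C^N)$ and so kills the principal divisor $\divf_{C^N}(\tilde f)$. Therefore $\phi_U(\divf(f))=g_*(0)=0$, and $\phi_U$ descends to $\phi_X\colon\CH^{LW}_0(X,A)\to\pi^{\ab}_1(X\setminus A)$. The main obstacle will be the functoriality of the second paragraph — the Frobenius/residue-degree bookkeeping is exactly what lets the $1$-dimensional reciprocity law propagate to $U$ — together with the essential use of properness of $C$: it is precisely properness (coming from the projectivity of $X$ and $C\cap A=\emptyset$) that leaves no boundary points on $C^N$, so that $\divf_{C^N}(\tilde f)$ is a genuine principal divisor to which the reciprocity law applies. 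Without it the statement is false already for a line in $\A^2$.
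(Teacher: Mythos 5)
Your proof is correct and follows essentially the same route as the paper: reduce to the normalization $C_n$ of an integral projective curve $C\subset X\setminus A$ via the compatibility $\divf_C(f)=\nu_*(\divf_{C_n}(\tilde f))$ and the functoriality of the reciprocity map under finite push-forward, then invoke classical reciprocity for the smooth projective curve $C_n$. The only difference is presentational: the paper cites \cite[Lemma~5.1(1)]{Raskind} for the commutative push-forward square, whereas you verify the Frobenius/residue-degree bookkeeping by hand.
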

\begin{proof}
If $d =1$, then $X$ is a connected smooth projective curve over $k$
(note that $k$ is perfect) with $A = \emptyset$
and one knows that $\CH^{LW}_0(X) \cong
\CH^F_0(X)$. The lemma therefore follows from the classical class field theory
in this case.

To prove that $\phi_X$ kills $\sR^{LW}_0(X,A)$ in general, we need to show that
$\phi_X(\divf(f)) = 0$ if $f$ is a nonzero rational function on an
integral projective curve $C \subset X \setminus A$.
So we choose any such curve $C$ and $f \in k(C)^{\times}$.
Let $\nu \colon C_n \to C \inj X$ be
the composite map, where $\nu$ is the normalization morphism.
We then have a commutative diagram
(see \cite[Lemma~5.1(1)]{Raskind})
\begin{equation}\label{eqn:Rec-map-1}
\xymatrix@C.8pc{
\sZ_0(C_n) \ar[r]^-{\phi_{C_n}} \ar[d]_-{\nu_*} & \pi^{\ab}_1(C_n) 
\ar[d]^-{\nu_*} \\
\sZ_0(X\setminus A) \ar[r]^-{\phi_X} & \pi^{\ab}_1(X\setminus A).}
\end{equation}

Since $\divf((f)_C) = \nu_*(\divf((f)_{C_n}))$, this diagram reduces the
problem to showing that $\phi_{C_n}(\divf((f)_{C_n})) = 0$. But this has
been shown above.
\end{proof}

In view of \lemref{lem:Moving-nor} (with $A=X_{\sing})$, 
\lemref{lem:Rec-map-0} implies the following.

\begin{cor}\label{cor:Rec-map-0-main}
The reciprocity map $\phi_X$ descends to a homomorphism
\[
\phi_X \colon \CH^{LW}_0(X) \to \pi^{\ab}_1(X^o).
\]
\end{cor}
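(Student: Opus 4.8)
The plan is to specialize the closed set $A$ to $X_\sing$ in the two preceding lemmas and then combine them formally; all of the substantive work has already been done there, so the corollary is a direct consequence. First I would observe that since $X$ is an integral $R_1$-scheme, its singular locus $X_\sing$ is closed of codimension at least two. Hence $A = X_\sing$ is admissible in both \lemref{lem:Rec-map-0} and \lemref{lem:Moving-nor}, and moreover $X \setminus A = X^o$ by the very definition of the regular locus. With this choice the ambient cycle group $\sZ_0(X \setminus A)$ is literally $\sZ_0(X^o)$, so that $\CH^{LW}_0(X, X_\sing)$ and $\CH^{LW}_0(X)$ are both quotients of $\sZ_0(X^o)$, by the relation subgroups $\sR^{LW}_0(X, X_\sing)$ and $\sR^{LW}_0(X)$ respectively, and the canonical map between them is induced by the identity on cycles.

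Next I would invoke the two lemmas in turn. Taking $A = X_\sing$, \lemref{lem:Rec-map-0} produces a homomorphism $\phi_X \colon \CH^{LW}_0(X, X_\sing) \to \pi^{\ab}_1(X^o)$; concretely, this says precisely that the reciprocity map $\phi_X \colon \sZ_0(X^o) \to \pi^{\ab}_1(X^o)$ annihilates the subgroup $\sR^{LW}_0(X, X_\sing)$ generated by the divisors $\divf(f)$ of rational functions on integral curves disjoint from $X_\sing$. On the other hand, \lemref{lem:Moving-nor} (again with $A = X_\sing$) asserts that the canonical map $\CH^{LW}_0(X, X_\sing) \to \CH^{LW}_0(X)$ is an isomorphism, which is to say that the two relation subgroups in fact coincide, $\sR^{LW}_0(X, X_\sing) = \sR^{LW}_0(X)$ inside $\sZ_0(X^o)$.

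Finally I would splice these together: since $\phi_X$ kills $\sR^{LW}_0(X, X_\sing)$ and this subgroup equals $\sR^{LW}_0(X)$, the map $\phi_X$ also kills $\sR^{LW}_0(X)$, and therefore descends to the desired homomorphism $\phi_X \colon \CH^{LW}_0(X) \to \pi^{\ab}_1(X^o)$. I do not expect any genuine obstacle at this final stage, since the corollary is the purely formal combination of an isomorphism of sources with a map out of one of them. The real content sits upstream: in \lemref{lem:Rec-map-0}, where the vanishing on curve relations is reduced, via normalization and the commutative square \eqref{eqn:Rec-map-1}, to classical class field theory for smooth projective curves; and in the moving lemma \lemref{lem:Moving-nor}, whose finite-field case is the delicate part, handled by pro-$\ell$ descent for two distinct primes. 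Thus the hard work is precisely the passage from curves avoiding $X_\sing$ to arbitrary Cartier curves, which the moving lemma supplies.
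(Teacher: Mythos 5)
Your proposal is correct and follows exactly the paper's own argument: the paper proves this corollary by citing \lemref{lem:Moving-nor} with $A=X_\sing$ together with \lemref{lem:Rec-map-0}, which is precisely the splicing you describe. Your unpacking of why the isomorphism $\CH^{LW}_0(X,X_\sing)\xrightarrow{\cong}\CH^{LW}_0(X)$ amounts to the equality of relation subgroups inside $\sZ_0(X^o)$ is a faithful elaboration of that one-line deduction.
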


It is clear that there is a commutative
diagram (with exact rows)
\begin{equation}\label{eqn:Rec-map-2}
\xymatrix@C.8pc{
0 \ar[r] & \CH^{LW}_0(X)^0 \ar[r] \ar[d]_-{\phi^0_X} &
\CH^{LW}_0(X) \ar[r]^-{\deg} \ar[d]^-{\phi_X} & \Z \ar@{^{(}->}[d] \\
0 \ar[r] & \pi^{\ab}_1(X^o)^0 \ar[r] & \pi^{\ab}_1(X^o) \ar[r] &
\wh{\Z}.}
\end{equation}

\subsection{Reciprocity map for surfaces}\label{sec:Rec*}
We assume now that $X$ is an integral projective $R_1$-scheme of dimension two
over a field $k$. 
By \propref{prop:Milnor-res-*}, there are canonical maps
\begin{equation}\label{eqn:Surface-0}
\CH^{LW}_0(X) \xrightarrow{\cyc_X} H^2_\tau(X, \sK^M_{2,X}) \to K_0(X),
\end{equation}
where $\tau = {\zar}/{\nis}$ and the composite arrow is the cycle class
map to $K$-theory. We let $F^2K_0(X)$ denote the image of this composite
arrow. 
For any closed subscheme $D \subset X$ supported on $X_\sing$, we let
$F^2K_0(X, D)$ denote the image of the cycle class map
$\sZ_0(X^o) \to K_0(X,D)$ (see \S~\ref{sec:CCM}). 
The main result about $\CH^{LW}_0(X)$ is the following.

\begin{prop}\label{prop:Surface-1}
Under the above assumptions, the following hold.
\begin{enumerate}
\item
There are cycle class maps
$\CH^{LW}_0(X) \to F^2K_0(X,D) \to F^2K_0(X)$ which are isomorphisms.
\item
  The maps $\CH^{LW}_0(X) \xrightarrow{\cyc_X} H^2_\tau(X, \sK^M_{2,X})
  \leftarrow H^2_\tau(X, \sK^M_{2,(X,D)})$
are isomorphisms for $\tau = {\zar}/{\nis}$.
\end{enumerate}
\end{prop}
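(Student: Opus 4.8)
The plan is to funnel all five groups in the statement through the Gersten-type complex $\sC_X$ furnished by \propref{prop:Milnor-res-*} and then to identify the comparison maps between them. First I would record that, since $\dim(X)=2$ and $X$ is $R_1$, the singular locus $S=X_\sing$ is a finite set of closed points, so $X$ has only isolated singularities and \propref{prop:Milnor-res-*} applies. It already supplies
\[
\CH^{LW}_0(X)\xrightarrow{\cyc_X} H^2_\tau(X,\sK^M_{2,X})\xrightarrow{\cong} H^2_\tau(X,\sK_{2,X})\to K_0(X)
\]
for $\tau=\zar/\nis$, together with the identifications $\CH^{LW}_0(X)\cong H_1(\sC^0_X)$ and $H^2_\tau(X,\sK^M_{2,X})\cong H_1(\sC_X)$; in particular \lemref{lem:Milnor-res} already makes the Zariski and Nisnevich versions agree, so I may fix one topology. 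The backward map in (2) is the easy one: from the short exact sequence $0\to\sK^M_{2,(X,D)}\to\sK^M_{2,X}\to (i_D)_*\sK^M_{2,D}\to 0$ and the vanishing of $H^i_\tau$ of a sheaf supported on the $0$-dimensional scheme $D$ for $i\ge 1$, the long exact cohomology sequence yields $H^2_\tau(X,\sK^M_{2,(X,D)})\xrightarrow{\cong} H^2_\tau(X,\sK^M_{2,X})$, exactly as in \lemref{lem:Factor-rho}. Hence the whole proposition reduces to showing that $\cyc_X$---which is surjective by \cite[Theorem~2.5]{Kato-Saito-2}---is injective, and that the induced cycle maps into $F^2K_0(X,D)$ and $F^2K_0(X)$ are isomorphisms.

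Next I would assemble the $K$-theory side. Using the factorization $\sZ_0(X^o)\to K_0(X,D)\to K_0(X)$ from \secref{sec:CCM}, the first task is to check that this cycle map annihilates $\sR^{LW}_0(X)$, so that it descends to $\CH^{LW}_0(X)\to F^2K_0(X,D)$; this is a relative refinement of the curve computation of \cite[Proposition~2.1]{Levine-Weibel}, using that a unit on a Cartier curve $C$ is a unit along $C\cap X_\sing\supseteq C\cap D$, so that $\divf(f)$ becomes a boundary in the relative theory. By construction both $\CH^{LW}_0(X)\to F^2K_0(X,D)$ and $F^2K_0(X,D)\to F^2K_0(X)$ are then surjective, so part (1) is entirely a matter of injectivity. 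To match these with cohomology I would feed the Thomason--Trobaugh descent and the coniveau (Brown--Gersten--Quillen) spectral sequence \cite{TT} into the middle square of \eqref{eqn:CCM-0}: for a surface the relative variant identifies $F^2K_0(X,D)$ with $H^2_\tau(X,\sK^M_{2,(X,D)})$, the point of passing to the double $D$ being to absorb the lower-filtration contributions that would otherwise obstruct injectivity of the edge map $H^2_\tau(X,\sK_{2,X})\to F^2K_0(X)$. Combined with the first paragraph, every map in (1) and (2) is then identified with one and the same comparison map out of $\CH^{LW}_0(X)$, and it remains only to prove that this map is injective.

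The injectivity is the genuine content and is precisely Bloch's formula for the singular surface $X$. Over an infinite field I would quote \cite[Theorem~8.1]{BKS} (resting on \cite{Levine85,Pedrini-Weibel86}), which gives $\CH^{LW}_0(X)\xrightarrow{\cong}H^2_\nis(X,\sK^M_{2,X})$; the $K_0$-statements in (1) then follow from the comparison of the previous paragraph. Over a finite field $k$, where the formula is new, I would descend by the pro-$\ell$ device already used in \lemref{lem:Moving-nor}: fix distinct primes $\ell_1\neq\ell_2$ with pro-$\ell_i$ extensions $k_i/k$, so that base change places the comparison map for $X$ in a commuting square with the already-known isomorphism for $X_{k_i}$; at each finite intermediate level $k'$ the transfer and restriction satisfy $\pr_{k'/k}\circ\res_{k'/k}=[k':k]$ on $\CH^{LW}_0$, on $H^2_\tau(X,\sK^M_{2,X})$ and on $K_0$, so choosing $k'/k$ of $\ell_i$-power degree and running the argument for both primes kills any kernel after multiplication by coprime integers, forcing injectivity over $k$.

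The main obstacle I anticipate is this finite-field descent rather than the comparison isomorphisms. The pro-$\ell$ argument of \lemref{lem:Moving-nor} rests on the base-change and transfer formalism of \cite{Binda-Krishna} for $\CH^{LW}_0$ and $\CH^{\Lci}_0$; transplanting it to $H^2_\tau(X,\sK^M_{2,X})$ and to $K_0(X,D)$ demands the same functoriality---proper push-forward, flat base change, and the projection formula tying the two together---for all of these theories at once and compatibly with $\cyc_X$, together with continuity (commutation with the filtered colimit $k_i=\operatorname{colim}k'$) so that a class dying over $k_i$ already dies at a finite level; this is where the bulk of the bookkeeping lies. A secondary technical point, needed already in the infinite-field case, is the well-definedness of $\CH^{LW}_0(X)\to F^2K_0(X,D)$, i.e.\ that the relative $K_0$ of the double $D$ is fine enough to register the Cartier-curve relations. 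Once these functorialities are in hand, the two coprime primes $\ell_1,\ell_2$ close the argument just as in \lemref{lem:Moving-nor}.
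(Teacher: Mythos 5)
Your first-paragraph reductions are fine (collapsing the two topologies via \propref{prop:Milnor-res-*}, the isomorphism $H^2_\tau(X,\sK^M_{2,(X,D)})\xrightarrow{\cong}H^2_\tau(X,\sK^M_{2,X})$ because $D$ is zero-dimensional, and surjectivity of $\cyc_X$ from \cite[Theorem~2.5]{Kato-Saito-2}), but the way you propose to supply the remaining injectivity does not close. Two concrete problems. First, even granting Bloch's formula $\CH^{LW}_0(X)\xrightarrow{\cong}H^2_\nis(X,\sK^M_{2,X})$, part (1) does not follow from it: the edge map $H^2_\tau(X,\sK_{2,X})\to F^2K_0(X)$ of the Thomason--Trobaugh/BGQ spectral sequence is a priori only surjective, with kernel the image of $d_2\colon H^0(X,\sK_{1,X})\to H^2(X,\sK_{2,X})$, and your remark that passing to the double $D$ ``absorbs the lower-filtration contributions'' is exactly the assertion $F^2K_0(X,D)\cong H^2_\tau(X,\sK^M_{2,(X,D)})$ --- a relative Bloch formula that you never prove and that is of the same order of difficulty as the proposition itself. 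Second, the finite-field case: the pro-$\ell$ descent of \lemref{lem:Moving-nor} rests on the base-change/transfer/continuity package of \cite[Proposition~6.1]{Binda-Krishna} for the lci Chow group; you would need the same package (including the degree formula and the ``dies at a finite level'' statement along $k_i=\colim k'$) simultaneously for $H^2_\nis(-,\sK^M_{2})$ and for relative $K_0$, compatibly with $\cyc$. That is not bookkeeping --- it is the actual content, and it is not available off the shelf in the form you need.

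The paper avoids all of this by running your sandwich in the opposite direction, and the argument works uniformly over every field. By \cite[Lemma~2.2]{Krishna-Srinivas} there is an exact sequence $SK_1(D)\to F^2K_0(X,D)\to F^2K_0(X)\to 0$, and by \lemref{lem:Moving-nor} together with \cite[Theorem~2.2]{Pedrini-Weibel86} (Levine's theorem for surfaces) an exact sequence $SK_1(X_\sing)\to\CH^{LW}_0(X)\to F^2K_0(X)\to 0$; since $X_\sing$ and $D$ are zero-dimensional, both $SK_1$ terms vanish, which gives (1) outright, with no case distinction on $k$. Then (2) is formal: the composite $\CH^{LW}_0(X)\xrightarrow{\cyc_X}H^2_\nis(X,\sK^M_{2,X})\surj F^2K_0(X)$ is an isomorphism by (1) and $\cyc_X$ is surjective by Kato--Saito, so $\cyc_X$ is injective as well. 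Neither \cite[Theorem~8.1]{BKS} nor any pro-$\ell$ descent is needed.
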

\begin{proof}
  By \cite[Lemma~2.2]{Krishna-Srinivas}, there is an
  exact sequence
  \[
    SK_1(D) \to F^2K_0(X,D) \to F^2K_0(X) \to 0,
  \]
  where $SK_1(D)$ is the kernel of the edge map
  $K_1(D) \to H^0_\zar(D, \sO^\times_D)$ in the Thomason-Trobaugh spectral
  sequence (see \cite[Theorem~10.3]{TT}). But this edge map is an isomorphism
  because $D$ is a 0-dimensional $k$-scheme.

By \lemref{lem:Moving-nor} and \cite[Theorem~2.2]{Pedrini-Weibel86},
there is an exact sequence
\[
SK_1(X_\sing) \to \CH^{LW}_0(X) \to  F^2K_0(X) \to 0.
\]
The item (1) now follows because $SK_1(X_\sing) = 0$. 
To prove (2), it is enough to work with the Nisnevich topology by
\propref{prop:Milnor-res-*}.
We now have the maps
$\CH^{LW}_0(X) \xrightarrow{\cyc_X} H^2_\nis(X, \sK^M_{2,X}) \surj F^2K_0(X)$.
The map $\cyc_X$ is surjective by \cite[Theorem~2.5]{Kato-Saito-2} 
(see \cite[\S~3.5]{Gupta-Krishna-CFT} for an explanation as to why it
suffices to know that $X^o$ is regular instead of being nice).
The map $\cyc_X$  in (2) is now isomorphism by (1). The other map
in (2) is an isomorphism for dimension reason.
\end{proof}

\subsection{Zero-cycles on a surface and its desingularizations}
\label{sec:Res-surface}
Let $k$ be any field. We recall the definition of the Chow group of
0-cycles with modulus from \cite{Binda-Saito}.
Let $X$ be a quasi-projective scheme over $k$ and $D \subset X$ an 
effective Cartier divisor.
Given a finite map $\nu \colon C \to X$ from a regular integral
curve whose image is not contained in $D$, we let $E = \nu^*(D)$.
We say that a rational function $f$ on $C$ has modulus $E$ if
$f \in {\rm Ker}(\sO^{\times}_{C,E} \surj \sO^{\times}_E)$.
We let $G(C,E)$ denote the multiplicative subgroup of such rational
functions in $k(C)^{\times}$.  Then $\CH_0(X|D)$
is the quotient $\sZ_0(X \setminus D)$ by the subgroup
$\sR_0(X|D)$ generated by $\nu_*(\divf(f))$ for all possible choices of 
$\nu \colon C \to X$ and $f \in G(C,E)$ as above.

Let $D \subset X$ be as above. We have seen in \S~\ref{sec:CCM} that
there is a cycle class homomorphism
$\wt{\cyc}_{X|D} \colon \sZ_0(X^o \setminus D) \to K_0(X,D)$.
If $k$ is perfect and $X \in \Sm_k$, then it was shown in
\cite[Theorem~12.4]{Binda-Krishna} that $\cyc_{X|D}$ descends to a
group homomorphism
\begin{equation}\label{eqn:CCM-modulus}
\cyc_{X|D} \colon \CH_0(X|D) \to K_0(X,D).
\end{equation}
If $\dim(X) \le 2$, this map exists without any condition on $k$ by
\cite[Theorem~1.2]{Krishna-ANT}.
We let $F^dK_0(X,D)$ be the image of this map if $X$ is of pure dimension
$d$.

Let us now assume that $k$ is any field and $X$ is an integral 
projective $R_1$-scheme of dimension two over $k$. 
Let $f \colon X_n \to X$ be the normalization morphism and 
$f' \colon \wt{X} \to X_n$ a resolution of singularities 
of $X_n$ (assuming it exists).
Let $E \subset \wt{X}$ be the reduced exceptional divisor.
Then $\pi = f \circ f' \colon \wt{X} \to X$ is a resolution of
singularities of $X$ with reduced exceptional divisor $E$.
We let $E' = \pi^{-1}(X_\sing)_\red$ so that $E'$ is a union of $E$
and a finite set of closed points.
Note that such a resolution of singularities exists (e.g., by \cite{Lipman}).
We write $S = X_\sing$ and $S' = (X_n)_\sing$ with reduced structures.

\begin{prop}\label{prop:RES-Norm}
  Let $k$ be a field and $X$ an integral and projective $k$-scheme of dimension
  two which is $R_1$. Let $\wt{X} \xrightarrow{f'} X_n \xrightarrow{f} X$
  be the desingularization and normalization morphisms as above.
  Let $m \ge 1$ be any integer. With the above notations, we have a
  commutative diagram
\begin{equation}\label{eqn:0-C-map-I-0}
\xymatrix@C2.7pc{
\CH^{LW}_0(X) \ar[r]^-{\cyc_{(X,mS)}}_-{\cong} \ar[d]^-{f^*}_-{\cong} 
\ar@/_1.5cm/[dd]_-{\pi^*} & 
F^{2}K_0(X,mS) \ar[d]^-{f^*}_-{\cong} \ar[r]^-{\cong} & F^{2}K_0(X) 
\ar[d]^-{f^*}_-{\cong} \\
\CH^{LW}_0(X_n) \ar[r]^-{\cyc_{(X_n,mS')}}_-{\cong} \ar@{->>}[d]_-{f'^*} & 
F^{2}K_0(X_n, mS') \ar@{->>}[d]^-{f'^*} \ar[r]^-{\cong} & F^{2}K_0(X_n) 
\ar@{->>}[dl]^-{f'^*} \\
\CH_0(\wt{X}|mE) \ar@{->>}[r]_-{\cyc_{{\wt{X}}|mE}} & 
F^{2}K_0(\wt{X},mE). &}
\end{equation}
Moreover, all arrows are isomorphisms for $m \gg 1$.
\end{prop}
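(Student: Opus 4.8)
The plan is to establish the commutativity of each square and triangle in the diagram first, and then argue that all arrows become isomorphisms for $m \gg 1$. I would organize the proof around three vertical layers: the top row on $X$, the middle row on the normalization $X_n$, and the bottom row on the resolution $\wt{X}$. The horizontal isomorphisms in the top two rows are already available: the maps $\cyc_{(X,mS)}$ and $\cyc_{(X_n,mS')}$ are isomorphisms by \propref{prop:Surface-1} (applied to the $R_1$-surfaces $X$ and $X_n$), and the identifications $F^2K_0(X,mS) \xrightarrow{\cong} F^2K_0(X)$ and its analogue on $X_n$ follow from the exact sequence of \cite[Lemma~2.2]{Krishna-Srinivas} together with the vanishing of $SK_1$ of a zero-dimensional scheme, exactly as in the proof of \propref{prop:Surface-1}(1).

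\textbf{The vertical maps and commutativity.} First I would construct the pull-back $f^* \colon \CH^{LW}_0(X) \to \CH^{LW}_0(X_n)$ and observe it is an isomorphism by \corref{cor:Pull-back-nor}, since $f$ is the normalization morphism. Compatibility of $f^*$ with the cycle class maps into $K$-theory is a matter of functoriality of the relative $K$-theory construction of \S~\ref{sec:CCM}; since $f$ is finite and an isomorphism over the regular loci, the forget-support classes $[k(x)]$ match up, so the top-left square commutes and all three vertical $f^*$ arrows in the top square are isomorphisms. For the middle row, the map $f'^* \colon \CH^{LW}_0(X_n) \to \CH_0(\wt{X}|mE)$ is the surjection discussed in \S~\ref{sec:BSC}, induced by the identity on $\sZ_0$ of the regular loci; it is surjective because every cycle on $(X_n)^o$ already lives on $\wt{X} \setminus E$. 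The commutativity of the bottom square reduces again to functoriality of the cycle class map $\cyc_{\wt{X}|mE}$ from \eqref{eqn:CCM-modulus}, which exists here with no hypothesis on $k$ because $\dim(\wt{X}) = 2$ (via \cite[Theorem~1.2]{Krishna-ANT}). The lower triangle on the right, relating $F^2K_0(X_n)$ and $F^2K_0(\wt{X},mE)$, commutes by the compatibility of $f'^*$ on $K_0$ with the modulus refinement.

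\textbf{The isomorphism claim for $m \gg 1$.} The genuinely substantive point, and the main obstacle, is showing that the bottom horizontal map $\cyc_{\wt{X}|mE}$ and the lower vertical maps $f'^*$ are isomorphisms once $m$ is large. Here I would invoke the theory on the smooth projective surface $\wt{X}$: for large $m$, the Chow group of $0$-cycles with modulus $\CH_0(\wt{X}|mE)$ stabilizes and maps isomorphically to the relative $K$-group $F^2K_0(\wt{X},mE)$, which is the content of the surface case of the results underlying \thmref{thm:Main-3} (cf. the Bloch--Srinivas-type comparison for normal surfaces of \cite{Krishna-Srinivas} and its $K$-theoretic refinements). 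The key input is that the natural surjection $\CH^{LW}_0(X_n) \twoheadrightarrow \CH_0(\wt{X}|mE)$ becomes an isomorphism for $m \gg 1$: intuitively, imposing a sufficiently high modulus along $E$ forces rational equivalences on $\wt{X}$ to descend to Cartier-curve relations on $X_n$, and no finer identifications survive.

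\textbf{Concluding the diagram chase.} Once the bottom row is known to consist of isomorphisms for $m \gg 1$, a diagram chase closes the argument. The left vertical composite $\pi^* = f'^* \circ f^*$ is then a composite of isomorphisms, hence an isomorphism; since $f^*$ on the top is already an isomorphism, $f'^*$ on the left is forced to be an isomorphism (not merely a surjection) for large $m$, and similarly the two right-hand $f'^*$ arrows become isomorphisms by commutativity. I expect the crux of the difficulty to lie entirely in the stabilization statement for the modulus Chow group and its comparison with relative $K$-theory on $\wt{X}$; the remaining arrows are either already supplied by \propref{prop:Surface-1} and \corref{cor:Pull-back-nor}, or are formal consequences of functoriality. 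A careful bound on how large $m$ must be would come from the conductor of the normalization together with the geometry of the exceptional divisor $E$, but for the qualitative statement it suffices to take $m$ exceeding this effectively computable threshold.
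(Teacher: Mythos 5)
Your handling of the top two rows matches the paper: commutativity there is formal, and the horizontal and vertical isomorphisms come from \propref{prop:Surface-1} and \corref{cor:Pull-back-nor}. One small point: the well-definedness of $f'^* \colon \CH^{LW}_0(X_n) \surj \CH_0(\wt{X}|mE)$ is not quite "every cycle on $X_n^o$ lives on $\wt{X}\setminus E$" (that only gives surjectivity on cycle groups); it is obtained from \lemref{lem:Moving-nor}, which lets one present $\CH^{LW}_0(X_n)$ using only curves avoiding $(X_n)_\sing$, whose strict transforms miss $E$ and hence give relations in $\CH_0(\wt{X}|mE)$ for every $m$.

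The genuine gap is at the step you yourself flag as the crux. You take as "the key input" that $\CH^{LW}_0(X_n) \surj \CH_0(\wt{X}|mE)$ becomes an isomorphism for $m \gg 1$, together with the claim that $\CH_0(\wt{X}|mE)$ stabilizes and that $\cyc_{\wt{X}|mE}$ is an isomorphism, citing "the results underlying \thmref{thm:Main-3}". But \thmref{thm:Main-3} is proved in this paper \emph{downstream} of the present proposition (its surface case runs through \thmref{thm:CFT-surface}, which invokes \propref{prop:RES-Norm}), and it is moreover stated only over finite fields under extra hypotheses, whereas the proposition here is over an arbitrary field; the only independent justification you offer is the sentence about high modulus "forcing rational equivalences to descend", which restates the conclusion. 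The actual external input is purely $K$-theoretic: \cite[Theorem~1.1]{Krishna-Srinivas} gives $f'^* \colon F^2K_0(X_n, mS') \xrightarrow{\cong} F^2K_0(\wt{X}, mE)$ for $m \gg 1$, an Artin--Rees/excision statement that presupposes nothing about Chow groups with modulus. Granting it, the composite $\CH^{LW}_0(X_n) \xrightarrow{\cong} F^2K_0(X_n,mS') \xrightarrow{\cong} F^2K_0(\wt{X},mE)$ factors as $\cyc_{\wt{X}|mE} \circ f'^*$ with $f'^*$ surjective, so a two-out-of-three argument forces both $f'^* \colon \CH^{LW}_0(X_n) \to \CH_0(\wt{X}|mE)$ and $\cyc_{\wt{X}|mE}$ to be isomorphisms. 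Your concluding diagram chase is exactly right, but it must be fed by this $K$-theoretic isomorphism rather than by the Chow-theoretic stabilization, which is a consequence of the proposition, not an available premise.
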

\begin{proof}
It is clear that the two squares on the top in the
diagram ~\eqref{eqn:0-C-map-I-0} are commutative.
Furthermore, all arrows in these squares are isomorphisms by
\propref{prop:Surface-1} and \corref{cor:Pull-back-nor}. It is also easy to check
using \lemref{lem:Moving-nor} that the pull-back
$f'^* \colon \sZ_0(X^o_n) \xrightarrow{\cong} \sZ_0(\wt{X} \setminus E)$
induces a pull-back map $f'^* \colon \CH^{LW}_0(X_n) \surj \CH_0(\wt{X}|mE)$
which makes the bottom square commutative. The map
$f'^* \colon  F^{2}K_0(X_n, mS') \to F^{2}K_0(\wt{X},mE)$ is an isomorphism
for all $m \gg 1$ by \cite[Theorem~1.1]{Krishna-Srinivas}.
It follows that all arrows in ~\eqref{eqn:0-C-map-I-0} are isomorphisms
for $m \gg 1$.
\end{proof}

\subsection{Reciprocity theorem for surfaces}\label{sec:RTS}
Assume now that $k$ is a finite field and $X$ an integral projective
$R_1$-scheme of dimension two over $k$. 
It follows from \propref{prop:Milnor-res-*} and \corref{cor:Rec-map-0-main}
that the cycle class and the 
reciprocity homomorphisms give rise to the degree preserving maps
\begin{equation}\label{eqn:CFT-surface-0*}
\CH^{LW}_0(X) \xrightarrow{\cyc_X} H^2_\nis(X, \sK^M_{2,X})
\xrightarrow{\rho_X} \pi^{\ab}_1(X^o).
\end{equation}

Our main result on the class field theory of $X$
is the following.

\begin{thm}\label{thm:CFT-surface}
The cycle class and the reciprocity homomorphisms induce 
isomorphisms of finite groups
\begin{equation}\label{eqn:CFT-surface-0}
\CH^{LW}_0(X)^0 \xrightarrow{\cong} H^2_\nis(X, \sK^M_{2,X})^0
\xrightarrow{\cong} \pi^{\ab}_1(X^o)^0.
\end{equation}
\end{thm}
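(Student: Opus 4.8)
The plan is to treat the two arrows of ~\eqref{eqn:CFT-surface-0} separately: the first is essentially formal, while the second carries the arithmetic content. For the first isomorphism $\CH^{LW}_0(X)^0 \xrightarrow{\cong} H^2_\nis(X, \sK^M_{2,X})^0$ I would simply invoke \propref{prop:Surface-1}(2), which already gives that $\cyc_X \colon \CH^{LW}_0(X) \to H^2_\nis(X, \sK^M_{2,X})$ is an isomorphism. Since $\cyc_X$ is compatible with the degree maps to $\Z$ (both induced by pushforward of regular closed points to $\Spec(k)$), it carries $\CH^{LW}_0(X)^0$ isomorphically onto $H^2_\nis(X, \sK^M_{2,X})^0$, so no further work is needed here.

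For the second arrow I would first unwind \lemref{lem:Factor-rho}. Because $\dim(X) = 2$, every closed subscheme $Y \subset X_\sing$ has $\dim(Y) \le 0 = d-2$, so the top horizontal arrow of ~\eqref{eqn:Rec-map-00} is an isomorphism even after reduction mod $m$; hence for each $m$ the limit $\varprojlim_Y H^2_\nis(X, \sK^M_{2,(X,Y)})/m$ collapses to $H^2_\nis(X, \sK^M_{2,X})/m$. Feeding this into the bottom isomorphism of ~\eqref{eqn:Rec-map-00}, which is \cite[Theorem~9.1(3)]{Kato-Saito-2}, I obtain a canonical identification
\[
\pi^{\ab}_1(X^o) \cong \varprojlim_m {H^2_\nis(X, \sK^M_{2,X})}/m,
\]
under which $\rho_X$ becomes the natural map from $H^2_\nis(X, \sK^M_{2,X})$ into its completion. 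Combining with the first isomorphism, the map $\phi^0_X$ is thereby identified with the completion map on degree-zero parts $\CH^{LW}_0(X)^0 \to \varprojlim_m \CH^{LW}_0(X)^0/m$.

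The whole statement then reduces to proving that $G := \CH^{LW}_0(X)^0$ is \emph{finite}: once this is known, the completion of $G$ is $G$ itself and the displayed map is automatically an isomorphism, while $\pi^{\ab}_1(X^o)^0$ becomes finite as a consequence. To establish finiteness I would pass to a resolution via \propref{prop:RES-Norm}, identifying $G$ with $\CH_0(\wt X|mE)^0 \cong F^2K_0(\wt X, mE)^0$ for $m \gg 0$, where $\wt X$ is smooth and projective over the finite field $k$. The forget-modulus surjection $\CH_0(\wt X|mE)^0 \surj \CH^F_0(\wt X)^0$ has finite target by the Kato--Saito class field theory for smooth projective schemes (\thmref{thm:CFT-KS} applied to $\wt X$ gives $\CH^F_0(\wt X)^0 \cong \pi^{\ab}_1(\wt X)^0$, which is finite), so it remains to bound its kernel, the relative part measuring the difference between rational equivalence with and without the modulus $mE$. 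This kernel is a subquotient of a group built from units along the one-dimensional $k$-scheme $mE$, and I expect it to be finite precisely because $k$ is finite. This relative finiteness is the step I anticipate to be the main obstacle and where the bulk of the work lies.

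Finally, granting the finiteness of $G$, I would conclude as follows. The degree sequence $0 \to G \to \CH^{LW}_0(X) \xrightarrow{\deg} \Z$ has torsion-free cokernel, so applying $\varprojlim_m(-/m)$ and using that the finite group $G$ has vanishing derived limits yields $\varprojlim_m (H^2_\nis(X, \sK^M_{2,X})/m)^0 \cong G$, with the canonical map being the identity. Together with the identification of the previous paragraph this shows that $\rho^0_X$ is an isomorphism onto $\pi^{\ab}_1(X^o)^0$, which is therefore finite; chaining with the first isomorphism gives ~\eqref{eqn:CFT-surface-0} as a chain of isomorphisms of finite groups.
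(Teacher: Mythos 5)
Your handling of the first arrow is the same as the paper's: \propref{prop:Surface-1}(2) gives that $\cyc_X$ is an isomorphism, and compatibility with degree does the rest. Your reduction of the second arrow is also sound and is genuinely a different route from the one taken here: you unwind \lemref{lem:Factor-rho} to identify $\rho_X$ with the completion map $H^2_\nis(X,\sK^M_{2,X}) \to {\varprojlim_m}\, {H^2_\nis(X,\sK^M_{2,X})}/m \cong \pi^{\ab}_1(X^o)$ via \cite[Theorem~9.1(3)]{Kato-Saito-2}, so that everything follows once $G = \CH^{LW}_0(X)^0$ is known to be finite. This is in fact essentially the mechanism the paper deploys later (in the proof of \lemref{lem:Finiteness} for \thmref{thm:Main-2}), and the $\varprojlim$/$\varprojlim^1$ bookkeeping you do is correct.

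The problem is that you never prove the finiteness of $G$, and that is where the entire arithmetic content of the theorem sits; you flag it yourself as ``the main obstacle,'' which means the proposal is a reduction, not a proof. Your heuristic that the kernel of $\CH_0(\wt{X}|mE)^0 \surj \CH^F_0(\wt{X})^0$ is ``built from units along the one-dimensional scheme $mE$'' and hence finite over a finite field is not substantiated: to make it precise along your lines one would use \propref{prop:RES-Norm} to identify $\CH_0(\wt{X}|mE)$ with $F^2K_0(\wt{X},mE)$ for $m \gg 0$ and then the exact sequence $SK_1(mE) \to F^2K_0(\wt{X},mE) \to F^2K_0(\wt{X}) \to 0$, at which point one still needs the finiteness of $SK_1$ of the (non-reduced, singular, projective) curve $mE$ over a finite field --- itself a class-field-theoretic input (vanishing of $SK_1$ of smooth projective curves over finite fields plus finiteness of the nilpotent and conductor corrections) that you do not supply. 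The paper avoids this entirely: it forms the commutative square ~\eqref{eqn:CFT-surface-1} comparing $\phi_X$ on $\CH^{LW}_0(X,X_\sing)^0$ with the reciprocity map $\rho_{\wt{X}|mE}$ on $\CH_0(\wt{X}|mE)^0$, uses \propref{prop:RES-Norm} to see the left vertical arrow is an isomorphism for $m \gg 0$, and then imports both the reciprocity isomorphism $\CH_0(\wt{X}|mE)^0 \xrightarrow{\cong} \pi^{\ab}_1(\wt{X},mE)^0$ and the finiteness of these groups from \cite[Theorem~III]{Kerz-Saito-2} and \cite[Lemma~8.4, Theorem~8.5, Corollary~8.3]{BKS}. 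Either you cite that finiteness (in which case your argument closes, and is arguably leaner since it only needs \cite[Theorem~9.1(3)]{Kato-Saito-2} beyond that), or you must carry out the $SK_1(mE)$ analysis in full; as written, the central step is missing.
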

\begin{proof}
  By \propref{prop:Surface-1}, we only have to show that the composition of
  the two maps in ~\eqref{eqn:CFT-surface-0} is an isomorphism of finite groups.
We now choose a resolution of singularities $\pi \colon \wt{X} \to X$
  (which exists by \cite{Lipman}) with the
  reduced exceptional divisor $E \subset \wt{X}$.
  For every integer $m \ge 1$, we then have a commutative diagram
  \begin{equation}\label{eqn:CFT-surface-1}
    \xymatrix@C1.5pc{
 \CH^{LW}_0(X, X_\sing)^0 \ar@{->>}[d]_-{\pi^*}
 \ar[r]^-{\rho_X} &  \pi^{\ab}_1(X^o)^0 \ar@{->>}[d] \\
  \CH_0(\wt{X}|mE)^0 \ar[r]^-{\rho_{\wt{X}|mE}} & \pi^{\ab}_1(\wt{X}, mE)^0,}
\end{equation}
where $\pi^{\ab}_1(\wt{X}, mE)$ is the abelianized {\'e}tale fundamental group
with modulus (see \cite[\S~8.3]{BKS} for definition) and
$\pi^{\ab}_1(\wt{X}, mE)^0$ is the kernel of the map
$\pi^{\ab}_1(\wt{X}, mE) \to {\rm Gal}({\ov{k}}/k)$.

The left vertical arrow in ~\eqref{eqn:CFT-surface-1} is an isomorphism
for all $m \gg 0$  by \propref{prop:RES-Norm}. Combining this with
\cite[Theorem~III]{Kerz-Saito-2} 
(if ${\rm char}(k) \neq 2$) and \cite[Lemma~8.4, Theorem~8.5]{BKS} (in general),
it follows that all arrows in
~\eqref{eqn:CFT-surface-1} are isomorphisms for all $m \gg 0$. Moreover,
these groups are finite by \cite[Corollary~8.3]{BKS}.
\end{proof}

\section{The Lefschetz condition}\label{sec:Leff}
In this section, we shall prove one of Grothendieck's Lefschetz conditions
as a prelude to our proof of the Lefschetz hypersurface section theorem for
the {\'e}tale fundamental groups of the regular loci of certain projective
schemes over a field. All cohomology groups in this section will be
with respect to the Zariski topology on schemes.

\subsection{Reflexive sheaves}\label{sec:Reflexive-mod}
Let $(X, \sO_X)$ be a locally ringed space.
If $(X, \sO_X)$ is an integral locally ringed space (i.e.,
the stalks of $\sO_X$ are integral domains), we let $\sK_X$
denote the sheaf of field of fractions of $\sO_X$.
Recall that for a sheaf of $\sO_X$-modules $\sE$ on $X$, the dual
$\sE^\vee$ is the sheaf of $\sO_X$-modules $\sHom_{\sO_X}(\sE, \sO_X)$.
There is a natural evaluation map ${\rm ev}_{\sE} \colon
\sE \to \sE^{\vee \vee}$ whose kernel is
the subsheaf of torsion submodules of $\sE$, where the latter
is defined as the kernel of the canonical map $\sE \to \sE \otimes_{\sO_X}
\sK_X$. We denote either of these kernels by $\sE_\tor$.
If $\sE$ is torsion-free, one calls $\sE^{\vee \vee}$ the reflexive hull of
$\sE$.
One says that $\sE$ is reflexive if the evaluation map
$\sE \to \sE^{\vee \vee}$ is an isomorphism.
The following lemma is elementary.

\begin{lem}\label{lem:D-ref}
The dual of any sheaf of $\sO_X$-modules is reflexive.
\end{lem}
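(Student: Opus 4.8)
The plan is to prove that the evaluation map ${\rm ev}_{\sE^\vee} \colon \sE^\vee \to (\sE^\vee)^{\vee\vee} = \sE^{\vee\vee\vee}$ is an isomorphism by exhibiting an explicit two-sided inverse, namely the dual $({\rm ev}_\sE)^\vee \colon \sE^{\vee\vee\vee} \to \sE^\vee$ of the evaluation map of $\sE$ itself. First I would record the formal ``zig-zag'' identity $({\rm ev}_\sE)^\vee \circ {\rm ev}_{\sE^\vee} = {\rm id}_{\sE^\vee}$. This is a purely formal check on sections: for $\phi \in \sE^\vee(U)$ and $x \in \sE(U)$ one verifies that both compositions send $x$ to $\phi(x)$. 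This already shows that ${\rm ev}_{\sE^\vee}$ is a split monomorphism with retraction $({\rm ev}_\sE)^\vee$, so the whole problem reduces to showing that $({\rm ev}_\sE)^\vee$ is \emph{injective}; being a retraction it is automatically surjective, so injectivity will upgrade it to an isomorphism, and then ${\rm ev}_{\sE^\vee}$ is its inverse.

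To handle the injectivity I would analyze the four-term exact sequence $0 \to \sE_\tor \to \sE \xrightarrow{{\rm ev}_\sE} \sE^{\vee\vee} \to \sQ \to 0$, where $\sE_\tor = \ker({\rm ev}_\sE)$ and $\sQ = \cok({\rm ev}_\sE)$, and apply the left-exact contravariant functor $\sHom_{\sO_X}(-,\sO_X)$. Writing $\sI = \im({\rm ev}_\sE)$ and splitting into $0 \to \sE_\tor \to \sE \to \sI \to 0$ and $0 \to \sI \to \sE^{\vee\vee} \to \sQ \to 0$, dualizing the first sequence gives $\sI^\vee \cong \sE^\vee$, since $(\sE_\tor)^\vee = 0$ because a torsion sheaf admits no nonzero map to the torsion-free $\sO_X$ on an integral space. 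Dualizing the second sequence yields the left-exact sequence $0 \to \sQ^\vee \to \sE^{\vee\vee\vee} \to \sI^\vee$, and under the identification $\sI^\vee \cong \sE^\vee$ the map $\sE^{\vee\vee\vee} \to \sI^\vee$ is exactly $({\rm ev}_\sE)^\vee$. Hence $\ker\big(({\rm ev}_\sE)^\vee\big) \cong \sQ^\vee$, and it suffices to prove $\sQ^\vee = 0$.

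The crux, and the only step where more than formal manipulation enters, is to show that $\sQ = \cok({\rm ev}_\sE)$ is a \emph{torsion} sheaf, for then $\sQ^\vee = \sHom_{\sO_X}(\sQ,\sO_X) = 0$ and we conclude. For this I would pass to the generic stalk: over the fraction field $\sK_X$ the stalk of ${\rm ev}_\sE$ is the biduality map of a $\sK_X$-vector space, which is an isomorphism in the finite-rank case, so $\sQ$ vanishes at the generic point and is therefore torsion. I expect this to be the main obstacle, since it is precisely where integrality of $(X,\sO_X)$ (to have $\sK_X$, the notion of torsion, and the vanishing of duals of torsion sheaves) and the finiteness of the generic rank of $\sE$ are genuinely used; the formal zig-zag identity alone only gives split injectivity and does not suffice. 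I would note that some such finiteness is indispensable, as the conclusion already fails on a point for an infinite-dimensional vector space, so the proof legitimately relies on the coherence that is in force throughout this section.
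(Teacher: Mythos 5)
Your proof is correct in the setting where the lemma is actually applied (coherent sheaves on an integral, locally Noetherian space), but it takes a genuinely different route from the paper's, and the difference matters. The paper argues purely formally: after reducing to $X=\Spec(A)$ it proves injectivity of ${\rm ev}_N$ from the identity ${\rm ev}_N(f)({\rm ev}_M(x))=f(x)$ --- your zig-zag identity in disguise --- and then asserts that surjectivity is ``clear'' because $\alpha={\rm ev}_N(f_\alpha)$ for $f_\alpha=\alpha\circ{\rm ev}_M$. That last equality is exactly the non-formal point you isolate: the two functionals are only forced to agree on the image of ${\rm ev}_M$ inside $N^\vee=M^{\vee\vee}$, and your observation that the dual of an infinite-dimensional vector space over a field is not reflexive shows that no purely formal manipulation can close this gap --- the statement is simply false for arbitrary sheaves of modules on arbitrary locally ringed spaces. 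Your decomposition (split injectivity from the zig-zag identity, identification of $\ker\bigl(({\rm ev}_\sE)^\vee\bigr)$ with $\sQ^\vee$ for $\sQ=\cok({\rm ev}_\sE)$ via the two dualized exact sequences, and vanishing of $\sQ^\vee$ because $\sQ$ is torsion by a generic-rank computation) supplies precisely the missing content, at the price of invoking integrality of $(X,\sO_X)$ and coherence of $\sE$. Both hypotheses are available wherever the lemma is used (e.g.\ in \corref{cor:Reflexive-PF}, where one needs the double dual of a coherent sheaf on a Noetherian integral $S_2$-scheme to be reflexive), so nothing downstream is affected; you should just state explicitly that you are proving the coherent variant of the lemma, and, since the ambient formulation is for locally ringed spaces, it is cleaner to phrase the final step stalkwise --- ${\rm ev}_\sE\otimes\sK_X$ is at each point the biduality map of a finite-dimensional $\sK_{X,x}$-vector space --- rather than through ``the'' generic point.
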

\begin{proof}
  We need to show that for any sheaf of $\sO_X$-modules $\sM$ with dual $\sN$,
  the evaluation map
  ${\rm ev}_N \colon \sN \to \sHom_{\sO_X}(\sN^\vee, \sO_X)$ is an isomorphism.
  Since this is a local condition on $X$, we can assume that $X = \Spec(A)$ for
  a commutative ring $A$ and represent $\sM$ (resp. $\sN$) by $M$ (resp. $N$).
  
  Now, ${\rm ev}_N(f) = 0$ implies
  that $f(x) = {\rm ev}_N(f)({\rm ev}_M(x)) = 0$ for all $x \in M$.
  Equivalently, $f = 0$. This shows that ${\rm ev}_N$ is injective.
  To show the surjectivity, let
  $\alpha \in \Hom_{A}(N^\vee, A)$ and let $f_\alpha \colon M \to A$ be
  given by $f_\alpha(x) = \alpha({\rm ev}_M(x))$. It is then clear that
  $\alpha = {\rm ev}_N(f_\alpha)$.
\end{proof}

Let $(X, \sO_X)$ be a locally ringed space with Noetherian stalks
and let $\sE$ be a sheaf of $\sO_X$-modules. Recall that
$\sE$ is said to satisfy the $S_i$ property for some $i \ge 0$ if for
every $x \in X$, the $\sO_{X,x}$-module $\sE_x$ satisfies
Serre's $S_i$ condition, i.e., ${\rm depth}(\sE_x) \ge {\rm min}\{i, 
\dim(\sE_x)\}$ for every point $x \in X$.
One says that $X$ is a locally $S_i$-ringed
space (or an $S_i$-scheme if $X$ is a Noetherian scheme) if it has
Noetherian stalks and $\sO_X$ satisfies the $S_i$ property.
The following is easy to verify using \cite[Lemma~0AV6]{SP}.

\begin{lem}\label{lem:Depth-reflexive}
  Let $(X, \sO_X)$ be a locally $S_2$-ringed space
  and $\sE$ a coherent $\sO_X$-module. Then $\sE^\vee$
  satisfies the $S_2$ property. In particular, every reflexive
  coherent $\sO_X$-module satisfies the $S_2$ property.
\end{lem}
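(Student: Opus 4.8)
The plan is to check the $S_2$ property stalk-by-stalk, since by definition it is a pointwise condition. First I would fix a point $x \in X$ and set $A = \sO_{X,x}$, a Noetherian local ring which is $S_2$ by hypothesis, and $M = \sE_x$, a finite $A$-module. Because $\sE$ is coherent and the stalks of $\sO_X$ are Noetherian, forming the dual commutes with passing to stalks, so $(\sE^\vee)_x \cong \Hom_A(M, A)$. In this way the entire lemma reduces to the purely local statement that $\depth \Hom_A(M,A) \ge \min\{2, \dim \Hom_A(M,A)\}$ for every finite module $M$ over an $S_2$ Noetherian local ring $A$.

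The first step is to bound the depth of the dual from below, using \cite[Lemma~0AV6]{SP}, which yields $\depth \Hom_A(M,A) \ge \min\{2, \depth A\}$. For completeness I would recall its short proof: choosing a finite presentation $A^a \to A^b \to M \to 0$ and applying $\Hom_A(-,A)$ produces a left-exact sequence $0 \to \Hom_A(M,A) \to A^b \to A^a$, so, writing $I \subseteq A^a$ for the image, one obtains a short exact sequence $0 \to \Hom_A(M,A) \to A^b \to I \to 0$. The standard depth estimate for a short exact sequence gives $\depth \Hom_A(M,A) \ge \min\{\depth A^b, \depth I + 1\}$, and since ${\rm Ass}(I) \subseteq {\rm Ass}(A^a) = {\rm Ass}(A)$ one has $\depth I \ge \min\{1, \depth A\}$; combining these yields exactly $\depth \Hom_A(M,A) \ge \min\{2, \depth A\}$.

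The second step is the dimension bookkeeping, where the $S_2$ hypothesis on $A$ enters. Write $d = \dim A$ and $e = \dim \Hom_A(M,A)$, and note $e \le d$ because $\Hom_A(M,A)$ is a submodule of a finite free module. If $d \ge 2$, then $A$ being $S_2$ forces $\depth A \ge 2$, so $\depth \Hom_A(M,A) \ge 2 \ge \min\{2,e\}$. If instead $d \le 1$, then $A$ being $S_2$ gives $\depth A \ge \min\{2,d\} = d$, hence $\depth \Hom_A(M,A) \ge \min\{2, \depth A\} \ge \min\{2,d\} = d \ge e = \min\{2,e\}$. In either case $\sE^\vee$ satisfies $S_2$ at $x$, and since $x$ was arbitrary, $\sE^\vee$ is $S_2$. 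The final assertion then follows by applying the first part to the coherent sheaf $\sE^\vee$: its dual is $\sE^{\vee\vee}$, which equals $\sE$ when $\sE$ is reflexive, so $\sE$ is $S_2$ as well.

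I expect the only genuinely delicate point to be the low-dimensional bookkeeping in the third step, where the $S_2$ hypothesis on $\sO_X$ must be used in precisely the form $\depth A \ge \min\{2,\dim A\}$ in order to convert the intrinsic bound $\min\{2,\depth A\}$ coming from \cite[Lemma~0AV6]{SP} into the required $\min\{2,\dim \Hom_A(M,A)\}$; by contrast, the identification of the stalks of the dual and the depth estimate itself are routine, and the edge cases ($M = 0$ or $\Hom_A(M,A) = 0$) are handled vacuously by the usual depth conventions.
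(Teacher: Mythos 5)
Your proof is correct and takes essentially the same approach as the paper: the paper offers no argument beyond the citation of \cite[Lemma~0AV6]{SP}, and your stalk-wise reduction together with the case analysis converting the bound $\depth\Hom_A(M,A)\ge\min\{2,\depth A\}$ into $\min\{2,\dim\Hom_A(M,A)\}$ via the $S_2$ hypothesis is exactly the verification being left to the reader.
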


Let $k$ be a field. 
Suppose that $X$ is an integral $k$-scheme and
there is a locally closed immersion $X \inj \P^N_k$. We let
$\sO^r_X(m)$ be the restriction of the sheaf $(\sO_{\P^N_k}(m))^{\oplus r}$
onto $X$. Let $\sE$ be
a reflexive coherent sheaf on $X$. We can then prove the following.

\begin{lem}\label{lem:Reflexive}
  There are integers $q, q' \in \Z$, $r, r' \ge 1$, and a coherent sheaf
  $\sE'$ together with exact exact sequences
\begin{equation}\label{eqn:Reflexive-0}    
  0 \to \sE \to \sO^r_X(q) \to \sE' \to 0;
\end{equation}
\[
  0 \to \sE' \to \sO^{r'}_X(q') \to \sH \to 0.
\]
In particular, each of $\sE$ and $\sE'$ is torsion-free (or zero). If $\sE$ is
locally free, so is $\sE'$.
\end{lem}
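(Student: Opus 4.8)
The plan is to build both sequences by dualizing a single surjection of the form $\sO^r_X(-q) \surj \sE^\vee$, exploiting that $\sE$ is reflexive so that $\sE^{\vee\vee} = \sE$. The only genuinely nontrivial input will be the existence of such a surjection; everything else is formal homological algebra together with \lemref{lem:D-ref}. First I would dispose of the trivial case $\sE = 0$ (all sheaves zero) and assume $\sE \neq 0$, whence $\sE^\vee \neq 0$ since $(\sE^\vee)^\vee = \sE \neq 0$.

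The key step is to produce, for $q \gg 0$ and some $r \ge 1$, a surjection $u \colon \sO^r_X(-q) \surj \sE^\vee$. Because $X \inj \P^N_k$ is only locally closed, $X$ need not be projective, so Serre's theorem on global generation after twisting does not apply directly. I would therefore pass to the scheme-theoretic closure $\ov{X}$ of $X$ in $\P^N_k$, which is integral and projective and in which $X$ is a dense open subscheme. Extending the coherent sheaf $\sE^\vee$ to a coherent sheaf on $\ov{X}$ (a standard fact for open immersions of Noetherian schemes) and applying Serre's theorem on $\ov{X}$, I obtain a surjection $\sO^r_{\ov{X}}(-q) \surj (\text{the extension})$ for $q \gg 0$; restricting along $X \inj \ov{X}$ yields $u$. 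This passage to the closure is the main obstacle, as it is the only place where projectivity, hence ampleness of $\sO_X(1)$, is genuinely used.

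Given $u$, set $\sN = \Ker(u)$, so that $0 \to \sN \to \sO^r_X(-q) \xrightarrow{u} \sE^\vee \to 0$ is exact. Applying the contravariant left-exact functor $\sHom_{\sO_X}(-,\sO_X)$ and using $(\sO^r_X(-q))^\vee = \sO^r_X(q)$, I get an exact sequence $0 \to (\sE^\vee)^\vee \to \sO^r_X(q) \to \sN^\vee$. Since $\sE$ is reflexive, $(\sE^\vee)^\vee = \sE$, and defining $\sE'$ to be the cokernel of the resulting injection $\sE \inj \sO^r_X(q)$ produces the first sequence ~\eqref{eqn:Reflexive-0}. By construction $\sE'$ is isomorphic to the image of $\sO^r_X(q) \to \sN^\vee$, hence is a subsheaf of $\sN^\vee$; and $\sN^\vee$ is reflexive by \lemref{lem:D-ref}, so torsion-free, since the torsion subsheaf of any sheaf is the kernel of its evaluation map into the double dual, which is an isomorphism here. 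A subsheaf of a torsion-free sheaf is torsion-free, so $\sE'$ is torsion-free; and $\sE$ is torsion-free for the same reason, proving the ``torsion-free (or zero)'' assertion.

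For the second sequence I would apply the construction of the previous paragraph to the reflexive sheaf $\sN^\vee$ in place of $\sE$: this yields an embedding $\sN^\vee \inj \sO^{r'}_X(q')$ for suitable $q' \in \Z$ and $r' \ge 1$. Composing with the inclusion $\sE' \inj \sN^\vee$ gives $\sE' \inj \sO^{r'}_X(q')$, and taking $\sH$ to be its cokernel produces the second sequence. Finally, for the locally free claim: if $\sE$ is locally free then so is $\sE^\vee$, so the surjection $u$ onto a locally free sheaf splits locally and $\sN = \Ker(u)$ is locally free; the sequence defining $\sN$ is then locally split, so dualizing it stays exact and identifies $\sE' = \sN^\vee$, which is locally free.
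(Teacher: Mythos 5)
Your proof is correct and follows essentially the same route as the paper: surject a twisted free sheaf onto $\sE^\vee$, dualize using reflexivity to embed $\sE$ into $\sO^r_X(q)$, define $\sE'$ as the cokernel sitting inside the (reflexive, hence torsion-free) dual of the kernel, and repeat once more for the second sequence. The only difference is that you explicitly justify the existence of the surjection $\sO^r_X(-q)\surj \sE^\vee$ for the merely locally closed $X\inj\P^N_k$ by extending $\sE^\vee$ to the projective closure and applying Serre's theorem there --- a point the paper's proof takes for granted --- which is a welcome clarification rather than a change of method.
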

\begin{proof}
  Since $\sE^\vee$ is coherent, there is a surjection $\sO^r_X(-q) \surj \sE^\vee$
  for some $q, r \gg 0$. We let $\sF$ be the kernel of this surjection.
  Since $\sF$ is necessarily coherent, we also have a surjection
  $\sO^{r'}_X(-q') \surj \sF$ for some $q', r' \gg 0$. Letting $\sF'$ be
  its kernel,  we get short exact sequences of coherent sheaves
  \[
    0 \to \sF \to \sO^r_X(-q) \to \sE^\vee \to 0;
  \]
  \[
    0 \to \sF' \to \sO^{r'}_X(-q') \to \sF \to 0.
  \]
 By dualizing, we get exact sequences
 \begin{equation}\label{eqn:Reflexive-1}
   0 \to \sE \to \sO^r_X(q) \to \sF^\vee \to \sExt^1_{\sO_X}(\sE^\vee, \sO_X)
   \to 0;
 \end{equation}
 \[
   0 \to \sF^\vee \to \sO^{r'}_X(q') \to \sF'^\vee \to
   \sExt^1_{\sO_X}(\sF, \sO_X) \to 0.
   \]

   Letting $\sE'$ be the cokernel of $\sE \to \sO^r_X(q)$, we get the
   two exact sequences of ~\eqref{eqn:Reflexive-0}. If $\sE$ is locally free, then
 so is $\sE^\vee$. In this case, $\sExt^1_{\sO_X}(\sE^\vee, \sO_X) = 0$ and
 $\sF$ must also be locally free. This implies that $\sE'$ is locally free.
\end{proof}

\subsection{The Hartogs lemma}\label{sec:Hartogs}
We need to prove a version of the Hartogs lemma for formal schemes.
Before we do this, we recall this result for the ordinary schemes.

\begin{lem}\label{lem:Local-Hartogs}
  Let $A$ be a Noetherian integral domain such that $X = \Spec(A)$ is an
$S_2$-scheme.
    Let $U \subset X$ be an open subscheme whose complement has codimension at
    least two in $X$. Let $E$ be a finitely generated reflexive $A$-module
    and $\sE$ the associated Zariski sheaf on $X$. Then the canonical map
    $E \to H^0(U, \sE_U)$ is an isomorphism.
  \end{lem}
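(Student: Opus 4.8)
The plan is to translate the assertion into the vanishing of two local cohomology groups and then to deduce that vanishing from a depth estimate supplied by the $S_2$ property. Write $Z = X \setminus U$ and choose an ideal $I \subseteq A$ with $V(I) = Z$. The local cohomology sequence with support in $Z$ reads
\[
0 \to H^0_Z(X, \sE) \to H^0(X, \sE) \to H^0(U, \sE_U) \to H^1_Z(X, \sE) \to \cdots,
\]
and since $H^0(X, \sE) = E$ (as $E$ is finitely generated and $X = \Spec(A)$), the canonical map $E \to H^0(U, \sE_U)$ is an isomorphism exactly when $H^0_Z(X, \sE) = H^1_Z(X, \sE) = 0$. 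So the whole lemma reduces to these two vanishings.

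First I would record that $\sE$ satisfies Serre's $S_2$ condition: this is precisely \lemref{lem:Depth-reflexive} applied to the reflexive coherent module $E$ over the $S_2$-scheme $X = \Spec(A)$. Next, because $A$ is a domain and $E$ is reflexive, $E$ is torsion-free, so $\operatorname{Ann}_A(E) = 0$ and hence $\dim E_\fp = \dim A_\fp = \operatorname{ht}(\fp)$ for every prime $\fp$ (the case $E = 0$ being trivial). Now fix $\fp \in Z$. Since $\fp$ contains a minimal prime of $I$, and every such minimal prime has height at least two by the codimension hypothesis on $Z$, we get $\operatorname{ht}(\fp) \ge 2$, whence $\dim E_\fp \ge 2$. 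The $S_2$ property then yields
\[
\depth E_\fp \ge \min\{2, \dim E_\fp\} = 2
\]
for every $\fp \in Z$.

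The step I expect to be the crux is converting this pointwise depth bound into the global vanishing of $H^0_Z$ and $H^1_Z$. For this I would appeal to the standard comparison between local cohomology and depth: for a coherent sheaf on a Noetherian scheme one has $H^i_Z(X, \sE) = 0$ for all $i < \inf_{\fp \in Z}\depth \sE_\fp$ (equivalently, this infimum is the $I$-grade of $E$, which is the least degree in which $H^i_I(E)$ is nonzero; see \cite{SP}). By the previous paragraph this infimum is at least $2$, so $H^0_Z(X, \sE) = H^1_Z(X, \sE) = 0$, and the reduction of the first paragraph then completes the proof. As a consistency check, the vanishing of $H^0_Z$ already follows from torsion-freeness of $E$, so the genuine content of the argument is the depth-$2$ bound feeding the vanishing of $H^1_Z$.
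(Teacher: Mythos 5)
Your proof is correct, but it follows a genuinely different route from the paper's. The paper argues by pure commutative algebra: reflexivity of $E$ yields an embedding $0 \to E \to A^r \to A^{r'}$, a diagram chase comparing sections over $X$ and over $U$ reduces everything to the case $E = A$, and that case is handled by the classical Hartogs intersection formula $A = \bigcap_{\depth A_\fp = 1} A_\fp$ (\lemref{lem:Hartog-classical}), proved via an analysis of the associated primes of a principal ideal in an $S_2$-domain. You instead run the local cohomology sequence for the pair $(X, Z)$ and kill $H^0_Z$ and $H^1_Z$ using the depth-sensitivity of local cohomology together with the bound $\depth \sE_\fp \ge 2$ on $Z$, which you correctly extract from \lemref{lem:Depth-reflexive} plus torsion-freeness (needed to know $\dim E_\fp = \operatorname{ht}(\fp)$, so that the $\min$ in the $S_2$ condition does not degenerate). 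All the steps check out: the identification of the obstruction with $H^0_Z$ and $H^1_Z$, the height bound for primes in $Z$, and the standard fact that $H^i_Z(X,\sE)$ vanishes below the $I$-grade of $E$, which equals $\inf_{\fp \in Z} \depth E_\fp$. Interestingly, your technique is exactly the one the authors deploy later (in the proof of \thmref{thm:Lef-EFG}, citing Hartshorne's Local Cohomology, Theorem 3.8), so it is entirely in the spirit of the paper. What the paper's more pedestrian reduction buys is reusability: the same ``embed the reflexive module into free modules and chase the diagram'' step is recycled verbatim in the formal-scheme version (\lemref{lem:Local-analytic-Hartogs}), where a direct local cohomology argument would not transfer as cleanly. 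Your argument is shorter here and generalizes immediately to higher vanishing ($H^i_Z = 0$ for $i < n$ under an $S_n$ hypothesis), but it leans on more machinery.
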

  \begin{proof}
    Since $E$ is reflexive, we can find an exact sequence of $A$-modules
    \begin{equation}\label{eqn:Local-Hartogs-0}
      0 \to E \to A^r \to A^{r'}
      \end{equation}
      for some $r, r' \ge 1$.
      This gives rise to a commutative diagram of exact sequences
    \begin{equation}\label{eqn:Local-Hartogs-1}
      \xymatrix@C.8pc{
        0 \ar[r] & E \ar[r] \ar[d]_-{j^*} & A^r \ar[r] \ar[d]^-{j^*}
        & A^{r'} \ar[d]^-{j^*} \\
        0 \ar[r] & H^0(U, \sE_{U}) \ar[r] & \sO(U)^r \ar[r]
        & \sO(U)^{r'},}
    \end{equation}
    where $j \colon U \inj X$ is the inclusion.
    A diagram chase shows that we can assume that $E = A$. But this case
    follows from \lemref{lem:Hartog-classical}.
\end{proof}

The following lemma is well known.

\begin{lem}\label{lem:Hartog-classical}
    Let $A$ be a Noetherian integral domain with quotient field $K$
    which is an $S_2$-ring. Then
    $A = {\underset{{\rm depth} \ A_\fp = 1}\bigcap} \ A_{\fp}$ inside $K$.
  \end{lem}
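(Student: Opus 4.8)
The plan is to prove the two inclusions of $A = \bigcap_{\depth A_\fp = 1} A_\fp$ inside $K$, the substance being the inclusion $\supseteq$. First I would pin down the index set. Since $A$ is an $S_2$-domain, every prime $\fp$ satisfies $\depth(A_\fp) \ge \min\{2, \operatorname{ht}(\fp)\}$; hence $\depth(A_\fp) = 1$ holds \emph{exactly} for the height-one primes (height $0$ gives $A_{(0)} = K$, of depth $0$, while height $\ge 2$ forces depth $\ge 2$, and in height $1$ the depth is squeezed between $1$ and $\dim A_\fp = 1$). Thus the intersection runs precisely over the height-one primes, and the inclusion $A \subseteq \bigcap_\fp A_\fp$ is immediate.

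For the reverse inclusion, fix $x \in \bigcap_{\operatorname{ht}(\fp) = 1} A_\fp$ and introduce the ideal of denominators $I = (A :_A x) = \{a \in A : ax \in A\}$. Writing $x = b/a$ with $a \ne 0$ shows $I \ne 0$, and the condition $x \in A_\fp$ is equivalent to $I \not\subseteq \fp$; so by hypothesis $I$ lies in no height-one prime. The goal becomes to show $I = A$, which says exactly $x \in A$.

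Next I would argue by contradiction. If $I \subsetneq A$, then $A/I$ is a nonzero module, so it has an associated prime $\fq$; since $I \ne 0$ we get $\fq \ne 0$, and since $I \not\subseteq \fq$ for height-one $\fq$, necessarily $\operatorname{ht}(\fq) \ge 2$. This is where the $S_2$ hypothesis enters decisively, yielding $\depth(A_\fq) \ge 2$. I would then consider the finitely generated fractional ideal $M = A + Ax \subseteq K$, which fits in a short exact sequence $0 \to A \to M \to A/I \to 0$: indeed $M/A$ is cyclic, generated by the class of $x$, and the surjection $A \to M/A$, $a \mapsto ax \bmod A$, has kernel exactly $I$. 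Localizing at $\fq$ and applying the depth lemma $\depth M''_\fq \ge \min\{\depth M'_\fq - 1,\ \depth M_\fq\}$ with $M'_\fq = A_\fq$ and $M''_\fq = (A/I)_\fq$, using $\depth((A/I)_\fq) = 0$ (as $\fq$ is associated) and $\depth(A_\fq) \ge 2$, forces $\depth(M_\fq) = 0$.

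The contradiction — and the only (mild) technical point — comes from torsion-freeness. Because $M \subseteq K$, the localization $M_\fq$ is a nonzero torsion-free $A_\fq$-module, and over the local domain $A_\fq$, whose maximal ideal is nonzero since $\operatorname{ht}(\fq) \ge 2$, any nonzero torsion-free finitely generated module has positive depth (an element annihilated by the maximal ideal would be a nonzero torsion element, impossible). Hence $\depth(M_\fq) \ge 1$, contradicting $\depth(M_\fq) = 0$. Therefore $I = A$ and $x \in A$, proving $\bigcap_\fp A_\fp \subseteq A$. The hard part, such as it is, will be the depth bookkeeping in the short exact sequence and the observation that the $S_2$ condition is precisely what supplies $\depth(A_\fq) \ge 2$; the remaining steps are formal.
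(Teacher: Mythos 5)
Your proof is correct, but it takes a genuinely different route from the paper's. Both arguments begin with the same observation — that in an $S_2$ Noetherian domain the primes of depth one are exactly the primes of height one — but then diverge. The paper writes $x = a/b$ and reduces to showing $a \in (b)$: using that $A/(b)$ is $S_1$ (quoting unmixedness results from EGA~IV), all associated primes of $(b)$ have height one, and the conclusion follows from the injection $A/(b) \hookrightarrow \prod_{\fp \in {\rm Ass}(b)} A_\fp/bA_\fp$; this is the Stacks Project argument 031T adapted to the $S_2$ setting. You instead work with the ideal of denominators $I = (A :_A x)$, take an associated prime $\fq$ of $A/I$ (necessarily of height $\ge 2$, hence $\depth(A_\fq) \ge 2$ by $S_2$), and derive a contradiction from the depth lemma applied to $0 \to A \to A + Ax \to A/I \to 0$ together with the torsion-freeness of $A + Ax \subseteq K$. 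Your version is the standard homological proof that $S_2$-modules satisfy the Hartogs property; it is self-contained and avoids the appeal to unmixedness of principal ideals, at the cost of invoking the depth inequality for short exact sequences. The paper's version is shorter on the page because it outsources the key step to a cited lemma. Both are sound, and the depth bookkeeping in your argument (in particular the step $\depth(M_\fq) \ge 1$ for a nonzero finitely generated torsion-free module over the local domain $A_\fq$ with ${\rm ht}(\fq) \ge 2$) is carried out correctly.
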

  \begin{proof}
    The proof of this lemma is identical to that of  \cite[Lemma~031T(2)]{SP}
    once we observe that in a Noetherian integral domain which is an $S_2$-ring, a prime ideal has height one if and only if its depth is one.
    We add a proof of the lemma for completeness.
    
    Suppose $a, b \in A$ are two nonzero elements such that $a \in bA_{\fp}$
    for every prime ideal $\fp \subset A$ of depth one.
    We need to show that $a \in (b)$.

    Let ${\rm Ass}(b)$ denote the set of associated primes of $(b)$.
    Since the integral domain $A$ is an $S_2$-ring  and $b \neq 0$, an easy consequence of
    \cite[Propositions~16.4.6(ii), 16.4.10(i)]{EGA-4} is that ${A}/{(b)}$ is an
    $S_1$-ring (e.g., see \cite[Lemma~2.1]{Ghosh-Krishna}). In particular,
    all associated primes of ${A}/{(b)}$ have height zero.
    The latter statement is equivalent to saying that all associated
    primes of $(b)$ are minimal, and hence have height one by
    Krull's principal ideal theorem (see \cite[Theorem~13.5]{Matsumura}).
    Since $1 \le {\rm depth}(A_{\fp}) \le \dim(A_{\fp}) \le 1$, we
    conclude that ${\rm depth}(A_{\fp}) = 1$ for every $\fp \in {\rm Ass}(b)$.
    This implies by our hypothesis that
    $a \in bA_{\fp}$ for every $\fp \in {\rm Ass}(b)$.
    We are now done because the canonical map
    \[
      {A}/{(b)} \to {\underset{\fp \in {\rm Ass}(b)}\prod}
      \ {A_{\fp}}/{bA_{\fp}}
    \]
    is injective (e.g., see \cite[Lemma~0311]{SP}).
    \end{proof}

    \begin{lem}\label{lem:Global-Hartogs}
  Let $X$ be a Noetherian integral $S_2$-scheme and $j \colon U \inj X$ an
  open immersion whose complement has codimension at least two in $X$.
  Then the unit of adjunction map $\sE \to j_* \sE_U$ is an isomorphism for
  every reflexive coherent Zariski sheaf $\sE$ on $X$.
\end{lem}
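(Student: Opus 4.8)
The plan is to deduce this global statement from the affine version already established in \lemref{lem:Local-Hartogs} by checking the assertion Zariski-locally on $X$. Since a morphism of sheaves is an isomorphism if and only if its restriction to each member of an open cover is, I would fix an affine open cover $\{V_\lambda\}$ of $X$, with $V_\lambda = \Spec(A_\lambda)$, and prove that the restriction to each $V_\lambda$ of the unit map $\sE \to j_* \sE_U$ is an isomorphism.

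First I would record that all the hypotheses descend to each $V_\lambda$. An open subscheme of a Noetherian integral $S_2$-scheme is again Noetherian, integral and $S_2$, so each $A_\lambda$ is a Noetherian integral domain which is $S_2$. Writing $Z = X \setminus U$ for the closed complement, the intersection $Z \cap V_\lambda$ still has codimension at least two in $V_\lambda$: for the generic point $\eta$ of any irreducible component of $Z \cap V_\lambda$ one has $\sO_{V_\lambda, \eta} = \sO_{X, \eta}$, so the local dimension is unchanged. Finally, since duality commutes with restriction to opens and with $\wt{(-)}$ on coherent sheaves over a Noetherian scheme, the restriction $\sE|_{V_\lambda}$ is $\wt{E_\lambda}$ for $E_\lambda := \sE(V_\lambda)$, and the evaluation isomorphism restricts to show $E_\lambda$ is a finitely generated reflexive $A_\lambda$-module.

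Next I would identify the two sheaves over $V_\lambda$. Let $j_\lambda \colon U \cap V_\lambda \inj V_\lambda$ be the induced open immersion. The formula $(j_* \sE_U)(W) = \sE_U(U \cap W)$ for open $W \subseteq V_\lambda$ shows that $(j_* \sE_U)|_{V_\lambda} = (j_\lambda)_*(\sE_U|_{U \cap V_\lambda})$ and that the unit of adjunction restricts to the unit for $j_\lambda$. Because $X$ is Noetherian, $j_\lambda$ is quasi-compact and separated, so $(j_\lambda)_*(\sE_U|_{U \cap V_\lambda})$ is quasi-coherent on the affine scheme $V_\lambda$; hence it equals $\wt{M_\lambda}$ for its module of global sections $M_\lambda := H^0(U \cap V_\lambda, \sE_U)$. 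A morphism of quasi-coherent sheaves on an affine scheme is an isomorphism precisely when the induced map on global sections is one, so the problem reduces to showing that $E_\lambda \to M_\lambda$ is an isomorphism. This is exactly \lemref{lem:Local-Hartogs} applied to $A_\lambda$, the reflexive module $E_\lambda$, and the open $U \cap V_\lambda \subseteq V_\lambda$ whose complement has codimension at least two.

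The argument is mostly bookkeeping, and the two points I would treat with genuine care are the stability of the codimension-$\ge 2$ condition under restriction to an open (handled via the equality of local rings at generic points of the components of the complement) and the quasi-coherence of $(j_\lambda)_*(\sE_U|_{U \cap V_\lambda})$, where the Noetherian hypothesis is what guarantees that $j_\lambda$ is quasi-compact and quasi-separated. Granting these, the local Hartogs lemma gives the isomorphism on global sections over each affine $V_\lambda$, and since this holds on a cover, the unit map $\sE \to j_* \sE_U$ is an isomorphism of sheaves.
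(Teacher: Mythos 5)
Your proof is correct and follows the same route as the paper, which simply notes that the statement is a local question and invokes \lemref{lem:Local-Hartogs}; you have merely made explicit the standard reduction steps (restriction of the hypotheses and of the unit map to an affine cover, quasi-coherence of $(j_\lambda)_*$, and passage to global sections) that the paper leaves implicit.
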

\begin{proof}
  This is a local question and hence follows directly from
  \lemref{lem:Local-Hartogs}.
  \end{proof}
  
  \begin{cor}\label{cor:Global-Hartogs-0}
    Let $\sE$ be as in \lemref{lem:Global-Hartogs}. Then the
    restriction map $H^0(X, \sE) \to H^0(U, \sE_U)$ is an isomorphism.
  \end{cor}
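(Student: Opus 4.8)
The plan is to deduce this immediately from \lemref{lem:Global-Hartogs} by passing to global sections. First I would invoke that lemma, which asserts that the unit of adjunction $\eta \colon \sE \to j_* \sE_U$ is an isomorphism of Zariski sheaves on $X$ (here using that $X$ is a Noetherian integral $S_2$-scheme, that the complement of $U$ has codimension at least two, and that $\sE$ is reflexive coherent). Applying the left-exact functor $H^0(X, -)$ to an isomorphism of sheaves yields an isomorphism of abelian groups $H^0(X, \sE) \xrightarrow{\cong} H^0(X, j_* \sE_U)$.

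Next, the key bookkeeping point is to identify the target. By the very definition of the pushforward along the open immersion $j \colon U \inj X$, one has $(j_* \sE_U)(X) = \sE_U(j^{-1}(X)) = \sE_U(U) = H^0(U, \sE_U)$, so $H^0(X, j_* \sE_U)$ is canonically $H^0(U, \sE_U)$. Under this identification, the map induced by $\eta$ on global sections is exactly the restriction map $H^0(X, \sE) \to H^0(U, \sE_U)$: for an open immersion $j^* \sE = \sE|_U$, and $\eta$ is at the level of sections the restriction of a section on $X$ to $U$. Composing the two displayed identifications therefore shows that the restriction map is an isomorphism, as claimed.

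I do not expect any genuine obstacle here; the entire substance is already packaged into \lemref{lem:Global-Hartogs} (which itself reduces, via \lemref{lem:Local-Hartogs}, to the classical Hartogs statement \lemref{lem:Hartog-classical}). The only thing requiring a word of care is the compatibility of the adjunction-induced map on $H^0$ with the honest restriction map, and this is immediate from the construction of the unit $\eta$. Accordingly I would keep the write-up to a single short paragraph in the actual proof.
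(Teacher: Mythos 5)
Your proposal is correct and follows exactly the paper's route: the paper's proof of this corollary is literally ``Immediate from Lemma~\ref{lem:Global-Hartogs},'' and your write-up just spells out the standard identification $H^0(X, j_*\sE_U) = H^0(U,\sE_U)$ and the compatibility of the adjunction unit with restriction. No issues.
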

  \begin{proof}
Immediate from \lemref{lem:Global-Hartogs}.
\end{proof}

\begin{cor}\label{cor:Reflexive-PF}
  Let $U \subset X$ be an open immersion as in \lemref{lem:Global-Hartogs} and 
  $\sE$ a reflexive coherent sheaf on $U$. Then $j_* \sE$ is 
  a reflexive coherent sheaf on $X$.
\end{cor}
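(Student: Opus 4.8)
The plan is to produce a reflexive coherent sheaf $\sG$ on all of $X$ whose restriction to $U$ recovers $\sE$, and then to identify $j_*\sE$ with $\sG$ using \lemref{lem:Global-Hartogs}. Since $\sG$ will be reflexive and coherent by construction, this will immediately give the assertion. The idea is that the codimension-$\geq 2$ complement together with the $S_2$ hypothesis makes pushing forward along $j$ the same as taking a reflexive hull on $X$.

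First I would record that $U$, being an open subscheme of the Noetherian integral $S_2$-scheme $X$, is again Noetherian, integral and $S_2$, so the hypotheses of \lemref{lem:Global-Hartogs} apply to any reflexive coherent sheaf we construct on $X$. Next, since $X$ is Noetherian, the coherent sheaf $\sE$ on the open subscheme $U$ extends to a coherent sheaf $\sF$ on $X$ (the standard coherent-extension fact on Noetherian schemes), so that $\sF|_U \cong \sE$. I would then set $\sG := \sF^{\vee\vee}$. Because $X$ is Noetherian and $\sF$ is coherent, $\sF^\vee = \sHom_{\sO_X}(\sF, \sO_X)$ is coherent, and hence so is $\sG$; and by \lemref{lem:D-ref} the sheaf $\sG = (\sF^\vee)^\vee$, being a dual, is reflexive. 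Thus $\sG$ is a reflexive coherent sheaf on $X$.

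The key compatibility to check is that dualization commutes with restriction along the open immersion $j$, namely $(\sF^\vee)|_U \cong (\sF|_U)^\vee$, whence $\sG|_U \cong (\sF|_U)^{\vee\vee} \cong \sE^{\vee\vee} \cong \sE$, the last step because $\sE$ is already reflexive. I would then apply \lemref{lem:Global-Hartogs} to the reflexive coherent sheaf $\sG$: its unit of adjunction $\sG \to j_*(\sG|_U) = j_*\sE$ is an isomorphism. Therefore $j_*\sE \cong \sG$ is reflexive and coherent, as required.

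I expect the only genuinely non-formal point to be the existence of a coherent extension $\sF$ of $\sE$ across the codimension-$\geq 2$ complement; everything else is bookkeeping with duals and the already-proven Hartogs isomorphism \lemref{lem:Global-Hartogs}. Since such coherent extensions are available on any Noetherian scheme, this obstacle is mild, and the coherence and reflexivity of $j_*\sE$ then follow formally.
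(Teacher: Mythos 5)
Your argument is correct and is essentially the paper's own proof: the paper likewise chooses a coherent extension of $\sE$ to $X$, takes its double dual to get a reflexive coherent sheaf restricting to $\sE$ on $U$ (since $\sE$ is already reflexive), and then invokes \lemref{lem:Global-Hartogs} to identify this sheaf with $j_*\sE$. Your write-up merely spells out the intermediate checks (coherence and reflexivity of the double dual via \lemref{lem:D-ref}, and compatibility of dualization with restriction to opens) that the paper leaves implicit.
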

\begin{proof}
  By choosing a coherent extension of $\sE$ on $X$ and taking its double dual,
  we can find a reflexive coherent sheaf $\sE'$ on $X$ such that
  $j^* \sE' \cong \sE$. We now apply \lemref{lem:Global-Hartogs} to conclude
  the proof.
\end{proof}

\subsection{The formal Hartogs lemma}\label{sec:FHL}
Let $X$ be a Noetherian scheme and $Y \subset X$ a closed subscheme.
Let $\wh{X}$ denote the formal completion of $X$ along $Y$ (see
\cite[Chapitre~0, \S~9]{EGA-I} or
\cite[Chapter~II, \S~10]{Hartshorne}). Let $\sI_Y$ denote
the sheaf of ideals on $X$ defining $Y$. 
Let $Y_m$ denote the closed subscheme of $X$ defined by the ideal sheaf
$\sI^m_Y$. For any open subscheme $U \subset X$ such that $V:= U \cap Y$ is
dense in $Y$, we have a commutative diagram of Noetherian formal schemes
\begin{equation}\label{eqn:Formal-completion}
  \xymatrix@C.8pc{
    V_m \ar[r]^-{\iota_U} \ar[d]_-{j_Y} & \ar[r] \wh{U} \ar[r]^-{c_U}
    \ar[d]_{\wh{j}} & U \ar[d]^-{j} \\
  Y_m \ar[r]^-{\iota} & \wh{X} \ar[r]^-{c_X} & X.}
\end{equation}
where $V_m := U\cap Y_m$. In the right square, all arrows are flat morphisms
(of locally ringed spaces),
the vertical arrows are open immersions (see \cite[\S~01HD]{SP}
for the definition of open immersion of locally ringed spaces)
and the horizontal arrows are the completion morphisms.
In the left square, the vertical arrows are open immersions and
the horizontal arrows are closed immersions. The two square are Cartesian.
The compositions of the two horizontal arrows are the given closed
immersions $Y_m \inj X$ and $V_m \inj U$ of schemes.

For any quasi-coherent sheaf $\sF$ on $X$, we let
$\wh{\sF}$ denote the
pull-back of $\sF$ under $c_X$. Note that
the canonical map $\wh{\sF} \to {\underset{m}\varprojlim} \ \sF \otimes_{\sO_X}
\sO_{Y_m}$ is an isomorphism if $\sF$ is coherent. 
We shall use this isomorphism later. For any morphism 
$f \colon X' \to X$ and quasi-coherent sheaf $\sF$ on $X$, we let 
$\sF_{X'} = f^*(\sF)$. We shall write $\sF_{Y_m}$ simply as $\sF_m$ for 
$m \ge 1$.
Our goal in this subsection is to prove a formal version of the
Hartogs lemma.

Let $A$ be an excellent normal integral domain and $J \subset A$
    the radical ideal such that $V(J) = X_\sing$, where  $X = \Spec(A)$. 
Let $\fp \subset A$
    be a complete intersection prime ideal such that ${\rm ht}(\fp + J) \ge
    {\rm ht}(\fp) + {\rm ht}(J)$
    and $A/{\fp}$ is normal. Let $U \subset X$ be
    an open subscheme containing $V(\fp) \cap X^o$ whose complement has
    codimension at least two in $X$. Let $\wh{A}$ be the
    $\fp$-adic completion of $A$ and $\wh{X} = \Spf(\wh{A})$, the formal
spectrum of $\wh{A}$. Let
    $\wh{U}$ be the formal completion of
    $U$ along $V(\fp) \cap U$. We let $Y = \Spec(A/{\fp})$ and 
$A_m:=A/{\fp^m}$. Since $A$ is excellent, so is $U$. It follows therefore
    from \cite[Corollaire~6.5.4, Scholie~7.8.3(v)]{EGA-IV} that
    $\wh{X}$ and $\wh{U}$ are both normal integral formal schemes.
Under these conditions, we have the following.

\begin{lem}\label{lem:Local-analytic-Hartogs}  
    For any finitely generated reflexive $A$-module $E$ with the associated
    Zariski sheaf $\sE$ on $X$, the canonical
    map $\wh{E} \to H^0(\wh{U}, \wh{\sE}_U)$ is an isomorphism.
  \end{lem}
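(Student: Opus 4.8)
The plan is to reduce the assertion first to the case of the structure sheaf, and then to reduce that case, via the theorem on formal functions, to the ordinary (non-formal) Hartogs lemma on the infinitesimal thickenings of $Y$ inside $X$. First I would exploit reflexivity of $E$ to replace $\sE$ by $\sO_X$. Since $E$ is finitely generated and reflexive, dualizing a finite presentation of $E^\vee$ (as in \lemref{lem:Reflexive}) yields an exact sequence $0 \to E \to A^r \to A^{r'}$. The $\fp$-adic completion $\wh A$ is flat over $A$ and, by the discussion around \eqref{eqn:Formal-completion}, the completion morphisms $c_X$ and $c_U$ are flat; hence completing this sequence over $\wh X$ and over $\wh U$ keeps it left exact:
\[
0 \to \wh E \to \wh A^{\,r} \to \wh A^{\,r'}, \qquad 0 \to \wh\sE_U \to \wh\sO_U^{\,r} \to \wh\sO_U^{\,r'}.
\]
Applying the left exact functor $H^0(\wh U, -)$ to the second sequence identifies $H^0(\wh U, \wh\sE_U)$ with $\ker\big(H^0(\wh U, \wh\sO_U)^{r} \to H^0(\wh U, \wh\sO_U)^{r'}\big)$, while $\wh E = \ker(\wh A^{r} \to \wh A^{r'})$. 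Comparing these two kernels via the five lemma then shows that it suffices to treat the case $E = A$, i.e.\ to prove that the canonical map $\wh A \to H^0(\wh U, \wh\sO_U)$ is an isomorphism.

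Next I would treat the structure sheaf by passing to the thickenings. By construction of the formal structure sheaf, $\wh\sO_U = \varprojlim_m \sO_{V_m}$, and since $H^0$ commutes with inverse limits this gives $H^0(\wh U, \wh\sO_U) = \varprojlim_m H^0(V_m, \sO_{V_m})$; likewise $\wh A = \varprojlim_m A/\fp^m = \varprojlim_m H^0(Y_m, \sO_{Y_m})$. Thus it is enough to show that for every $m \ge 1$ the restriction map $A/\fp^m = H^0(Y_m, \sO_{Y_m}) \to H^0(V_m, \sO_{V_m})$ is an isomorphism, where $Z_m := Y_m \setminus V_m$. This is precisely the ordinary Hartogs lemma for the (non-reduced) scheme $Y_m$, which holds once one knows that $A/\fp^m$ is an $S_2$-ring and that $Z_m$ has codimension at least two in $Y_m$: these two facts force $H^0_{Z_m}(\sO_{Y_m}) = H^1_{Z_m}(\sO_{Y_m}) = 0$ by the depth estimate for local cohomology, and hence give the isomorphism at each level $m$.

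It remains to verify the two inputs, and the $S_2$-property of $A/\fp^m$ is the heart of the matter. Here the hypothesis that $\fp$ is a complete intersection is essential: writing $\fp = (f_1, \dots, f_c)$ with $f_1, \dots, f_c$ an $A$-regular sequence and $c = \operatorname{ht}(\fp)$, the associated graded ring $\gr_\fp(A)$ is a polynomial ring $(A/\fp)[x_1, \dots, x_c]$, so each $\fp^m/\fp^{m+1}$ is a free $A/\fp$-module. Since $A/\fp$ is normal, hence $S_2$, each $\fp^m/\fp^{m+1}$ is $S_2$, and an induction on $m$ along the short exact sequences $0 \to \fp^m/\fp^{m+1} \to A/\fp^{m+1} \to A/\fp^m \to 0$, using $\depth M \ge \min\{\depth M', \depth M''\}$ and that all three modules share the support $V(\fp)$, shows that $A/\fp^m$ is $S_2$ for all $m$. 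I would emphasize that one cannot argue with $E/\fp^m E$ in place of $A/\fp^m$, since a reflexive $E$ need not have $E/\fp^m E$ of type $S_2$; this is exactly why the reduction to $E = A$ above is needed. For the codimension bound, the containment $U \supseteq V(\fp) \cap X^o$ gives $Z_m \subseteq Y \cap X_{\sing} = V(\fp + J)$, and since $A$ is a normal (so $\operatorname{ht}(J) \ge 2$) excellent, hence universally catenary, domain, the hypothesis $\operatorname{ht}(\fp + J) \ge \operatorname{ht}(\fp) + \operatorname{ht}(J)$ yields $\codim_{Y} V(\fp + J) \ge \operatorname{ht}(J) \ge 2$. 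The main obstacle is thus the $S_2$-assertion for $A/\fp^m$; once it is in hand, taking the inverse limit of the level-$m$ isomorphisms and then undoing the reflexive reduction of the first paragraph completes the proof.
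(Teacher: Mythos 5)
Your proposal is correct, and its first two reductions coincide with the paper's: you pass from $E$ to $A$ via the left-exact presentation $0 \to E \to A^r \to A^{r'}$ and flatness of completion, and then from the formal scheme to the finite thickenings via $\wh{A} \cong \varprojlim_m A_m$ and $H^0(\wh{U}, \sO_{\wh{U}}) \cong \varprojlim_m H^0(V_m, \sO_{V_m})$. Where you genuinely diverge is in proving that $A_m \to H^0(V_m, \sO_{V_m})$ is an isomorphism for each $m$. The paper runs an induction on $m$ directly at the level of global sections: the complete-intersection hypothesis makes $\fp^m/\fp^{m+1} \cong A_1^q$ free over $A/\fp$, giving the sheaf sequence $0 \to \sO_Y^q \to \sO_{Y_{m+1}} \to \sO_{Y_m} \to 0$, and the five lemma bootstraps the isomorphism from the base case $m=1$, which is the ordinary Hartogs lemma (\lemref{lem:Local-Hartogs}) for the normal integral scheme $Y$. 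You instead isolate the statement that each thickening $A/\fp^m$ is an $S_2$-ring --- proved by induction from the same freeness of the conormal graded pieces together with the depth inequality for extensions --- and then apply the local-cohomology form of Hartogs ($H^0_{Z_m}(\sO_{Y_m}) = H^1_{Z_m}(\sO_{Y_m}) = 0$ because $\depth \ge 2$ along $Z_m$) to each $Y_m$ independently. Both routes consume exactly the same inputs (freeness of $\fp^m/\fp^{m+1}$ and codimension $\ge 2$ of the removed locus, the latter exactly as you derive it from ${\rm ht}(\fp+J) \ge {\rm ht}(\fp) + {\rm ht}(J)$), but yours makes the mechanism explicit by identifying the $S_2$ property of the non-reduced thickenings as the reason Hartogs persists, and it correctly recognizes that \lemref{lem:Local-Hartogs} as stated (for integral domains) cannot be quoted for $Y_m$ with $m \ge 2$; the paper's version avoids ever needing $S_2$ for the thickenings by trading it for the softer five-lemma argument anchored at the reduced fiber.
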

  \begin{proof}
Since $E$ is reflexive, we can find an exact sequence of $A$-modules
    \begin{equation}\label{eqn:Local-analytic-Hartogs-0}
      0 \to E \to A^r \to A^{r'}
      \end{equation}
      for some $r, r' \ge 1$.
      Since all arrows in the right square of ~\eqref{eqn:Formal-completion} are
      flat, we have an exact sequence of coherent sheaves

\begin{equation}\label{eqn:Local-analytic-Hartogs-1}
  0 \to \wh{\sE} \to \sO^r_{\wh{X}} \to \sO^{r'}_{\wh{X}}
\end{equation}
on $\wh{X}$. This gives rise to a commutative diagram of exact sequences
    \begin{equation}\label{eqn:Local-analytic-Hartogs-2}
      \xymatrix@C.8pc{
        0 \ar[r] & \wh{E} \ar[r] \ar[d]_-{\wh{j}^*} & \wh{A}^r \ar[r]
        \ar[d]^-{\wh{j}^*}
        & \wh{A}^{r'} \ar[d]^-{\wh{j}^*} \\
        0 \ar[r] & H^0(\wh{U}, \wh{\sE}_{\wh{U}}) \ar[r] & \sO(\wh{U})^r \ar[r]
        & \sO(\wh{U})^{r'},}
    \end{equation}
    where $j \colon U \inj X$ is the inclusion.
    A diagram chase shows that we can assume that $E = A$.

We now let $V = Y \cap U$. Let $j \colon U \inj X$
    and $\wh{j} \colon \wh{U} \inj \wh{X}$ denote the inclusion maps.
    We then have a commutative diagram
    \begin{equation}\label{eqn:Local-analytic-Hartogs-3}
      \xymatrix@C.8pc{
        \wh{A} \ar[r]^-{\cong} \ar[d]_-{\wh{j}^*} &
        {\underset{m \ge 1}\varprojlim} \ A_m
      \ar[d]^-{\wh{j}^*} \\
      H^0(\wh{U}, \sO_{\wh{U}}) \ar[r]^-{\cong} &
      {\underset{n \ge 1}\varprojlim} \ H^0(V_m, \sO_{V_m}).}
    \end{equation}

The top horizontal arrow is clearly an isomorphism and
    the Milnor exact sequence for the cohomology of
    inverse limit sheaves (e.g., see \cite[Proposition~1.6]{Jannsen})
    implies that the bottom horizontal arrow
    is also an isomorphism. It suffices therefore to show that the
    right vertical arrow is an isomorphism.
    We shall show by induction the stronger assertion that
    the restriction map $A_m \to H^0(V_m, \sO_{V_m})$ is an isomorphism
    for all $m \ge 1$.

Let us first assume that $m =1$. It follows from our assumption that
    $Y$ is a Noetherian normal integral scheme in which the codimension
    of $Y \setminus V$ is at least two. Therefore, the map
    $A_1 \to H^0(V, \sO_{V}) = H^0(V_1, \sO_{V_1})$ is an isomorphism
    by \lemref{lem:Local-Hartogs}.
We now prove the general case by induction on $m$.
    We consider the exact sequence of $A_{m+1}$-modules
    \begin{equation}\label{eqn:Local-analytic-Hartogs-4}
      0 \to {\fp^m}/{\fp^{m+1}} \to A_{m+1} \to A_m \to 0.
    \end{equation}
    Since $\fp \subset A$ is a complete intersection,
    it follows that ${\fp^m}/{\fp^{m+1}} \cong S^m({\fp}/{\fp^{2}})$ is
    a free $A/{\fp}$-module. That is, ${\fp^m}/{\fp^{m+1}} \cong A^{q}_1$
    for some $q \ge 1$.
    We thus get an exact sequence of $A_{m+1}$-modules
  \begin{equation}\label{eqn:Local-analytic-Hartogs-5}
      0 \to A^q_1 \to A_{m+1} \to A_m \to 0.
    \end{equation}  
    This gives rise to the exact sequence of coherent $\sO_X$-modules
 \begin{equation}\label{eqn:Local-analytic-Hartogs-6}
      0 \to \sO^q_{Y} \to \sO_{Y_{m+1}} \to \sO_{Y_m} \to 0.
    \end{equation}

Considering the cohomology groups and comparing them on $X$ and $U$,
    we get a commutative diagram of exact sequences
 \begin{equation}\label{eqn:Local-analytic-Hartogs-7}   
   \xymatrix@C.8pc{
     0 \ar[r] & A^q_1 \ar[r] \ar[d] & A_{m+1} \ar[r] \ar[d] & A_m \ar[r] 
\ar[d] &
     0 \\
     0 \ar[r] & {H^0(V_1, \sO_{V_1})}^q \ar[r] & H^0(V_{m+1}, \sO_{V_{m+1}}) 
\ar[r] &
     H^0(V_m, \sO_{V_m}) \ar[r]^-{\partial} & H^1(V_1, \sO_{V_1})^q .}
 \end{equation}
  The left and the right vertical arrows are isomorphisms by induction on $m$.
 It follows from the 5-lemma that the middle vertical
 arrow must also be an isomorphism. This concludes the proof of the
 claim and the lemma.   
\end{proof}

   Let $X$ be an excellent normal integral scheme and $Y \subset X$
a local complete intersection closed subscheme which is integral and normal.
Let $U \subset X$ be an open subscheme containing $Y \cap X^o$ such that
$X \setminus U$ has codimension at least two in $X$. 
Under these hypotheses, we have the following `formal Hartogs lemma'.

\begin{lem}\label{lem:Global-analytic-Hartogs}
  Let $\sE$ be a reflexive coherent sheaf on $X$. Then the unit of adjunction
  map $\wh{\sE} \to \wh{j}_*(\wh{\sE}_{U})$ is an isomorphism.
\end{lem}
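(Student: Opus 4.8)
The plan is to deduce this global statement from its affine counterpart, Lemma~\ref{lem:Local-analytic-Hartogs}, in exactly the way Lemma~\ref{lem:Global-Hartogs} was deduced from Lemma~\ref{lem:Local-Hartogs} in the non-completed setting: whether the unit map $\wh{\sE} \to \wh{j}_*(\wh{\sE}_U)$ is an isomorphism is a local question on $\wh{X}$, and on suitable charts it becomes precisely the content of the local formal Hartogs lemma already established.

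Concretely, I would first observe that the underlying space of $\wh{X}$ is $Y$, and that the formal affine opens $\Spf(\wh{A})$ obtained by completing affine opens $\Spec(A) \subset X$ along $Y \cap \Spec(A)$ form a base for the topology of $\wh{X}$ (they lie over the base $\{Y \cap \Spec(A)\}$ of $Y$). Since a morphism of sheaves is an isomorphism once it induces isomorphisms on sections over every member of a base of opens — stalks being computed along this cofinal system — it suffices to test the unit map after taking sections over each $\Spf(\wh{A})$. Because $\wh{j}_*$ commutes with restriction to the open $\Spf(\wh{A})$ and $\Spf(\wh{A})$ is affine, these sections are $\Gamma(\Spf(\wh{A}), \wh{\sE}) = \wh{E}$ and $\Gamma(\Spf(\wh{A}), \wh{j}_*(\wh{\sE}_U)) = H^0(\wh{U}_A, \wh{\sE}_U)$, where $E = \Gamma(\Spec(A), \sE)$ is a reflexive $A$-module and $\wh{U}_A = \Spf(\wh{A}) \cap \wh{U}$ is the formal completion of $U_A := U \cap \Spec(A)$ along its intersection with $Y$. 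Thus the map in question is exactly the canonical map $\wh{E} \to H^0(\wh{U}_A, \wh{\sE}_U)$ appearing in Lemma~\ref{lem:Local-analytic-Hartogs}.

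It then remains to verify, on each chart, the hypotheses of Lemma~\ref{lem:Local-analytic-Hartogs}. After refining the cover I may assume the prime ideal $\fp = \sI_Y(\Spec(A))$ is generated by a regular sequence (as $Y$ is lci and integral) and that $A/\fp$ is normal (as $Y$ is normal); the conditions $U_A \supseteq V(\fp) \cap (\Spec(A))^o$ and $\codim(\Spec(A) \setminus U_A) \ge 2$ are inherited from the global hypotheses on $U$. The one delicate input is the codimension hypothesis relating $Y$ to $X_\sing$. The key observation here is that, since $Y$ is lci in $X$, a point $y \in Y$ at which $\sO_{Y,y}$ is regular forces $\sO_{X,y}$ to be regular, because a Noetherian local ring that becomes regular after quotienting by a regular sequence is itself regular. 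Hence $Y \cap X_\sing \subseteq Y_\sing$, and normality of $Y$ gives $\codim_Y(Y \cap X_\sing) \ge 2$. As $U \supseteq Y \cap X^o$ forces $Y \setminus U_A \subseteq Y \cap X_\sing$, the locus of $Y$ that is removed has codimension at least two in $Y$, which is precisely the input feeding the base case of Lemma~\ref{lem:Local-analytic-Hartogs}.

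The hard part will be reconciling the running hypotheses of Lemma~\ref{lem:Local-analytic-Hartogs} — in particular its height inequality ${\rm ht}(\fp + J) \ge {\rm ht}(\fp) + {\rm ht}(J)$ and the attendant normality of the completions — with what the lci and normality data actually supply. Using that excellent schemes are catenary, the containment $Y \cap X_\sing \subseteq Y_\sing$ yields ${\rm ht}(\fp + J) \ge {\rm ht}(\fp) + 2$, which is all that the proof of Lemma~\ref{lem:Local-analytic-Hartogs} genuinely exploits (its inductive argument uses only the codimension-two bound $\codim_Y(Y \setminus V) \ge 2$, together with $\fp$ being a complete intersection). The remaining, more routine, obstacle is the formal-geometric bookkeeping: that the $\Spf(\wh{A})$ form a base and that $\wh{j}_*$ commutes with the relevant restrictions, both of which follow from the Cartesian diagram~\eqref{eqn:Formal-completion} and the flatness of the completion morphisms recorded there.
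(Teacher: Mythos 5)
Your proof is correct and follows the same route as the paper's: the paper likewise disposes of this lemma in one line by declaring it a local question and invoking Lemma~\ref{lem:Local-analytic-Hartogs} on affine charts where $Y$ becomes a complete intersection. Your additional care in checking the hypotheses of the local lemma --- in particular that $Y \cap X_\sing \subseteq Y_\sing$ together with normality of $Y$ supplies the codimension-two bound ${\rm ht}(\fp+J) \ge {\rm ht}(\fp)+2$ that the local proof actually uses, even though the stated inequality ${\rm ht}(\fp + J) \ge {\rm ht}(\fp) + {\rm ht}(J)$ is not literally verified --- is a worthwhile refinement of the paper's terse argument rather than a deviation from it.
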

\begin{proof}
  Since this is a local question, we can assume that $X = \Spec(A)$ is affine
  and $Y$ is a complete intersection on $X$. In this case, $\sE$ is
  the Zariski sheaf associated to a finitely generated reflexive
  $A$-module $E$. Furthermore, the lemma is equivalent to showing that
  the canonical map $\wh{E} \to H^0(\wh{U}, \wh{\sE}_U)$ is an
  isomorphism. But this is the content of \lemref{lem:Local-analytic-Hartogs}.
\end{proof}

\begin{cor}\label{cor:Global-analytic-Hartogs-0}
  Let $\sE$ be as in \lemref{lem:Global-analytic-Hartogs}. Then the
  restriction map $H^0(\wh{X}, \wh{\sE}) \to H^0(\wh{U}, \wh{\sE}_{U})$
  is an isomorphism.
\end{cor}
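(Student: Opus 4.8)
The plan is to obtain this as a direct consequence of \lemref{lem:Global-analytic-Hartogs} by passing to global sections, exactly as \corref{cor:Global-Hartogs-0} was deduced from \lemref{lem:Global-Hartogs}. The substantive content here---the formal Hartogs phenomenon---is already contained in that lemma (and ultimately in its local input \lemref{lem:Local-analytic-Hartogs}), so at this stage only a formal identification remains.

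First I would recall that \lemref{lem:Global-analytic-Hartogs} provides an isomorphism of coherent sheaves $\wh{\sE} \xrightarrow{\cong} \wh{j}_*(\wh{\sE}_{U})$ on the formal scheme $\wh{X}$, where $\wh{j} \colon \wh{U} \inj \wh{X}$ is the open immersion appearing in \eqref{eqn:Formal-completion}. Applying the functor $H^0(\wh{X}, -)$, which of course carries isomorphisms to isomorphisms, yields an isomorphism $H^0(\wh{X}, \wh{\sE}) \xrightarrow{\cong} H^0(\wh{X}, \wh{j}_*(\wh{\sE}_{U}))$.

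Next I would invoke the defining property of the direct image along the open immersion $\wh{j}$, namely the canonical identification $H^0(\wh{X}, \wh{j}_*(\wh{\sE}_{U})) = H^0(\wh{U}, \wh{\sE}_{U})$. Composing the two isomorphisms above therefore produces an isomorphism $H^0(\wh{X}, \wh{\sE}) \xrightarrow{\cong} H^0(\wh{U}, \wh{\sE}_{U})$.

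Finally, the only point requiring a word is that this composite agrees with the restriction map named in the statement. This is purely formal: by the very construction of the unit of adjunction $\wh{\sE} \to \wh{j}_*\wh{j}^*\wh{\sE} = \wh{j}_*(\wh{\sE}_{U})$, the map it induces on global sections, read through the identification $H^0(\wh{X}, \wh{j}_*(\wh{\sE}_{U})) = H^0(\wh{U}, \wh{\sE}_{U})$, is exactly restriction of sections along $\wh{j}$. Hence the restriction map is an isomorphism, and there is no genuine obstacle beyond this bookkeeping; all the real work was carried out in the preceding lemma.
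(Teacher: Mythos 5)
Your argument is correct and is precisely what the paper means by deducing the corollary "immediately" from \lemref{lem:Global-analytic-Hartogs}: apply $H^0(\wh{X},-)$ to the isomorphism $\wh{\sE}\xrightarrow{\cong}\wh{j}_*(\wh{\sE}_U)$, identify $H^0(\wh{X},\wh{j}_*(\wh{\sE}_U))$ with $H^0(\wh{U},\wh{\sE}_U)$, and note that the unit of adjunction induces the restriction map on sections. No difference in approach and no gap.
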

\begin{proof}
Immediate from \lemref{lem:Global-analytic-Hartogs}. 
  \end{proof}

\subsection{The $\Lef(X,Y)$ condition}\label{sec:LEF**}
Let $X$ be a Noetherian scheme and $Y \subset X$ a closed (resp. open)
subscheme. We shall then say that $(X,Y)$ is a closed (resp. an open) pair.
Recall from \cite[\S~2, Expos{\'e}~X]{SGA-2}
that a closed pair $(X,Y)$ is said to satisfy the Lefschetz condition
and one says that $\Lef(X,Y)$ holds, if for any open
subscheme $U \subset X$ containing $Y$ and any coherent locally free sheaf
$\sE$ on $U$, the restriction map $H^0(U, \sE) \to H^0(\wh{X}, \wh{\sE})$ is
an isomorphism.
We shall not recall the effective Lefschetz condition because we do not 
need it.

We shall work with the following setup throughout \S~\ref{sec:LEF**}.
We fix a field $k$ and let $X \inj \P^N_k$ be an integral normal projective
scheme over $k$ of dimension $d \ge 3$.  Suppose we are given a closed
immersion $\iota \colon Y \inj X$ of integral normal schemes such that
$X^o \cap Y \subset Y^o$ and $2 \le \dim(Y) \le d-1$. We further assume that
$Y \subset X$ is cut out by $e$ general hypersurfaces in $\P^N_k$
such that $Y$ is a complete intersection in $X$, where $e = {\rm codim}(Y,X)$.
Let $U \subset X$ be an open subscheme containing $X^o \cap Y$ such that
$X \setminus U$ has codimension at least two in $X$.
Let $j \colon U \inj X$ be the inclusion map.
We shall let $\wh{X}$ (resp. $\wh{U}$) denote the formal completion of
$X$ (resp. $U$) along $Y$ (resp. $U \cap Y$). We shall continue to use the
notations of ~\eqref{eqn:Formal-completion}.

\begin{lem}\label{lem:Lef-proj}
For any coherent reflexive sheaf $\sE$ on $X$, the pull-back map
  $H^0(X, \sE) \to H^0(\wh{X}, \wh{\sE})$ is an isomorphism.
\end{lem}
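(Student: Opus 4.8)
The plan is to reduce the statement first to the case of line bundles $\sO_X(q)$ and then to a single cohomology vanishing statement. Throughout, write $\Phi_\sF \colon H^0(X,\sF) \to H^0(\wh X, \wh\sF)$ for the comparison map attached to a coherent sheaf $\sF$ on $X$. Since the completion morphism $c_X \colon \wh X \to X$ is flat (see \eqref{eqn:Formal-completion}), the functor $\sF \mapsto \wh\sF$ is exact, and for coherent $\sF$ one has $H^0(\wh X, \wh\sF) = \varprojlim_m H^0(Y_m, \sF_m)$. Both facts will be used repeatedly.

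\emph{Reduction to line bundles.} Applying \lemref{lem:Reflexive} to the reflexive sheaf $\sE$, I obtain exact sequences $0 \to \sE \to \sO^r_X(q) \to \sE' \to 0$ and $0 \to \sE' \to \sO^{r'}_X(q') \to \sH \to 0$ with $\sE'$ torsion-free. Because completion is exact and $H^0$ is left exact, these yield ladders of exact sequences relating $\Phi_\sE$, $\Phi_{\sO^r_X(q)}$, $\Phi_{\sE'}$, and so on. A diagram chase then gives two implications: (i) if $\Phi$ is injective on each $\sO_X(q)$, then $\Phi_\sF$ is injective for every torsion-free $\sF$ (embed $\sF \hookrightarrow \sF^{\vee\vee} \hookrightarrow \sO^r_X(q)$ via \lemref{lem:Reflexive} and use left exactness); and (ii) if moreover $\Phi$ is an isomorphism on each $\sO_X(q)$, then the four-lemma applied to the first sequence—using that $\Phi_{\sO^r_X(q)}$ is an isomorphism and that $\Phi_{\sE'}$ is injective by (i)—forces $\Phi_\sE$ to be an isomorphism. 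Thus it suffices to prove the lemma for every line bundle $L = \sO_X(q)$.

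\emph{The line bundle case.} For $L$ locally free the sequences $0 \to L \otimes \sI_Y^m \to L \to L \otimes \sO_{Y_m} \to 0$ are exact, so $\Phi_L$ is identified with $H^0(X,L) \to \varprojlim_m H^0(X, L \otimes \sO_{Y_m})$. Injectivity is immediate: the kernel is $\bigcap_m H^0(X, L \otimes \sI_Y^m) = H^0\big(X, \bigcap_m (\sI_Y^m \otimes L)\big) = 0$ by Krull's intersection theorem, since $L$ is torsion-free and $X$ is integral (this already establishes the hypothesis of (i) above). For surjectivity, the coboundary identifies $\coker\big(H^0(X,L) \to H^0(X, L \otimes \sO_{Y_m})\big)$ with a subgroup of $H^1(X, L \otimes \sI_Y^m)$; hence if
\[
H^1(X, L \otimes \sI_Y^m) = 0 \qquad \text{for all } m \gg 0,
\]
then $H^0(X,L) \to H^0(X, L \otimes \sO_{Y_m})$ is surjective for large $m$, and since the kernels $H^0(X, L \otimes \sI_Y^m)$ form a Mittag-Leffler system of finite-dimensional $k$-vector spaces with zero intersection, passage to the limit yields that $\Phi_L$ is an isomorphism.

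\emph{The vanishing, and the main obstacle.} It remains to prove $H^1(X, L \otimes \sI_Y^m) = 0$ for $m \gg 0$, and this is where I expect the real difficulty to lie. Here the complete intersection hypothesis is essential: since $Y$ is cut out in $X$ by a regular sequence of sections of $\sO_X(d_1), \dots, \sO_X(d_e)$, the conormal sheaf $\sI_Y/\sI_Y^2 \cong \bigoplus_i \sO_Y(-d_i)$ is locally free and $\sI_Y^m/\sI_Y^{m+1} \cong \Sym^m(\sI_Y/\sI_Y^2)$. Filtering $L \otimes \sI_Y^m$ by the $\sI_Y$-adic filtration reduces the vanishing to the vanishing of $H^1$ of the graded pieces $L|_Y \otimes \Sym^m\big(\bigoplus_i \sO_Y(-d_i)\big)$, which are direct sums of the very negative twists $L|_Y(-\sum_i \beta_i d_i)$ with $\sum_i \beta_i = m$. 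Taking the degrees $d_i$ large and invoking $\dim(Y) \ge 2$, Serre duality on the normal projective scheme $Y$ identifies each such $H^1$ with the dual of an $H^{\dim(Y)-1}$ of a sufficiently positive twist, which vanishes by Serre vanishing precisely because $\dim(Y) - 1 \ge 1$. The delicate points—and the main obstacle—are running Serre duality on the possibly non-Cohen-Macaulay schemes $X$ and $Y$ and controlling the infinite $\sI_Y$-adic filtration uniformly in $m$. Normality is what saves the argument: for $X$ and $Y$ normal the non-Cohen-Macaulay locus has codimension at least three, so the cohomology sheaves of the dualizing complex that could obstruct the comparison are supported in low dimension and do not interfere with $H^{\dim(Y)-1}$ in the relevant range. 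This is exactly where the hypotheses $\dim(Y) \ge 2$ and large hypersurface degrees become indispensable.
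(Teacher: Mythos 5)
Your reduction to line bundles is sound: \lemref{lem:Reflexive}, the exactness of $c_X^*$, and the left exactness of $H^0$ do show that injectivity of the comparison map on the sheaves $\sO^r_X(q)$ forces injectivity on all torsion-free sheaves, and the four-lemma then reduces the statement to showing $\Phi_L$ is an isomorphism for $L = \sO_X(q)$. The Krull-intersection argument for injectivity and the reduction of surjectivity to the vanishing $H^1(X, L \otimes \sI_Y^m) = 0$ for $m \gg 0$ are also correct. The gap is in your proof of that vanishing. Filtering $L \otimes \sI_Y^m$ by the $\sI_Y$-adic filtration does \emph{not} reduce it to the graded pieces $L|_Y \otimes \Sym^j(C_{Y/X})$: those pieces are supported on $Y$, the filtration is infinite with trivial intersection, and $L\otimes\sI_Y^m$ is not the inverse limit of its quotients $L\otimes\sI_Y^m/\sI_Y^j$, so no d{\'e}vissage through the associated graded can compute $H^1$ of the rank-one sheaf $L \otimes \sI_Y^m$ on $X$. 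What the sequences $0 \to L\otimes\sI_Y^{j+1} \to L\otimes\sI_Y^j \to L\otimes({\sI_Y^j}/{\sI_Y^{j+1}}) \to 0$ actually yield, once $H^1(Y,-)$ of the graded pieces vanishes, is a chain of surjections $H^1(X, L\otimes\sI_Y^{j+1}) \surj H^1(X, L\otimes\sI_Y^j)$ --- the wrong direction entirely; they control $H^1(X, L\otimes{\sI_Y^m}/{\sI_Y^j})$, equivalently the inverse system $H^0(Y_j, L_j)$, but say nothing about $H^1(X, L\otimes\sI_Y^m)$ itself. Your closing remarks about Serre duality and non-Cohen--Macaulay loci on $Y$ address a side issue; the obstacle you yourself flag (``controlling the infinite filtration'') is exactly the one the argument does not overcome.

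The missing input is a vanishing theorem on $X$, not on $Y$. When $e = 1$ your argument can be completed: $L \otimes \sI_Y^m \cong L(-m\deg H)$ is again a line bundle, and \lemref{lem:ESZ-0} (applicable because $X$ is normal, hence $(R_1+S_2)$, and $d \ge 3$) gives precisely $H^1(X, L(-n)) = 0$ for $n \gg 0$. But the lemma is stated for complete intersections of arbitrary codimension $e$, and for $e \ge 2$ resolving $\sI_Y^m$ by direct sums of negative twists of $\sO_X$ (via the Koszul-type resolution attached to the regular sequence cutting out $Y$) costs the vanishing of $H^j(X, L(-n))$ for $j$ up to roughly $e$, which the $S_2$ condition does not supply. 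This is why the paper argues with depth rather than with twists: a reflexive sheaf on the $S_2$-scheme $X$ is itself $S_2$ (\lemref{lem:Depth-reflexive}), and the algebraization result [SP, Proposition~0EL1], formulated in terms of ${\rm depth}(\sE_x) \ge 2$, then gives the isomorphism $H^0(X,\sE) \to \varprojlim_m H^0(Y_m, \sE_m)$ in all codimensions at once.
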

\begin{proof}
Using the Milnor exact sequence for the cohomology of inverse limit sheaves,
the lemma is equivalent to showing that the map
$H^0(X, \sE) \to {\underset{m \ge 1}\varprojlim} \ H^0(Y_m, \sE_m)$
is an isomorphism. But this follows immediately from
\lemref{lem:Depth-reflexive} and \cite[Proposition~0EL1]{SP} since $X$ is normal
(hence $S_2$) and $d \ge 3$.
\end{proof}

\begin{lem}\label{lem:Lef-0}
  For any coherent reflexive sheaf $\sE$ on $U$, the pull-back map
  $H^0(U, \sE) \to H^0(\wh{U}, \wh{\sE})$ is an isomorphism.
\end{lem}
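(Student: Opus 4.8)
The plan is to reduce \lemref{lem:Lef-0} to the already-established \lemref{lem:Lef-proj} on all of $X$ by extending $\sE$ across the codimension-two complement $X \setminus U$ and then invoking the two Hartogs-type comparison results proved above.

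First I would extend $\sE$ to $X$. Since $X$ is normal, hence $S_2$, and $X \setminus U$ has codimension at least two, \corref{cor:Reflexive-PF} produces a reflexive coherent sheaf $\sE'$ on $X$ with $j^*\sE' \cong \sE$; concretely one may take $\sE' = j_*\sE$. Because pullback along the completion morphism commutes with restriction to the open set $U$, we also obtain $\wh{\sE'}|_{\wh{U}} \cong \wh{\sE}$. I would then assemble the commutative square of completion and restriction maps
\[
\xymatrix@C.8pc{
H^0(X, \sE') \ar[r] \ar[d] & H^0(\wh{X}, \wh{\sE'}) \ar[d] \\
H^0(U, \sE) \ar[r] & H^0(\wh{U}, \wh{\sE}),}
\]
in which the horizontal arrows are the completion (pull-back) maps and the vertical arrows are restriction to $U$. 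The top arrow is an isomorphism by \lemref{lem:Lef-proj}, as $\sE'$ is reflexive on $X$. The left arrow is an isomorphism by \corref{cor:Global-Hartogs-0}, since $\sE'$ is reflexive and $X \setminus U$ has codimension at least two. The right arrow is an isomorphism by \corref{cor:Global-analytic-Hartogs-0}: its hypotheses hold here because $X$ is excellent (being projective over $k$), normal and integral, $Y$ is a complete intersection in $X$ — hence a local complete intersection — and is integral and normal, while $U$ contains $X^o \cap Y$ and has complement of codimension at least two. A diagram chase then forces the bottom arrow $H^0(U, \sE) \to H^0(\wh{U}, \wh{\sE})$ to be an isomorphism, which is exactly the claim.

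I do not expect a serious obstacle, since the substance has been front-loaded into the global and formal Hartogs lemmas. The only points requiring care are verifying that the hypotheses of \corref{cor:Global-analytic-Hartogs-0} are met verbatim in the present setup (in particular that the complete intersection $Y$ is lci and that $U \supset X^o \cap Y$), and confirming that the square genuinely commutes — which is immediate from the functoriality of pull-back along the flat completion morphisms $c_X$ and $c_U$ fitting into the right-hand Cartesian square of ~\eqref{eqn:Formal-completion}.
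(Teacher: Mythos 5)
Your proposal is correct and coincides with the paper's own argument: the paper likewise sets $\sE' = j_*\sE$, invokes \corref{cor:Reflexive-PF} to get reflexivity on $X$, and runs the same commutative square with the top arrow handled by \lemref{lem:Lef-proj} and the two vertical arrows by \corref{cor:Global-Hartogs-0} and \corref{cor:Global-analytic-Hartogs-0}. Your added checks of the hypotheses of the formal Hartogs corollary are accurate and consistent with the standing assumptions of \S\ref{sec:LEF**}.
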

\begin{proof}
  We let $\sE' = j_* \sE$. It follows from \corref{cor:Reflexive-PF} that
  $\sE'$ is a reflexive coherent sheaf on $X$. We now consider the
  commutative diagram
  \begin{equation}\label{eqn:Lef-0-0}
    \xymatrix@C.8pc{
      H^0(X, \sE') \ar[r]^-{c^*_X} \ar[d]_-{j^*} & H^0(\wh{X}, \wh{\sE'})
      \ar[d]^-{\wh{j}^*} \\
      H^0(U, \sE) \ar[r]^-{c^*_U} & H^0(\wh{U}, \wh{\sE}).}
      \end{equation}

The top horizontal arrow is an isomorphism by \lemref{lem:Lef-proj}.
       Corollary~\ref{cor:Global-Hartogs-0} implies that
      the left vertical arrow is an isomorphism. Since $X$ is excellent
      and normal, Corollary~\ref{cor:Global-analytic-Hartogs-0} implies that
      the right vertical arrow is an isomorphism. A diagram chase now
      finishes the proof.
\end{proof}

\begin{lem}\label{lem:CI-reg}
  One has $Y^o = X^o \cap Y$.
\end{lem}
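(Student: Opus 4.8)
The plan is to prove the two inclusions defining $Y^o = X^o \cap Y$ separately. One of them, namely $X^o \cap Y \subseteq Y^o$, is already part of the standing hypotheses of the setup in \S\ref{sec:LEF**}, so the entire content of the lemma is the reverse inclusion $Y^o \subseteq X^o \cap Y$. Since $Y^o \subseteq Y$ trivially, it suffices to establish $Y^o \subseteq X^o$, i.e. to show that every point at which $Y$ is regular is also a point at which $X$ is regular. So I would fix $y \in Y^o$ and pass to the local rings $A = \sO_{X,y}$ and $B = \sO_{Y,y}$. Because $Y$ is cut out in $X$ by the $e$ hypersurface sections, we may write $B = A/I$ with $I = (f_1,\dots,f_e)$ the ideal of local equations, where $e = \codim(Y,X)$, and the task becomes the purely local statement: if $B$ is regular, then $A$ is regular.

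The argument rests on two comparisons. First I would record the dimension relation $\dim B = \dim A - e$: as $X$ and $Y$ are both integral and of finite type over $k$, the catenary dimension formula gives $\dim A = \dim X - \dim \overline{\{y\}}$ and $\dim B = \dim Y - \dim \overline{\{y\}}$, the closure $\overline{\{y\}}$ being the same closed set whether computed in $X$ or in $Y$; hence $\dim A - \dim B = \dim X - \dim Y = e$. Second, I would bound the embedding dimension from below. Since $I \subseteq \fm_A$ and $\fm_B = \fm_A/I$, there is a canonical identification $\fm_B/\fm_B^2 \cong \fm_A/(\fm_A^2 + I)$, and the image of $I$ in $\fm_A/\fm_A^2$ is spanned over $k(y)$ by the classes of $f_1,\dots,f_e$, so $\edim(B) \ge \edim(A) - e$. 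Assuming $B$ is regular, i.e. $\edim(B) = \dim B$, and combining with the universal inequality $\edim(A) \ge \dim A$, I obtain
\[
\dim A \le \edim(A) \le \edim(B) + e = \dim B + e = \dim A,
\]
which forces $\edim(A) = \dim A$, so $A$ is regular and $y \in X^o$. (Equivalently, lifting a minimal generating set of $\fm_B$ to $\fm_A$ and adjoining $f_1,\dots,f_e$ exhibits $\fm_A$ as generated by $\dim A$ elements; this is the standard fact that a local ring admitting a system of $e$ elements cutting out a regular quotient of the expected dimension is itself regular, cf. \cite{Matsumura}.)

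I do not expect a serious obstacle, since the proof is entirely local commutative algebra. The only points demanding mild care are the dimension formula $\dim A - \dim B = e$, which genuinely uses that $X$ and $Y$ are integral finite-type $k$-schemes so that the formula $\dim \sO_{-,y} = \dim(-) - \dim\overline{\{y\}}$ applies, and the identification $\fm_B/\fm_B^2 \cong \fm_A/(\fm_A^2 + I)$ together with $I$ being generated by exactly $e$ elements — this last input is precisely where the hypothesis that $Y$ is a complete intersection of codimension $e$ in $X$ is used. Neither point requires the normality of $X$ or $Y$ beyond what is already assumed, and the residue field $k(y)$ causes no difficulty because every vector space in sight is taken over $k(y)$.
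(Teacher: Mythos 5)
Your proof is correct and is essentially the paper's argument: the paper's one-line proof ("at any point $x \in X_\sing \cap Y$, the ideal of $Y$ is defined by a regular sequence") invokes exactly the local fact you verify in detail, namely that a Noetherian local ring $A$ with a quotient $B = A/(f_1,\dots,f_e)$ that is regular of dimension $\dim A - e$ must itself be regular. Your spelling-out via the dimension formula and the embedding-dimension inequality is a faithful expansion of what the paper leaves implicit.
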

\begin{proof}
Since we are already given that $ X^o \cap Y \subset Y^o$, the lemma 
follows because at any point $x \in
X_\sing \cap Y$, the ideal of $Y$ is defined by a regular sequence.
\end{proof}

It follows from \lemref{lem:CI-reg} that $(X^o, Y^o)$ is a closed pair.
We can now prove:

\begin{prop}\label{prop:LEF*}
  Assume that $Y$ intersects $X_\sing$ properly in $X$.
  Then $\Lef(X^o,Y^o)$ holds.
\end{prop}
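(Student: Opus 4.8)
The plan is to reduce the verification of $\Lef(X^o,Y^o)$ to the comparison isomorphism already furnished by \lemref{lem:Lef-0}, applied with the single admissible open $U=X^o$. First I would observe that $U=X^o$ is legitimate for the running setup of this subsection: it is open in $X$, it contains $X^o\cap Y=Y^o$ by \lemref{lem:CI-reg}, and since $X$ is normal its complement $X_\sing$ has codimension at least two. The hypothesis that $Y$ meets $X_\sing$ properly is precisely what licenses the use of the formal Hartogs lemma (its local form \lemref{lem:Local-analytic-Hartogs} requires $\mathrm{ht}(\fp+J)\ge \mathrm{ht}(\fp)+\mathrm{ht}(J)$) inside the proof of \lemref{lem:Lef-0}. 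Thus, under this hypothesis, \lemref{lem:Lef-0} gives: for every reflexive coherent sheaf $\sF$ on $X^o$, the completion map $H^0(X^o,\sF)\to H^0(\widehat{X^o},\widehat{\sF})$ is an isomorphism, the target being the completion along $Y^o$ that features in the Lefschetz condition.

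Next, I fix an open $V$ with $Y^o\subseteq V\subseteq X^o$ and a locally free coherent sheaf $\sE$ on $V$; the goal is to show the completion map $b\colon H^0(V,\sE)\to H^0(\widehat V,\widehat\sE)$ is bijective. Since $X^o$ is regular, hence integral and Noetherian, I would first extend $\sE$ to a coherent sheaf $\sE_1$ on $X^o$ and then pass to its reflexive hull $\sF=\sE_1^{\vee\vee}$. By \lemref{lem:D-ref} the sheaf $\sF$ is reflexive and coherent, and because dualizing commutes with restriction to opens while $\sE$ is already reflexive, one has $\sF|_V\cong\sE$. As $Y^o\subseteq V$, the completions along $Y^o$ satisfy $\widehat{X^o}=\widehat V$ and $\widehat\sF\cong\widehat\sE$, so applying \lemref{lem:Lef-0} to $\sF$ produces an isomorphism $c\colon H^0(X^o,\sF)\xrightarrow{\cong} H^0(\widehat V,\widehat\sE)$.

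The decisive point is that $c$ factors as $c=b\circ a$, where $a\colon H^0(X^o,\sF)\to H^0(V,\sE)$ is restriction: completion along $Y^o$ only records germs near $Y^o\subseteq V$, so the completion map on $X^o$ passes through restriction to $V$. From $c=b\circ a$ being an isomorphism I immediately obtain that $b$ is surjective. One cannot instead argue that $a$ itself is an isomorphism via \corref{cor:Global-Hartogs-0}, because $V$ is an arbitrary open of $X^o$ and $X^o\setminus V$ may have codimension one; this is the one place where the codimension hypotheses of the Hartogs lemmas are unavailable, and it is why only surjectivity is extracted from the extension step. For injectivity of $b$ I would argue directly: $V$ is integral and $\sE$ is locally free, hence torsion-free, so a section killed by completion along the nonempty $Y^o$ lies, at each $y\in Y^o$, in $\bigcap_m \sI_{Y^o,y}^m\sE_y$; since $1+i$ is a unit for every $i\in\sI_{Y^o,y}$, Krull's intersection theorem forces this stalk to vanish, whence the section vanishes on a neighborhood of $Y^o$ and therefore on all of the integral scheme $V$. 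Combining, $b$ is an isomorphism, which is exactly the assertion $\Lef(X^o,Y^o)$.

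The hard part is the codimension-one phenomenon just flagged: the naive attempt to compare $H^0(V,\sE)$ with $H^0(X^o,\sF)$ directly breaks down because $X^o\setminus V$ need not have codimension two, so Hartogs does not apply to $a$. The resolution I propose is to use the reflexive extension only to obtain \emph{surjectivity} of $b$, and to supply \emph{injectivity} separately by the elementary Krull-intersection argument, which requires nothing beyond integrality and torsion-freeness. Everything else is bookkeeping with the already-established Hartogs and formal-Hartogs comparisons.
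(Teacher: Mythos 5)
Your proof is correct, but it diverges from the paper's at the decisive step. The paper also reduces everything to \lemref{lem:Lef-0}, but instead of invoking it only for $U=X^o$ and then propagating to an arbitrary open $V\supseteq Y^o$ by an extension argument, it shows that the codimension-one phenomenon you flag \emph{cannot occur}: if $Z\subset X\setminus V$ were irreducible of dimension $d-1$, then $Z\cap Y$ would be contained in $Y\cap X_\sing$ (since $V\supseteq Y^o$), forcing $\dim(Z\cap Y)\le\dim(Y\cap X_\sing)\le\dim(Y)-2$, while the fact that $Y$ is cut out by $e$ hypersurfaces gives $\dim(Z\cap Y)\ge (d-1)-e=\dim(Y)-1$, a contradiction. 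Hence every open $V\subseteq X^o$ containing $Y^o$ automatically has complement of codimension at least two in $X$, and \lemref{lem:Lef-0} applies to $V$ directly for every reflexive coherent sheaf on it. Your route --- reflexive-hull extension to $X^o$ to get surjectivity of the completion map from the factorization $c=b\circ a$, plus Krull intersection and torsion-freeness for injectivity --- is sound (the identification $\sF|_V\cong\sE$, the factorization through restriction, and the vanishing of a section of a torsion-free sheaf on an integral scheme that dies near the nonempty $Y^o$ all check out), and it has the mild virtue of not needing the dimension count for general $V$. What it loses is the auxiliary geometric fact itself, which the paper reuses later: in the proof of \thmref{thm:Main-1} (Part 1) the authors cite precisely the statement ``for any open $U\subset X^o$ containing $Y$, the codimension of $X^o\setminus U$ is at least two'' as established in this proof, so with your argument that claim would still need to be proved separately.
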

\begin{proof}
Let $U \subset X^o$ be an open subscheme containing $Y^o$.
In view of \lemref{lem:Lef-0}, we only need to show that $X \setminus U$
has codimension at least two in $X$. Suppose to the contrary that
there is an irreducible closed subscheme $Z \subset X \setminus U$ of
dimension $d-1$. We must then have that  the
scheme theoretic intersections $Z \cap Y$ and
$Z \cap (Y \cap X_\sing)$ have same support, and therefore 
$\dim(Z \cap Y) \le \dim(Y\cap X_{\sing})$. We also get
\[
    \dim(Z\cap Y) \ge \dim(Z) - e = d -1- (d-\dim(Y))=\dim(Y)-1 > 
\dim(Y \cap X_{\sing}),
\]
since $Y \subset X$ is cut out by $e$ general hypersurfaces in $\P^N_k$, the 
intersection of $Y$ and $X_\sing$ is proper and $X$ is catenary. This is 
clearly a contradiction.
\end{proof}

\section{Lefschetz for {\'e}tale fundamental group}
\label{sec:LFG}
In this section, we shall prove a Lefschetz hypersurface section
theorem for the abelianized {\'e}tale fundamental group of
the regular locus of a normal projective scheme over a field
under certain conditions. Our setup for this section is the following.

Let $k$ be a field of exponential characteristic $p \ge 1$
and $X \inj \P^N_k$ an integral projective
scheme over $k$ of dimension $d \ge 3$.  
We let $H \subset \P^N_k$ be a hypersurface such that
the scheme theoretic intersection $Y = X \cap H$ satisfies the following.

\begin{enumerate}
\item
$Y$ is integral of dimension $d-1$.
\item
$X^o \cap Y$ is regular.
\item
$Y$ is normal if $X$ is so.
\item
$Y$ does not contain any irreducible component of $X_\sing$.
\end{enumerate}

A hypersurface section $Y$ satisfying the above conditions will be called a `good' hypersurface section. Note that if $X$ is an $R_a$-scheme for some $a\ge 0$ and
$Y$ is good, then $Y$ too is an $R_a$-scheme by (2) and (4).
Since $X_\sing$ is reduced, it follows from (4) that the scheme theoretic
intersection $Y_s := Y \cap X_\sing$ is 
an effective Cartier divisor on $X_\sing$ (see \cite[Lemma~3.3]{Gupta-Krishna})
and $(Y_s)_\red = Y_\sing$ (see \lemref{lem:CI-reg}).
We let 
\[
\iota \colon Y \inj X, \ \iota^o \colon Y^o \inj X^o, \
j \colon X^o \inj X, \ \tilde{j} \colon Y^o \inj Y, \
\tilde{\iota} \colon Y_s \inj X_\sing
\]
be the inclusions. We let $q = \dim(Y_s)$.

\subsection{The Enriques-Severi-Zariski theorem}
\label{sec:ESZT}
We shall need a version of the Enriques-Severi-Zariski theorem 
for some singular projective schemes and their regular loci.
Suppose that $H \subset \P^N_k$ is a hypersurface of degree $m \ge 1$ such that $Y=X\cap H$ is good.
Let $C_{Y/X}$ denote the conormal sheaf on $Y$ associated
to the regular embedding $\iota \colon Y \inj X$ so that $C_{Y/X} = {\sI_Y}/{\sI^2_Y}
\cong \sO_Y(-m)$, where $\sI_Y$ is the sheaf of ideals on $X$ defining $Y$.
For any coherent sheaf $\sF$ on $Y$ and integer $n \ge 1$, we let
$\sF^{[n]} := \sF \otimes_{\sO_Y} S^n(C_{Y/X}) \cong \sF(-nm)$.
For any coherent sheaf $\sF$ on $Y^o$ and integer $n \ge 1$, we let
$\sF^{[n]} := \sF \otimes_{\sO_{Y^o}} S^n(C_{Y^o/X^o}) \cong \sF(-nm)$.

\begin{lem}\label{lem:ESZ-0}
Assume that $X$ is an $(R_1 + S_b)$-scheme for some $b \ge 2$. 
Let $\sE$ be a locally free sheaf on $X$. Then $H^i_\zar(X, \sE(-j)) =0$
for $i \le b-1$ and $j \gg 0$.
\end{lem}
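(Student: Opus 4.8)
The plan is to deduce the vanishing from Serre--Grothendieck duality together with the description of Serre's condition $S_b$ in terms of the dualizing complex. First I would reduce to the range $b \le d = \dim X$: since $\dim \sO_{X,x} \le d$ for every $x$, the condition $S_b$ coincides with $S_{\min(b,d)}$, so we may assume $b \le d$, and then $i \le b-1 \le d-1$ throughout. As $X$ is integral and of finite type over $k$, it is equidimensional of dimension $d$ and admits a dualizing complex $\omega_X^\bullet$, which I normalize so that $\mathcal{H}^{-d}(\omega_X^\bullet) = \omega_X$ and $\omega_X^\bullet$ is concentrated in cohomological degrees $[-d,0]$. The key input is the standard local-duality characterization of $S_b$: the scheme $X$ satisfies $(S_b)$ if and only if $\dim \supp \mathcal{H}^{-q}(\omega_X^\bullet) \le q - b$ for every $q < d$; in particular $\mathcal{H}^{-q}(\omega_X^\bullet) = 0$ for all $q < b$. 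This last vanishing is the only consequence of $S_b$ that I will actually use.

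Since $X$ is projective over $k$ and $\sE$ is locally free, Serre--Grothendieck duality gives, for each $i$, a $k$-linear isomorphism
\[
H^i(X, \sE(-j))^\vee \;\cong\; \mathbb{H}^{-i}\bigl(X,\, R\sHom_{\sO_X}(\sE(-j), \omega_X^\bullet)\bigr) \;\cong\; \mathbb{H}^{-i}\bigl(X,\, \sE^\vee(j) \otimes_{\sO_X} \omega_X^\bullet\bigr),
\]
the second isomorphism using that $\sE$ is locally free (so there are no higher $\sExt$ terms). I would then run the second hypercohomology spectral sequence
\[
E_2^{s,t} = H^s\bigl(X,\, \sE^\vee(j) \otimes \mathcal{H}^{t}(\omega_X^\bullet)\bigr) \Longrightarrow \mathbb{H}^{s+t}\bigl(X,\, \sE^\vee(j) \otimes \omega_X^\bullet\bigr),
\]
the identification of the $E_2$-term being valid because $\sE^\vee$ is locally free. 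Writing $t = -q$, a term contributing to $\mathbb{H}^{-i}$ has $s = q - i \ge 0$. If $s \ge 1$, then $H^s\bigl(X, (\sE^\vee \otimes \mathcal{H}^{-q}(\omega_X^\bullet))(j)\bigr) = 0$ for $j \gg 0$ by Serre's vanishing theorem applied to the coherent sheaf $\sE^\vee \otimes \mathcal{H}^{-q}(\omega_X^\bullet)$. The only remaining term has $s = 0$, forcing $q = i$; but for $i \le b-1 < b$ we have $\mathcal{H}^{-i}(\omega_X^\bullet) = 0$, so $E_2^{0,-i} = 0$ as well. Since only the finitely many indices $q \in \{0, \dots, d\}$ occur, a single $j_0$ works for all of them, and for $j \ge j_0$ every term contributing to $\mathbb{H}^{-i}$ vanishes. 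Hence $\mathbb{H}^{-i}(X, \sE^\vee(j)\otimes\omega_X^\bullet) = 0$, and dualizing back gives $H^i(X, \sE(-j)) = 0$ for $i \le b-1$ and $j \gg 0$. Note that this argument treats $i=0$ on the same footing, as there $q=0$ and $\mathcal{H}^{0}(\omega_X^\bullet)=0$.

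The one genuinely external ingredient --- and the place I would be most careful --- is the equivalence between $(S_b)$ and the vanishing $\mathcal{H}^{-q}(\omega_X^\bullet)=0$ for $q < b$. This is standard, but it requires knowing that $X$ is equidimensional and catenary (guaranteed here since $X$ is integral of finite type over $k$) and a consistent normalization of $\omega_X^\bullet$ across the components of the non--Cohen--Macaulay locus. An alternative route, avoiding duality altogether, is to pass to the affine cone: writing $A$ for a homogeneous coordinate ring of $X \subset \P^N_k$ with irrelevant ideal $\fm$ and $M$ for a finitely generated graded module with $\widetilde{M} = \sE$, one has $H^i(X,\sE(n)) \cong [H^{i+1}_\fm(M)]_n$ for $i \ge 1$; the $S_b$ hypothesis makes $\sE$ satisfy $S_b$ on the punctured cone, and Grothendieck's finiteness theorem then shows that $H^{\ell}_\fm(M)$ is finitely generated, hence of finite length, for $\ell \le b$. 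Finite length forces $[H^{\ell}_\fm(M)]_n = 0$ for $n \ll 0$, which is exactly the desired vanishing for $n = -j$; the case $i = 0$ is then handled directly, since a nonzero section of $\sE(-j)$ would embed $\sO_X(j)$ into the coherent sheaf $\sE$, which is impossible for $j \gg 0$. I would present the duality argument as the main proof and keep the cone computation in reserve.
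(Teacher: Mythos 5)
Your main argument is correct and is essentially the paper's proof: the $S_b$ hypothesis forces $\omega^{\bullet}_{X/k}$ to lie in $D^{[-d,-b]}(X)$ (the paper cites Lee--Nakayama for this), and then Grothendieck--Serre duality plus Serre vanishing, via exactly the hypercohomology spectral sequence you write down, gives the claim. You have merely made explicit the spectral-sequence bookkeeping that the paper compresses into ``follows easily from the Serre vanishing theorem,'' so no further comment is needed.
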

\begin{proof}
Let $\omega^{\bullet}_{X/k}$ denote the dualizing complex for $X$.
Under the assumption of the lemma, it follows 
from \cite[Lemma~4.27]{Lee-Nakayama} that 
$\omega^{\bullet}_{X/k}\in D^{[-d, -b]}(X)$.
Moreover, we have $\sH^{-d}(\omega^{\bullet}_{X/k}) \cong j_*(\omega_{{X^o}/k})$,
where $\omega_{{X^o}/k}$ is the canonical sheaf of $X^o$.
Since $X$ is an $(R_1 + S_b)$-scheme for some $b \ge 2$, it is normal
(e.g., see \cite[Theorem~23.8]{Matsumura}). We can therefore apply
the Grothendieck-Serre duality for normal projective schemes to get
\[
H^i_\zar(X, \sE(-j)) \cong \H^{-i}_\zar(X, \sE^\vee\otimes^L_{\sO_X}
\omega^{\bullet}_{X/k}(j)).
\]
The desired assertion now follows easily from the Serre vanishing 
theorem. 
\end{proof}

\begin{lem}\label{lem:Vanishing-*}
Assume that $X$ is normal and let $\sE$ be a coherent reflexive sheaf on 
$Y^o$. Then $H^0(Y^o, \sE^{[n]}) = 0$ for $n \gg 0$.
\end{lem}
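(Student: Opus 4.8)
The plan is to reduce this vanishing statement on the quasi-projective regular locus $Y^o$ to an elementary Serre-type vanishing on the projective scheme $Y$, by means of the Hartogs lemmas of \S\ref{sec:Hartogs}. Recall first that, since $X$ is normal and $Y$ is a good hypersurface section, condition (3) of the setup makes $Y$ an integral normal projective scheme of dimension $d-1 \ge 2$; its regular locus is $Y^o$, and the complement $Y \setminus Y^o = Y_\sing$ has codimension at least two in $Y$ because $Y$ is $R_1$. Writing $\widetilde{j} \colon Y^o \inj Y$ for the open immersion, \corref{cor:Reflexive-PF} shows that the pushforward $\widetilde{\sE} := \widetilde{j}_* \sE$ is a reflexive coherent sheaf on $Y$ with $\widetilde{j}^*\widetilde{\sE} \cong \sE$. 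Since $\sE^{[n]} \cong \sE(-nm)$ and $\sO_Y(-nm)$ is invertible, for every $n$ the twist $\widetilde{\sE}(-nm)$ is again reflexive coherent on $Y$ and restricts to $\sE^{[n]}$ on $Y^o$.

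Next I would apply \corref{cor:Global-Hartogs-0} to the normal (hence $S_2$) integral scheme $Y$, the open subscheme $Y^o$ of complementary codimension at least two, and the reflexive sheaf $\widetilde{\sE}(-nm)$. This yields, for every $n$, a restriction isomorphism
\[
H^0(Y, \widetilde{\sE}(-nm)) \xrightarrow{\ \cong\ } H^0(Y^o, \sE^{[n]}).
\]
It therefore suffices to prove that $H^0(Y, \widetilde{\sE}(-nm)) = 0$ for $n \gg 0$, which is now a statement about a coherent sheaf on the genuinely projective scheme $Y$.

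For this last step I would feed $\widetilde{\sE}$ into \lemref{lem:Reflexive} (for the closed immersion $Y \inj \P^N_k$), obtaining a short exact sequence $0 \to \widetilde{\sE} \to \sO^r_Y(q) \to \sE' \to 0$ for some integer $q$ and some $r \ge 1$. Twisting by $\sO_Y(-nm)$ and applying the left-exact functor $H^0(Y, -)$ gives an injection $H^0(Y, \widetilde{\sE}(-nm)) \inj H^0(Y, \sO_Y(q-nm))^{\oplus r}$. Since $\sO_Y(1)$ is very ample and $Y$ is integral and projective of dimension $d-1 > 0$, one has $H^0(Y, \sO_Y(q-nm)) = 0$ whenever $q - nm < 0$, i.e. for all $n > q/m$. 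Hence $H^0(Y, \widetilde{\sE}(-nm)) = 0$ for $n \gg 0$, and the displayed isomorphism gives $H^0(Y^o, \sE^{[n]}) = 0$ for $n \gg 0$, as claimed. The one point that genuinely requires care --- and the reason the Hartogs machinery of \S\ref{sec:Hartogs} is invoked --- is that $Y^o$ is only quasi-projective, so Serre vanishing is not directly available on it; the reflexivity of $\sE$ is precisely what allows the extension across the codimension-$\ge 2$ locus $Y_\sing$ and the reduction to the projective scheme $Y$, where the bounded-positivity argument applies.
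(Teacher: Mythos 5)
Your proof is correct and follows essentially the same route as the paper's: extend $\sE$ to a reflexive coherent sheaf on $Y$ via \corref{cor:Reflexive-PF}, use the Hartogs isomorphism of \corref{cor:Global-Hartogs-0} to identify sections over $Y^o$ with sections over $Y$, and embed the extension into $\sO^r_Y(q)$ via \lemref{lem:Reflexive}. The only (harmless) deviation is at the final step, where the paper invokes \lemref{lem:ESZ-0}, whereas you observe directly that an integral positive-dimensional projective scheme has no nonzero global sections of a negative twist of the very ample $\sO_Y(1)$ --- which is all that is needed for the $H^0$ vanishing.
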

\begin{proof}
It follows from \corref{cor:Reflexive-PF} that $\tilde{j}_* \sE$
  is a coherent reflexive sheaf on $Y$.
  We denote this extension by $\sE$ itself.
  Using the exact sequences of \lemref{lem:Reflexive} and tensoring them
  with $S^n(C_{Y/X})$ (which is invertible on $Y$)
  and subsequently taking the cohomology, we get exact
  sequences
  \begin{equation}\label{eqn:Vanishing-*-0}
    0 \to H^0(Z, \sE^{[n]}) \to H^0(Z, {\sO^r_Z(q)}^{[n]}) \to
    H^0(Z, \sE'^{[n]}) \to H^1(Z, \sE^{[n]}) \to H^1(Z, {\sO^r_Z(q)}^{[n]})
  \end{equation}
  and
\begin{equation}\label{eqn:Vanishing-*-1}  
  0 \to H^0(Z, \sE'^{[n]}) \to H^0(Z, {\sO^{r'}_Z(q')}^{[n]}),
\end{equation}
where $Z \in \{Y, Y^o\}$.
\corref{cor:Global-Hartogs-0} and \lemref{lem:ESZ-0} together tell us that
$H^0(Z, {\sO^r_Z(q)}^{[n]}) = 0 = H^0(Z, {\sO^{r'}_Z(q')}^{[n]})$ for
all $n \gg 0$.
We conclude that $H^0(Z, \sE^{[n]}) = 0$ and
\begin{equation}\label{eqn:Vanishing-*-2} 
  H^1(Z, \sE^{[n]}) \inj H^1(Z, {\sO^r_Z(q)}^{[n]})
\end{equation}
for all $n \gg 0$.
\end{proof}

\subsection{The Gysin map and Poincar{\'e} duality}\label{sec:Gysin}
We fix a prime-to-$p$ integer $n$ and let $\Lambda = {\Z}/n$ be the
constant sheaf of rings on the {\'e}tale site of $\Sch_k$.
For any integer $m \in \Z$, we let $\Lambda(m)$ denote the usual 
Tate twist of  the sheaf of
$n$-th roots of unity $\mu_n$ on the {\'e}tale site of $\Sch_k$.
For any {\'e}tale sheaf of $\Lambda$-modules $\sF$ on $\Sch_k$, we let
$\sF(m) = \sF \otimes_\Lambda \Lambda(m)$.
Let $D^{+}(X, \Lambda)$ denote the derived category of
the bounded below complexes of the sheaves of $\Lambda$-modules on the
small {\'e}tale site of $X$.

Let $\iota \colon Y \inj X$ be as in \S~\ref{sec:ESZT}. Recall from Gabber's construction \cite{Fujiwara} (see also \cite[Definition~2.1]{Navarro}) that the regular closed embedding $\iota \colon Y \inj X$ induces the Gysin homomorphism $\iota_*: H^i_{\etl}(Y, \Lambda(m)) \to H^{i+2}_{\etl}(X, \Lambda(m+1))$ for any pair of integers $i \ge 0$ and $m \in \Z$. This homomorphism is defined as follows.
It follows from \cite[\S~2]{Deligne} that the line bundle $\sO_X(Y)$ (which we shall write in short as $\sO(Y)$) on $X$ has a canonical class $[\sO(Y)] \in H^1_{Y, \etl}(X, \G_m)$ and its image via the boundary map $H^1_{Y, \etl}(X, \G_m) \to H^2_{Y, \etl}(X, \Lambda)$ is Deligne's localized Chern class $c_1(Y)$. 
Here, $H^*_{Y, \etl}(X, -)$ denotes the {\'e}tale cohomology with support
in $Y$.

At the level of the derived 
category $D^{+}(Y, \Lambda)$, this Chern class is given in terms
of the map $c_1(Y) \colon \Lambda_Y \to \iota^{!}\Lambda_X(1)[2]$.
Using this Chern class, Gabber's Gysin homomorphism 
$\iota_* \colon H^*_{\etl}(Y, \Lambda(m)) \to H^{*+2}_{\etl}(X, \Lambda(m+1))$ is 
the one induced on the cohomology by the composite map 
$\iota_* \colon \iota_*(\Lambda_Y) \to \iota_* \iota^{!}(\Lambda_X(1)[2]) \to
\Lambda_X(1)[2]$ in $D^{+}(X, \Lambda)$, where the second arrow is the
counit of adjunction map.

The local complete intersection closed immersions
$Y \inj X$ and $Y_s \inj X_\sing$ induce a diagram of 
distinguished triangles in $D^{+}(X, \Lambda)$ given by

\begin{equation}\label{eqn:DT}
\xymatrix@C.8pc{
\iota_* \circ \tilde{j}_{!}(\Lambda_{Y^o}) \ar[r]  & \iota_*(\Lambda_Y) \ar[r] 
\ar[d]^-{c_1(Y)} &  \iota_* \circ \tilde{\iota}_*(\Lambda_{Y_s}) 
\ar[d]^-{c_1(Y_s)} \\
j_{!}(\Lambda_{X^o})(1)[2] \ar[r]  & \Lambda_X(1)[2] \ar[r]  &
\iota_*(\Lambda_{X_\sing})(1)[2].}
\end{equation}

The Cartesian square 
\begin{equation}\label{eqn:Chern-0}
\xymatrix@C.8pc{
Y_s \ar[r]^-{\tilde{\iota}} \ar[d] & X_\sing \ar[d] \\
Y \ar[r]^-{\iota} & X}
\end{equation}
and the functoriality of Deligne's localized Chern class imply
(see \cite[Corollary~2.12]{Navarro} or \cite[Proposition~1.1.3]{Fujiwara})
that the right side square in ~\eqref{eqn:DT} is commutative.
It follows that there is a Gysin morphism $c^{j_!}_1(Y^o) \colon
\iota_* \circ \tilde{j}_{!}(\Lambda_{Y^o}) \to j_{!}(\Lambda_{X^o})(1)[2]$
in $D^{+}(X, \Lambda)$ such that the resulting left square in ~\eqref{eqn:DT} 
is commutative.

Applying the cohomology functor on $D^{+}(X, \Lambda)$, we get a commutative 
diagram of long exact sequences
\begin{equation}\label{eqn:Chern-1}
\xymatrix@C.4pc{
H^*_{c, \etl}(Y^o, \Lambda(m)) \ar[r] \ar[d]^-{\iota^o_*} &
H^*_{\etl}(Y, \Lambda(m)) \ar[r] \ar[d]^-{\iota_*} & 
H^*_{\etl}(Y_s, \Lambda(m)) \ar[r] \ar[d]^-{\tilde{\iota}_*} &
H^{*+1}_{c, \etl}(Y^o, \Lambda(m)) \ar[d]^-{\iota^o_*} \\
H^{*+2}_{c, \etl}(X^o, \Lambda(m+1)) \ar[r] &
H^{*+2}_{\etl}(X, \Lambda(m+1)) \ar[r] & H^{*+2}_{\etl}(X_\sing, \Lambda(m+1)) 
\ar[r] & H^{*+3}_{c, \etl}(X^o, \Lambda(m+1)),}
\end{equation}
where $H^*_{c, \etl}(-, \Lambda(m))$ denotes the {\'e}tale cohomology with
compact support (see \cite[Chapter~VI, \S~3]{Milne}) and the
vertical arrows are the Gysin homomorphisms.

\vskip .3cm

Assume now that $k$ is either finite or algebraically closed.
Recall in this case 
(see \cite[Introduction]{JSZ} and \cite[Chapter~VI, \S~11]{Milne},
see also \cite[Corollary~4.2.3]{Deglise}) 
that there is a perfect pairing
\begin{equation}\label{eqn:Lef-et-0}
H^i_{c, \etl}(X^o, \Lambda(m)) \times H^{2d+e-i}_{\etl}(X^o, \Lambda(d-m))
\to H^{2d+e}_{c, \etl}(X^o, \Lambda(d)) \cong \Lambda
\end{equation}
for $m \in \Z$, where $e = 1$ if $k$ is finite and $e = 0$ if $k$ is 
algebraically closed.
This pairing exists even if $X^o$ is not smooth, but may not be perfect
in the latter case. 
It is well known and follows from its construction 
(see the proof of \cite[Theorem~VI.11.1]{Milne} or directly use
\cite[\S~3.3.13 and Remark~4.2.5]{Deglise}) that
~\eqref{eqn:Lef-et-0} is compatible with the closed immersion
$\iota^o \colon Y^o \inj X^o$, i.e., there is a commutative diagram

\begin{equation}\label{eqn:PD-*}
\xymatrix@C.3pc{
H^i_{c, \etl}(Y^o, \Lambda(m-1)) \ar[d]_-{\iota^o_*} & \times &
H^{2d-2+e-i}_{\etl}(Y^o, \Lambda(d-m)) \ar[r] 
&  H^{2d-2+e}_{c, \etl}(Y^o, \Lambda(d-1)) 
\ar[d]^-{\iota^o_*} \\
H^{i+2}_{c, \etl}(X^o, \Lambda(m)) & \times & 
H^{2d-2+e-i}_{\etl}(X^o, \Lambda(d-m)) \ar[u]_-{(\iota^o)^*} \ar[r] &
H^{2d+e}_{c, \etl}(X^o, \Lambda(d)).}
\end{equation}

\subsection{A Lefschetz theorem for {\'e}tale cohomology of $X^o$}\label{LEFET}
Let $\iota \colon Y \inj X$ be as in \S~\ref{sec:ESZT}. We shall now prove a Lefschetz theorem for the {\'e}tale cohomology of $X^o$. This is of independent interest in the study of singular varieties.
In this paper, we shall use it in the proofs of the main results.

\begin{prop}\label{prop:Lef-et}
Assume that $k$ is either finite or algebraically closed
and $X$ is an $R_2$-scheme. Then the pull-back map
$H^i_{\etl}(X^o, \Lambda) \to H^i_{\etl}(Y^o, \Lambda)$ is an isomorphism
for $i \le 1$ and injective for $i = 2$.
\end{prop}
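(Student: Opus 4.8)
The plan is to prove the dual assertion about the Gysin pushforward by Poincar\'e duality, then use the commutative diagram \eqref{eqn:Chern-1} to reduce to a weak Lefschetz statement for the Gysin map of the ambient \emph{projective} schemes, and finally settle the latter using the affineness of $X \setminus Y$. Since $k$ is perfect, the regular loci $X^o$ and $Y^o$ are smooth over $k$ of dimensions $d$ and $d-1$, so the pairing \eqref{eqn:Lef-et-0} is perfect for both. Taking $m = d$ in \eqref{eqn:PD-*} (so that the right-hand factor is untwisted), the perfectness of the pairings together with the commutativity of \eqref{eqn:PD-*} shows that the transpose of the pull-back
\[
(\iota^o)^* \colon H^j_{\etl}(X^o, \Lambda) \to H^j_{\etl}(Y^o, \Lambda)
\]
is exactly the Gysin map $\iota^o_* \colon H^{2d-2+e-j}_{c,\etl}(Y^o, \Lambda(d-1)) \to H^{2d+e-j}_{c,\etl}(X^o, \Lambda(d))$. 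Hence $(\iota^o)^*$ is an isomorphism (resp.\ injective) if and only if $\iota^o_*$ is an isomorphism (resp.\ surjective). Writing $i = 2d-2+e-j$, it suffices to show that $\iota^o_* \colon H^i_{c,\etl}(Y^o, \Lambda(d-1)) \to H^{i+2}_{c,\etl}(X^o, \Lambda(d))$ is an isomorphism for $i \ge 2d-3+e$ (the cases $j \le 1$) and is surjective for $i = 2d-4+e$ (the case $j = 2$).

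I would then feed these top degrees into \eqref{eqn:Chern-1} taken with $m = d-1$, whose outer vertical arrows are precisely the maps $\iota^o_*$ above. Since $X$ is an $R_2$-scheme, $\dim X_\sing \le d-3$; and as $Y_s$ is an effective Cartier divisor on $X_\sing$ we get $q = \dim Y_s \le d-4$. Consequently $H^l_{\etl}(X_\sing, \Lambda(d)) = 0$ for $l > 2(d-3)+e$ and $H^l_{\etl}(Y_s, \Lambda(d-1)) = 0$ for $l > 2(d-4)+e$, so in the range $i \ge 2d-4+e$ all the $X_\sing$- and $Y_s$-terms appearing in \eqref{eqn:Chern-1} vanish. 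The horizontal arrows then give compatible isomorphisms $H^i_{c,\etl}(Y^o, \Lambda(d-1)) \cong H^i_{\etl}(Y, \Lambda(d-1))$ and $H^{i+2}_{c,\etl}(X^o, \Lambda(d)) \cong H^{i+2}_{\etl}(X, \Lambda(d))$, and the commutativity of the ladder identifies $\iota^o_*$ with the Gysin map $\iota_* \colon H^i_{\etl}(Y, \Lambda(d-1)) \to H^{i+2}_{\etl}(X, \Lambda(d))$ of the projective schemes. This reduces the proposition to proving that $\iota_*$ is an isomorphism for $i \ge 2d-3+e$ and a surjection for $i = 2d-4+e$.

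To treat this projective Gysin map I would use that $X \setminus Y = X \cap (\P^N_k \setminus H)$ is affine of dimension $d$. By Artin's affine vanishing theorem, $H^l_{\etl}(X \setminus Y, \Lambda(d)) = 0$ for $l > d + e$; since $d \ge 3$, this kills the terms $H^{i+1}_{\etl}$ and $H^{i+2}_{\etl}$ of the complement in the localization sequence for $Y \subset X$ in our range, so that the forget-support map $H^{i+2}_{Y,\etl}(X, \Lambda(d)) \to H^{i+2}_{\etl}(X, \Lambda(d))$ is an isomorphism for $i \ge 2d-3+e$ and a surjection for $i = 2d-4+e$ (only $H^{i+2}_{\etl}$ of the complement is needed at the boundary degree, which still vanishes). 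It then remains to show that the cycle-class map $H^i_{\etl}(Y, \Lambda(d-1)) \to H^{i+2}_{Y,\etl}(X, \Lambda(d))$ induced by the localized Chern class $c_1(Y)$ is an isomorphism (resp.\ surjection) in these top degrees. By Gabber's absolute purity this map is an isomorphism over $X^o$, where both $X$ and $Y$ are regular; hence the cone of $c_1(Y)$ is a complex supported on $Y_s$, and the outstanding point is to show that the hypercohomology of this cone vanishes in the relevant degrees $l \ge 2d-2+e$.

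I expect this final vanishing to be the main obstacle: it concerns cohomology concentrated on $Y_s \subset X_\sing$, precisely where absolute purity is unavailable, so one cannot simply invoke a purity isomorphism and must instead bound $H^l_{Y_s,\etl}(X, \Lambda(d))$ directly in top degrees. This is exactly where the $R_2$ hypothesis is indispensable, as it forces $\dim Y_s \le d-4$; the strategy is to show, via a cohomological-dimension estimate for the cone of $c_1(Y)$ along the low-dimensional $Y_s$, that its hypercohomology cannot reach the top degrees in which we work, thereby forcing the cycle-class map to be an isomorphism there. Carrying out this estimate is the technical heart of the argument, while the duality reduction and the passage through \eqref{eqn:Chern-1} are comparatively formal.
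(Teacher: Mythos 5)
Your reduction steps are sound and coincide with the paper's own proof: the duality argument identifying $(\iota^o)^*$ with the transpose of the compactly supported Gysin map via \eqref{eqn:PD-*}, the passage through \eqref{eqn:Chern-1} using $\dim X_\sing \le d-3$ and $q=\dim Y_s \le d-4$, and the use of affine vanishing for $U=X\setminus Y$ to compare $H^{i+2}_{Y,\etl}(X,\Lambda(d))$ with $H^{i+2}_{\etl}(X,\Lambda(d))$ are all correct (the paper additionally disposes of the algebraically closed case by citing Skorobogatov, but your uniform treatment is fine). The genuine gap is the step you yourself flag as the technical heart: you leave the vanishing of the hypercohomology of the cone of $c_1(Y)$ in degrees $\ge 2d-2+e$ as a ``strategy'', and that strategy, as described --- a cohomological-dimension estimate resting solely on $\dim Y_s\le d-4$ --- does not close. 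The cone is indeed supported on $Y_s$, but it is not concentrated in a bounded range of cohomological degrees: its cohomology sheaves in positive degrees are the restrictions $\sF_b=\iota^*R^bj'_*(\Lambda_U(d))$ for $b\ge 2$, where $j'\colon U\inj X$, and these occur in degrees up to $d-1$. The naive bound ``twice the dimension of the support plus $e$ plus the top degree of the complex'' gives $2(d-4)+e+(d-1)=3d-9+e$, which exceeds the threshold $2d-3+e$ as soon as $d\ge 6$. So the $R_2$ hypothesis alone is not enough to force the vanishing.

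What makes the count close --- and what is missing from your outline --- is the relative affine Lefschetz theorem (see \cite[Theorem~VI.7.3(d)]{Milne}): since $j'$ is an affine open immersion, $R^bj'_*(\Lambda_U(d))$ is supported on a closed subscheme of dimension at most $d-b$. Combined with the purity statement $\sF_b|_{Y^o}=0$ for $b\ge 2$, the sheaf $\sF_b$ sits in cohomological degree $b-1$ of the cone and is supported in dimension at most ${\rm min}\{q,d-b\}$; optimizing over $b$ (the two regimes $q\le d-b$ and $q>d-b$ both give contributions in degree at most $2d-5+e$ in the unshifted indexing, and the kernel and cokernel of $\Lambda_{Y}(d-1)\to \sF_1$, supported on $Y_s$, contribute only up to $2q+e\le 2d-8+e$) yields exactly the vanishing you need. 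This trade-off between the cohomological degree $b$ and the dimension of the support is the content of the paper's \eqref{eqn:Lef-et-5}; without it your final estimate is not a formality but fails numerically in high dimension. Once this input is supplied, your cone-of-$c_1(Y)$ packaging becomes an equivalent reformulation of the paper's Leray spectral sequence argument for $j'$, so the rest of the write-up stands.
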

\begin{proof}
Using ~\eqref{eqn:PD-*}, the proposition is reduced to
showing that the Gysin homomorphism
\begin{equation}\label{eqn:Lef-et-1}
\iota^o_* \colon H^i_{c, \etl}(Y^o, \Lambda(d-1)) \to
H^{i+2}_{c, \etl}(X^o, \Lambda(d))
\end{equation}
is an isomorphism for $i \ge 2d+e-3$ and surjective for $i = 2d+e-4$.

Using  the long exact sequences of ~\eqref{eqn:Chern-1} and the known bounds on
the {\'e}tale cohomological dimensions of $X_{\sing}$ and $Y_s$,
the assertion that ~\eqref{eqn:Lef-et-1} is an isomorphism
is equivalent to asserting that the Gysin homomorphism
\begin{equation}\label{eqn:Lef-et-2}
\iota_* \colon H^i_{\etl}(Y, \Lambda(d-1)) \to
H^{i+2}_{\etl}(X, \Lambda(d))
\end{equation}
is an isomorphism for $i \ge 2d+e-3$ and surjective for $i = 2d+e-4$.
For $e = 0$, this follows from \cite[Theorem~2.1]{Skorobogatov}.
We shall prove the $e = 1$ case using a similar strategy as follows.

We let $U = X \setminus Y$. 
The Leray spectral sequence for the inclusion $j' \colon U \inj X$
yields a strongly convergent spectral sequence
\begin{equation}\label{eqn:Lef-et-3}
E^{a,b}_2 = H^a_{\etl}(X, R^bj'_*(\Lambda_U(d))) \Rightarrow
H^{a+b}_{\etl}(U, \Lambda(d)).
\end{equation}
We know that the canonical map
$\Lambda_X(d) \xrightarrow{\cong} j'_*(\Lambda_U(d))$ is an isomorphism. 
Furthermore, the canonical map
$R^bj'_*(\Lambda_U(d)) \to \iota_* \iota^{*}R^bj'_*(\Lambda_U(d))$
is an isomorphism for $b > 0$. We let $\sF_b = \iota^{*}R^bj'_*(\Lambda_U(d))$.
The sheaf exact sequence
\[
0 \to \iota_* \iota^{!}(\Lambda_X(d)) \to \Lambda_X(d) \to
j'_*(\Lambda_U(d)) \to 0
\]
shows that the resulting boundary map $\partial' \colon \sF_1 \to 
\iota^{!}(\Lambda_X(d)[2])$ is an isomorphism.
Moreover, it follows from the construction of the spectral sequence
~\eqref{eqn:Lef-et-3} that the map on the cohomology
groups $H^a_{\etl}(Y, \sF_1) \xrightarrow{\partial'} 
H^{a+2}_{\etl}(X, \Lambda_X(d))$, induced by the composite map
$\iota_*(\sF_1) \to \iota_* \iota^{!}(\Lambda_X(d)[2]) \to \Lambda_X(d)[2]$,
is the map
\[
E^{a,1}_2 = H^a_{\etl}(Y, \sF_1) \xrightarrow{\partial}
H^{a+2}_{\etl}(X, \Lambda_X(d)) = E^{a+2, 0}_2.
\]

It follows that the composition
\[
\H^a_{\etl}(X, \iota_* \iota^{!}(\Lambda_X(d)[2])) 
\xrightarrow{\partial'^{-1}} H^a_{\etl}(Y, \sF_1) \xrightarrow{\partial}
H^{a+2}_{\etl}(X, \Lambda_X(d))
\]
is induced by the canonical adjunction map
$\iota_* \iota^{!}(\Lambda_X(d)[2]) \to \Lambda_X(d)[2]$.
Pre-composing the latter with the Gysin map (see \S~\ref{sec:Gysin})
$\iota_* \Lambda_Y(d-1) \xrightarrow{c_1(Y)} 
\iota_* \iota^{!}(\Lambda_X(d)[2])$, we see that the composition
\begin{equation}\label{eqn:Gysin-inverse}
H^a_{\etl}(Y, \Lambda_Y(d-1)) \xrightarrow{\partial'^{-1} \circ c_1(Y)}
H^a_{\etl}(Y, \sF_1) \xrightarrow{\partial}
H^{a+2}_{\etl}(X, \Lambda_X(d))
\end{equation}
coincides with the Gysin homomorphism $\iota_*$ in ~\eqref{eqn:Chern-1}.

We next study the map $H^a_{\etl}(Y, \sF_1) \xrightarrow{\partial}
H^{a+2}_{\etl}(X, \Lambda_X(d))$.
Using the purity theorem for the closed pair $(X^o, Y^o)$ of smooth schemes
(see \cite[Theorem~VI.5.1]{Milne}), it follows that
\begin{equation}\label{eqn:Lef-et-4}
\sF_b|_{Y^o} \cong \left\{
\begin{array}{ll}
\Lambda_{Y^o}(d) & \mbox{if $b =1$} \\
0 &\mbox{if $b > 1$.}
\end{array}
\right. 
\end{equation}

On the other hand, the affine Lefschetz theorem 
(see \cite[Theorem~VI.7.3(d)]{Milne}) implies that
the sheaf $R^bj'_*(\Lambda_U(d))$ is supported on a closed subscheme
of $Y$ whose dimension is bounded by $d - b$. We conclude that 
\begin{equation}\label{eqn:Lef-et-5}
H^a_{\etl}(X, R^bj'_*(\Lambda_U(d))) = 0 \ \ \mbox{if $b \ge 2$ \ and $a
> {\rm min}\{2q+1, 2d-2b+1\}$}.
\end{equation} 

Using ~\eqref{eqn:Lef-et-3}, ~\eqref{eqn:Lef-et-4} and 
~\eqref{eqn:Lef-et-5}, we get exact sequences
\begin{equation}\label{eqn:Lef-et-6}
H^{i-1}_{\etl}(U, \Lambda_U(d)) \to 
H^{i-2}_{\etl}(Y, \sF_1) \xrightarrow{\partial}
H^{i}_{\etl}(X, \Lambda_X(d)) \to H^{i}_{\etl}(U, \Lambda_U(d));
\end{equation}
\[
H^{2d-3}_{\etl}(Y, \sF_1) \xrightarrow{\partial}
H^{2d-1}_{\etl}(X, \Lambda_X(d)) \to H^{2d-1}_{\etl}(U, \Lambda_U(d))
\]
for $i \ge 2d$.
Since $d \ge 3$ and the {\'e}tale cohomological dimension of $k$ is one,
another application of the affine Lefschetz theorem reduces the above
exact sequences to an isomorphism $H^{i-2}_{\etl}(Y, \sF_1) \xrightarrow{\cong}
H^{i}_{\etl}(X, \Lambda_X(d))$
$i \ge 2d$ and a surjection $H^{2d-3}_{\etl}(Y, \sF_1) \surj
H^{2d-1}_{\etl}(X, \Lambda_X(d))$.

Finally, we consider the long exact sequences
\[
\xymatrix@C.6pc{
H^{i-1}_{\etl}(Y_s, \Lambda_{Y_s}(d-1)) \ar[r] & 
H^{i}_{c, \etl}(Y^o, \Lambda_{Y^o}(d-1)) \ar[r] \ar@{=}[d] &
H^{i}_{\etl}(Y, \Lambda_{Y}(d-1)) \ar[r] \ar[d]& 
H^{i}_{\etl}(Y_s, \Lambda_{Y_s}(d-1)) \\
H^{i-1}_{\etl}(Y_s, \tilde{\iota}^* \sF_1) \ar[r] & 
H^{i}_{c, \etl}(Y^o, \tilde{j}^*\sF_1) \ar[r] &
H^{i}_{\etl}(Y, \sF_1) \ar[r] \ar[d]^-{\partial} & 
H^{i}_{\etl}(Y_s, \tilde{\iota}^* \sF_1) \\
& & 
H^{i+2}_{\etl}(X, \Lambda_X(d)). &}
\]

The left vertical isomorphism on the top is a consequence of  
~\eqref{eqn:Lef-et-4}. Using the bound on the cohomological dimension
of $Y_s$, the above diagram shows that the right vertical arrow is an
isomorphism. Since the composite right vertical arrow is the Gysin
homomorphism as observed before, we conclude that the map $\iota_*$
in ~\eqref{eqn:Lef-et-2} is an isomorphism for $i \ge 2d-2$ and surjection for
$i = 2d-3$. This finishes the proof.
\end{proof}

\subsection{A Lefschetz theorem for {\'e}tale fundamental groups}
\label{sec:LFG**}
Recall that the Lefschetz theorem for the
{\'e}tale fundamental groups of smooth projective schemes over a field
was proven by Grothendieck (see \cite[Expos{\'e}~XII, Corollaire~3.5]{SGA-2}).
However, this is a very challenging
problem for smooth non-projective schemes due
to the presence of ramification when we extend {\'e}tale covers 
to compactifications.
We shall prove the following version of the Lefschetz theorem for the
{\'e}tale fundamental groups of smooth but non-projective schemes.

\begin{thm}\label{thm:Lef-EFG}
Assume that $k$ is either finite or algebraically closed and
$X \subset \P^N_k$ is an $(R_3 + S_4)$-scheme.
Let $Y \subset X$ be a good hypersurface section of degree $m \gg 0$.
Then the induced map
$\iota^o_* \colon \pi^{\ab}_1(Y^o) \to \pi^{\ab}_1(X^o)$ is an 
isomorphism of profinite topological abelian groups.
\end{thm}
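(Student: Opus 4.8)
The plan is to reinterpret the statement cohomologically and prove it one prime at a time. Since $\pi^{\ab}_1$ is a profinite abelian group and $\Hom_{\mathrm{cont}}(\pi^{\ab}_1(W),\Z/n)\cong H^1_{\etl}(W,\Z/n)$, Pontryagin duality shows that $\iota^o_*$ is an isomorphism of topological groups if and only if the pullback
\[
(\iota^o)^*\colon H^1_{\etl}(X^o,\Z/n)\longrightarrow H^1_{\etl}(Y^o,\Z/n)
\]
is an isomorphism for every $n\ge 1$. Writing $n=n'p^r$ with $n'$ prime to $p$, it suffices to treat the two factors separately. For the prime-to-$p$ part I would simply invoke \propref{prop:Lef-et} (valid since $X$ is $R_3$, hence $R_2$), which already gives that $H^1_{\etl}(X^o,\Z/n')\to H^1_{\etl}(Y^o,\Z/n')$ is an isomorphism. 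So the entire remaining content is the wild $p$-part, which is where I expect the real difficulty to lie.

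For the $p$-part (only relevant when $p>1$) I would pass through the Artin--Schreier--Witt sequence of \'etale sheaves on a smooth $k$-scheme $W$,
\[
0\to \Z/p^r \to W_r(\sO_W)\xrightarrow{F-1} W_r(\sO_W)\to 0,
\]
whose cohomology presents $H^1_{\etl}(W,\Z/p^r)$ via the short exact sequence
\[
0\to \cok\!\big(F-1\mid H^0(W,W_r\sO_W)\big)\to H^1_{\etl}(W,\Z/p^r)\to \ker\!\big(F-1\mid H^1(W,W_r\sO_W)\big)\to 0.
\]
Since \'etale and Zariski cohomology agree for the quasi-coherent sheaves $W_r(\sO_W)$ and the Frobenius $F$ is natural under pullback along $Y^o\inj X^o$, these sequences assemble into a morphism of short exact sequences for $W=X^o$ and $W=Y^o$. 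Thus, by the five lemma, the $p$-part reduces to proving that the restriction map $H^i(X^o,W_r\sO_{X^o})\to H^i(Y^o,W_r\sO_{Y^o})$ is an isomorphism for $i\le 1$ and injective for $i=2$. Using the Verschiebung filtration $0\to \sO\to W_r\sO\to W_{r-1}\sO\to 0$ and induction on $r$, this in turn follows once I prove the same statement for the structure sheaf: that $H^i(X^o,\sO_{X^o})\to H^i(Y^o,\sO_{Y^o})$ is an isomorphism for $i\le 1$ and injective for $i=2$.

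To establish this coherent Lefschetz comparison I would bootstrap from the projective scheme $X$. Because $X$ is $(R_3+S_4)$ it is normal, the closed set $X\setminus X^o=X_\sing$ has codimension at least $4$, and the line bundle $\sO_X(-m)$ satisfies $S_4$; hence the local cohomology groups $H^i_{X_\sing}(X,\sO_X(-m))$ vanish for $i\le 3$, giving $H^i(X^o,\sO_{X^o}(-m))\cong H^i(X,\sO_X(-m))$ for $i\le 2$. By \lemref{lem:ESZ-0} (applied with $b=4$) the right-hand side vanishes for all $i\le 3$ once $m\gg 0$, so $H^i(X^o,\sO_{X^o}(-m))=0$ for $i\le 2$. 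Feeding this into the long exact sequence attached to
\[
0\to \sO_{X^o}(-m)\to \sO_{X^o}\to \sO_{Y^o}\to 0
\]
(using $C_{Y^o/X^o}\cong\sO_{Y^o}(-m)$) yields at once that the restriction map is an isomorphism for $i\le 1$ and injective for $i=2$, exactly as required.

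Assembling the three ingredients---the duality reinterpretation, \propref{prop:Lef-et} for the tame part, and the Artin--Schreier--Witt reduction to the coherent comparison for the wild part---gives that $(\iota^o)^*$ is bijective on $H^1_{\etl}(-,\Z/n)$ for every $n$, and a continuous bijection of profinite groups is automatically a homeomorphism, so $\iota^o_*$ is an isomorphism of profinite topological abelian groups. The main obstacle is unquestionably the $p$-primary part: controlling wild covers forces one to work with the (possibly infinite-dimensional) coherent cohomology of the \emph{non-proper} scheme $X^o$, and the crux is reducing those groups to the projective $X$, where the Enriques--Severi--Zariski vanishing of \lemref{lem:ESZ-0} is available. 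It is precisely here that the hypotheses $R_3$ (to force $X_\sing$ to have codimension $\ge 4$) and $S_4$ (to supply the depth and the vanishing up to degree $3$) enter, and that the degree $m$ must be taken large.
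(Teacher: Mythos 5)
Your proposal is correct and follows essentially the same route as the paper: Pontryagin duality, \propref{prop:Lef-et} for the prime-to-$p$ part, and an Artin--Schreier reduction of the $p$-part to the vanishing of $H^i(X^o,\sO_{X^o}(-Y^o))$ for $i\le 2$, which you then obtain exactly as the paper does from the $(R_3+S_4)$ local-cohomology bound together with \lemref{lem:ESZ-0}. The only organizational difference is that you treat ${\Z}/{p^r}$ directly via the Artin--Schreier--Witt sequence and the Verschiebung filtration on $W_r\sO$, whereas the paper first reduces to ${\Z}/p$ by induction on $r$ and then uses the relative Artin--Schreier sequence for $u^o_{!}({{\Z}/p}|_{X^o\setminus Y^o})$; both d\'evissages land on the same coherent vanishing statement.
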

\begin{proof}
Since $\pi^{\ab}_1(X^o) \xrightarrow{\cong} {\underset{n \in \Z}\varprojlim}
\ {\pi^{\ab}_1(X^o)}/n$ and the same holds for $Y^o$, it suffices to show that
the map ${\pi^{\ab}_1(Y^o)}/n \to {\pi^{\ab}_1(X^o)}/n$ is an isomorphism
for every integer $n \in \Z$.
Using the Pontryagin duality (e.g., see in the middle of the proof of
\cite[Lemma~1.9, p.~99]{Raskind}) $({\pi^{\ab}_1(Z)}/n)^\vee \cong 
H^1_{\etl}(Z, {\Z}/n)$ for $Z  \in \{X^o, Y^o\}$, 
we are reduced to showing that the pull-back map
\begin{equation}\label{eqn:Lef-EFG-0}
(\iota^o)^* \colon H^1_\etl(X^o, {\Z}/n) \to H^1_\etl(Y^o, {\Z}/n)
\end{equation}
is an isomorphism for all $n \in \Z$.
In view of \propref{prop:Lef-et}, we can assume $n = p^r$, where
${\rm char}(k) = p > 0$.
We shall prove this by induction on $r \ge 1$.

We have an exact sequence of constant {\'e}tale sheaves on $\Sch_k$:
\[
0 \to {\Z}/{p} \to {\Z}/{p^r} \to {\Z}/{p^{r-1}} \to 0
\]
for every $r \ge 2$, where $ {\Z}/{p} \to {\Z}/{p^r}$ is
multiplication by $p^{r-1}$.
This yields a long exact sequence of {\'e}tale cohomology groups
\[
H^0_\etl(X^o, {\Z}/{p^r}) \to H^0_\etl(X^o, {\Z}/{p^{r-1}}) \to
H^1_\etl(X^o, {\Z}/{p}) \to H^1_\etl(X^o, {\Z}/{p^r}) \hspace*{2cm}
\]
\[
\hspace*{7cm}
\to H^1_\etl(X^o, {\Z}/{p^{r-1}}) \to H^2_\etl(X^o, {\Z}/{p}).
\]

Since $X^o$ is integral, the first arrow from left in this exact sequence
is surjective. The same applies to $Y^o$ as well. We thus get a commutative
diagram of exact sequences
\begin{equation}\label{eqn:Lef-EFG-1}
\xymatrix@C.8pc{
0 \ar[r] & H^1_\etl(X^o, {\Z}/{p}) \ar[r] \ar[d] & H^1_\etl(X^o, {\Z}/{p^r}) 
\ar[r] \ar[d] & H^1_\etl(X^o, {\Z}/{p^{r-1}}) \ar[r] \ar[d] & 
H^2_\etl(X^o, {\Z}/{p}) \ar[d] \\
0 \ar[r] & H^1_\etl(Y^o, {\Z}/{p}) \ar[r]  & H^1_\etl(Y^o, {\Z}/{p^r}) 
\ar[r] & H^1_\etl(Y^o, {\Z}/{p^{r-1}}) \ar[r] & H^2_\etl(Y^o, {\Z}/{p}),}
\end{equation}
where the vertical arrows are the pull-back maps.
Using a diagram chase and an induction on $r$, we are reduced to showing that
the pull-back map
\begin{equation}\label{eqn:Lef-EFG-2}
(\iota^o)^* \colon H^i_\etl(X^o, {\Z}/p) \to H^i_\etl(Y^o, {\Z}/p)
\end{equation}
is an isomorphism for $i = 1$ and injective for $i = 2$.

We let $U = X \setminus Y$ so that $U^o = X^o \setminus Y^o$.
We let $u \colon U \inj X$ and $u^o \colon U^o \inj X^o$ denote the
inclusion maps. Then ~\eqref{eqn:Lef-EFG-2} is equivalent to the assertion that
$H^i_{\etl}(X^o, u^o_{!} ({{\Z}/p}|_{U^o}))= 0$ for  $i = 1, 2$.
Using the cohomology exact sequence associated to the
relative Artin-Schreier sheaf exact sequence (where $F$ is the
Frobenius of $\sO_{X^o}$)
\[
0 \to u^o_{!} ({{\Z}/p}|_{U^o}) \to  \sO_{X^o}(-Y^o) 
\xrightarrow{1 - F} \sO_{X^o}(-Y^o) \to 0,
\]
we get the long exact sequence
\[
\cdots \to H^{i-1}_\etl(X^o, \sO_{X^o}(-Y^o)) \to 
H^i_{\etl}(X^o, u^o_{!} ({{\Z}/p}|_{U^o})) \to
H^i_\etl(X^o, \sO_{X^o}(-Y^o)) \hspace*{2cm}
\]
\[
\hspace*{8cm} \xrightarrow{1 - F} 
H^i_\etl(X^o, \sO_{X^o}(-Y^o)) \to \cdots
\]
for $i \ge 1$. Using this, it suffices to show that 
$H^i_\etl(X^o, \sO_{X^o}(-Y^o)) = 0$
for $i \le 2$. Equivalently, it suffices to show that
$H^i_\zar(X^o, \sO_{X^o}(-Y^o)) = 0$ for $i \le 2$.

We now consider the exact sequence of Zariski cohomology groups
\[
\cdots \to H^{i-1}(X^o, \sO_{X^o}(-Y^o)) \to
H^{i}_{X_\sing}(X, \sO_{X}(-Y)) \to 
H^{i}(X, \sO_{X}(-Y)) \hspace*{2cm}
\]
\[
\hspace*{8cm} \to H^{i}(X^o, \sO_{X^o}(-Y^o)) \to \cdots .
\]

Since $X$ is an $(R_3+ S_4)$-scheme, we have
${\underset{x \in X_\sing}{\rm  inf}} \{{\rm depth} (\sO_{X}(-Y)_x)\} \ge 4$.
We conclude from \cite[Theorem~3.8]{Hartshorne-LC} and a
spectral sequence argument that $H^{i}_{X_\sing}(X, \sO_{X}(-Y)) = 0$ for
$i \le 3$. The above long exact sequence then tells us that
the map $H^{i}(X, \sO_{X}(-Y)) \to H^{i}(X^o, \sO_{X^o}(-Y^o))$
of Zariski cohomology groups is an isomorphism for $i \le 2$.
The theorem therefore is finally reduced to showing that
$H^{i}(X, \sO_{X}(-Y)) = 0$ for $i \le 2$. But this follows from
\lemref{lem:ESZ-0} since the degree of the hypersurface $H$ is
very large.
\end{proof}

\begin{remk}\label{remk:Frob-eigenvalue}
If one knew that the eigenvalues of 
the Frobenius on $H^2(X^o, \sO_{X^o}(-Y^o))$ are all 
different from 
one, then the hypothesis of \thmref{thm:Lef-EFG} 
(and hence \thmref{thm:Main-1}) could be weakened.
\end{remk}

\enlargethispage{20pt}

\section{Class field theory and applications}\label{sec:CFT**}
In this section, we shall study the class field theory of singular schemes
over finite fields and its applications. In particular, we shall prove 
Theorems~\ref{thm:Main-1}, ~\ref{thm:Main-2}
and ~\ref{thm:Main-3}.

\subsection{Proof of \thmref{thm:Main-1}}\label{sec:Rec-F}
Let $k$ be a finite field and $X \in \Sch_k$ an integral projective 
$R_1$-scheme of dimension $d \ge 1$ over $k$. It was shown in 
\corref{cor:Rec-map-0-main} that the Frobenius substitution associated to
the regular closed points gives rise to a reciprocity homomorphism
$\phi_X \colon \CH^{LW}_0(X) \to \pi^{\ab}_1(X^o)$.
Also the left vertical arrow of ~\eqref{eqn:Rec-map-2}
gives us the restriction map $\phi_{X}^{0}$. What remains to show is that 
$\phi^0_X \colon \CH^{LW}_0(X)^0 \to  \pi^{\ab}_1(X^o)^0$
is surjective, and it is an isomorphism of finite groups under either of the
conditions (1) and (2) of the theorem.
We shall prove all of these by induction on $d$.
Since $d \le 2$ case already follows from \thmref{thm:CFT-surface},
we shall assume that $d \ge 3$.

We let $\pi \colon X_n \to X$ be the normalization morphism.
Using Corollaries~\ref{cor:Pull-back-nor} and ~\ref{cor:PF-LW},
the Zariski-Nagata purity theorem 
(see \cite[Expos{\'e}~X, Th{\'e}or{\`e}me~3.1]{SGA-1}) for $\pi_1((X_n)^o)$ 
and \cite[Lemma~5.1(1)]{Raskind}, we can assume that $X$ is normal.

{\bf Part~1:} 
We first consider the case when $X$ has isolated singularities.
Let $\alpha \in \CH^{LW}_0(X)$
be a 0-cycle such that $\phi_X(\alpha) = 0$. We now fix an embedding 
$X \inj \P^N_k$ and apply 
\cite[Theorem~3.1]{Wutz} to find a hypersurface
$H \subset \P^N_k$ containing ${\rm Supp}(\alpha)$ such that the scheme 
theoretic intersection $Y = X \cap H$ is smooth and
does not meet $X_\sing$. In particular, $Y \subset X^o$.
We can then find a cycle $\alpha' \in \CH^{F}_0(Y)$
such that $\alpha = \iota_*(\alpha')$, where $\iota \colon Y \inj X$
is the inclusion.
Since $X$ is an $S_2$-scheme of dimension at least three and 
$Y \subset X^o$, it follows from 
\cite[Expos{\'e}~XII, Corollaire~3.5]{SGA-2} that $Y$ is connected
(hence integral) and the map
$\pi_1(Y) \to {\underset{W}\varprojlim} \ \pi_1(W)$ is an isomorphism,
where the limit is taken over all open neighborhoods of $Y$ in $X$.

Since $Y \subset X^o$, there is a factorization
\begin{equation}\label{eqn:Main-1-1}
\pi_1(Y) \to {\underset{U}\varprojlim} \ \pi_1(U) \to
{\underset{W}\varprojlim} \ \pi_1(W),
\end{equation}
where the first limit is over all open neighborhoods of $Y$ contained
in $X^o$. Note that the second arrow is an isomorphism and therefore, so is 
the first arrow. On the
other hand, we showed in the proof of \lemref{prop:LEF*} that
for any open $U \subset X^o$ containing $Y$, the codimension of
$X^o \setminus U$ is at least two. We conclude from the Zariski-Nagata purity
theorem that the first limit in ~\eqref{eqn:Main-1-1} is $\pi_1(X^o)$.
It follows that the map $\pi_1(Y) \to \pi_1(X^o)$ is an isomorphism.
We therefore conclude that there is a commutative diagram
\begin{equation}\label{eqn:Main-1-2}
\xymatrix@C.8pc{
\CH^F_0(Y) \ar[r]^-{\phi_{Y}}
\ar[d]_-{\iota_*} & \pi^{\ab}_1(Y) \ar[d]^-{\iota_*}_-{\cong} \\
\CH^{LW}_0(X) \ar[r]^-{\phi_{X}} & \pi^{\ab}_1(X^o)}
\end{equation}
such that the right vertical arrow is an isomorphism and
$\alpha' \in \CH^F_0(Y)$.

It follows that $\iota_* \circ \phi_Y(\alpha') = 0$.
Since $\phi_Y$ is injective by induction, we get $\alpha' = 0$.
In particular, we get $\alpha = \iota_*(\alpha') = 0$.
This shows that $\phi_X$ is injective.
The surjectivity of $\phi^0_X$ and finiteness of $\CH^{LW}_0(X)^0$ and
$\pi^{\ab}_1(X^o)^0$ also follow from
~\eqref{eqn:Main-1-2} and induction on $d$.
We have thus finished the proof of \thmref{thm:Main-1} when $X$ has
isolated singularities.

{\bf Part~2:} 
We now assume that $X$ is an $(R_3 + S_4)$-scheme. In particular, it is
normal. Moreover, it is regular if $d \le 3$, in which case the theorem is
due to Kato and Saito \cite{Kato-Saito-1}. If $d = 4$, then 
$X$ has isolated singularities in which case we have already proven the
theorem in Part~1. We can therefore assume that $d \ge 5$.
As before, we let $\alpha \in \CH^{LW}_0(X)$
be a 0-cycle such that $\phi_X(\alpha) = 0$.

We can now apply \cite[Theorem~6.3]{Ghosh-Krishna} which say that
for every integer $m \gg 1$ there exists a hypersurface $H \subset \P^N_k$ of degree $m$ containing ${\rm Supp}(\alpha)$
such that the hypersurface section $Y = X \cap H$ satisfies the following.
\begin{enumerate}
\item
$Y \cap X^o = Y^o$ is regular.
\item
$Y$ is an $(R_3 + S_4)$-scheme.
\item
$Y$ contains no irreducible component of $X_\sing$.
\end{enumerate}

Since $X$ is normal and integral, 
it follows from \cite[Expos{\'e}~XII, Corollaire~3.5]{SGA-2}
that $Y$ is connected. Since (2) implies that $Y$ is also
normal, it follows that it must be integral. In particular, it is good
(see the beginning of \S~\ref{sec:LFG}). Moreover, there is a 0-cycle
$\alpha' \in \CH^{LW}_0(Y)$ such that $\alpha = \iota_*(\alpha')$
if we let $\iota \colon Y \inj X$ be the inclusion.

Using \corref{cor:PF-LW} and \cite[Lemma~5.1(1)]{Raskind}, 
we get a commutative diagram
\begin{equation}\label{eqn:Main-1-3}
\xymatrix@C.8pc{
\CH^{LW}_0(Y) \ar[r]^-{\phi_{Y}}
\ar[d]_-{\iota_*} & \pi^{\ab}_1(Y^o) \ar[d]^-{\iota_*}_-{\cong} \\
\CH^{LW}_0(X) \ar[r]^-{\phi_{X}} & \pi^{\ab}_1(X^o).}
\end{equation}
The right vertical arrow in this diagram is an isomorphism by
\thmref{thm:Lef-EFG} since $m \gg 0$. It follows that 
$\phi_Y(\alpha') = 0$. We conclude by induction that $\alpha' = 0$.
The surjectivity of $\phi_X$ and finiteness of $\pi^{\ab}_1(X^o)^0$
also follow from ~\eqref{eqn:Main-1-3}
because $\phi_Y$ is surjective  and $\pi^{\ab}_1(Y^o)^0$ is finite by induction.
This finishes the proof of the theorem except that we still need to show
that $\phi^0_X$ is surjective without condition (1) or (2) of the theorem.

As $d \ge 3$ and $X$ is normal (equivalently, $(R_1 + S_2)$),
we can find a hypersurface section
$Y = X \cap H$ which satisfies conditions (1) and (3) in Part~2 above,
and it is an $(R_1 + S_2)$-scheme. We observed above that 
$Y$ is then an integral normal scheme.
Now, the map $\iota_*: \pi^{\ab}_1(Y^o) \to \pi^{\ab}_1(X^o)$ is surjective
by \propref{prop:LEF*} and \cite[Expos{\'e}~X, Corollaire~2.6]{SGA-2}.
Since the map $\phi_Y$ in ~\eqref{eqn:Main-1-3} is surjective on degree zero
subgroups by induction, we conclude that $\phi^0_X$ is surjective.
$\hfill \square$


\subsection{Proof of \thmref{thm:Main-2}}\label{sec:BF-pf}
Let $X$ be as in \thmref{thm:Main-2}.
We need some preparation before we prove the theorem.

By \cite[Theorem~6.2]{Raskind},
the norm maps $N_x \colon K_0(k(x)) \to K_0(k)$ for $x \in X^o_{(0)}$  induce
a natural homomorphism
$\deg \colon H^d_\nis(X, \sK^M_{d,X}) \to \Z$.
Since these norms are multiplication by the degrees of the field
extensions ${k(x)}/k$, we see that the composition
$\sZ_0(X^o) \xrightarrow{\cyc_X} H^d_\nis(X, \sK^M_{d,X})
\xrightarrow{\deg} \Z$ is the degree homomorphism. 
We let $H^d_\nis(X, \sK^M_{d,X})^0$ be the kernel of this map.
We then get a commutative diagram
\begin{equation}\label{eqn:Rec-map-3}
\xymatrix@C.8pc{
0 \ar[r] & H^d_\nis(X, \sK^M_{d,X})^0 \ar[r] \ar[d]_-{\rho^0_X} &
H^d_\nis(X, \sK^M_{d,X}) \ar[r]^-{\deg} \ar[d]^-{\rho_X} & \Z \ar@{^{(}->}[d] \\
0 \ar[r] & \pi^{\ab}_1(X^o)^0 \ar[r] & \pi^{\ab}_1(X^o) \ar[r] &
\wh{\Z},}
\end{equation}
where the right vertical arrow is the canonical inclusion into
the profinite completion and $\pi^{\ab}_1(X^o) \to \wh{\Z}$ is the push-forward
map induced by the structure map of $X^o$.

Since the last term of the top row of ~\eqref{eqn:Rec-map-3} is torsion free, we have an exact sequence of inverse limits
\begin{equation}\label{eqn:Rec-map-4}
0\to {\underset{m}\varprojlim}\ H^d_\nis(X, \sK^M_{d,X})^0/m \to {\underset{m}\varprojlim}\ {H^d_\nis(X, \sK^M_{d,X})}/m \to \wh{\Z}.
\end{equation}

\begin{lem}\label{lem:Finiteness}
  The map $\rho_X \colon H^d_\nis(X, \sK^M_{d,X}) \to
\pi^{\ab}_1(X^o)$ is injective.
\end{lem}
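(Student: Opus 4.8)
The plan is to deduce the injectivity of $\rho_X$ from the identity $\phi_X = \rho_X \circ \cyc_X$ of \lemref{lem:Factor-rho}, by first showing that $\cyc_X \colon \sZ_0(X^o) \to H^d_\nis(X, \sK^M_{d,X})$ annihilates the group $\sR^{LW}_0(X)$ of rational equivalences, and then combining this with the surjectivity of $\cyc_X$ and the injectivity of the reciprocity map on $\CH^{LW}_0(X)$ already supplied by \thmref{thm:Main-1}.

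First I would replace $\sR^{LW}_0(X)$ by a more tractable generating set. Applying \lemref{lem:Moving-nor} with $A = X_\sing$, the canonical map $\CH^{LW}_0(X, X_\sing) \to \CH^{LW}_0(X)$ is an isomorphism; since both are quotients of $\sZ_0(X^o)$ and $\sR^{LW}_0(X, X_\sing) \subseteq \sR^{LW}_0(X)$, this forces the equality $\sR^{LW}_0(X) = \sR^{LW}_0(X, X_\sing)$. Thus $\sR^{LW}_0(X)$ is generated by the divisors $\divf(f)$ attached to closed integral curves $C \subset X^o$ disjoint from $X_\sing$ and to functions $f \in k(C)^\times$. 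This is where the moving lemma does the essential work: it eliminates exactly the relations arising from Cartier curves passing through $X_\sing$, which are what obstruct Bloch's formula for singular schemes in general.

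Next I would show $\cyc_X(\divf(f)) = 0$ for each such $C$ and $f$. Because $C$ is closed in $X$ yet contained in the regular open locus $X^o$, excision gives $H^d_C(X, \sK^M_{d,X}) \cong H^d_C(X^o, \sK^M_{d,X^o})$, and by construction $\cyc_X(\divf(f))$ is the image of the class of $\divf(f)$ under the forget-supports map $H^d_C(X,\sK^M_{d,X}) \to H^d_\nis(X, \sK^M_{d,X})$. On the regular scheme $X^o$ the Gersten resolution of $\sK^M_{d,X^o}$ (as in \lemref{lem:Milnor-res}, via \cite[Proposition~10]{Kerz10}) computes $H^d_C(X^o, \sK^M_{d,X^o})$ as the cokernel of $k(C)^\times \xrightarrow{\divf} \sZ_0(C)$, in which the class of $\divf(f)$ is tautologically zero. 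Hence $\cyc_X(\divf(f)) = 0$, so $\cyc_X$ kills $\sR^{LW}_0(X)$ and descends to a homomorphism $\ov{\cyc}_X \colon \CH^{LW}_0(X) \to H^d_\nis(X, \sK^M_{d,X})$.

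Finally I would conclude by a short diagram chase. The map $\ov{\cyc}_X$ is surjective since $\cyc_X$ is surjective by \cite[Theorem~2.5]{Kato-Saito-2}, and the descended reciprocity map factors as $\ov{\phi}_X = \rho_X \circ \ov{\cyc}_X \colon \CH^{LW}_0(X) \to \pi^{\ab}_1(X^o)$, which is injective by \thmref{thm:Main-1} in each of the cases (1) and (2). Therefore any element of $\ker(\rho_X)$ has the form $\ov{\cyc}_X(c)$ with $\ov{\phi}_X(c) = 0$, whence $c = 0$ and the element vanishes; that is, $\rho_X$ is injective. I expect the one genuinely delicate point to be the support-cohomology compatibility in the third paragraph, namely making precise that the cycle class of a principal divisor carried by a curve inside $X^o$ is computed within $H^d_C(X,\sK^M_{d,X})$ through excision and the Gersten complex; but this is a formal consequence of the functoriality of cohomology with supports once the moving lemma has confined all relations to curves avoiding $X_\sing$.
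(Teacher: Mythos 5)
Your argument is correct, but it inverts the paper's logic and rests on different inputs. The paper proves the injectivity of $\rho_X$ directly from Kato--Saito's class field theory: after reducing to $X$ normal via the normalization, it combines the isomorphism $\varprojlim_m {H^d_\nis(X,\sK^M_{d,X})}/m \xrightarrow{\cong} \pi^{\ab}_1(X^o)$ of \cite[Theorem~9.1(3)]{Kato-Saito-2} with the finiteness of $H^d_\nis(X,\sK^M_{d,X})^0$ (Raskind) when $X$ is geometrically connected, and with a bounded-exponent argument via a finite base change otherwise; \thmref{thm:Main-1} enters only to bound the image of $\rho^0_X$. The factorization of $\cyc_X$ through $\CH^{LW}_0(X)$ is then \emph{deduced} from the injectivity of $\rho_X$, which is exactly how \S~\ref{sec:BF-pf} concludes. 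You go the other way: you first establish the factorization directly --- the moving lemma identifies $\sR^{LW}_0(X)$ with $\sR^{LW}_0(X,X_\sing)$, and for a closed integral curve $C\subset X^o$ excision plus the Gersten resolution on the regular scheme $X^o$ gives $\cyc_X(\divf(f))=0$ --- and then read off the injectivity of $\rho_X$ from the surjectivity of $\cyc_X$ and the injectivity of $\phi_X$ supplied by \thmref{thm:Main-1} (you need the full injectivity of $\phi_X$, not merely of $\phi^0_X$, but this follows from ~\eqref{eqn:Rec-map-2} as the paper remarks after the theorem). There is no circularity, since \thmref{thm:Main-1} is proved in \S~\ref{sec:Rec-F} without this lemma, and your support-compatibility step, though left informal, is indeed standard. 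Your route has the pleasant byproduct that the existence of the cycle class map on $\CH^{LW}_0(X)$ comes out unconditionally for integral projective $R_1$-schemes over any field, independently of class field theory; the price is that the injectivity of $\rho_X$ then uses the full strength of \thmref{thm:Main-1} and is confined to hypotheses (1) and (2), whereas the paper's argument is essentially a formal consequence of \cite{Kato-Saito-2} together with finiteness of the degree-zero idele class group.
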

\begin{proof}
  If $\pi \colon X_n \to X$ denotes the normalization of $X$, then
  the maps $\pi^* \colon H^d_\nis(X, \sK^M_{d,X}) \to H^d_\nis(X_n, \sK^M_{d,X_n})$
  and $\pi^* \colon \pi^{\ab}_1(X^o) \to \pi^{\ab}_1(X^o_n)$ are isomorphisms.
  The first isomorphism holds for dimension reason and exactness of $\pi_*$ on
  Nisnevich sheaves. The second isomorphism holds by the Zariski-Nagata purity
  theorem (see \cite[Expos{\'e}~X, Th{\'e}or{\`e}me~3.1]{SGA-1}).
We can therefore assume that $X$ is normal.
 By ~\eqref{eqn:Rec-map-3},
  it suffices to show that $\rho^0_X$ is injective.

  We assume first that $X$ is geometrically connected. 
  Since $H^d_\nis(X, \sK^M_{d,X})^0$ is a finite group (hence profinite complete) by \cite[Theorem~6.2(1)]{Raskind}
  (take $I = \sO_X$ and $T = X$) and $\pi^{\ab}_1(X^o)$ is profinite complete,
~\eqref{eqn:Rec-map-3} and ~\eqref{eqn:Rec-map-4} gives rise to a commutative diagram of exact sequences
\begin{equation}\label{eqn:CFT-surface-00*}
\xymatrix@C.8pc{
0 \ar[r] & H^d_\nis(X, \sK^M_{d,X})^0 \ar[r] \ar[d]_-{\rho^0_X} &
{\underset{m}\varprojlim}\ {H^d_\nis(X, \sK^M_{d,X})}/m \ar[r]^-{\deg} 
\ar[d]^-{\wh{\rho}_X} & \wh{\Z} \ar@{=}[d] \\
0 \ar[r] & \pi^{\ab}_1(X^o)^0 \ar[r] & \pi^{\ab}_1(X^o) \ar[r] &
\wh{\Z}.}
\end{equation}
The middle vertical arrow is an isomorphism by \cite[Theorem~9.1(3)]{Kato-Saito-2}. It follows that the left vertical
arrow must also be an isomorphism.

Suppose now that $X$ is not geometrically connected. 
Since $\wh{\rho}_X$ is always an isomorphism by \cite[Theorem~9.1(3)]{Kato-Saito-2}, it suffices to show, using ~\eqref{eqn:Rec-map-4}, that $H^d_\nis(X, \sK^M_{d,X})^0$ is profinite complete. To show the latter, it suffices to prove the stronger claim that $H^d_\nis(X, \sK^M_{d,X})^0$ is torsion of bounded exponent. Since the image of $\rho^0_X$ is finite by \thmref{thm:Main-1}, the claim is equivalent to proving that the kernel of $\rho^0_X$ is torsion of bounded exponent. Since $\wh{\rho}_X$ is an isomorphism, it is enough to show that the kernel of the completion map
$\alpha_X \colon
H^d_\nis(X, \sK^M_{d,X}) \to {\underset{m}\varprojlim}\ {H^d_\nis(X, \sK^M_{d,X})}/m$ is torsion of bounded exponent.

Now, we know that there exists a finite field extension ${k'}/k$ such that
$X_{k'}$ is a disjoint union of geometrically connected integral normal
schemes. We pick any irreducible component $Y$ of $X_{k'}$ and let
$\pi \colon Y \to X$ be the projection map. Then $\pi$ is a finite
{\'e}tale morphism of normal schemes whose degree divides $[k':k]$.
Since we have shown that $\alpha_Y$ is injective, it suffices to show
that the kernel of the map
$\pi^* \colon H^d_\nis(X, \sK^M_{d,X})\to  H^d_\nis(Y, \sK^M_{d,Y})$
is torsion of bounded exponent. But this follows directly from
\cite[Lemma~4.5(1)]{Kato-Saito-2}.
\end{proof}

We now prove \thmref{thm:Main-2}. The claim, that the cycle class map
$\cyc_X \colon \sZ_0(X^o) \to H^d_\nis(X, \sK^M_{d,X})$ factors through
$\CH^{LW}_0(X)$, is a direct consequence of
Lemmas~\ref{lem:Factor-rho} and ~\ref{lem:Finiteness}, and
\corref{cor:Rec-map-0-main}.
To resulting map $\cyc_X \colon \CH^{LW}_0(X) \to H^d_\nis(X, \sK^M_{d,X})$
is surjective by \cite[Theorem~2.5]{Kato-Saito-2}. It is injective
because $\phi_X = \rho_X \circ \cyc_X$ is injective by \thmref{thm:Main-1}.
$\hfill \square$

\vskip .3cm

\subsection{Proof of \thmref{thm:Main-3}}\label{sec:BSC-pf}
Let $X$ be as in \thmref{thm:Main-3} and let $f \colon \wt{X} \to X$ be a
resolution of singularities with reduced exceptional divisor $E$.
By \corref{cor:Pull-back-nor}, we can assume that $X$ is normal.
Since $f^* \colon \CH^{LW}_0(X) \to \CH_0(\wt{X}|mE)$ is clearly
surjective for all $m \ge 1$, we only need to show that this map
is injective for all $m \gg 1$. The latter is equivalent to showing that the
map $f^* \colon \CH^{LW}_0(X)^0 \to \CH_0(\wt{X}|mE)^0$
is injective for all $m \gg 1$.

We let $C(X^o) = {\underset{m}\varprojlim} \ \CH_0(\wt{X}|mE)$
and let $C(X^o)^0$ denote the kernel of the degree map $C(X^o) \to \Z$.
It was shown in \cite[Proposition~3.2]{Kerz-Saito-2} that the 
Frobenius substitution associated to closed points in $X^o$ defines a 
reciprocity map $\phi_{X^o} \colon C(X^o) \to \pi^{\ab}_1(X^o)$ such that one has a
commutative diagram
\begin{equation}\label{eqn:BS-fin-0}
\xymatrix@C1pc{
\CH^{LW}_0(X) \ar[r]^{\phi_X} \ar[d]_-{f^*} & \pi^{\rm ab}_1(X^o) \ar@{=}[d] \\
C(X^o) \ar[r]_{\phi_{X^o}} & \pi^{\rm ab}_1(X^o).}
\end{equation}

\enlargethispage{30pt}

If we restrict this diagram to the degree zero subgroups, then
\thmref{thm:Main-1} says that the top horizontal arrow is an isomorphism.
On the other hand, \cite[Theorem~III]{Kerz-Saito-2} 
(if ${\rm char}(k) \neq 2$) and \cite[Theorem~8.5]{BKS} (in general)
say that the bottom horizontal arrow is an isomorphism.
It follows that the map
$f^* \colon \CH^{LW}_0(X)^0 \to C(X^o)^0$ is an isomorphism. 
In particular, $C(X^o)^0$ is finite.

Since $C(X^o) \surj \CH_0(\wt{X}|mE)$ for every $m \ge 1$, it follows that
$C(X^o)^0 \surj \CH_0(\wt{X}|mE)^0$ for every $m \ge 1$. 
We conclude that $\{\CH_0(\wt{X}|mE)^0\}_{m \ge 1}$ is an inverse system of
abelian groups whose transition maps are all surjective and whose
limit $C(X^o)^0$ is finite. But this implies that this inverse system
is eventually constant. That is,
the map $C(X^o)^0 \to \CH_0(\wt{X}|mE)^0$ is an isomorphism for all
$m \gg 1$. It follows that the map
$f^* \colon \CH^{LW}_0(X)^0 \to \CH_0(\wt{X}|mE)^0$ is an isomorphism for
all $m \gg 1$.
$\hfill \square$

\vskip .3cm

\subsection{Necessity of $R_1$-condition}\label{sec:No-R1}
We show by an example that it is necessary to assume the $R_1$-condition
in \thmref{thm:Main-1}.
Let $C$ be the projective plane curve over a finite field 
$k$ which has a simple cusp along
the origin and is regular elsewhere. Its local ring at the singular point
is analytically isomorphic to $k[[t^2, t^3]]$ which is canonically a 
subring of its normalization $k[[t]]$.
Let $\pi \colon \P^1_k \to C$ denote the normalization map. Let $S \cong
\Spec({k[t^2,t^3]}/{(t^2,t^3)})$ denote the reduced conductor and 
$\wt{S} \cong \Spec({k[t]}/{(t^2)})$ its scheme theoretic inverse image
in $\P^1_k$. We have a commutative diagram with exact rows:
\begin{equation}\label{eqn:failure-0}
\xymatrix@C1pc{
0  \ar[r] & {\underset{m}\varprojlim}\ {\sO^{\times}(mS)}/{k^{\times}} 
\ar[r] \ar[d] & {\underset{m}\varprojlim}\ K_0(C, mS) \ar[r] \ar[d]_{\cong} &
\Pic(C) \ar[r] \ar[d]^{\pi^*} & 0 \\
0  \ar[r] & {\underset{m}\varprojlim}\  {\sO^{\times}(m\wt{S})}/{k^{\times}} 
\ar[r] & {\underset{m}\varprojlim}\ K_0(\P^1_k, m\wt{S}) \ar[r]  &
\Pic(\P^1_k) \ar[r] & 0.}
\end{equation}
The isomorphism of the middle vertical map follows from the known result that the double relative $K$-groups $K_0(C, \P^1_k, mS)$ and 
$K_{-1}(C, \P^1_k, mS)$
vanish.

It is easy to check from the $K$-theory localization sequence
that $\Pic(\P^1_k, m\wt{S}) \xrightarrow{\cong} K_0(\P^1_k, m\wt{S})$.
On the other hand, the known class field theory for curves (with modulus)
tells us that there is a canonical isomorphism
${\underset{m}\varprojlim} \ \Pic^0(\P^1_k, m\wt{S}) \xrightarrow{\cong} 
\pi^{\rm ab}(C^o)^0$. It follows that there are isomorphisms
$(1 + tk[[t]])^{\times} \xrightarrow{\cong} \W(k) \xrightarrow{\cong}
\pi^{\rm ab}(C^o)^0$.
On the other hand, $\CH^{LW}_0(C)^0 \cong \Pic^0(C) \cong k$.
This shows that there is no reciprocity map $\CH^{LW}_0(C)^0 \to 
\pi^{\rm ab}(C^o)^0$ and the two can not be isomorphic.

\section{Lefschetz for generalized Albanese variety}
\label{sec:Alb*}
In order to prove the remaining of our main results, we need to use a
Lefschetz hypersurface theorem for the generalized Albanese variety
of smooth quasi-projective schemes over algebraically closed fields.
The goal of this section to establish such a Lefschetz theorem.
 
We assume in this section that $k$ is an algebraically closed field
of characteristic $p > 0$. Recall from \cite{Serre-Alb-1} that 
to any quasi-projective scheme $V$ over $k$, there is associated
a semi-abelian variety $\Alb_S(V)$ over $k$ together with a morphism
$alb_V \colon V \to \Alb_S(V)$ which has the universal property that
given any semi-abelian variety $A$ over $k$ and a morphism $f \colon V \to A$,
there exists a unique affine morphism $\tilde{f} \colon \Alb_S(V) \to A$
such that $f = \tilde{f} \circ alb_V$. Recall here that an affine
morphism between two commutative group schemes over $k$ is the
composition of a group homomorphism with a translation of the target scheme. 

The assignment $V \mapsto \Alb_S(V)$ is a covariant functor for 
arbitrary morphisms of quasi-projective schemes.
If $V$ is smooth and projective, then $\Alb_S(V)$ is the 
Albanese variety in the 
classical sense. If $V$ is a smooth curve, then $\Alb_S(V)$ coincides with 
Rosenlicht's generalized Jacobian \cite{Rosenlicht} or Serre's
generalized Jacobian with modulus \cite{Serre-CFT}.
We shall call $\Alb_S(V)$ `the generalized Albanese variety' of $V$.

For any quasi-projective scheme $V$ over $k$, let $\Alb_W(V)$ denote the
Albanese variety of $V$ which is universal for rational maps from
$V$ to abelian varieties over $k$ (see \cite{Weil} or
\cite[Chapter~II, \S~3]{Lang}).
Let ${\rm Cl}(V)$ denote the divisor class group of $V$ and
${\rm Cl}^0(V)$ the subgroup of ${\rm Cl}(V)$ consisting of Weil
divisors which are algebraically equivalent to zero in the sense of 
\cite[Chapter~19]{Fulton}.
If $V$ is projective and $R_1$, then we recall from \cite{Weil} (see also
\cite[Chapter~IV, \S~4]{Lang}) that there is an abelian variety $\Pic_W(V)$
over $k$, known as the Weil-Picard variety of $V$, 
such that $\Pic_W(V)(k) \cong {\rm Cl}^0(V)$.
Moreover, $\Alb_W(V) \cong \Alb_W(V^o)$ is the dual of $\Pic_W(V)$ 
(see \cite[Chapter~VI, p.~152]{Lang}).
We shall therefore refer to $\Alb_W(V)$ as `the Weil-Albanese variety' of $V$.

\subsection{Generalized Albanese of a smooth variety}
\label{sec:Asmooth}
Let $X \in \Sch_k$ be an integral projective
$R_1$-scheme of dimension $d \ge 1$.
Let $U \subset X^o$ be a nonempty open subscheme and set $Z = X \setminus U$,
endowed with the reduced induced closed subscheme structure.
In this case, Serre gave an explicit description of 
$\Alb_S(U)$ in \cite{Serre-Alb-2}. We recall this description.
We remark here that even if we do not assume $X$ to be smooth, the
exposition of \cite{Serre-Alb-2} remains valid in the present case
with no modification.

Let ${\rm Div}(X)$ denote the free abelian group of Weil divisors on $X$.
Let $\Lambda^1_{U}(X)$ denote the image of the 
push-forward map $\sZ_{d-1}(Z) \to \sZ_{d-1}(X) = {\rm Div}(X)$.
There is thus a canonical homomorphism $\iota_U: \Lambda^1_{U}(X) \to 
\frac{{\rm Cl}(X)}{{\rm Cl}^0(X)} = \ns(X)$, where $\ns(X)$ is the 
N{\'e}ron-Severi group of $X$.
Let $\Lambda_U(X)$ denote the kernel of the canonical map
$\Lambda^1_{U}(X) \xrightarrow{\iota_U} \ns(X)$ so that the quotient
${\rm Div}(X) \surj {\rm Cl}(X)$ induces a homomorphism
$\Lambda_U(X) \to {\rm Cl}^0(X)$.
It was shown by Serre \cite{Serre-Alb-2} that
$\Alb_S(U)$ is the Cartier dual of the 1-motive $[\Lambda_U(X) \to \Pic_W(X)]$
(see \cite{Hodge3} for the definitions of 1-motives and their Cartier duals).

We thus have a canonical exact sequence of algebraic groups
\begin{equation}\label{eqn:Alb-0}
0 \to \Lambda_U(X)^{\vee} \to \Alb_S(U) \to \Alb_W(X) \to 0,
\end{equation}
where $\Lambda_U(X)^{\vee}$ is the Cartier dual of the constant group
scheme over $k$ associated to the lattice $\Lambda_U(X)$. In particular,
$\Lambda_U(X)^{\vee}$ is a split torus of rank equal to the rank of
the lattice $\Lambda_U(X)$.

\subsection{A Lefschetz theorem for $\Alb_S(U)$}\label{sec:Lef-alb}
We shall now prove a Lefschetz theorem for the generalized Albanese variety.
We let $X \subset \P^N_k$ be an integral normal projective scheme of dimension 
$d \ge 3$ over $k$ which is an $R_2$-scheme. Let $U \subset X^o$ be
a nonempty open subscheme. We let $Z = X \setminus U$ with the reduced
closed subscheme structure.
We let $H \subset \P^N_k$ be a 
hypersurface and $Y = X \cap H$ the scheme theoretic intersection.
We shall say that $Y$ is `$Z$-admissible' if the following hold.
\begin{enumerate}
\item
$Y$ is good (see \S~\ref{sec:LFG}).
\item
For every irreducible component $Z'$ of $Z$ of dimension $d-1$, 
the scheme theoretic intersection $Y \cap Z'$ is integral of dimension $d-2$.
\end{enumerate}

Let $\iota \colon Y \inj X$ be the inclusion of a $Z$-admissible
hypersurface section
of $X$. Then the construction of the pull-back map on algebraic cycles
in \cite[Chapter~2, \S~4]{Fulton} yields a homomorphism
$\iota^* \colon {\rm Div}(X) \to {\rm Div}(Y)$. Furthermore, it easily follows
from the proof of \cite[Corollary~2.4.1]{Fulton} that
it induces the pull-back maps
$\iota^* \colon {\rm Cl}(X) \to {\rm Cl}(Y)$ and 
$\iota^* \colon {\rm Cl}^0(X) \to {\rm Cl}^0(X)$. Taking the quotients, we get a
pull-back map $\iota^* \colon \ns(X) \to \ns(Y)$.
We shall follow the notations of \S~\ref{sec:LFG}.

\begin{lem}\label{lem:Lef-NS-tor}
Assume that $X$ is normal, $H$ is a 
hypersurface of degree $m \gg 0$ and $Y = X \cap H$ is good.
Then the map $\iota^* \colon \ns(X)_{\tor} \to \ns(Y)_{\tor}$ is injective.
\end{lem}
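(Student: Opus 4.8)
The plan is to prove injectivity on each $n$-torsion subgroup $\ns(X)[n]$ separately, since $\ns(X)_\tor=\bigcup_n\ns(X)[n]$. First I would pass from $\ns$ to the Picard group of the regular locus. As $X$ is $R_2$, the set $X\setminus X^o$ has codimension $\ge 3$, so by normality $\Pic(X^o)={\rm Cl}(X^o)={\rm Cl}(X)$, and likewise $\Pic(Y^o)={\rm Cl}(Y)$ (here $Y^o=X^o\cap Y$ by \lemref{lem:CI-reg} and $Y$ is a normal $R_2$-scheme). Since ${\rm Cl}^0(X)=\Pic_W(X)(k)$ is divisible, we have ${\rm Cl}^0(X)=n\,{\rm Cl}^0(X)\subseteq n\Pic(X^o)$, so the sequence $0\to {\rm Cl}^0(X)\to\Pic(X^o)\to\ns(X)\to 0$ restricts to a short exact sequence $0\to {\rm Cl}^0(X)[n]\to\Pic(X^o)[n]\to\ns(X)[n]\to 0$, and similarly for $Y$. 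Comparing the two sequences by the snake lemma, it suffices to show that $\Pic(X^o)[n]\to\Pic(Y^o)[n]$ is injective and that ${\rm Cl}^0(X)[n]\to {\rm Cl}^0(Y)[n]$ is an isomorphism.

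For $n$ prime to $p$ this is where I would invoke \propref{prop:Lef-et}. Because $X$ is normal and projective over the algebraically closed field $k$ and $X\setminus X^o$ has codimension $\ge 2$, any unit on $X^o$ has trivial divisor and hence extends to a global unit, so $\mathcal O(X^o)^\times=k^\times$, which is $n$-divisible. The Kummer sequence therefore gives a restriction-compatible isomorphism $\Pic(X^o)[n]\cong H^1_\etl(X^o,\mu_n)\cong H^1_\etl(X^o,\Z/n)$, and likewise for $Y$. Now \propref{prop:Lef-et} yields $H^1_\etl(X^o,\Z/n)\xrightarrow{\cong}H^1_\etl(Y^o,\Z/n)$, so $\Pic(X^o)[n]\to\Pic(Y^o)[n]$ is an isomorphism. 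Taking the inverse limit over $\ell$-power levels (the finite group $\ns(X)_\tor\{\ell\}$ dies in the Tate module) identifies $T_\ell\,{\rm Cl}^0(X)=T_\ell\Pic(X^o)$ with $T_\ell\Pic(Y^o)=T_\ell\,{\rm Cl}^0(Y)$, whence $\dim\Pic_W(X)=\dim\Pic_W(Y)=:g$. Thus ${\rm Cl}^0(X)[n]$ and ${\rm Cl}^0(Y)[n]$ are both of order $n^{2g}$, and the injection between them (the restriction of the isomorphism on $\Pic[n]$) is an isomorphism; the snake lemma then gives $\ker\big(\ns(X)[n]\to\ns(Y)[n]\big)=0$.

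The main obstacle is the $p$-primary torsion, where \propref{prop:Lef-et} is unavailable. The identity $\mathcal O(X^o)^\times=k^\times$ still gives $\Pic(X^o)[p^r]\cong H^1_{\mathrm{fl}}(X^o,\mu_{p^r})$ in the flat topology, so I would establish the Lefschetz injectivity $H^1_{\mathrm{fl}}(X^o,\mu_{p^r})\hookrightarrow H^1_{\mathrm{fl}}(Y^o,\mu_{p^r})$ by the strategy of \thmref{thm:Lef-EFG}: the condition $\Lef(X^o,Y^o)$ from \propref{prop:LEF*} forces injectivity of restriction to the formal completion along $Y^o$, and the Enriques--Severi--Zariski vanishing of \lemref{lem:ESZ-0} controls the graded pieces of the conormal filtration relating the completion to $Y^o$. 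This handles injectivity of $\Pic(X^o)[p^r]\to\Pic(Y^o)[p^r]$. For the remaining isomorphism ${\rm Cl}^0(X)[p^r]\cong {\rm Cl}^0(Y)[p^r]$, I would upgrade the prime-to-$p$ computation (which shows the induced homomorphism $\Pic_W(X)\to\Pic_W(Y)$ of abelian varieties is an isogeny of $p$-power degree) to the statement that it is an isomorphism, equivalently that $\Alb_W(Y)\to\Alb_W(X)$ is; over an algebraically closed field a surjective homomorphism is bijective on $k$-points, and an honest isomorphism is bijective on all $p^r$-torsion. Feeding these two inputs into the snake lemma as before completes the $p$-primary case and hence the proof.

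I expect the flat-cohomology Lefschetz injectivity for $\mu_{p^r}$, together with promoting the Picard-variety isogeny to an isomorphism, to be the genuinely delicate points; the prime-to-$p$ part is essentially formal once \propref{prop:Lef-et} and the Kummer sequence are in hand.
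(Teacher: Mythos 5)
Your overall architecture coincides with the paper's: reduce to $\ns(X^o)_\tor\to\ns(Y^o)_\tor$, compare the two exact sequences $0\to{\rm Cl}^0\to\Pic(\cdot^o)\to\ns\to 0$ using divisibility of ${\rm Cl}^0$, and feed injectivity on $\Pic_\tor$ plus surjectivity on ${\rm Cl}^0_\tor$ into the snake lemma; your prime-to-$p$ treatment via the Kummer sequence and \propref{prop:Lef-et} is exactly the paper's, and your counting argument for the bijectivity of ${\rm Cl}^0(X)[n]\to{\rm Cl}^0(Y)[n]$ (equal Tate modules force $\dim\Pic_W(X)=\dim\Pic_W(Y)$) is a harmless variant of the paper's appeal to the Lefschetz theorem for the Weil--Albanese variety. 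The $p$-primary case, however, contains two genuine gaps. First, the injectivity of $\Pic(X^o)[p^r]\to\Pic(Y^o)[p^r]$ is the hard core of the lemma, and your formal-completion sketch does not deliver it: $\Lef(X^o,Y^o)$ controls $H^0$ of coherent sheaves, whereas to push a trivialization of a line bundle from $Y^o$ through the infinitesimal neighbourhoods of $Y^o$ and back to $X^o$ you would need $H^1(Y^o,\sO_{Y^o}(-jm))=0$ for \emph{all} $j\ge 1$ on the non-projective scheme $Y^o$, together with an algebraization step; neither is provided by \lemref{lem:ESZ-0} (which concerns $H^i(X,-)$, not $H^i(Y^o,-)$, and only for twists $\gg 0$), and relating $H^1(Y^o,-)$ to $H^1(Y,-)$ would require depth hypotheses along $Y_\sing$ that the lemma does not assume. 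The paper instead identifies $\Pic(X^o)[p^r]$ with $H^0_\etl(X^o,{\sO^{\times}_{X^o}}/{p^r})$ and, via the Bloch--Gabber--Kato isomorphism, the sheaves $W_r\Omega^1_{X^o|Y^o,\log}$ and the residue sequence for $\Omega^1_{X^o}(\log Y^o)(-Y^o)$, reduces everything to the two $H^0$-vanishing statements $H^0(Y^o,\sO_{Y^o}(-Y^o))=0$ and $H^0(X^o,\Omega^1_{{X^o}/k}(-Y^o))=0$, which do follow from \corref{cor:Global-Hartogs-0} and \lemref{lem:Vanishing-*}.

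Second, your surjectivity of ${\rm Cl}^0(X)[p^r]\to{\rm Cl}^0(Y)[p^r]$ rests on a false principle: a surjective homomorphism of abelian varieties is not bijective on $k$-points (multiplication by $n$ is a counterexample), and the Tate-module computation only shows that $\Pic_W(X)\to\Pic_W(Y)$ is an isogeny whose kernel has $p$-power order. If that kernel were \'etale, say ${\Z}/p$, the induced map on $p^r$-torsion of $k$-points would \emph{fail} to be surjective. What saves the argument is that the kernel is infinitesimal: by the Lefschetz theorem for the Weil--Albanese variety (\cite[Chapter~VII, Theorem~5]{Lang}) the kernel of $\Alb_W(Y)\to\Alb_W(X)$ is $\alpha_{p^r}$, hence the dual isogeny $\Pic_W(X)\to\Pic_W(Y)$ also has infinitesimal kernel and is therefore bijective on $k$-points and on all torsion. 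You need this geometric input explicitly; it cannot be extracted from the prime-to-$p$ computation, and no ``upgrade to an honest isomorphism'' is available in general.
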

\begin{proof}
Since $X$ is normal and $Y$ is good, the latter
is also normal. It follows therefore from 
\cite[Example~10.3.4]{Fulton} that the pull-back maps
$j^* \colon \ns(X) \to \ns(X^o)$ and $\tilde{j}^* \colon \ns(Y) \to 
\ns(Y^o)$ are isomorphisms. Hence, the lemma is equivalent to the statement
that the map $(\iota^o)^* \colon \ns(X^o)_{\tor} \to \ns(Y^o)_{\tor}$ is 
injective.

Since $\Pic_W(X)(k)$ and $\Pic_W(Y)(k)$ are divisible, there is a 
commutative diagram of short exact sequences
\begin{equation}\label{eqn:Lef-NS-tor-0} 
\xymatrix@C.8pc{
0 \ar[r] & {\rm Cl}^0(X)_{\tor} \ar[r] \ar[d] & {\Pic}(X^o)_{\tor} \ar[r] \ar[d] &
\ns(X^o)_{\tor} \ar[r] \ar[d] & 0 \\
0 \ar[r] & {\rm Cl}^0(Y)_{\tor} \ar[r] & {\Pic}(Y^o)_{\tor} \ar[r] &
\ns(Y^o)_{\tor} \ar[r] & 0.}
\end{equation}

Since $Y \subset X$ is a general hypersurface section, it follows from the 
Lefschetz theorem for the Weil-Albanese
variety (see \cite[Chapter~VII, Theorem~5]{Lang}) that the
canonical map $\Alb_W(Y) \to \Alb_W(X)$ is an isogeny of abelian varieties
whose kernel is isomorphic to the finite infinitesimal 
group scheme $\alpha_{p^r}$ for
some $r \ge 0$. Considering the induced map between the dual abelian
varieties, we see that the pull-back morphism $\Pic_W(X) \to \Pic_W(Y)$
is an isogeny of abelian varieties (see \cite[Theorem~11.1]{Cor-Sil}). 
It is then an easy exercise that the map
${\rm Cl}^0(X)_{\tor} \to  {\rm Cl}^0(Y)_{\tor}$ is surjective.
Using a diagram chase in ~\eqref{eqn:Lef-NS-tor-0}, the lemma is now
reduced to showing that the
map ${\Pic}(X^o)_{\tor} \to {\Pic}(Y^o)_{\tor}$ is injective.

We first fix a prime-to-$p$ integer $n$. Since $k$ is algebraically closed,
we can identify $\mu_n$ with ${\Z}/n$.
Since $H^0_{\etl}(X^o, \sO^{\times}_{X^o}) \cong k^{\times}$ (because $X$ is
$R_1$) and the latter is a divisible group,
one observes using the Kummer sequence that $~_n{\Pic}(X^o) \cong 
H^1_{\etl}(X^o, {\Z}/n)$. By the same token, we have
$~_n{\Pic}(Y^o) \cong H^1_{\etl}(Y^o, {\Z}/n)$.
It follows therefore from \propref{prop:Lef-et} that the map
$~_n{\Pic}(X^o) \to ~_n{\Pic}(Y^o)$ is injective. We note here that
this part of \propref{prop:Lef-et} does not require $X$ to be $R_2$.

We now let $n = p^r$ for some $r \ge 1$. Using the short exact sequence
of {\'e}tale sheaves
\begin{equation}\label{eqn:Lef-NS-tor-1}
0 \to  \sO^{\times}_{X^o} \xrightarrow{n} \sO^{\times}_{X^o} \to
{\sO^{\times}_{X^o}}/{n} \to 0,
\end{equation}
we see that $~_n{\Pic}(X^o) \cong H^0_{\etl}(X^o, {\sO^{\times}_{X^o}}/{n})$.
Using the similar isomorphism for $Y^o$, we need to show that
the map $H^0_{\etl}(X^o, {\sO^{\times}_{X^o}}/{n}) \to
H^0_{\etl}(Y^o, {\sO^{\times}_{Y^o}}/{n})$ is injective.
Comparing the exact sequence ~\eqref{eqn:Lef-NS-tor-1} with the similar
sequence for $Y^o$, this injectivity is equivalent to showing that
$H^0_{\etl}(X^o, {\sK_{1, X^o|Y^o}}/n) = 0$, where we let
$\sK_{1, X^o|Y^o} = {\rm Ker}(\sO^{\times}_{X^o} \surj 
(\iota^o)^*(\sO^{\times}_{Y^o}))$. 
Note here that ${\sK_{1, X^o|Y^o}}/n = \Ker({\sK_{1, X^o}}/n \to 
(\iota^o)^*({\sO^{\times}_{Y^o}}/n))$ 
since $\sO^{\times}_{Y^o}$ is $p$-torsion free.

We let $W_r\Omega^\bullet_{X^o}$ be the $p$-typical de Rham-Witt complex of $X^o$
(e.g., see \cite{Illusie}) and let $W_r\Omega^i_{X^o, \log}$ be the image of the 
Bloch-Gabber-Kato homomorphism 
$\dlog \colon \sK^M_{i, X^o} \to W_r\Omega^i_{X^o}$. This map is
given by $\dlog(\{x_1, \ldots , x_i\}) = \dlog[x_1]_r \wedge \cdots \wedge
\dlog[x_i]_r$, where $[\cdot]_r$ denotes the Teichm{\"u}ller homomorphism
$[\cdot]_r \colon \sO^{\times}_{X^o} \to (W_r\sO_{X^o})^{\times}$.
The Bloch-Gabber-Kato homomorphism induces an isomorphism
$\dlog \colon {\sK^M_{i, X^o}}/{n} \xrightarrow{\cong} W_r\Omega^i_{X^o, \log}$.

We let $W_r\Omega^1_{{X^o}/{Y^o}, \log}$ denote the image of $\dlog \colon 
{\sK_{1, X^o|Y^o}}/n \inj W_r\Omega^1_{X^o}$.
It suffices to show that 
$H^0_{\etl}(X^o, W_r\Omega^1_{X^o|Y^o, \log}) = 0$.
Using the short exact sequence (see \cite[Theorem~1.1.6]{JSZ})
\begin{equation}\label{eqn:Lef-NS-tor-2}
0 \to W_{r-1}\Omega^1_{X^o|Y^o, \log} \xrightarrow{\un{p}}
W_r\Omega^1_{X^o|Y^o, \log} \to W_1\Omega^1_{X^o|Y^o, \log} 
\to 0
\end{equation}
and induction on $r$, it suffices to show that
$H^0_{\etl}(X^o, W_1\Omega^1_{X^o|Y^o, \log}) = 0$.

One easily checks that the image of the composite inclusion 
\[
{\sK_{1, X^o|Y^o}}/p \inj \Omega^1_{X^o} \inj \Omega^1_{X^o}(\log Y^o)\]
lies in the $\sO_{X^o}$-submodule 
$\Omega^1_{X^o|Y^o} := \Omega^1_{X^o}(\log Y^o)(-Y^o)$
(see \cite[Theorem~1.2.1]{JSZ}).
Hence, it suffices to show that $H^0_{\etl}(X^o, \Omega^1_{X^o|Y^o}) = 0$.

To show this, we use the exact sequence 
\begin{equation}\label{eqn:Lef-NS-tor-4}
0 \to \Omega^1_{X^o}(-Y^o) \to \Omega^1_{X^o|Y^o} 
\xrightarrow{{\rm Res}} \sO_{Y^o}(-Y^o) \to 0,
\end{equation}
where ${\rm Res}$ is the Poincar{\'e} residue map twisted by $\sO_{X^o}(-Y^o)$.
It suffices therefore to show that the left and the right terms of
the sequence ~\eqref{eqn:Lef-NS-tor-4} have no global sections.
Since ${\rm char}(k) = p > 0$, we are finally reduced to showing that
\begin{equation}\label{eqn:Lef-NS-tor-5}
H^0_{\zar}(Y^o, \sO_{Y^o}(-Y^o)) = H^0_{\zar}(X^o, \Omega^1_{{X^o}/k}(-Y^o)) = 0,
\end{equation}
where we note that these Zariski cohomologies coincide with the
corresponding {\'e}tale cohomologies.

Now, we first note that $\sO_{Y}(Y)$ is very ample on $Y$.
This already implies that $H^0_{\zar}(Y, \sO_{Y}(-Y)) = 0$ (see
\cite[Exercise~III.7.1]{Hartshorne}).
Since $Y$ is normal, we conclude from \corref{cor:Global-Hartogs-0} that
$H^0_{\zar}(Y^o, \sO_{Y^o}(-Y^o)) = 0$.
On the other hand, since $\Omega^1_{{X^o}/k}$ is locally free and
$m \gg 0$, it follows from
\lemref{lem:Vanishing-*} that $H^0_{\zar}(X^o, \Omega^1_{{X^o}/k}(-Y^o)) = 0$.
This concludes the proof of the lemma.
\end{proof}

\begin{prop}\label{prop:Lef-NS}
Assume that $X$ is an $(R_2 + S_2)$-scheme, $H$ is a 
hypersurface of degree $m \gg 0$ and $Y = X \cap H$ is good.
Then the map $\iota^* \colon \ns(X) \to \ns(Y)$ is injective.
\end{prop}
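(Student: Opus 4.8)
The plan is to combine two facts that are already in hand: the torsion injectivity of \lemref{lem:Lef-NS-tor} and the étale Lefschetz injectivity of \propref{prop:Lef-et}. First I would record that $X$ is normal (being $R_1+S_2$) and that $Y$, being good, is normal as well; hence by \cite[Example~10.3.4]{Fulton} the restriction maps give identifications $\ns(X)\cong\ns(X^o)$ and $\ns(Y)\cong\ns(Y^o)$, so it suffices to prove that $(\iota^o)^*\colon\ns(X^o)\to\ns(Y^o)$ is injective. Since $\ns(X^o)$ and $\ns(Y^o)$ are finitely generated abelian groups, it is enough to establish (i) injectivity on torsion subgroups and (ii) injectivity after $-\otimes\Z_\ell$ for a single prime $\ell\neq p$. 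Part (i) is exactly \lemref{lem:Lef-NS-tor}, so the real content is (ii).

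For (ii) I would fix $\ell\neq p$ and, for each $r\ge 1$, use the Kummer sequence to produce the cycle class injection $\Pic(X^o)/{\ell^r}\inj H^2_\et(X^o,\mu_{\ell^r})$. Since ${\rm Cl}^0(X)=\Pic_W(X)(k)$ is divisible (being the group of $k$-points of an abelian variety), we get $\Pic(X^o)/{\ell^r}\cong\ns(X^o)/{\ell^r}$, and hence an injection $\ns(X^o)/{\ell^r}\inj H^2_\et(X^o,\mu_{\ell^r})$; the same applies to $Y^o$. Identifying $\mu_{\ell^r}\cong\Z/{\ell^r}$ over the algebraically closed field $k$, \propref{prop:Lef-et} (applicable because $X$ is $R_2$) shows that the pull-back $(\iota^o)^*\colon H^2_\et(X^o,\mu_{\ell^r})\to H^2_\et(Y^o,\mu_{\ell^r})$ is injective. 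Functoriality of the cycle class map then assembles these into a commutative square whose other three arrows are injective, forcing $\ns(X^o)/{\ell^r}\to\ns(Y^o)/{\ell^r}$ to be injective for every $r$.

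To finish, I would pass to the inverse limit over $r$: left exactness of $\varprojlim$ together with finite generation (which identifies $\varprojlim_r\ns(X^o)/{\ell^r}$ with $\ns(X^o)\otimes\Z_\ell$) yields an injection $\ns(X^o)\otimes\Z_\ell\inj\ns(Y^o)\otimes\Z_\ell$. By flatness of $\Z_\ell$ this means $\ker((\iota^o)^*)\otimes\Z_\ell=0$, so the finitely generated group $\ker((\iota^o)^*)$ is finite of order prime to $\ell$; in particular it is torsion. But $(\iota^o)^*$ is injective on torsion by \lemref{lem:Lef-NS-tor}, and a torsion kernel on which the map is injective must vanish. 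Hence $\ker((\iota^o)^*)=0$, as required.

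The main obstacle is (ii), the injectivity on the non-torsion part, which cannot be obtained by a purely algebraic argument and genuinely requires the $\ell$-adic input of \propref{prop:Lef-et}. Within (ii) the two delicate points to verify carefully are the compatibility of the Kummer cycle class maps with the hypersurface pull-back (so that the commutative square is legitimate) and the divisibility of ${\rm Cl}^0$ needed to pass between $\Pic$ and $\ns$ modulo $\ell^r$; once these are in place the remaining bookkeeping that combines the mod-$\ell^r$ injectivity with the torsion statement is formal.
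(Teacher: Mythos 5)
Your argument is correct and follows essentially the same route as the paper: both reduce the statement to \lemref{lem:Lef-NS-tor} for the torsion part and to the injectivity of $(\iota^o)^*$ on $H^2_{\etl}(-,{\Z}/{\ell^r})$ from \propref{prop:Lef-et}, fed through the Kummer sequence and the divisibility of ${\rm Cl}^0(X)$. The only (harmless) difference is bookkeeping: the paper picks $\ell$ prime to the order of $\ns(X)_{\tor}$ and embeds $\ns(X)_{\free}$ into $H^2_{\etl}(X^o,\Z_\ell)$, whereas you work with an arbitrary $\ell \neq p$ and dispose of the resulting prime-to-$\ell$ torsion kernel at the end.
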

\begin{proof}
Since ${\rm Cl}^0(X)$ is divisible, the map
${\Pic(X^o)}/n \to {\ns(X)}/n$ is an isomorphism for every integer $n \neq 0$.
It follows from \cite[Th{\'e}or{\`e}me~2]{Neron} that $\ns(X)$ is a 
finitely generated abelian group. Hence, there exists a short exact sequence
\begin{equation}\label{eqn:Lef-NS-0}
0 \to \ns(X)_{\tor} \to \ns(X) \to \ns(X)_{\free} \to 0,
\end{equation}
where the first group is finite and the last group is free of finite rank
(called the Weil-Picard rank of $X$). 
We can therefore find a prime number
$\ell \neq p$ such that the map ${\ns(X)}/{\ell^r} \to {\ns(X)_\free}/{\ell^r}$
is an isomorphism for all $r \ge 1$. It follows now from the Kummer sequence
that there is a series of homomorphisms
\[
{\underset{r \ge 1}\varprojlim} \ {\Pic(X^o)}/{\ell^r} \xrightarrow{\cong} 
{\underset{r \ge 1}\varprojlim} \ {\ns(X)}/{\ell^r}
\xrightarrow{\cong} {\underset{r \ge 1}\varprojlim} \ {\ns(X)_\free}/{\ell^r} 
\inj {\underset{r \ge 1}\varprojlim} \ H^2_{\etl}(X^o, {\Z}/{\ell^r}) 
\cong H^2_{\etl}(X^o, \Z_\ell).
\]
Comparing with the similar maps for $Y$, we obtain
a commutative diagram
\begin{equation}\label{eqn:Lef-NS-1}
\xymatrix@C.8pc{
\ns(X)_\free \ar@{^{(}->}[r] \ar[d]_-{\iota^*} &
\wh{\ns(X)}_{\ell} \ar@{^{(}->}[r] \ar[d]^-{\iota^*} & H^2_{\etl}(X^o, \Z_\ell) 
\ar[d]^-{(\iota^o)^*} \\
\ns(Y)_\free \ar@{^{(}->}[r] &
\wh{\ns(Y)}_{\ell} \ar@{^{(}->}[r] & H^2_{\etl}(Y^o, \Z_\ell),}
\end{equation}
where $\wh{A}_{\ell}$ denote the $\ell$-adic completion of an abelian group
$A$.

Using \lemref{lem:Lef-NS-tor}, the exact sequence
~\eqref{eqn:Lef-NS-0} and the diagram ~\eqref{eqn:Lef-NS-1}, 
we reduce the proposition to showing that
the pull-back map
\begin{equation}\label{eqn:Lef-NS-2}
H^2_{\etl}(X^o, {\Z}/{\ell^r}) \to H^2_{\etl}(Y^o, {\Z}/{\ell^r})
\end{equation}
is injective for all $r \ge 1$. But this follows from 
\propref{prop:Lef-et}.
\end{proof}

We can now prove our Lefschetz theorem for the generalized
Albanese variety.

\begin{thm}\label{thm:Lef-Alb-main}
  Let $X \subset \P^N_k$ be an integral projective scheme of dimension $d \ge 3$
  over an algebraically closed field $k$ of characteristic $p > 0$. Let
  $U \subset X^o$ be a dense open subscheme and $Z = X \setminus U$.
  Assume that $X$ is an $(R_2 + S_2)$-scheme and $H \subset \P^N_k$ is a 
hypersurface of degree $m \gg 0$ such that $Y = X \cap H$ is $Z$-admissible.
Then the map $\Alb_S(U \cap Y)(k) \to \Alb_S(U)(k)$ is an isomorphism.   
\end{thm}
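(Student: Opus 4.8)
The plan is to compare the generalized Albanese varieties of $U$ and $U\cap Y$ by applying Serre's description from \S\ref{sec:Asmooth} to both and running a five-lemma argument on $k$-points. Since $X$ is $(R_2+S_2)$ it is normal, and since $Y$ is good it is integral and normal of dimension $d-1\ge 2$; moreover $U\cap Y\subset X^o\cap Y=Y^o$ by \lemref{lem:CI-reg}, and $U\cap Y$ is dense open in $Y$ because the general hypersurface section $Y$ is not contained in $Z$. Writing $Z_Y=Y\cap Z=Y\setminus(U\cap Y)$, the exact sequence \eqref{eqn:Alb-0} for $X$ and for $Y$ fits into a ladder
\[
\begin{array}{ccccccccc}
0 & \to & \Lambda_{U\cap Y}(Y)^{\vee} & \to & \Alb_S(U\cap Y) & \to & \Alb_W(Y) & \to & 0 \\
  &     & \downarrow                 &     & \downarrow      &     & \downarrow&     &   \\
0 & \to & \Lambda_{U}(X)^{\vee}      & \to & \Alb_S(U)       & \to & \Alb_W(X) & \to & 0,
\end{array}
\]
where the middle vertical arrow is the covariant functoriality of $\Alb_S$ for $U\cap Y\inj U$, the right arrow is the pushforward on Weil--Albanese varieties, and the left arrow is the Cartier dual of the divisorial pullback $\iota^{*}\colon\Lambda_U(X)\to\Lambda_{U\cap Y}(Y)$. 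Commutativity and the identification of the vertical maps both follow by realizing all three as the Cartier duals of the evident morphism of $1$-motives $[\Lambda_U(X)\to\Pic_W(X)]\to[\Lambda_{U\cap Y}(Y)\to\Pic_W(Y)]$ built from the two pullback maps.

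Next I would check that the two outer vertical maps are isomorphisms on $k$-points. For the right arrow, the Lefschetz theorem for the Weil--Albanese variety (as invoked in \lemref{lem:Lef-NS-tor}) makes $\Alb_W(Y)\to\Alb_W(X)$ an isogeny whose kernel is an infinitesimal group scheme $\alpha_{p^r}$; since $k$ is algebraically closed and $\alpha_{p^r}(k)=0$, it is an isomorphism on $k$-points. For the left arrow I would show $\iota^{*}$ is an isomorphism of lattices. The group $\Lambda^1_U(X)$ is free on the prime divisors $Z'$ of $X$ contained in $Z$, and, as $Y$ is $Z$-admissible with $H$ general of large degree, $\iota^{*}$ sends each such $Z'$ to the prime divisor $Y\cap Z'$ of $Y$; because a general $H$ contains no component of $Z$ of dimension $\le d-2$, these exhaust the prime divisors of $Y$ contained in $Z_Y$, so $\iota^{*}\colon\Lambda^1_U(X)\xrightarrow{\cong}\Lambda^1_{U\cap Y}(Y)$ is an isomorphism. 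It restricts to the subgroups $\Lambda$: for $x\in\Lambda^1_U(X)$ the N\'eron--Severi classes satisfy $[\iota^{*}x]=\iota^{*}[x]$ in $\ns(Y)$, and since $\iota^{*}\colon\ns(X)\to\ns(Y)$ is \emph{injective} by \propref{prop:Lef-NS}, one has $[x]=0$ iff $[\iota^{*}x]=0$. Hence $\iota^{*}$ carries $\Lambda_U(X)=\ker(\Lambda^1_U(X)\to\ns(X))$ isomorphically onto $\Lambda_{U\cap Y}(Y)$, and its Cartier dual is an isomorphism of split tori.

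Finally, passing to $k$-points preserves exactness of both rows because the kernel tori $\Lambda_\bullet^{\vee}$ are split (so $H^1(k,-)=0$) and $k$ is algebraically closed; then the five lemma yields that $\Alb_S(U\cap Y)(k)\to\Alb_S(U)(k)$ is an isomorphism. The main obstacle is already absorbed into \propref{prop:Lef-NS}: the injectivity of $\iota^{*}$ on $\ns$ is exactly what forces surjectivity of $\iota^{*}$ on the lattices $\Lambda$, and it is there that the $(R_2+S_2)$ hypothesis and the {\'e}tale-cohomology Lefschetz theorem \propref{prop:Lef-et} are consumed. The only remaining delicate point is the divisor-theoretic bookkeeping — confirming, via the $Z$-admissibility of $Y$ and generality of $H$, that the pullback of Weil divisors is a clean bijection $\{Z'\}\leftrightarrow\{Y\cap Z'\}$ with each $Y\cap Z'$ a single reduced prime divisor and no lower-dimensional component of $Z$ absorbed into $Y$.
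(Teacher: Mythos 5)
Your proposal is correct and follows essentially the same route as the paper: the ladder of exact sequences from ~\eqref{eqn:Alb-0}, the Lefschetz isogeny with infinitesimal kernel $\alpha_{p^r}$ handling $\Alb_W(Y)(k)\to\Alb_W(X)(k)$, the bijectivity of $\iota^*$ on $\Lambda^1$ from $Z$-admissibility combined with the injectivity of $\ns(X)\to\ns(Y)$ from \propref{prop:Lef-NS} to get bijectivity on $\Lambda$, and Cartier duality plus the five lemma. Your write-up merely makes explicit the divisor-theoretic bookkeeping and the exactness on $k$-points that the paper leaves implicit.
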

\begin{proof}
We let $V = U \cap Y$ and consider the commutative diagram of the short
exact sequences of abelian groups (see ~\eqref{eqn:Alb-0})
\begin{equation}\label{eqn:Lef-Alb-main-0}
\xymatrix@C.8pc{
0 \ar[r] & \Lambda_V(Y)^\vee(k) \ar[r] \ar[d]_-{\alpha} & 
\Alb_S(V)(k) \ar[r] \ar[d]^-{\beta} & \Alb_W(Y)(k) \ar[d]^-{\iota_*} \ar[r] & 
0 \\
0 \ar[r] & \Lambda_U(X)^\vee(k) \ar[r] & 
\Alb_S(U)(k) \ar[r]  & \Alb_W(X)(k) \ar[r] & 0,}
\end{equation}
where the vertical arrows are the canonical maps induced by the
inclusion $Y \inj X$. We have seen in the proof of \lemref{lem:Lef-NS-tor}
that the right vertical arrow $\iota_*$ in ~\eqref{eqn:Lef-Alb-main-0} is an
isomorphism. So we need to show that $\alpha$ is an isomorphism to prove the
theorem. 

Since $Y$ is $Z$-admissible, we see that the homomorphism
$\iota^* \colon \Lambda^1_U(X) \to \Lambda^1_V(Y)$ is bijective. But this 
implies by virtue of \propref{prop:Lef-NS} that the homomorphism
$\iota^* \colon \Lambda_U(X) \to \Lambda_V(Y)$ is also bijective.
Taking the Cartier duals of these groups, we conclude that $\alpha$ is an
isomorphism.
\end{proof}

\section{The Suslin homology}
\label{sec:LWS}
The goal of this section is to prove \thmref{thm:Main-4} which identifies
the Levine-Weibel Chow group of a projective $R_1$-scheme over an
algebraically closed field with the Suslin 
homology of its regular locus. We begin by recalling the definition of
Suslin homology of smooth schemes.

\subsection{Recollection  of Suslin homology}\label{sec:SH}
Let $k$ be any field. Let $\Delta_i$ denote the
algebraic $i$-simplex, i.e., the spectrum of the ring
${k[x_0, \ldots , x_i]}/{(x_0 + \cdots + x_i -1)}$.
Let $X \in \Sch_k$.
Recall from \cite{Suslin-Voev} that the Suslin homology $H^{S}_i(X, A)$
of $X$ with coefficients in an abelian group $A$
is defined to be the $i$-th homology 
of the complex $(C_*(X) \otimes_{\Z} A, \partial)$, where
$C_i(X)$ is the free abelian group on the set of
integral closed subschemes of $X \times \Delta_i$ which are
finite and surjective over $\Delta_i$. The boundary
map is given by the alternating sum:
\[
\partial = \sum_{j=0}^{i} (-1)^j \delta^*_j \colon \ C_i(X) \to C_{i-1}(X),
\]
where $\delta^*_j$ is the pull-back map between the cycle groups
induced by the inclusion $\delta_j \colon X \times \Delta_{i-1}
\inj X \times \Delta_{i}$, given by $x_j = 0$. Note that the finiteness
and surjectivity conditions on the cycles over $\Delta_i$ insure that this
pull-back is defined.

As explained in \cite{Suslin-Voev}, $H^S_*(X, A)$ is an algebraic
analogue of the singular homology of topological spaces.
We shall write $H^{S}_*(X, \Z)$ in short as
$H^{S}_*(X)$. One easily checks from the definition that 
$H^S_*(-, A)$ is a covariant functor on $\Sch_k$.
By \cite[Proposition~14.18]{MVW} and \cite[Chapter~4, \S~9]{FSV}, the Suslin
homology is also a part of the motivic homology and cohomology theories of 
algebraic varieties in the sense of $\A^1$-homotopy theory.

It is easy to see from the definition that the identity map
$C_0(X) \to \sZ_0(X)$ induces a surjective homomorphism
$H^{S}_0(X) \surj \CH^F_0(X)$. This is an isomorphism if $X$ is complete.
Otherwise, $H^{S}_0(X)$ carries more information about $X$ than its
Chow group.
We shall be interested in the group $H^{S}_0(X, A)$.
In this case, the universal coefficient theorem implies that
there is a functorial isomorphism ${H^{S}_0(X)}/{n} \xrightarrow{\cong}
H^{S}_0(X, {\Z}/n)$ for any integer $n \in \Z$.

In this paper, we shall use the following description of $H^S_0(X)$
due to Schmidt (\cite[Theorem~5.1]{Schmidt}).
Assume that $X$ is a reduced scheme which is dense open in a projective
scheme $\ov{X}$. Let $\nu \colon C \to \ov{X}$ be a finite morphism from
a regular projective integral curve whose image is not contained in
$\ov{X} \setminus X$. Let $f \in k(C)^{\times}$ be such that it is regular
in a neighborhood of $\nu^{-1}(\ov{X} \setminus X)$ and 
$f(x) = 1$ for every $x \in \nu^{-1}(\ov{X} \setminus X)$.
Then the identity map $C_0(X) \to \sZ_0(X)$ induces an isomorphism
between $H^{S}_0(X)$ and the quotient of $\sZ_0(X)$ by the subgroup generated by
$\nu_*(\divf(f))$, where the $(C,f)$ runs through the collection
of all curves $C$ and $f \in k(C)^{\times}$ as the above. We shall let
$\sR^{S}_0(X)$ denote this subgroup.

\subsection{Chow group with modulus and Suslin homology}
\label{sec:Modulus}
One of the key steps for proving \thmref{thm:Main-4} is to 
show that the Suslin homology coincides with the Chow group of 0-cycles
with modulus (see \S~\ref{sec:Res-surface} for the definition of the
latter)
in certain cases. We shall prove this result of
independent interest in this subsection.
We expect this to have many applications in the theory of 0-cycles
with modulus.

Let $X$ be a regular projective scheme over a field $k$ and 
$D \subset X$ an effective Cartier divisor.
It is then an easy exercise using Schmidt's description of Suslin homology that 
the identity map of $\sZ_0(X \setminus D)$
induces a surjection $\CH_0(X|D) \surj H^S_0(X \setminus D)$.
We let $\Lambda$ be ${\Z}[\tfrac{1}{p}]$ if ${\rm char}(k) = p > 0$
and ${\Z}/n$, where $n$ is any nonzero integer, if ${\rm char}(k) = 0$. 
The following result was obtained by the second author in a joint work with
F. Binda \cite{BK-snc}. Since the paper is not yet published, we present
a proof.

\begin{prop}\label{prop:Suslin-modulus}
Let $X$ be a regular projective scheme over a field $k$ and
$D \subset X$ a reduced effective Cartier divisor whose all
irreducible components are regular.
Then ${\CH_0(X|D)}_\Lambda \surj {H^S_0(X \setminus D)}_\Lambda$
is an isomorphism.
\end{prop}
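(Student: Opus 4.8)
The plan is to prove the isomorphism by establishing that the surjection $\CH_0(X|D)_\Lambda \surj H^S_0(X\setminus D)_\Lambda$ is also injective, which amounts to comparing the two defining subgroups of rational equivalence inside $\sZ_0(X\setminus D)_\Lambda$. Recall that $H^S_0(X\setminus D)$ is $\sZ_0(X\setminus D)$ modulo the subgroup $\sR^S_0(X\setminus D)$ generated by $\nu_*(\divf(f))$, where $\nu\colon C\to X$ runs over finite maps from regular projective integral curves not mapping into $D$, and $f\in k(C)^\times$ satisfies $f\equiv 1$ along $\nu^{-1}(D)$ (Schmidt's description). The modulus group $\sR_0(X|D)$ is generated by the same type of cycles, but with the weaker condition that $f\in G(C,E)$, i.e.\ $f\in\Ker(\sO^\times_{C,E}\surj \sO^\times_E)$ where $E=\nu^*(D)$. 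Since $E$ may be nonreduced while Schmidt's condition only sees $E_\red=\nu^{-1}(D)$, we have $G(C,E)\subseteq \{f : f\equiv 1 \text{ on } \nu^{-1}(D)\}$, which gives the surjection; the content is the reverse containment after tensoring with $\Lambda$.

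\medskip

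First I would reduce to a single curve: fix a finite $\nu\colon C\to X$ from a regular projective integral curve and a rational function $f$ which is regular near $\nu^{-1}(D)$ with $f(x)=1$ for all $x\in\nu^{-1}(D)$. I must show that $\nu_*(\divf(f))$, viewed in $\CH_0(X|D)_\Lambda$, lies in the image of $\sR_0(X|D)_\Lambda$, i.e.\ that some $\Lambda$-multiple is realized by a function with the full modulus condition. Writing $E=\nu^*(D)=\sum_i m_i P_i$ with $E_\red=\sum_i P_i=\nu^{-1}(D)$, the hypothesis says $f\in\sO^\times_{C,E_\red}$ and $f\equiv 1 \pmod{\fm_{P_i}}$ at each $P_i$, whereas membership in $G(C,E)$ demands $f\equiv 1\pmod{\fm_{P_i}^{m_i}}$. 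The gap is measured by the quotient
\[
Q \;=\; \frac{\Ker\big(\sO^\times_{C,E_\red}\surj \sO^\times_{E_\red}\big)}{\Ker\big(\sO^\times_{C,E}\surj \sO^\times_{E}\big)}\;\cong\;\prod_i \big(1+\fm_{P_i}\big)/\big(1+\fm_{P_i}^{m_i}\big),
\]
and the key local computation is that each factor $(1+\fm_{P_i})/(1+\fm_{P_i}^{m_i})$ is a successive extension of the additive groups $\fm_{P_i}^j/\fm_{P_i}^{j+1}\cong k(P_i)$. Over a field of characteristic $p>0$ these are $p$-groups, hence are killed by inverting $p$ (so become trivial after $\otimes\,\Lambda=\otimes\,\Z[1/p]$); over characteristic $0$ they are $k(P_i)$-vector spaces, hence uniquely divisible, so they vanish after $\otimes\,\Z/n$. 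In either case $Q\otimes_\Z\Lambda=0$.

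\medskip

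The mechanism to upgrade this local vanishing to a statement about cycles is a moving/approximation argument: given $f$ with Schmidt's weak condition, I would modify it by a unit $u\in\sO^\times_{C,E_\red}$ so that $uf\in G(C,E)$ has the full modulus, \emph{without changing the divisor}, i.e.\ $u$ must be chosen to have no zeros or poles outside $\nu^{-1}(D)$ so that $\divf(uf)=\divf(f)$ away from a controlled correction. Concretely, because $Q_\Lambda=0$, after replacing $f$ by a $\Lambda$-admissible multiple (and using that $C$ is a regular projective curve, so that $\Pic$ and the generalized Jacobian $J_E=J(C,E)$ control the realizability of local unit data by global rational functions) I can correct $f$ to lie in $G(C,E)$ up to an element of $\sR_0(X|D)$. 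This is where I expect the main obstacle to lie: the local adjustment produces a prescribed element of $(1+\fm_{P_i})/(1+\fm_{P_i}^{m_i})$, but realizing it by a \emph{global} rational function on $C$ requires surjectivity of the map from global units with prescribed behaviour, controlled by the vanishing of a relative $\Pic$ or $H^1$ term; I would handle this via the exact sequence relating $\CH_0(C|E)=\Pic(C,E)$, the generalized Jacobian, and the local symbols, and invoke that the obstruction groups are again built from the additive groups $k(P_i)$ and therefore die after $\otimes\,\Lambda$.

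\medskip

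Finally I would assemble these pieces: the surjectivity is immediate from the inclusion of generating relations, and for injectivity I take an element of $\sR^S_0(X\setminus D)_\Lambda$, represent it by curve data $(C,f)$ as above, and use the local-to-global correction to show it already lies in $\sR_0(X|D)_\Lambda$. Summing over the finitely many curves appearing in a given relation shows the two subgroups of $\sZ_0(X\setminus D)_\Lambda$ coincide, giving the desired isomorphism ${\CH_0(X|D)}_\Lambda\xrightarrow{\cong}{H^S_0(X\setminus D)}_\Lambda$. The hypotheses that $D$ is reduced with regular components enter precisely to guarantee $E_\red=\nu^{-1}(D)$ is well-behaved and that the local rings at the $P_i$ are regular, so that $\fm_{P_i}^j/\fm_{P_i}^{j+1}$ really is a one-dimensional $k(P_i)$-space and the $\Lambda$-vanishing of $Q$ goes through cleanly.
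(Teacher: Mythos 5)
Your local computation of $Q=\prod_i(1+\fm_{P_i})/(1+\fm_{P_i}^{m_i})$ and of its vanishing after $\otimes\,\Lambda$ is correct and is the heart of the matter, but the step where you upgrade it to a statement about cycles has a genuine gap, and the mechanism you propose for closing it would not work. You ask for a unit $u\in\sO^{\times}_{C,E_\red}$ with no zeros or poles outside $\nu^{-1}(D)$ such that $uf\in G(C,E)$: on the projective curve $C$ such a $u$ (being also a unit along $\nu^{-1}(D)$) has empty divisor and is therefore a constant, and a constant cannot in general realize the prescribed classes in $(1+\fm_{P_i})/(1+\fm_{P_i}^{m_i})$ at several points at once. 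If instead you allow $u$ to have zeros and poles away from $\nu^{-1}(D)$ (which weak approximation does permit), then $\divf(uf)$ differs from $\divf(f)$ by the uncontrolled cycle $\divf(u)$, and controlling that cycle is governed by the relative Picard group of $(C,E)$, which has the abelian variety $J(C)$ as a quotient; the obstruction is therefore not ``built from the additive groups $k(P_i)$'' and does not die after $\otimes\,\Lambda$. As written, the decisive step is flagged as an expected obstacle and is not actually carried out.

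The good news is that no global realization is needed: your local computation already finishes the proof if you use it multiplicatively instead of trying to preserve the divisor. In characteristic $p$, writing $f=1+x$ with $x$ in the Jacobson radical of $\sO_{C,E}$ gives $f^{p^N}=1+x^{p^N}\in G(C,E)$ as soon as $p^N\ge\max_i m_i$, so $p^N\nu_*(\divf(f))=\nu_*(\divf(f^{p^N}))\in\sR_0(X|D)$ and the generator dies in $\CH_0(X|D)[1/p]$. In characteristic $0$, divisibility of $Q$ gives $f=g^nh$ with $g\in G(C,E_\red)$ and $h\in G(C,E)$, so $\nu_*(\divf(f))=n\,\nu_*(\divf(g))+\nu_*(\divf(h))$ lies in $n\sZ_0(X\setminus D)+\sR_0(X|D)$ and dies in $\CH_0(X|D)/n$. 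Either way every generator of $\sR^{S}_0(X\setminus D)$ vanishes in $\CH_0(X|D)_\Lambda$, which is the required injectivity. This corrected route is genuinely different from the paper's, which first reduces to closed immersions $C\inj X$, then performs the Saito--Sato sequence of point blow-ups $\pi\colon X'\to X$ to make the strict transform of $C$ regular and transverse to $D'_\red$ (this is where the hypotheses that $D$ is reduced with regular components are used), and finally invokes Miyazaki's theorem that $\CH_0(X'|D')_\Lambda\cong\CH_0(X'|D'_\red)_\Lambda$ to absorb the multiplicities; in the transverse reduced situation Schmidt's condition and the modulus condition agree on the nose.
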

\begin{proof}
We can assume that $X$ is connected.
We let $U = X \setminus D$. We need to show that $\sR^{S}_0(U)$
dies in $\CH_0(X|D)_\Lambda$. 
So we let $\nu \colon C \to X$ be a finite morphism from a regular
integral projective curve whose image is not contained in $D$.
We let $E = \nu^{-1}(D)$ and let $f \in \sO^{\times}_{C,E}$ be such that
either $E = \emptyset$ or $f(x) = 1$ for all $x \in E$.
Our assertion is immediate if $E = \emptyset$ and we therefore 
assume that this is not the case.

Since $\nu$ is a finite morphism of regular schemes, we can find a
factorization $C \xrightarrow{\nu'} \P^n_X \xrightarrow{\pi} X$ of $\nu$ 
such that
the first map is a closed immersion and the second map is the
canonical projection. Since $\pi$ is smooth, it follows that
$\pi^*(D)$ is reduced with regular irreducible components.
There is a push-forward map $\pi_* \colon \sZ_0(\P^n_U) \to
\sZ_0(U)$ such that $\pi_*(\sR_0(\P^n_X|\P^n_D)) \subset
\sR_0(X|D)$ (see \cite{Binda-Saito} or \cite[\S~2]{KP-MRL}). Since
$\nu_*(\divf(f)) = \pi_* \circ \nu'_*(\divf(f))$,
it suffices to show that $\nu'_*(\divf(f))$ dies in $\CH_0(\P^n_X|\P^n_D)_\Lambda$.
We can therefore assume that $\nu \colon C \inj X$ is a closed immersion.

Since $D$ is reduced with regular irreducible components, we can
apply \cite[Proposition~A.6]{Saito-Sato} to find a finite sequence of
blow-ups $\pi \colon X' \to X$ along the closed points lying over $D$
such that the scheme theoretic inverse image
$D' := X' \times_X D$ satisfies the following.

\begin{enumerate}
\item
The irreducible components of $D'_\red$ are regular.
\item 
The strict transform $C'$ of $C$ is regular.
\item
$C'$ intersects $D'_\red$ only in the regular locus of $D'_\red$ and transversely.
\end{enumerate}

Since $\pi$ is proper, we have a commutative diagram
\begin{equation}\label{eqn:Suslin-modulus-0}
\xymatrix@C.8pc{
\sZ_0(X'\setminus D') \ar[d]_-{\pi_*} \ar[r] & \CH_0(X'|D') \ar[d]^-{\pi_*} \\
\sZ_0(X\setminus D) \ar[r] & \CH_0(X|D),}
\end{equation}
where $\pi_*$ is the push-forward map between the 0-cycle groups.
Since $C$ is regular, the map $\pi \colon C' \to C$ is an isomorphism
and hence $f \in k(C')^{\times}$ such that
$\divf(f)_C = \pi_*(\divf(f)_{C'})$. Moreover, $f$ is a regular invertible
function in a neighborhood of $D' \cap C'$ with $f(x) = 1$ for every
$x \in D' \cap C'$.

It follows from ~\eqref{eqn:Suslin-modulus-0} that $\divf(f)_C$ will
die in ${\CH_0(X|D)}_\Lambda$ if we can show that $\divf(f)_{C'}$
  dies in ${\CH_0(X'|D')}_\Lambda$. Equivalently, $\divf(f)_{C'}$
  dies in ${\CH_0(X'|D'_\red)}_\Lambda$ by \cite[Theorem~1.3]{Miyazaki}.
We can therefore assume that our original curve $C \subset X$ has the
property that it is regular and it intersects $D$ transversely in
the regular locus of $D$. But in this case, it is easy to see that
$f \in \sO^{\times}_{C,E}$ and $f(x) = 1$ for all $x \in E$ if and only if
$f \in {\rm Ker}(\sO^{\times}_{C,E} \to \sO^{\times}_E)$.
This concludes the proof.
\end{proof}

\subsection{Relation with Levine-Weibel Chow group}
\label{sec:LWS*}
Let $k$ be any field and 
$X$ an integral projective $R_1$-scheme of dimension $d \ge 1$ over $k$.
We first define a canonical homomorphism from the Levine-Weibel
Chow group of $X$ to the Suslin homology of $X^o$.

\begin{lem}\label{lem:Chow-Suslin}
There is an inclusion of subgroups $\sR^{LW}_0(X) \subseteq
\sR^{S}_0(X^o)$ inside $\sZ_0(X^o)$. 
In other words, the identity map of $\sZ_0(X^o)$ defines a
canonical surjection 
\[
\theta_X \colon \CH^{LW}_0(X) \surj H^{S}_0(X^o).
\]
\end{lem}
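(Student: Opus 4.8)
The goal reduces to establishing the inclusion of subgroups $\sR^{LW}_0(X) \subseteq \sR^S_0(X^o)$ inside $\sZ_0(X^o)$, for then the identity map on $\sZ_0(X^o)$ descends to the asserted surjection $\theta_X$. The naive approach would be to take a Cartier curve $C$ together with $f \in \sO^\times_{C, C \cap X_\sing}$, normalize, and push forward; I expect the main obstacle here to be that Schmidt's description of $\sR^S_0(X^o)$ requires the rational function to take the value $1$ on the fibre over $X_\sing = X \setminus X^o$, whereas the Levine--Weibel relation only demands that $f$ be a unit there. Rather than reconcile these two conditions directly, my plan is to bypass the singular fibre altogether by invoking the moving lemma.

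Concretely, I would apply \lemref{lem:Moving-nor} with $A = X_\sing$, which is a closed subset of codimension at least two since $X$ is $R_1$ (the case $d=1$ being trivial, as then $X$ is regular). Because $X \setminus A = X^o$, the two groups $\CH^{LW}_0(X, X_\sing)$ and $\CH^{LW}_0(X)$ are both quotients of $\sZ_0(X^o)$ and the canonical comparison map between them is induced by the identity. The inclusion $\sR^{LW}_0(X, X_\sing) \subseteq \sR^{LW}_0(X)$ holds by construction, while the injectivity supplied by \lemref{lem:Moving-nor} yields the reverse inclusion; hence $\sR^{LW}_0(X) = \sR^{LW}_0(X, X_\sing)$. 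The payoff of this identification is that $\sR^{LW}_0(X, X_\sing)$ is generated by the cycles $\divf(f)$ attached to integral curves $C \subset X$ with $C \cap X_\sing = \emptyset$, i.e. $C \subset X^o$, so that all generators now avoid the singular locus entirely and the troublesome value-$1$ condition becomes vacuous.

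For such a generator, let $\nu \colon C_n \to C \inj X$ denote the normalization of $C$ followed by the inclusion. Then $C_n$ is a regular projective integral curve, $\nu$ is finite with image $C \not\subset X_\sing$, and crucially $\nu^{-1}(X_\sing) = \emptyset$. Setting $\tilde f = \nu^* f \in k(C_n)^\times$, the compatibility of divisors of functions under the finite birational map $\nu$ (as already used in the proof of \lemref{lem:Rec-map-0}) gives $\divf(f) = \nu_*(\divf(\tilde f))$ in $\sZ_0(X^o)$. Since $\nu^{-1}(X_\sing) = \emptyset$, the regularity and value-$1$ conditions in Schmidt's description of $\sR^S_0(X^o)$ are satisfied vacuously by the pair $(C_n, \tilde f)$, whence $\nu_*(\divf(\tilde f)) \in \sR^S_0(X^o)$. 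Thus every generator of $\sR^{LW}_0(X) = \sR^{LW}_0(X, X_\sing)$ lies in $\sR^S_0(X^o)$, establishing the required inclusion and therefore the existence of the canonical surjection $\theta_X \colon \CH^{LW}_0(X) \surj H^S_0(X^o)$.
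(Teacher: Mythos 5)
Your proposal is correct and follows essentially the same route as the paper: invoke \lemref{lem:Moving-nor} with $A = X_\sing$ to replace $\sR^{LW}_0(X)$ by $\sR^{LW}_0(X, X_\sing)$, whose generators come from integral curves avoiding $X_\sing$, and then observe that for such a curve the normalization $(C_n, f)$ satisfies Schmidt's conditions vacuously since $\nu^{-1}(X_\sing)=\emptyset$. Your identification $\sR^{LW}_0(X)=\sR^{LW}_0(X,X_\sing)$ via the injectivity half of the moving lemma is exactly the reduction the paper performs, just spelled out in more detail.
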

\begin{proof}
By \lemref{lem:Moving-nor}, we can replace $\sR^{LW}_0(X)$ by 
$\sR^{LW}_0(X, X_\sing)$. We now let $C \subset X$ be an integral curve 
with $C \cap X_{\rm sing} = \emptyset$
and let $f \in k(C)^{\times}$. Since $C$ is closed in the projective 
scheme $X$ which does not meet $X_{\rm sing}$, it is clear that the pair 
$(C_n, f)$ defines a relation in $\sR^{S}_0(X^o)$ according to
Schmidt's description of $H^S_0(X^o)$.
\end{proof}

\enlargethispage{20pt}

\begin{lem}\label{lem:Surface}
Assume that ${\rm char}(k) = p > 0$ and $d =2$. 
Then the kernel of $\CH^{LW}_0(X) \surj H^{S}_0(X^o)$ is a $p$-primary
torsion group of bounded exponent. 
\end{lem}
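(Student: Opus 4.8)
The plan is to move both groups to a resolution of singularities and compare them with the Chow group of $0$-cycles with modulus, where \propref{prop:Suslin-modulus} is available. Fix a resolution $\pi\colon\wt{X}\to X$ as in \S\ref{sec:Res-surface}, with reduced exceptional divisor $E$; since $\dim X=2$ we may take $E$ to be a simple normal crossing divisor, so that each irreducible component of $E$ is regular and at most two components pass through any point. By \propref{prop:RES-Norm} the map $\pi^*$ identifies $\CH^{LW}_0(X)$ with $\CH_0(\wt{X}|m_0E)$ for some fixed $m_0\gg1$. As $\pi$ is an isomorphism over $X^o=X_\reg$, it identifies $X^o$ with $\wt{X}\setminus E'$ for $E'=\pi^{-1}(X_\sing)_\red$, which differs from $E$ by finitely many closed points. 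Realising $H^S_0$ as the motivic homology $H_0(-,\Z(0))$ \cite[Proposition~14.18]{MVW} and using the Gysin triangle for each of the smooth codimension-two pairs obtained by deleting these points one at a time, the vanishing $H^{j}(k(x),\Z(2))=0$ for $j>2$ forces $H^S_0(\wt{X}\setminus E')\xrightarrow{\cong}H^S_0(\wt{X}\setminus E)$; hence $H^S_0(X^o)\cong H^S_0(\wt{X}\setminus E)$. Under these identifications \lemref{lem:Chow-Suslin} shows that $\theta_X$ is the tautological surjection $r\colon\CH_0(\wt{X}|m_0E)\surj H^S_0(\wt{X}\setminus E)$, carrying a modulus relation (where $f\equiv1$ modulo $m_0E$ on a curve) to the Suslin relation of the same $(C,f)$ (where merely $f=1$ on the reduced boundary $E$). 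So it suffices to bound $\ker r$.

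That $\ker r$ is $p$-primary torsion is already formal: since $E=(m_0E)_\red$, the reduced-modulus comparison \cite[Theorem~1.3]{Miyazaki} gives $\CH_0(\wt{X}|m_0E)_\Lambda\cong\CH_0(\wt{X}|E)_\Lambda$ with $\Lambda=\Z[\tfrac1p]$, while \propref{prop:Suslin-modulus} gives $\CH_0(\wt{X}|E)_\Lambda\cong H^S_0(\wt{X}\setminus E)_\Lambda$; thus $r\otimes\Lambda$ is an isomorphism and $\ker r$ is $p$-primary. The essential content of the lemma is therefore the bounded exponent, which I will obtain directly, reproving $p$-primariness along the way.

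I claim $p^N\cdot\sR^S_0(\wt{X}\setminus E)\subseteq\sR_0(\wt{X}|m_0E)$ inside $\sZ_0(\wt{X}\setminus E)$ for $N=\lceil\log_p(2m_0)\rceil$, a bound independent of the relation; this kills $\ker r$. Let $(\nu\colon C\to\wt{X},f)$ generate $\sR^S_0(\wt{X}\setminus E)$, so $C$ is a regular projective curve and $\ord_x(f-1)\ge1$ for all $x\in\nu^{-1}(E)$. Following the blow-up construction in the proof of \propref{prop:Suslin-modulus} (via \cite[Proposition~A.6]{Saito-Sato}) I pass to a composite of blow-ups $\sigma\colon X'\to\wt{X}$, centred at regular points of the successive boundaries, after which the strict transform $C'\cong C$ meets $\sigma^*E$ transversally in the regular locus of $(\sigma^*E)_\red$. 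Because $E$ has normal crossings with at most two branches at a point, every such blow-up raises the multiplicity of the boundary by at most one, so $\sigma^*E$ has all multiplicities $\le2$; consequently $(\nu')^*(\sigma^*(m_0E))=m_0\,(\nu')^*(\sigma^*E)$ has multiplicity at most $2m_0$ at each point of $C'\cap\sigma^*E$. Now the characteristic-$p$ identity $(1+y)^{p^N}=1+y^{p^N}$ gives $\ord_x(f^{p^N}-1)=p^N\ord_x(f-1)\ge p^N\ge2m_0$, so $f^{p^N}$ meets the modulus condition for $\sigma^*(m_0E)$ on $C'$, i.e.\ $\divf(f^{p^N})_{C'}\in\sR_0(X'|\sigma^*(m_0E))$. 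Pushing forward along $\sigma$, which is compatible with moduli pulled back from the base (as in \eqref{eqn:Suslin-modulus-0}, cf.\ \cite{Binda-Saito,KP-MRL}), and using $C'\cong C$, we get $p^N\,\divf(f)_C=\sigma_*\divf(f^{p^N})_{C'}\in\sR_0(\wt{X}|m_0E)$, as claimed.

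The main obstacle is this last, quantitative step: one must transversalise the Suslin curve against $E$ while keeping the power of $p$ uniform over all relations. The two points that make it work are that for a normal-crossing surface boundary the iterated blow-ups needed for transversality keep the multiplicities of $\sigma^*E$ bounded (here by $2$), and that the Frobenius-type identity $(1+y)^{p^N}=1+y^{p^N}$ converts the weak Suslin vanishing $\ord\ge1$ into the strong modulus vanishing $\ord\ge2m_0$ at the single uniform cost $N=\lceil\log_p(2m_0)\rceil$. Verifying that the blow-ups can indeed be taken with centres at regular boundary points (so that no multiplicity exceeds $2$) and that the modulus push-forward applies verbatim is where the care lies.
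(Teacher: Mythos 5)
Your overall strategy---identify $\CH^{LW}_0(X)$ with $\CH_0(\wt{X}|m_0E)$ via \propref{prop:RES-Norm}, identify $H^S_0(X^o)$ with $H^S_0(\wt{X}\setminus E)$, and then bound the kernel of the tautological surjection $r$---is sound in outline and close to the paper's, and your deduction of $p$-primariness from \cite[Theorem~1.3]{Miyazaki} and \propref{prop:Suslin-modulus} is fine. The problem is the quantitative step, which you correctly identify as the essential content. The assertion that the iterated blow-ups needed to transversalise a Suslin curve against $E$ keep the multiplicities of $\sigma^*E$ bounded by $2$ is false, and even your own stated mechanism (``each blow-up raises the multiplicity by at most one'') would only give a bound of $1$ plus the number of blow-ups, which is not uniform. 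Concretely, take $E=\{y=0\}$ locally on $\wt{X}$ and $C=\{y=x^n\}$, meeting $E$ at the origin with contact order $n$: separating $C$ from the total transform of $E$ requires about $n$ point blow-ups, and at each stage the new exceptional curve acquires multiplicity equal to the sum of the multiplicities of the branches through the centre, so the component of $\sigma^*E$ that $C'$ finally meets transversally has multiplicity comparable to $n$. Equivalently, without blowing up at all, the modulus condition at a point $y\in\nu^{-1}(E)$ reads $\ord_y(f-1)\ge m_0\,\ord_y(\nu^*E)$, and $\ord_y(\nu^*E)$ is the contact order $n$, which is unbounded over the set of generators of $\sR^S_0(\wt{X}\setminus E)$. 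The Frobenius identity then only yields $\ord_y(f^{p^N}-1)\ge p^N$, so you need $p^N\ge m_0 n$ with $n$ depending on the relation: you recover that each generator of $\ker r$ is $p$-power torsion (which is the $p$-primariness you already had), but no uniform exponent.

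A relation-by-relation argument of this shape cannot produce the uniform bound without a further moving step showing that high-contact relations are congruent to low-contact ones modulo $\sR_0(\wt{X}|m_0E)$. The paper instead obtains the bounded exponent globally: it factors $\theta_X$ through $\CH_0(\wt{X}|mE)\surj\CH_0(\wt{X}|E)$ and controls the kernel of this map by comparing the cycle class maps to $F^2K_0(\wt{X},mE)\to F^2K_0(\wt{X},E)$ via \propref{prop:RES-Norm}; the kernel of the relative $K$-group map is governed by the nilpotent ideal $\sO(-E)/\sO(-mE)$ and is a $p$-group of exponent bounded in terms of $m$ alone (\cite[Lemma~3.4]{Krishna-Inv}). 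A secondary point: since a generator $\nu\colon C\to\wt{X}$ of $\sR^S_0$ need not be birational onto its image, the strict transform of $\nu(C)$ is not isomorphic to $C$ in general; one must first factor $\nu$ through a closed immersion into some $\P^n_{\wt{X}}$ as in the proof of \propref{prop:Suslin-modulus}, which takes you out of the surface case where your multiplicity heuristics were formulated.
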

\begin{proof}
Let $\pi \colon \wt{X} \to X$ be a resolution of singularities of 
$X$ such that the reduced exceptional divisor $E \subset \wt{X}$ has
strict normal crossings (see \cite{Lipman} for the existence of
$\wt{X}$). For an integer $m \ge 1$, let 
$mE \inj \wt{X}$ denote the infinitesimal thickening of $E$ in $\wt{X}$
of order $m$. 

It is clear from the definitions of $\CH^{LW}_0(X)$,
$\CH_0(\wt{X}|D)$ and $H^S_0(X^o)$ that the identity map of
$\sZ_0(X^o)$ defines, by the pull-back via $\pi^*$,
the canonical surjective maps
\begin{equation}\label{eqn:Surface-00}
\CH^{LW}_0(X) \stackrel{\pi^*}{\surj} \CH_0(\wt{X}|mE) \surj 
\CH_0(\wt{X}|E) \surj H^S_0(X^o)
\end{equation}
for every integer $m \ge 1$ such that the composite map
is $\theta_X$.
The first arrow from the left is an isomorphism for all $m \gg 1$
by \thmref{thm:Main-3} and the third arrow is an isomorphism
after inverting $p$ by \propref{prop:Suslin-modulus}.
We thus have to show that the kernel of the
surjection $\CH_0(\wt{X}|mE) \surj \CH_0(\wt{X}|E)$ is a $p$-group
of bounded exponent{\footnote{If we do not insist on bounded exponent, then
we can directly apply \cite[Theorem~1.3]{Miyazaki}.}} if $m \gg 1$.
But this follows by comparing the map (see ~\eqref{eqn:0-C-map-I-0})
$cyc_{\wt{X}|mE} \colon \CH_0(\wt{X}|mE) \to F^2K_0(\wt{X}, mE)$ with 
  $cyc_{\wt{X}|E} \colon \CH_0(\wt{X}|E) \to F^2K_0(\wt{X}, E)$ for $m \gg 1$, and
applying \propref{prop:RES-Norm} in combination with \cite[Lemma~3.4]{Krishna-Inv}.
\end{proof}

We shall now generalize \lemref{lem:Surface} to arbitrary dimension.

\begin{thm}\label{thm:p-torsion}
Let $k$ be a perfect field of characteristic $p > 0$
and $X$ an integral projective $R_1$-scheme of dimension $d \ge 2$ over $k$.
Then the kernel of the canonical surjection $\CH^{LW}_0(X) \surj 
H^S_0(X^o)$ is a $p$-primary torsion group. Equivalently, the map
\[
\theta_X \colon {\CH^{LW}_0(X)}[\tfrac{1}{p}] \to {H^S_0(X^o)}[\tfrac{1}{p}]
\]
is an isomorphism.
\end{thm}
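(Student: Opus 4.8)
The plan is to prove the equivalent assertion that $\theta_X[\tfrac{1}{p}]$ is injective, arguing by induction on $d$. Since $\theta_X$ is surjective by \lemref{lem:Chow-Suslin}, injectivity of $\theta_X[\tfrac{1}{p}]$ is the same as the kernel of $\theta_X$ being $p$-primary torsion. The base case $d=2$ is exactly \lemref{lem:Surface} (a surface that is $R_1$ has only isolated singularities, so that lemma applies), so I assume $d \ge 3$ and that the statement holds for integral projective $R_1$-schemes of dimension $d-1$.

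The engine of the induction is to \emph{spread out a Suslin relation onto a hypersurface section}. Let $\alpha \in \CH^{LW}_0(X)$ be a class killed by $\theta_X$. By Schmidt's description of $H^S_0(X^o)$ recalled in \S~\ref{sec:SH}, there are finitely many finite morphisms $\nu_j \colon C_j \to X$ from regular projective integral curves whose images are not contained in $X_\sing$, and functions $f_j \in k(C_j)^\times$ that are regular and equal to $1$ near $\nu_j^{-1}(X_\sing)$, with $\alpha = \sum_j (\nu_j)_*(\divf(f_j))$ in $\sZ_0(X^o)$. Using the Bertini theorems of \cite{Ghosh-Krishna} (which build on \cite{KL, Wutz}), I would choose a good hypersurface section $Y = X \cap H$ of sufficiently large degree that is integral, regular in codimension one, and contains both $\Supp(\alpha)$ and every curve $\nu_j(C_j)$. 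Since $Y$ is good, the closed set $Y \cap X_\sing$ has codimension at least two in $Y$ and contains $Y_\sing$; hence \corref{cor:PF-LW} supplies a push-forward $\iota_* \colon \CH^{LW}_0(Y) \to \CH^{LW}_0(X)$ for $\iota \colon Y \inj X$. As $\alpha$ is supported on $Y^o = X^o \cap Y$, it defines a class $\alpha' \in \CH^{LW}_0(Y)$ with $\iota_*(\alpha') = \alpha$.

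The point of forcing the curves into $Y$ is that each $\nu_j$ now factors through a finite morphism $C_j \to Y$ whose image avoids $Y_\sing$, while $f_j$ remains regular and equal to $1$ near the preimage of $Y_\sing$. Thus the very same data $(C_j, f_j)$ realize the cycle $\alpha' = \sum_j (\nu_j)_*(\divf(f_j))$ as an element of $\sR^{S}_0(Y^o)$, so that $\theta_Y(\alpha') = 0$. By the inductive hypothesis the kernel of $\theta_Y$ is $p$-primary torsion, whence $\alpha'$ is $p$-primary torsion in $\CH^{LW}_0(Y)$, and therefore $\alpha = \iota_*(\alpha')$ is $p$-primary torsion. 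As $\alpha$ was an arbitrary element of $\Ker(\theta_X)$, this group is $p$-primary torsion, which is the theorem.

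The hard part will be the Bertini step. I need an integral hypersurface section $Y$ of $X$ that (i) passes through the prescribed $0$-dimensional support and through the finitely many one-dimensional subvarieties $\nu_j(C_j)$ (all of codimension $\ge 2$ in $X$ since $d \ge 3$), and (ii) is still a good section, in particular with $X^o \cap Y$ regular and containing no component of $X_\sing$. Prescribing that $H$ contain the curves fixes part of the linear system, so the usual general-position and irreducibility arguments must be carried out relative to this base locus; ensuring simultaneously integrality of $Y$ and regularity of $Y$ along $X^o \cap Y$ (even where the $\nu_j(C_j)$ are singular or meet $X_\sing$) is precisely the kind of statement isolated in \cite{Ghosh-Krishna}, and checking that it applies with both the $0$- and $1$-dimensional base conditions is the crux. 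Once the section is in hand, the descent of the Suslin relation and the functoriality of $\CH^{LW}_0$ under $\iota_*$ are formal.
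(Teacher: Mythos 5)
Your overall strategy -- induction on $d$ with base case \lemref{lem:Surface}, spreading the Suslin relation onto a hypersurface section $Y$, invoking the inductive hypothesis on $Y$, and pushing forward via \corref{cor:PF-LW} -- is exactly the paper's. But the step you yourself flag as ``the crux'' is a genuine gap, and it does not go away by checking hypotheses in \cite{Ghosh-Krishna} more carefully: the Bertini statement you need is false as you have set it up. You require a hypersurface section $Y = X\cap H$ with $X^o\cap Y$ regular that \emph{contains the image curves} $\nu_j(C_j)$. The curves $C_j$ are regular, but the morphisms $\nu_j$ are only finite, so the images $\nu_j(C_j)$ can be arbitrarily singular; if some $\nu_j(C_j)$ has a closed point $x\in X^o$ at which its embedding dimension equals $d$ (i.e.\ its Zariski tangent space fills $T_xX^o$), then \emph{every} hypersurface section through that curve is singular at $x$, and no choice of degree or generality can rescue this. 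The Bertini theorems with base conditions ([KL, Theorem~7], [Wutz, Theorem~3.1], and \lemref{lem:Bertini-general} in the paper) all carry an embedding-dimension hypothesis on the prescribed subscheme for precisely this reason.

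The missing device is a preliminary reduction that replaces each finite morphism by a \emph{regular closed subscheme} before applying Bertini. Since $\nu\colon C\to X$ is finite and $C$ is regular projective, one factors $\nu$ as $C\stackrel{\nu'}{\inj}\P^n_X\xrightarrow{\pi}X$ with $\nu'$ a closed immersion; one has $(\P^n_X)_\sing=\P^n_{X_\sing}$, so $\P^n_X$ is again integral projective and $R_1$, and $\nu_*(\divf(f))=\pi_*\bigl(\nu'_*(\divf(f))\bigr)$ with $\pi_*$ defined on $\CH^{LW}_0$ by \corref{cor:PF-LW}. One then proves the $p$-power torsion claim for $\nu'_*(\divf(f))$ on $\P^n_X$, where the curve $C$ \emph{is} a regular closed subscheme (embedding dimension one everywhere), so the Bertini step goes through, and pushes the conclusion down to $X$. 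Note also that it suffices to treat one generator $\nu_*(\divf(f))$ of $\sR^S_0(X^o)$ at a time: the kernel of $\theta_X$ is the abelian subgroup generated by the classes of these elements, so it is $p$-primary torsion as soon as each generator is killed by a power of $p$. This removes any need to force a single $Y$ to contain all the curves simultaneously, which is a second (milder) strain on your Bertini step. With these two adjustments your argument becomes the paper's proof.
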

\begin{proof}
We shall prove the theorem by induction on $d$. The case
$d = 2$ follows from \lemref{lem:Surface}. So we can assume $d \ge 3$.
Let $\nu \colon C \to X$ be a finite morphism from a regular integral
projective curve whose image is not contained in $X_{\rm sing}$
and let $f \in {\rm Ker}(\sO^{\times}_{C,E} \surj \sO^{\times}_{E})$,
where $E = \nu^{-1}(X_{\rm sing})$ with the reduced closed subscheme
structure. We need to show that $\nu_*(\divf(f)) \in
\CH^{LW}_0(X)$ is killed by a power of $p$.

We can get a factorization $C \stackrel{\nu'}{\inj} 
\P^n_X \xrightarrow{\pi} X$ of $\nu$, where $\nu'$ is a closed
immersion and $\pi$ is the canonical projection. Since
the singular locus of $\P^n_X$ coincides with $\P^n_{X_{\rm sing}}$, it is
clear that $\nu'_*(\divf(f)) \in \sR^{S}_0(\P^n_{X^o})$ and
$\nu_*(\divf(f)) = \pi_*(\nu'_*(\divf(f)))$.
Using \corref{cor:PF-LW}, it suffices therefore to show that 
$\nu'_*(\divf(f))$ is killed by some power of $p$ in $\CH^{LW}_0(\P^n_X)$.
We can thus assume that $\nu \colon C \to X$ is a closed immersion.

We now fix a closed embedding $X \inj \P^N_k$.
We let $C' = C \cap X^o$. Since $X^o$ and $C'$ are smooth (this uses
perfectness of $k$) and
$d \ge 3$, we can use \cite[Theorem~7]{KL} (for $k$ infinite)
and \cite[Theorem~3.1]{Wutz} (for $k$ finite) to find a hypersurface 
$H \subset \P^N_k$ containing $C$ and not containing $X$ 
such that the scheme theoretic intersection $X \cap H$
has the property that it contains
no irreducible component of $X_\sing$ and $H \cap X^o$ is smooth.

We let $W$ be the connected component of $H \cap X^o$ which contains $C'$
and $Y \subset X$ the closure of $W$ with the reduced closed
subscheme structure. Then $Y \subset X$ is an integral closed subscheme
of dimension $d-1$ containing $C$ which satisfies the following properties.

\begin{enumerate}
\item
$Y \cap X^o$ is smooth.
\item
$\dim(Y \cap X_\sing) \le \dim(X_\sing) - 1 \le d-3 = \dim(Y) - 2$. 
In particular, $Y$ is an $R_1$-scheme.
\end{enumerate}

We let $A = Y \cap X_\sing$ and $U = Y \cap X^o$ so that 
$Y_\sing \subset A$ and $U \subset Y^o$. 
Let $C \stackrel{\nu'}{\inj} Y \stackrel{\iota}{\inj} X$ 
be the factorization of $\nu$.
It follows from the choice of $Y$ that 
$\nu'_*(\divf(f)) \in \sR^{S}_0(U) \subset \sR^{S}_0(Y^o)$.
On the other hand, we have a commutative diagram
\begin{equation}\label{eqn:p-torsion-0}
\xymatrix@C.8pc{
\CH^{LW}_0(Y, A) \ar[r]^-{\cong} \ar@{->>}[d] & \CH^{LW}_0(Y) \ar@{->>}[d] \\
H^S_0(U) \ar[r] & H^S_0(Y^o),}
\end{equation}
where the existence of the left vertical arrow
follows directly from the proof of \lemref{lem:Chow-Suslin} 
and the top horizontal arrow is an isomorphism by \lemref{lem:Moving-nor}.

Since $\nu'_*(\divf(f)) \in \CH^{LW}_0(Y, A)$, it follows
from the above diagram and by induction that
$\nu'_*(\divf(f))$ is killed by a power of $p$ in $\CH^{LW}_0(Y)$.
Equivalently,
$\nu'_*(\divf(f))$ is killed by a power of $p$  in $\CH^{LW}_0(Y, A)$.
The push-forward map $\iota_* \colon \sZ_0(Y \setminus A) \to \sZ_0(X^o)$
and \corref{cor:PF-LW} together imply that
$\nu_*(\divf(f)) = \iota_*\circ \nu'_*(\divf(f))$ is killed by a power of $p$ 
in $\CH^{LW}_0(X)$. This concludes the proof.
\end{proof}

\subsection{The Albanese homomorphism}\label{sec:Alb-hom}
We now assume that $k$ is an algebraically closed field.
Let $X$ be a connected smooth quasi-projective scheme of dimension $d \ge 1$ 
over $k$. The covariance of Suslin homology
defines the push-forward map ${\deg}_X \colon H^S_0(X) \to H^S_0(k) \cong \Z$. 
This is also called the degree map since $\deg_X ([x]) = [k(x): k]$ for a 
closed point $x \in X$. We let $H^S_0(X)^0 = \Ker(\deg_X)$.
Let $\Alb_S(X)$ be the generalized
Albanese variety of $X$ (see \S~\ref{sec:Alb*}).

Let $\vartheta_X \colon \sZ_0(X)^0 \to \Alb_S(X)(k)$ be given by
$\vartheta_X(\sum_i n_i [x_i]) = \sum_i n_i (alb_X(x_i))$.
It was shown in \cite[Lemma~3.1]{SS} that this map factors through
the quotient by $\sR^S_0(X)$ to yield the Albanese homomorphism
\begin{equation}\label{eqn:alb-hom}
\vartheta_X \colon H^S_0(X)^0 \to \Alb_S(X)(k).
\end{equation}
Furthermore, $\vartheta_X$ defines a natural transformation of covariant
functors from $\Sm_k$ to abelian groups as $X$ varies.
The map $\vartheta_X$ was in fact discovered by Ramachandran 
\cite{Ramachandran} who showed more generally that there exists an Albanese 
group scheme ${\sA lb}_S(X)$ and an Albanese homomorphism 
$\vartheta_X \colon H^S_0(X) \to {\sA lb}_S(X)(k)$
such that $\Alb_S(X)$ is the identity component of ${\sA lb}_S(X)$
and ~\eqref{eqn:alb-hom} is the induced map on the degree zero part.
If $X$ is projective over $k$, then $\vartheta_X$ coincides with the 
classical Albanese homomorphism from the degree zero Chow group of 0-cycles
on $X$.

Suppose now that $X \in \Sch_k$ is an integral projective $R_1$-scheme of 
dimension $d \ge 1$.
Recall from \S~\ref{sec:Alb*} that the universal rational map
$alb^w_X \colon X_n \dashrightarrow \Alb_W(X)$ extends to a regular morphism
$alb^w_X \colon X^o \to \Alb_W(X)$. Moreover, the universal property of 
$\Alb_S(X^o)$ shows that this map is the composition
$X^o \xrightarrow{alb_{X^o}} \Alb_S(X^o) \to \Alb_W(X)$.

Recall from \cite[\S~7]{Krishna-Srinivas} that if $X$ is normal, then $alb^w_{X}
\colon X^o \to \Alb_W(X)$ gives rise to the Albanese homomorphism
$\alpha_{X} \colon \CH^{LW}_0(X)^0 \to \Alb_W(X)(k)$.
The construction of this homomorphism is identical to that of $\vartheta_{X^o}$.
If $X$ is smooth, this is the classical Albanese homomorphism for
$\CH^F_0(X)^0$. The main result of \cite[\S~7]{Krishna-Srinivas} is that
$\alpha_X$ is an isomorphism between the torsion subgroups,
extending the famous Bloch-Roitman-Milne torsion theorem for smooth 
projective schemes. Using \corref{cor:Pull-back-nor} and
birational invariance of $\Alb_W(X)$, this results immediately extends
to $R_1$-schemes, i.e.,

\begin{prop}\label{prop:Non-normal-tor}
Let $X \in \Sch_k$ be an integral projective $R_1$-scheme of dimension 
$d \ge 1$. Then the Albanese map
$alb^w_X \colon X^o \to \Alb_W(X)$ induces a homomorphism
\[
\alpha_X \colon \CH^{LW}_0(X)^0 \to \Alb_W(X)(k)
\]
which is an isomorphism on the torsion subgroups.
\end{prop}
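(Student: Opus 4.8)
The plan is to reduce everything to the normal case, which is \cite[\S~7]{Krishna-Srinivas}, by passing to the normalization. Let $\pi \colon X_n \to X$ be the normalization morphism. Because $X$ is $R_1$, the map $\pi$ is an isomorphism over the regular locus $X^o$; concretely, $\pi^{-1}(X^o) \to X^o$ is an isomorphism whose complement $\pi^{-1}(X_\sing)$ has codimension at least two in $X_n$, so that $X^o$ is identified with an open subscheme of $(X_n)^o$ with complement of codimension at least two. First I would apply \corref{cor:Pull-back-nor} to conclude that the pull-back $\pi^* \colon \CH^{LW}_0(X) \to \CH^{LW}_0(X_n)$ is an isomorphism; since over $X^o$ the map $\pi$ is an isomorphism it carries a closed point $x \in X^o$ to its unique preimage $x' \in (X_n)^o$ with the same residue field, so $\pi^*$ preserves degrees and restricts to an isomorphism $\pi^* \colon \CH^{LW}_0(X)^0 \xrightarrow{\cong} \CH^{LW}_0(X_n)^0$.

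Next I would use the birational invariance of the Weil--Albanese variety. As $X$ is integral and $\pi$ is birational, $X$ and $X_n$ share a function field, and $\Alb_W(-)$ is universal for rational maps to abelian varieties (see \S~\ref{sec:Alb*}); hence there is a canonical isomorphism $\Alb_W(X) \xrightarrow{\cong} \Alb_W(X_n)$. By the description recalled in \S~\ref{sec:Alb*}, both $alb^w_X \colon X^o \to \Alb_W(X)$ and $alb^w_{X_n} \colon (X_n)^o \to \Alb_W(X_n)$ arise from the universal rational map $X_n \dashrightarrow \Alb_W(X) = \Alb_W(X_n)$, so under the identifications $X^o \cong \pi^{-1}(X^o) \subset (X_n)^o$ and $\Alb_W(X) \cong \Alb_W(X_n)$ the morphism $alb^w_X$ is simply the restriction of $alb^w_{X_n}$. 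In particular $alb^w_X(x) = alb^w_{X_n}(x')$ for a closed point $x \in X^o$ and its preimage $x'$. This gives a commutative square
\[
\xymatrix@C1.4pc{
\CH^{LW}_0(X)^0 \ar[r]^-{\alpha_X} \ar[d]_-{\pi^*}^-{\cong} & \Alb_W(X)(k) \ar[d]^-{\cong} \\
\CH^{LW}_0(X_n)^0 \ar[r]^-{\alpha_{X_n}} & \Alb_W(X_n)(k),}
\]
whose commutativity on generators is exactly the identity $alb^w_X(x) = alb^w_{X_n}(x')$. The same identity, together with the fact that $\pi^*$ identifies $\sR^{LW}_0(X)$ with $\sR^{LW}_0(X_n)$, shows that the cycle-level assignment defining $\alpha_X$ descends to $\CH^{LW}_0(X)^0$, since the corresponding assignment for the normal scheme $X_n$ is already known to be well defined. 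Finally, because $X_n$ is normal, \cite[\S~7]{Krishna-Srinivas} tells us that $\alpha_{X_n}$ restricts to an isomorphism on torsion subgroups; as the two vertical maps in the square are isomorphisms, it follows that $\alpha_X$ is an isomorphism on torsion subgroups.

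The one step that genuinely requires care, rather than being purely formal, is the compatibility asserted in the second paragraph: namely that after identifying targets via birational invariance, $alb^w_X$ is the restriction of $alb^w_{X_n}$ along $X^o \hookrightarrow (X_n)^o$. This is the hinge on which the commutative square (and the well-definedness of $\alpha_X$) rests, and it is where the universal property of $\Alb_W$ recalled in \S~\ref{sec:Alb*} must be used explicitly; once it is in place, the remainder of the argument is just the isomorphism of \corref{cor:Pull-back-nor} combined with the normal case \cite[\S~7]{Krishna-Srinivas}.
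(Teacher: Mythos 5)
Your proposal is correct and follows exactly the route the paper takes: the paper's (very terse) justification is precisely "Using Corollary~\ref{cor:Pull-back-nor} and birational invariance of $\Alb_W(X)$, this result immediately extends to $R_1$-schemes," i.e.\ reduction to the normal case of \cite[\S~7]{Krishna-Srinivas} via the normalization. Your write-up simply makes explicit the compatibility of $alb^w_X$ with $alb^w_{X_n}$ under the identification of Albanese varieties, which the paper leaves implicit.
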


\subsection{Proof of \thmref{thm:Main-4}}\label{sec:Main-4-pf}
We shall now prove \thmref{thm:Main-4}. 
We let $k$ be an algebraically closed field and $X \in \Sch_k$ an integral
projective $R_1$-scheme of dimension $d \ge 1$. 
We assume first that ${\rm char}(k) = p > 0$.
We need to show in this case that the map 
$\theta_X \colon \CH^{LW}_0(X) \to H^S_0(X^o)$ is an isomorphism.

By \thmref{thm:p-torsion}, we only have to show that $\Ker(\theta_X)\{p\} = 0$.
For this, we consider the diagram
\begin{equation}\label{eqn:Suslin-LW-iso-0}
\xymatrix@C.8pc{
\CH^{LW}_0(X)^0 \ar[r]^-{\theta_X} \ar[d]_-{\alpha_X} & H^S_0(X^o)^0 
\ar[d]^-{\vartheta_{X^o}} \\
\Alb_W(X)(k) & \Alb_S(X^o)(k) \ar[l]_-{\cong},}
\end{equation}
where the bottom horizontal arrow is an isomorphism by ~\eqref{eqn:Alb-0}.
It follows from the construction of various maps that this diagram is
commutative.  

  If $a \in \Ker(\theta_X)\{p\}$, then it must lie in $\CH^{LW}_0(X)^0$.
Moreover, its image under $\theta_X$ will die in $H^S_0(X^o)^0$.
This in turn implies by ~\eqref{eqn:Suslin-LW-iso-0} that
$\alpha_X(a) = 0$. \propref{prop:Non-normal-tor} implies that
$a = 0$.

We now assume that ${\rm char}(k) = 0$. In this case, we have to show that the 
map $\theta_X \colon {\CH^{LW}_0(X)}/n \to {H^S_0(X^o)}/n$ is an isomorphism
for all integers $n \neq 0$.

We let $\pi \colon X' \to X$ be the normalization map. We let
$A = \pi^{-1}(X_\sing)$ and $U = X' \setminus A$. We then have a commutative
diagram
\begin{equation}\label{eqn:Main-4-0}
\xymatrix@C.8pc{
{\CH^{LW}_0(X')}/n \ar@{->>}[d]_-{\theta_{X'}} & 
{\CH^{LW}_0(X', A)}/n \ar[r]^-{\pi_*}_-{\cong} 
\ar[l]^-{\cong} \ar@{->>}[d]^-{\theta_{X'}} & {\CH^{LW}_0(X)}/n 
\ar@{->>}[d]^-{\theta_{X}} \\
{H^S_0(X'^o)}/n & {H^S_0(U)}/n \ar[r]^-{\pi_*}_-{\cong}  \ar[l] &  
{H^S_0(X^o)}/n.}
\end{equation}

The top horizontal arrows are isomorphisms by \lemref{lem:Moving-nor}
and \corref{cor:Pull-back-nor}.
Suppose that the left vertical arrow in this diagram is an isomorphism.
Then the middle and the right vertical arrows also are isomorphisms.
We can therefore assume that $X$ is normal.

Let $\pi \colon \wt{X} \to X$ be a resolution of singularities of 
$X$ such that the reduced exceptional divisor $E \subset \wt{X}$ has
strict normal crossings. As in ~\eqref{eqn:Surface-0}, there are 
canonical surjections
\begin{equation}\label{eqn:Surface-0-higher}
{\CH^{LW}_0(X)}/n \stackrel{\pi^*}{\surj} {\CH_0(\wt{X}|mE)}/n \surj 
{\CH_0(\wt{X}|E)}/n \surj {H^S_0(X^o)}/n.
\end{equation}

The first arrow from the left is an isomorphism for all $m \gg 1$
by \cite[Theorem~1.8]{Gupta-Krishna} and the third arrow is an isomorphism
by \propref{prop:Suslin-modulus}.
We thus have to show that ${\CH_0(\wt{X}|mE)}/n \surj {\CH_0(\wt{X}|E)}/n$ is an
isomorphism for all $m \ge 1$. But this follows from 
\cite[Theorem~1.3(2)]{Miyazaki}. This concludes the proof of
\thmref{thm:Main-4}.
$\hfill \square$

\vskip .3cm

\subsection{Class field theory with finite coefficients}
\label{sec:CFT-fin}
We shall now prove \thmref{thm:Main-1-0} as an application of \thmref{thm:p-torsion}. We restate it for convenience.

\begin{thm}\label{thm:Rec-prime}
Let $X$ be an integral projective $R_1$-scheme of dimension $d \ge 1$ over a 
finite field. Let $n$ be any integer prime to ${\rm char}(k)$.
Then the reciprocity map 
\[
\phi_X: {\CH^{LW}_0(X)}/n \to {\pi^{\rm ab}_1(X^o)}/n
\]
is an isomorphism of finite abelian groups.
\end{thm}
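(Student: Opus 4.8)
The plan is to factor the mod-$n$ reciprocity map through the Suslin homology of $X^o$ and then invoke the prime-to-$p$ class field theory of Schmidt and Spie{\ss}. Throughout, write $p = {\rm char}(k)$, so that $n$ is prime to $p$ by hypothesis. The case $d = 1$ is classical and I would dispose of it first: an integral projective $R_1$-scheme of dimension one is a smooth projective curve, so $X^o = X$, $\CH^{LW}_0(X) = \CH^F_0(X)$, and the assertion is the reduction mod $n$ of Lang's class field theory. Hence I would assume $d \ge 2$ for the remainder, which is exactly the range in which \thmref{thm:p-torsion} applies (finite fields being perfect).

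First I would reduce the \emph{source} from the Levine-Weibel Chow group to Suslin homology. By \thmref{thm:p-torsion} there is a short exact sequence
\[
0 \to K \to \CH^{LW}_0(X) \xrightarrow{\theta_X} H^S_0(X^o) \to 0
\]
in which $K = \Ker(\theta_X)$ is a $p$-primary torsion group. Since $n$ is prime to $p$ we have $K/n = 0$, and a snake-lemma computation for multiplication by $n$ then shows that $\theta_X/n \colon \CH^{LW}_0(X)/n \xrightarrow{\cong} H^S_0(X^o)/n$ is an isomorphism. Next I would record the factorization of the reciprocity map: both $\phi_X$ and the Suslin reciprocity map $\rho^S_{X^o} \colon H^S_0(X^o) \to \pi^{\ab}_1(X^o)$ are defined by sending a regular closed point to its associated Frobenius, while $\theta_X$ is induced by the identity on $\sZ_0(X^o)$ (via \lemref{lem:Chow-Suslin} and Schmidt's description of $H^S_0$ recalled in \S~\ref{sec:SH}). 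Consequently $\phi_X = \rho^S_{X^o} \circ \theta_X$, and it suffices to prove that $\rho^S_{X^o}/n$ is an isomorphism of finite groups.

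For this last point I would appeal to the tame class field theory of \cite{Schmidt-Spiess} and \cite{Schmidt}. Combining the universal coefficient isomorphism $H^S_0(X^o)/n \xrightarrow{\cong} H^S_0(X^o, {\Z}/n)$ (noted in \S~\ref{sec:SH}) with their main result, the reciprocity map identifies $H^S_0(X^o, {\Z}/n)$ with $\pi^{\ab}_1(X^o)/n$ for every smooth variety over a finite field and every $n$ prime to $p$, both groups being the $\mathbb{Z}/n$-dual of the finite group $H^1_{\etl}(X^o, {\Z}/n)$; applied to $U = X^o$ this gives the desired isomorphism of finite abelian groups. Finally $\phi_X/n = (\rho^S_{X^o}/n) \circ (\theta_X/n)$ is a composite of isomorphisms, which completes the proof.

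The genuinely hard input here is \thmref{thm:p-torsion}, which is already available; given it, the remaining obstacle is essentially bookkeeping. The point worth verifying carefully is the applicability of Step three: the Schmidt--Spie{\ss} theorem must be invoked for the open variety $X^o$ \emph{without} any compactification or resolution hypothesis, which is legitimate precisely because, for coefficients prime to $p$, the wild (pro-$p$) part of $\pi^{\ab}_1(X^o)$ is invisible and the prime-to-$p$ theory suffices. This is exactly why the mod-$n$ statement is accessible, whereas the full integral statement capturing wild ramification is the content of \thmref{thm:Main-1}.
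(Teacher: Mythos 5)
Your proposal is essentially correct and shares the paper's key reduction: \thmref{thm:p-torsion} is used in both arguments to replace ${\CH^{LW}_0(X)}/n$ by ${H^S_0(X^o)}/n$ (and your snake-lemma computation for this step is fine, since the kernel is $p$-primary and $n$ is prime to $p$). Where you diverge is in how the target is handled. The paper first proves surjectivity of ${\phi_X}/n$ by induction on $d$, cutting by a good hypersurface section from \cite{Ghosh-Krishna} and using \propref{prop:LEF*} together with \cite[Expos{\'e}~X, Corollaire~2.6]{SGA-2} to push ${\pi^{\ab}_1(Y^o)}/n$ onto ${\pi^{\ab}_1(X^o)}/n$; it then concludes by showing that source and target are finite of the \emph{same cardinality}, via \cite[Corollary~7.1]{Kelly-Saito} (giving $H^S_0(X^o, {\Z}/n) \cong H^1_{\etl}(X^o,{\Z}/n)^*$) and Pontryagin duality $({\pi^{\ab}_1(X^o)}/n)^\vee \cong H^1_{\etl}(X^o,{\Z}/n)$. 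The paper never checks that these duality isomorphisms are compatible with the reciprocity map --- the separate surjectivity step is precisely what allows it to get away with a cardinality count. You instead invoke the tame class field theory of Schmidt--Spie{\ss} and Schmidt as an isomorphism \emph{realized by the reciprocity map}, which removes the Lefschetz/induction step entirely (and also makes the paper's preliminary reduction to $X$ normal unnecessary). That input is available --- the paper itself uses the integral statement $H^S_0(X^o) \inj \pi^{t,\ab}_1(X^o)$ from \cite[Theorem~8.7]{Schmidt} in \S~\ref{sec:LWCH-fin} --- so your route is legitimate and somewhat shorter, at the cost of leaning on a stronger form of the external results (the map being an isomorphism, rather than the abstract finiteness and duality statements).

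One step is wrong as literally written, though easily repaired. The integral homomorphism $\rho^S_{X^o}\colon H^S_0(X^o) \to \pi^{\ab}_1(X^o)$ you posit does not exist: the Frobenius reciprocity on $\sZ_0(X^o)$ kills $\sR^S_0(X^o)$ only in the \emph{tame} quotient $\pi^{t,\ab}_1(X^o)$, not in $\pi^{\ab}_1(X^o)$; this failure is exactly the wild ramification that \thmref{thm:Main-1} is designed to capture, and it is visible in diagram ~\eqref{eqn:Tame}, where the Suslin reciprocity map has target $\pi^{t,\ab}_1(X^o)$. Consequently the factorization $\phi_X = \rho^S_{X^o}\circ\theta_X$ can only be asserted after reducing mod $n$: the kernel of $\pi^{\ab}_1(X^o) \surj \pi^{t,\ab}_1(X^o)$ is topologically generated by pro-$p$ wild inertia, hence is pro-$p$, so ${\pi^{\ab}_1(X^o)}/n \to {\pi^{t,\ab}_1(X^o)}/n$ is an isomorphism for $n$ prime to $p$, and ${\phi_X}/n$ then does factor through ${H^S_0(X^o)}/n$ with induced map equal to the tame reciprocity mod $n$. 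Your closing paragraph shows you understand why this works; just make sure the well-definedness of $\rho^S_{X^o}$ is only ever claimed at the mod-$n$ (equivalently, tame) level.
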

\begin{proof}
By an argument identical to the one in ~\eqref{eqn:Main-4-0}, we can assume
that $X$ is normal. We shall first show by induction on $d$ that 
$\phi_X$ is surjective. This is clear for $d \le 2$ by \thmref{thm:Main-1}.
We assume therefore that $d \ge 3$. We fix an integer $n$ prime to
${\rm char}(k)$. We fix an embedding $X \subset \P^N_k$ and apply
\cite[Theorem~6.3]{Ghosh-Krishna}
to find a hypersurface $H \subset \P^N_k$ such that the scheme theoretic
intersection $Y = X \cap H$ is normal, smooth along $X^o$ and intersects $X_\sing$
properly. We had argued in the proof of part (2) of \thmref{thm:Main-1}
that $Y$ must be integral in this case.

By \corref{cor:PF-LW}, we get a commutative diagram
\begin{equation}\label{eqn:Rec-prime-0}
\xymatrix@C.8pc{
{\CH^{LW}_0(Y)}/n \ar[r]^-{\phi_Y} \ar[d]_-{\iota_*} &
{\pi^{\rm ab}_1(Y^o)}/n \ar[d]^-{\iota_*} \\
{\CH^{LW}_0(X)}/n \ar[r]^-{\phi_X} & {\pi^{\rm ab}_1(X^o)}/n,}
\end{equation}
where $\iota \colon Y \inj X$ is the inclusion.

It follows from \propref{prop:LEF*} and \cite[Expos{\'e}~X, Corollaire~2.6]{SGA-2} that
the right vertical arrow is surjective. The top horizontal arrow is
surjective by induction. We conclude that $\phi_X$ is surjective.
To finish the proof of the theorem, it suffices now to show that
${\CH^{LW}_0(X)}/n$ and ${\pi^{\rm ab}_1(X^o)}/n$ are both finite
abelian groups of the same cardinality.

By \thmref{thm:p-torsion}, we can replace ${\CH^{LW}_0(X)}/n$ by
${H^S_0(X^o)}/n \cong H^S_0(X^o, {\Z}/n)$. Similarly, we can replace 
${\pi^{\rm ab}_1(X^o)}/n$ by $H^1_{\et}(X^o, {\Z}/n)^* := 
\Hom_{{\Z}/n}(H^1_{\et}(X^o, {\Z}/n), {\Z}/n)$.
On the other hand, \cite[Corollary~7.1]{Kelly-Saito} implies that
$H^S_0(X^o, {\Z}/n) \cong H^1_{\et}(X^o, {\Z}/n)^*$ and
\cite[Theorem~4.1]{Geisser-Schmidt} says that
$H^1_{\etl}(X^o, {\Z}/n)^*$ is finite.
It follows that ${\CH^{LW}_0(X)}/n$ and ${\pi^{\rm ab}_1(X^o)}/n$ are finite
and have the same cardinality.
\end{proof}

\vskip .3cm

\subsection{Chow group vs. Suslin homology over finite 
fields}\label{sec:LWCH-fin}
We shall now show that the assumption that $k$ is algebraically closed
in \thmref{thm:Main-4} is essential. 
Assume that $k$ is a finite field and $X \in \Sch_k$ satisfies
one of the two conditions of \thmref{thm:Main-1}. Let $\pi^{t, \ab}_1(X^o)$
be the abelianized tame fundamental group of $X^o$ (see \cite{Schmidt})
which describes the finite {\'e}tale covers of $X^o$ which are tamely 
ramified along $X_\sing$.
We then have a commutative diagram
\begin{equation}\label{eqn:Tame}
\xymatrix@C.8pc{
0 \ar[r] & \CH^{LW}_0(X) \ar[r]^-{\phi_X} \ar@{->>}[d]_-{\theta_X} & 
\pi^{\ab}_1(X^o) \ar[r] \ar@{->>}[d] & {\wh{\Z}}/{\Z} \ar@{=}[d] \ar[r] & 0 \\
0 \ar[r] & H^S_0(X^o) \ar[r]^-{\phi_{X^o}} & \pi^{t, \ab}_1(X^o) \ar[r] &
{\wh{\Z}}/{\Z} \ar[r] & 0,}
\end{equation}
whose rows are exact.
The top row is given by \thmref{thm:Main-4} and the bottom row is given
by \cite[Theorem~8.7]{Schmidt}. 
It is clear that the middle vertical arrow may not be
injective in general. This implies that $\theta_X$ is not injective in general.

\section{The Roitman torsion theorem}\label{sec:RTT}
Let $k$ be an algebraically closed field of arbitrary characteristic.
Let $X$ be a smooth quasi-projective scheme over $k$ which admits an
open embedding $X \inj \ov{X}$, where $\ov{X}$ is smooth and projective
over $k$. Then Spie{\ss} and Szamuely \cite{SS} showed that
the Albanese homomorphism $\vartheta_X$ (see ~\eqref{eqn:alb-hom}) is an 
isomorphism on prime-to-$p$ torsion subgroups, where $p$ is
the exponential characteristic of $k$. 
This was an important development as it provided a crucial 
breakthrough in eliminating the projectivity hypothesis from the
famous Roitman torsion theorem \cite{Roitman}.
Geisser \cite{Geisser} subsequently showed that the prime-to-$p$ condition in
the torsion theorem of Spie{\ss} and Szamuely could be eliminated if one
assumed resolution of singularities. 

The goal of this section is prove \thmref{thm:Main-5}
which eliminates the prime-to-$p$ condition
from the torsion theorem of \cite{SS}  without assuming 
resolution of singularities.

\enlargethispage{30pt}

\subsection{Some preliminaries}\label{sec:PreRT}
We shall use the following results in our proof.

\begin{lem}\label{lem:Bertini-special}
Let $X \subset \P^N_k$ be an integral Cohen-Macaulay closed subscheme of 
dimension $d \ge 2$ and let $C \subset \P^N_k$ be a closed subscheme such that
the scheme theoretic intersection $C \cap X$ has codimension at least two in 
$X$.
Then for all $m \gg 0$, a  general hypersurface $H \subset \P^N_k$ of degree $m$
containing $C$ has the property that $X \cap H$ is an integral scheme 
of dimension $d-1$.
\end{lem}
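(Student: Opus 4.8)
The plan is to prove Lemma~\ref{lem:Bertini-special} by combining a classical Bertini irreducibility theorem for general hypersurface sections with the standard dimension estimates for linear systems containing a fixed base locus. The essential point is that imposing the containment $C \subset H$ is a linear condition on the space of degree-$m$ hypersurfaces, and for $m \gg 0$ the remaining freedom is large enough that a general such $H$ still meets $X$ in an integral scheme of the expected dimension.

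First I would set up the relevant linear system. Fix the closed embedding $X \subset \P^N_k$ and let $\sI_C \subset \sO_{\P^N_k}$ denote the ideal sheaf of $C$. The hypersurfaces of degree $m$ containing $C$ correspond to the linear subsystem $\P(H^0(\P^N_k, \sI_C(m))) \subset \P(H^0(\P^N_k, \sO(m)))$. For $m \gg 0$ this subsystem is base-point-free away from $C$ itself, and more importantly its restriction to $X$ cuts out (for general members) a divisor whose only constraints come from the locus $C \cap X$. Since we are told $\dim(C \cap X) \le d-2$, a general member $H$ of this subsystem will contain $C \cap X$ but its intersection $X \cap H$ will nonetheless have pure dimension $d-1$: the extra base locus $C \cap X$ has codimension at least two in $X$, so it cannot force an entire component of $X \cap H$.

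Next I would establish integrality. The scheme $X$ is integral and Cohen--Macaulay of dimension $d \ge 2$, and $X \cap H$ is a hypersurface section, hence an effective Cartier divisor on $X$ when $H$ does not contain $X$; being cut out by a single equation on a Cohen--Macaulay scheme, it inherits the $S_1$ (indeed $S_{d-1}$) property, so it has no embedded components and is reduced once we know it is generically reduced. For irreducibility and reducedness of a general member of a linear system whose restriction to $X$ is sufficiently positive, I would invoke a Bertini-type irreducibility theorem — the appropriate reference is the Altman--Kleiman Bertini theorem \cite{KL} (or, over a finite field, the version of Wutz \cite{Wutz} already used in Lemma~\ref{lem:Bertini-spl}), applied to the very ample linear system $|\sI_C(m)|$ restricted to $X$ for $m \gg 0$. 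The key input is that after twisting by a large multiple, the linear system separates enough to guarantee that the general divisor in it is geometrically integral; the fixed base locus $C \cap X$ of codimension $\ge 2$ does not obstruct this because integrality is a condition on the generic behavior of the section.

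The main obstacle I expect is handling the fixed base locus correctly: standard Bertini theorems apply to base-point-free systems, whereas here every member is forced to contain $C$, and $C \cap X$ may be positive-dimensional. The way around this is to pass to the blow-up of $X$ (or $\P^N_k$) along $C \cap X$, where the strict transform of the linear system becomes base-point-free for $m \gg 0$, apply the classical Bertini irreducibility theorem there, and then push the integral divisor back down to $X$, using that $C \cap X$ has codimension at least two so that the blow-down does not merge distinct components or introduce non-reducedness in codimension one. This blow-up reduction, together with Serre vanishing to guarantee that $\sI_C(m)$ is globally generated away from $C$ and very ample on the blow-up for $m \gg 0$, is the technical heart of the argument; the remaining dimension count and the Cohen--Macaulay reducedness assertion are then routine.
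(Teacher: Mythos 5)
Your proposal is correct and is essentially the paper's own argument: apply the Altman--Kleiman Bertini theorem \cite[Theorem~1]{KL} for hypersurfaces containing the subscheme $C$ to get that $X\cap H$ is irreducible of dimension $d-1$ and smooth (hence generically reduced, i.e.\ $R_0$) away from $C$, note that any such section of the Cohen--Macaulay scheme $X$ is again Cohen--Macaulay and therefore $S_1$, and conclude integrality from $R_0+S_1$. The only divergence is your closing blow-up reduction, which you call the technical heart but which is in fact unnecessary: \cite[Theorem~1]{KL} is already stated for the (non-base-point-free) linear system $|\sI_C(m)|$ with a prescribed containment condition, so the fixed base locus $C\cap X$ of codimension $\ge 2$ is handled by the reference itself rather than by an additional blow-up argument.
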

\begin{proof}
Since $\codim (C\cap X, X)\ge 2$, by \cite[Theorem~1]{KL}, a general 
hypersurface $H \subset \P^N_k$ of any
degree $m \gg 0$ containing $C$ has the property that 
$X \cap H$ is irreducible of dimension $d-1$ and smooth along
$X_\reg \setminus C$. In particular, it is  generically smooth. That is,
it $X \cap H$ satisfies Serre's $R_0$ condition.

Since $X$ is Cohen-Macaulay, any
hypersurface $H \subset \P^N_k$ containing $C$ has the property 
that $X \cap H$ is Cohen-Macaulay. In particular, it satisfies 
Serre's $S_1$ condition. But it is classical that
a Noetherian scheme is reduced if and only if it satisfies 
$R_0$ and $S_1$ conditions. We therefore conclude that 
a general hypersurface $H \subset \P^N_k$ 
of any degree $m \gg 0$ containing $C$ has the property that 
$X \cap H$ is reduced and irreducible, hence integral of dimension $d-1$.
\end{proof}

\begin{lem}\label{lem:Bertini-general}
Let $X \subset \P^N_k$ be a smooth and connected 
projective scheme of dimension $d \ge 3$.
Let $Z \subset X$ be a nowhere dense reduced closed subscheme with
$(d-1)$-dimensional irreducible components $Z_1, \ldots , Z_r$.
Let $C \subset X$ be a reduced curve whose no component lies in 
$Z$. Assume that the embedding dimension of $C$ at
each of its closed points is at most two.
Then for all $m \gg 0$, a  general hypersurface $H \subset \P^N_k$ of degree $m$
containing $C$ has the property that $X \cap H$ is $Z$-admissible.
\end{lem}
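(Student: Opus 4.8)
The plan is to reduce the statement to checking, for a general member $H$ of the linear system $|\sI_C(m)|$ of degree-$m$ hypersurfaces through $C$ (with $m \gg 0$), the following three conditions, each of which holds on a dense open subset of $|\sI_C(m)|$: (a) $Y := X \cap H$ is integral of dimension $d-1$; (b) $Y$ is smooth; and (c) for each component $Z_i$, the scheme $Y \cap Z_i$ is integral of dimension $d-2$. Since $X$ is smooth, we have $X_\sing = \emptyset$ and $X^o = X$, so the four conditions defining ``good'' in \S~\ref{sec:LFG} collapse to exactly ``$Y$ integral and smooth of dimension $d-1$'' (normality being automatic from smoothness, and condition (4) vacuous). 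Intersecting the finitely many dense opens coming from (a), (b) and the $r$ instances of (c) then produces a general $H$ for which $X \cap H$ is $Z$-admissible, which is what the lemma asserts.

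For (a) I would apply \lemref{lem:Bertini-special}: $X$ is integral (smooth and connected) and Cohen--Macaulay, and $C \cap X = C$ has codimension $d-1 \ge 2$ in $X$, so a general $H \supset C$ of degree $m \gg 0$ cuts $X$ in an integral scheme of dimension $d-1$. For the part of (b) asserting smoothness of $Y$ \emph{away from} $C$, I would invoke the Altman--Kleiman Bertini theorem \cite[Theorem~7]{KL} (valid since $k$ is algebraically closed, hence infinite), which gives smoothness of $X \cap H$ outside the base locus $C$ for a general $H$.

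The heart of the argument, and the place where the embedding-dimension hypothesis is used, is the smoothness of $Y$ \emph{along} $C$; I would establish it by a dimension count on the bad incidence locus. Fix a closed point $x \in C$ with maximal ideal $\fm_x \subset \sO_{X,x}$. Then $Y = V(h)$ fails to be smooth at $x$ precisely when the image of $h$ in $\fm_x/\fm_x^2$ vanishes; since $h$ vanishes on $C$, this image lies a priori in the subspace $W_x := (\sI_{C,x} + \fm_x^2)/\fm_x^2$, whose dimension equals $d - \edim_x(C)$. For $m \gg 0$ the jet evaluation $H^0(X, \sI_C(m)) \to \sC_{C/X}(m)|_x \surj W_x$ is surjective (global generation of $\sC_{C/X}(m)$ together with $H^1(X,\sI_C^2(m)) = 0$), so ``$Y$ singular at $x$'' imposes exactly $\dim W_x = d - \edim_x(C)$ independent linear conditions on $h$. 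I then split $C$ into its regular locus $C_\reg$, where $\edim_x(C) = 1$ so the codimension is $d-1$, and the \emph{finite} singular set $C_\sing$, where the hypothesis $\edim_x(C) \le 2$ forces the codimension to be $\ge d-2 \ge 1$. The incidence variety of pairs $(x,[h])$ with $V(h)$ singular at $x$ therefore has dimension at most
\[
\max\{\, \dim C_\reg + \dim|\sI_C(m)| - (d-1),\ \dim C_\sing + \dim|\sI_C(m)| - (d-2)\,\} = \dim|\sI_C(m)| - (d-2),
\]
which is strictly less than $\dim|\sI_C(m)|$ because $d \ge 3$, so a general $H$ makes $Y$ smooth along $C$ as well. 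Isolating the finitely many singular points of $C$ — where $\edim \le 2$ barely keeps the count positive — is the main obstacle, and is the only step genuinely using the embedding-dimension assumption.

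Finally, for (c): as no component of $C$ lies in $Z$, the intersection $C \cap Z_i$ is finite, hence of codimension $d-1 \ge 2$ in the integral variety $Z_i$ of dimension $d-1 \ge 2$. For $m \gg 0$ the restriction map $H^0(\P^N_k, \sI_C(m)) \to H^0(Z_i, \sI_{C \cap Z_i/Z_i}(m))$ is surjective, by Serre vanishing applied to the kernel sheaf $\sI_{C \cup Z_i}(m)$, so a general $H \supset C$ restricts to a general degree-$m$ hypersurface section of $Z_i$ through $C \cap Z_i$. Applying the Bertini integrality theorem to each of the finitely many integral $Z_i$ (irreducibility and generic smoothness on $Z_{i,\reg}$ via \cite[Theorem~1]{KL}, reducedness coming from generic smoothness together with $m \gg 0$ and $\dim Z_i \ge 2$, exactly as in the proof of \lemref{lem:Bertini-special}) shows $Y \cap Z_i$ is integral of dimension $d-2$. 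Intersecting all the dense open conditions furnished by (a), (b) and (c) then completes the proof.
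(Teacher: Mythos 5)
Your argument is correct and its skeleton is the same as the paper's: reduce $Z$-admissibility to (i) integrality of $X\cap H$ via \lemref{lem:Bertini-special}, (ii) smoothness of $X\cap H$ using the hypothesis $\edim_x(C)\le 2$, and (iii) integrality of $H\cap Z_i$ by applying the same Bertini integrality statement to each $Z_i$, then intersect finitely many dense open conditions. The only substantive difference is in step (ii): the paper simply quotes \cite[Theorem~7]{KL}, whose hypothesis is exactly the stratified inequality $\dim\{x: \edim_x(C)=e\}+e<\dim X$ that you verify ($1+1<d$ on $C_\reg$, $0+2<d$ on the finite set $C_\sing$), whereas you unpack that citation into the explicit incidence-variety count; your version is self-contained but proves nothing more, and your attribution is slightly off (Theorem~7 of \cite{KL} is the statement giving smoothness \emph{along} $C$, while smoothness away from the base locus is their Theorem~1).

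One small point you should make explicit in step (iii): to run the argument of \lemref{lem:Bertini-special} on $Z_i$ (or to invoke that lemma directly with $Z_i$ in place of $X$) you need $Z_i$ to be Cohen--Macaulay, since reducedness of $Z_i\cap H$ is obtained as $R_0+S_1$ and the $S_1$ part comes from Cohen--Macaulayness, not from generic smoothness. This is where the hypothesis that $X$ is smooth and $\dim Z_i=d-1$ enters: $X$ is locally factorial, so the integral codimension-one subscheme $Z_i$ is a Cartier divisor on $X$, hence Cohen--Macaulay. The paper records exactly this observation at the start of its proof; with that line added, your argument is complete.
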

\begin{proof}
Since each $Z_i$ is a Cartier divisor on a smooth scheme, 
it is Cohen-Macaulay of dimension $d-1 \ge 2$. Furthermore, 
our hypothesis implies that $C \cap Z_i$ has codimension at least two in $Z_i$ 
for each $i$. Since $\edim(C\cap X^o)<3$, it follows from 
\cite[Theorem~7]{KL} that a general hypersurface $H \subset \P^N_k$ 
of any degree $m \gg 0$ containing $C$ has the property that 
$X \cap H$ is smooth. We combine this with \lemref{lem:Bertini-special}
to conclude the proof.
\end{proof}

We shall also need the following result on the invariance of the
$p$-primary torsion subgroup of the generalized Albanese variety under
monoidal transformations.

\begin{lem}\label{lem:p-tor-inv}
Assume that ${\rm char}(k) = p > 0$ and let
$U$ be a smooth quasi-projective scheme of dimension $d \ge 1$ over $k$.
Suppose that there exists an open immersion $U \subset X$ such that
$X$ is a smooth projective scheme. Let $\pi \colon \wt{X} \to X$ be
the morphism obtained by a successive blow-ups along closed points.
Then the induced homomorphism $\pi_* \colon \Alb_S(\pi^{-1}(U))(k) \to 
\Alb_S(U)(k)$ is an isomorphism on the $p$-primary torsion subgroups.
\end{lem}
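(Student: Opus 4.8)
The plan is to reduce the statement to the corresponding (classical) fact for the Weil--Albanese variety, using Serre's description \eqref{eqn:Alb-0} of the generalized Albanese as an extension of $\Alb_W$ by a split torus, together with the observation that such a torus contributes nothing to $p$-primary torsion in characteristic $p$. First I would set $V = \pi^{-1}(U)$ and recall from \S\ref{sec:Asmooth} that, since $X$ and $\wt{X}$ are smooth, there are canonical exact sequences of commutative algebraic groups
\[
0 \to T_U \to \Alb_S(U) \to \Alb_W(X) \to 0, \qquad 0 \to T_V \to \Alb_S(V) \to \Alb_W(\wt{X}) \to 0,
\]
where $T_U = \Lambda_U(X)^\vee$ and $T_V = \Lambda_V(\wt{X})^\vee$ are split tori.

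The central point is that over an algebraically closed field of characteristic $p$ a split torus $T$ is invisible to $p$-primary torsion. Indeed $T(k) \cong (k^\times)^r$, and since ${\rm char}(k) = p$ the group $k^\times$ has no nontrivial $p$-power roots of unity (so $T(k)\{p\} = 0$) and is $p$-divisible (so $T(k)/p^n = 0$ for all $n$). Taking $k$-points, which is exact because $k$ is algebraically closed and all the groups involved are smooth, and applying the snake lemma to multiplication by $p^n$, I would deduce isomorphisms $\Alb_S(U)(k)[p^n] \xrightarrow{\cong} \Alb_W(X)(k)[p^n]$ and $\Alb_S(V)(k)[p^n] \xrightarrow{\cong} \Alb_W(\wt{X})(k)[p^n]$ for every $n$. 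Note that the ranks of $T_U$ and $T_V$ may well differ (blowing up a closed point of $U$ adds an exceptional divisor to $V$, hence to the lattice computing $T_V$), but this is precisely what the previous computation renders irrelevant.

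Next I would invoke functoriality. By \S\ref{sec:Alb*} the assignment $W \mapsto \Alb_S(W)$ is a covariant functor, and by the factorization of the rational Albanese map recalled in \S\ref{sec:Alb-hom} the projections $\Alb_S(\cdot) \to \Alb_W(\cdot)$ are natural. Hence $\pi$ induces a commutative square relating $\pi_* \colon \Alb_S(V)(k)\{p\} \to \Alb_S(U)(k)\{p\}$ to $\pi_* \colon \Alb_W(\wt{X})(k)\{p\} \to \Alb_W(X)(k)\{p\}$, whose horizontal arrows are the isomorphisms just established. It then remains to prove that the right-hand vertical map is an isomorphism; but this is the classical birational invariance of the Albanese variety of a smooth projective scheme. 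Since $\pi$ is a composite of blow-ups at closed points, $\wt{X} \to X$ is a birational morphism of smooth projective varieties, and each point blow-up leaves $\Alb_W$ unchanged because the exceptional $\P^{d-1}$ carries no nonconstant map to an abelian variety. Thus $\pi_* \colon \Alb_W(\wt{X}) \to \Alb_W(X)$ is an isomorphism, and a diagram chase concludes.

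The main obstacle I anticipate is not any single deep ingredient but the bookkeeping of naturality: one must check that the square comparing $\pi_*$ on $\Alb_S$ with $\pi_*$ on $\Alb_W$ genuinely commutes, that is, that the projections $\Alb_S(\cdot) \to \Alb_W(\cdot)$ arising from Serre's construction are compatible with push-forward along $\pi$. This is forced by the universal properties defining the two functors, but it is the step that requires care. Everything else---the vanishing and $p$-divisibility of the torus part, and the birational invariance of $\Alb_W$---is standard.
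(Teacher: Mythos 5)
Your proposal is correct and follows essentially the same route as the paper's proof: both use Serre's exact sequence ~\eqref{eqn:Alb-0} for $U$ and $\pi^{-1}(U)$, observe that the torus part is divisible with trivial $p$-primary torsion so that $\Alb_S(\cdot)(k)\{p\}\to\Alb_W(\cdot)(k)\{p\}$ is an isomorphism, and then conclude via the birational invariance of the Weil--Albanese variety of a smooth projective scheme.
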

\begin{proof}
We let $\wt{U} = \pi^{-1}(U)$. It follows from ~\eqref{eqn:Alb-0} that 
there is a commutative diagram of exact sequences of abelian groups
\begin{equation}\label{eqn:p-tor-inv-0}
\xymatrix@C.8pc{
0 \ar[r] & \Lambda_{\wt{U}}(\wt{X})^\vee(k) \ar[r] \ar[d]_-{\pi_*} & 
\Alb_S(\wt{U})(k) \ar[r] \ar[d]^-{\pi_*}  & \Alb_W(\wt{X})(k) \ar[r] 
\ar[d]^-{\pi_*}  & 0 \\
0 \ar[r] & \Lambda_{U}({X})^\vee(k) \ar[r] & 
\Alb_S(U)(k) \ar[r] & \Alb_W(X)(k) \ar[r] & 0.}
\end{equation}

Since $T(k)$ is divisible and 
$T(k)\{p\} = 0$ for an algebraic torus $T$ over $k$, we see that  
the maps $\Alb_S(\wt{U})(k)\{p\} \to \Alb_W(\wt{X})(k)\{p\}$
and $\Alb_S(U)(k)\{p\} \to \Alb_W(X)(k)\{p\}$ are isomorphisms.
On the other hand, one knows that the Weil-Albanese variety of a 
smooth scheme is a birational invariant. This implies that the
right vertical arrow in ~\eqref{eqn:p-tor-inv-0} is an isomorphism.
We conclude that the middle vertical arrow is an isomorphism on the
$p$-primary torsion subgroups.
\end{proof}

\subsection{Proof of \thmref{thm:Main-5}}\label{sec:Main-5-pf}
We shall now prove \thmref{thm:Main-5}. We let $U$ be a smooth 
quasi-projective scheme of dimension $d \ge 1$ over $k$
with an open immersion $U \subset X$ such that
$X$ is smooth and projective over $k$. We have to show that 
the Albanese homomorphism 
$\vartheta_U \colon H^S_0(U)_{\tor} \to \Alb_S(U)(k)_\tor$ is an isomorphism.

We can assume $X$ to be integral. We can also assume that 
${\rm char}(k) = p >0$. 
We shall prove the theorem by induction on $d$. The case $d \le 2$
follows from \cite[Theorem~1.1]{Geisser}. 
We therefore assume $d \ge 3$.

We fix a closed embedding $X \inj \P^N_k$ and let $Z = X \setminus U$
with reduced structure. Let $H \subset \P^N_k$ be a
hypersurface such that the scheme theoretic intersection $Y = X \cap H$
satisfies the condition of \lemref{lem:Bertini-general}
Using  
the covariance of the Albanese homomorphism (see the beginning of 
\S~\ref{sec:Alb-hom}), we get a commutative diagram
\begin{equation}\label{eqn:Roitman-main-0}
\xymatrix@C2pc{
H^S_0(Y \cap U)^0 \ar[r]^-{\vartheta_{Y \cap U}} \ar[d]_-{\iota_*} &
\Alb_S(Y \cap U)(k) \ar[d]^-{\iota_*} \\
H^S_0(U)^0 \ar[r]^-{\vartheta_U} & \Alb_S(U)(k),}
\end{equation}
where $\iota \colon Y \inj X$ is the inclusion.
Using \thmref{thm:Lef-Alb-main}, the known case $d \le 2$ and an 
induction on $d$, we see that $\vartheta_U$ is surjective on the torsion 
subgroups. In the rest of the proof, we shall show that this map is injective 
too.

We shall prove the injectivity in several steps. 
We fix an element $\alpha \in \sZ_0(U)$ such that $\alpha \neq 0$ 
in $H^S_0(U)$ but $n \alpha = 0$ in $H^S_0(U)$ for some integer $n \ge 2$.
By the torsion theorem of Spie{\ss} and Szamuely \cite{SS},
we can assume $n = p^a$, where $a$ is a positive integer. 
We must then have $\alpha \in \sZ_0(U)^0$. We shall show that 
$\vartheta_U(\alpha) \neq 0$. This will finish the proof.

Since $n \alpha = 0$ in $H^S_0(U)$, we can find a finite collection of 
distinct integral normal
curves $\{C_1, \ldots , C_m\}$ with finite maps $\nu_i \colon C_i \to X$ 
none of whose images is contained in $Z$ and elements 
$f_i \in \sO^{\times}_{C_i, E_i}$
such that $f_i(x) = 1$ for every $x \in E_i$ and
$n \alpha = \sum_i (\nu_i)_*(\divf(f_i))$. Here, $E_i =
\nu^{-1}_i(Z)$. We let $C'_i = \nu_i(C_i)$ and $C' = \bigcup_i C'_i \subset X$.
Since we can not always find a hypersurface section of $X$ which
is smooth along $U$ and contains $C'$, we have to go through some
monoidal transformations of $X$.

\vskip .3cm

{\bf STEP~1}:
We can find a morphism $\pi \colon \wt{X} \to X$ which is a composition
of monoidal transformations whose centers are closed points such that the 
following hold.

\begin{enumerate}
\item
The strict transform $D'_i$ of each $C'_i$ is smooth so that
$D'_i \cong C_i$.
\item
$D'_i \cap D'_j = \emptyset$ for $i \neq j$.
\item
Each $D'_i \subset \wt{X}$ intersects the exceptional divisor $E$ (which is reduced)
transversely.
\end{enumerate}

It is clear that there exists a set of distinct blown-up closed points 
$T \subset X$ such that $\pi \colon \pi^{-1}(X \setminus T) \to 
X \setminus T$ is an isomorphism. 
Set $\wt{U} = \pi^{-1}(U)$ and $\wt{Z} = \pi^{-1}(Z)$ with reduced structure.
We shall identify $D'_i$ with $C_i$ and the composite map
$C_i \xrightarrow{\cong} D'_i \xrightarrow{\pi} C'_i$ with $\nu_i$.
Let $C$ denote the strict transform of $C'$ with irreducible components
$\{C_1, \ldots , C_m\}$. We then have
$E_i = \nu^{-1}_i(Z) = \wt{Z} \cap C_i$.
Since ${\rm Supp}(\alpha) \subset C' \cap U$, we can find
$\alpha' \in \sZ_0(\wt{U})$ supported on $C$ 
such that $\pi_*(\alpha') = \alpha$.
This implies that 
$\pi_*(n\alpha' - \sum_i \divf(f_i)) = 0$.
Setting $\beta = n\alpha' - \sum_i \divf(f_i)$, we get 
$\pi_*(\beta) = 0$ in the cycle group $\sZ_0(U)$.

\vskip .2cm

{\bf STEP~2}:
We let $T' = T \cap U = \{y_1, \ldots , y_s\}$.
We can then write $\beta = \stackrel{s}{\underset{i =0}\sum} \beta_i$,
where $\beta_i$ is a 0-cycle on $\wt{U}$ supported on $\pi^{-1}(y_i)$
for $1 \le i \le s$ and $\beta_0$ is supported on $\wt{U} \setminus E$. 
We then get 
$\stackrel{s}{\underset{i =0}\sum}  \pi_*(\beta_i) = 0$ in 
$\sZ_0(U) \subseteq \sZ_0(X)$.  
Since all closed points of $T$ are distinct and the support of $\pi_*(\beta_0)$
is disjoint from $T'$, and hence from $T$, one easily checks that we must have 
$\pi_*(\beta_i) = 0$ for all $0 \le i \le s$. Since $\pi$ is an isomorphism 
away from $T$, we must have $\beta_0 = 0$.
We can therefore assume that $\beta$ is a 0-cycle on $E \cap \wt{U}$.

We now note that each $\pi^{-1}(\{y_i\})$ is a $(d-1)$-dimensional projective 
scheme whose irreducible components are successive point blow-ups of 
$\P^{d-1}_k$. Moreover, we have 
$\pi_*(\beta_i) = 0$ under the push-forward map 
$\pi_*\colon \sZ_0(\pi^{-1}(\{y_i\})) \to \Z$, induced by the maps
$\pi\colon \pi^{-1}(\{y_i\})
\to \Spec(k(y_i)) \xrightarrow{\simeq} \Spec(k)$. 
But this means that ${\rm deg}(\beta_i) = 0$. Taking the sum, we get
${\rm deg}(\beta) = \stackrel{s}{\underset{i =1}\sum} {\rm deg}(\beta_i) = 0$.
We can therefore find finitely many smooth projective rational curves 
$L_1, \ldots , L_{m'}$ on $E \cap \wt{U}$ and rational functions 
$f'_j \in k(L_j)$ 
such that $\beta = \stackrel{m'}{\underset{j = 1}\sum}  \divf(f'_j)_{L_j}$
(see \cite[Lemma~6.3]{Krishna-Inv}).

\vskip .2cm

{\bf STEP~3}: Using an argument of Bloch (see \cite[Lemma~5.2]{Bloch-Duke}), 
after possibly further
blow-up of $\wt{X}$ along the closed points of $E \cap \wt{U}$, we can assume
that no more than two $L_j$'s meet at a point and they intersect
$C$ transversely (note that $C$ is smooth along $E$).
In particular, in combination with (1) - (3) above, this implies that
$D:= C \cup (\cup_j L_j)$ is a reduced curve with
following properties (see line 4 from the bottom of \cite[p.~5.2]{Bloch-Duke}).

\begin{listabc}
\item
Each component of $D$ is smooth (note that $D = C$ away
from $(\cup_j L_j)$).
\item
$D$ is smooth along $\wt{X} \setminus \wt{U}$. 
\item
$D \cap \wt{U}$ has only ordinary double point singularities, i.e., 
exactly two components of $D \cap \wt{U}$ meet at any of its 
singular points with distinct tangent directions. 
\end{listabc}

In particular, the embedding dimension of $D$ at each of its closed 
points is at most two. Furthermore, we have
\begin{equation}\label{eqn:Roitman-main-1}
n \alpha'= \stackrel{m}{\underset{i = 1}\sum}  \divf(f_i) + 
\beta = \stackrel{m}{\underset{i = 1}\sum} \divf(f_i) + 
\stackrel{m'}{\underset{j = 1}\sum}  \divf(f'_j).
\end{equation}
Since $L_j \cap \wt{Z} = \emptyset$ for each $j$, it follows that
$n \alpha' \in \sR^S_0(\wt{U})$. Note also that $\wt{X}$ is an integral
smooth projective scheme.

\vskip .2cm

\enlargethispage{20pt}

{\bf STEP~4}:  Let $\{\wt{Z}_1, \ldots , \wt{Z}_r\}$ be the set of irreducible 
components of $\wt{Z}$ of dimension $d-1$ 
with integral closed subscheme structure on each 
$\wt{Z_i}$. We fix a closed embedding $\wt{X} \inj \P^M_k$. 
It follows from \lemref{lem:Bertini-general} that all $q \gg 0$, a general 
hypersurface $H \subset \P^M_k$ of degree $q$
containing $D$ has the property that the scheme theoretic intersection
$Y = X \cap H$ is $\wt{Z}$-admissible. Since $q \gg 0$, we can also ensure
using the Enriques-Severi-Zariski vanishing theorem that
$H^0(\wt{X}, \Omega^1_{{\wt{X}}/k}(-Y)) = 0$. 
We choose such a hypersurface $H$ and let 
$\iota \colon Y \inj \wt{X}$ denote the inclusion.
We let $V = Y \cap \wt{U}$.

\vskip .2cm

{\bf STEP~5}: It follows from ~\eqref{eqn:Roitman-main-1} and STEP~4 that 
$\alpha' \in \sZ_0(V)$ and $n \alpha' \in \sR^S_0(V)$, i.e., 
$n \alpha' = 0$ in $H^S_0(V)$. Note that $\alpha' \neq 0$ in 
$H^S_0(V)^0$ since $\pi_*(\alpha') = \alpha$ is not zero in
$H^S_0({U})^0_\tor$. 
Since the Albanese homomorphism is a natural transformation between two
functors on $\Sm_k$ 
(see ~\eqref{eqn:alb-hom}), there is a commutative diagram
\begin{equation}\label{eqn:Roitman-main-2}
\xymatrix@C.8pc{
H^S_0(V)^0_\tor \ar[r]^-{\vartheta_{V}} \ar[d]_-{\iota_*} &
\Alb_S(V)(k)_\tor \ar[d]^-{\iota_*} \\
H^S_0(\wt{U})^0_\tor \ar[r]^-{\vartheta_{\wt{U}}} & \Alb_S(\wt{U})(k)_\tor.}
\end{equation}

By the choice of $H$ and \thmref{thm:Lef-Alb-main}, the right vertical
arrow is an isomorphism. Since $\alpha' \in H^S_0(V)^0_\tor$, it follows
by induction on $d$ that $\vartheta_{V}(\alpha') \neq 0$. Hence, we get
\begin{equation}\label{eqn:Roitman-main-3}
\vartheta_{\wt{U}}(\alpha') = \vartheta_{\wt{U}} \circ \iota_*(\alpha') 
= \iota_* \circ \vartheta_{V}(\alpha') \neq 0.
\end{equation}

We now consider another commutative diagram
\begin{equation}\label{eqn:Roitman-main-4}
\xymatrix@C.8pc{
H^S_0(\wt{U})^0\{p\} \ar[r]^-{\vartheta_{\wt{U}}} \ar[d]_-{\pi_*} &
\Alb_S(\wt{U})(k)\{p\} \ar[d]^-{\pi_*} \\
H^S_0(U)^0\{p\} \ar[r]^-{\vartheta_{U}} & \Alb_S({U})(k)\{p\}.}
\end{equation}
Using this diagram, we get
\[
\vartheta_{U}(\alpha) = \vartheta_{U} \circ \pi_*(\alpha') =
\pi_* \circ \vartheta_{\wt{U}}(\alpha').
\]
Since $\vartheta_{\wt{U}}(\alpha') \neq 0$ by ~\eqref{eqn:Roitman-main-3}, 
we conclude from \lemref{lem:p-tor-inv} that $\vartheta_{U}(\alpha) \neq 0$.
This concludes the proof of \thmref{thm:Main-5}.
$\hfill \square$

\vskip .4cm

\noindent\emph{Acknowledgement.}
The authors are indebted to the anonymous referee for reading the
manuscript very thoroughly and suggesting many improvements.

\end{document}